\newcommand {\A}{{\mathcal{A}}}
\newcommand {\B}{{\mathcal{B}}}
\newcommand {\bmo}{\mathrm{bmo}}
\newcommand {\C}{{\mathbb C}}
\newcommand {\Ca}{{\mathcal{C}}}
\newcommand {\Da}{\mathcal{D}}
\newcommand {\dec}{\mathrm{dec}}
\newcommand {\ud}{\mathrm{d}}
\newcommand {\veps}{\varepsilon}
\newcommand {\thetat}{\tilde{\theta}}
\newcommand {\F}{\mathcal{F}}
\newcommand {\HT}{\mathcal{H}}
\newcommand {\Hp}{\mathcal{H}^{p}_{FIO}(\Rn)}
\newcommand {\HpM}{\mathcal{H}^{p}_{FIO}(M)}
\newcommand {\Hps}{\mathcal{H}^{s,p}_{FIO}(\Rn)}
\newcommand {\HpsM}{\mathcal{H}^{s,p}_{FIO}(M)}
\newcommand {\inj}{\mathrm{inj}}
\newcommand {\ka}{\kappa}
\newcommand {\la}{\lambda}
\newcommand {\rb}{\rangle}
\newcommand {\lb}{{\langle}}
\newcommand {\La}{{\mathcal{L}}}
\newcommand {\loc}{{\mathrm{loc}}}
\newcommand {\N}{{{\mathbb N}}}
\newcommand {\ph}{\varphi}
\newcommand {\psit}{\tilde{\psi}}
\newcommand {\R}{\mathbb R}
\newcommand {\Rn}{\mathbb{R}^{n}}
\newcommand {\rank}{\mathrm{rank}}
\newcommand {\supp}{\mathrm{supp}}
\newcommand {\Sp}{S^{*}\Rn}
\newcommand {\Sw}{\mathcal{S}}
\newcommand {\Util}{\tilde{U}}
\newcommand {\w}{\omega}
\newcommand\WF{\operatorname{WF}}
\newcommand {\Z}{\mathbb Z}
\newcommand {\Zn}{\mathbb{Z}^{n}}
\newcommand {\vanish}[1]{\relax}
\newcommand{\wh}{\widehat}
\newcommand{\wt}{\widetilde}
\DeclareMathOperator{\Real}{Re}
\newtheorem{theorem}{Theorem}[section]
\newtheorem{lemma}[theorem]{Lemma}
\newtheorem{proposition}[theorem]{Proposition}
\newtheorem{corollary}[theorem]{Corollary}
\theoremstyle{definition}
\newtheorem{definition}[theorem]{Definition}
\newtheorem{remark}[theorem]{Remark}
\numberwithin{equation}{section}
\protected\def\ignorethis#1\endignorethis{}
\let\endignorethis\relax
\title[Local smoothing and Hardy spaces for FIOs on manifolds]
	{Local smoothing and Hardy spaces for Fourier integral operators on manifolds}
\author{Naijia Liu}
\address{Department of Mathematics,
    Sun Yat-sen University,
    Guangzhou, 510275,
    P.R.~China}
\email{liunj@mail2.sysu.edu.cn}
\author{Jan Rozendaal}
\address{Institute of Mathematics, Polish Academy of Sciences\\
Ul.~\'{S}niadeckich 8\\
00-656 Warsaw\\
Poland}
\email{jrozendaal@impan.pl}
\author{Liang Song}
\address{Department of Mathematics,
    Sun Yat-sen University,
    Guangzhou, 510275,
    P.R.~China}
\email{songl@mail.sysu.edu.cn}
\author{Lixin Yan}
\address{Department of Mathematics,
    Sun Yat-sen University,
    Guangzhou, 510275,
    P.R.~China}
\email{mcsylx@mail.sysu.edu.cn}
\keywords{Local smoothing, wave equation, Hardy space for Fourier integral operators, Riemannian manifold}
\subjclass[2020]{Primary 58J45. Secondary 35L05, 42B35, 35S30}
\thanks{The research leading to these results has received funding from the Norwegian Financial Mechanism 2014-2021, grant 2020/37/K/ST1/02765. This research was funded in part by the National Science Center, Poland, grant 2021/43/D/ST1/00667. N.J.~Liu is supported by  China Postdoctoral Science Foundation (No.~2022M723673). L.~Song is supported by  NNSF of China (No.~12071490).  L.~Song and L.~Yan are supported  by National Key R$\&$D Program of China 2022YFA1005700.}
\begin{document}

\begin{abstract}
We introduce the Hardy spaces for Fourier integral operators on Riemannian manifolds with bounded geometry. We then use these spaces to obtain improved local smoothing estimates for Fourier integral operators satisfying the cinematic curvature condition, and for wave equations on compact manifolds. The estimates are essentially sharp, for all $2<p<\infty$ and on each compact manifold. We also apply our local smoothing estimates to nonlinear wave equations with initial data outside of $L^{2}$-based Sobolev spaces.
\end{abstract}
	
\maketitle	

\tableofcontents

\section{Introduction}\label{sec:intro}

This article concerns fixed-time and local smoothing estimates for wave equations and Fourier integral operators on manifolds, and associated function spaces.

\subsection{Setting}\label{subsec:setting}

Let $(M,g)$ be a smooth compact Riemannian manifold of dimension $n\geq2$ with Laplace--Beltrami operator $\Delta_{g}$, and consider the Cauchy problem
\begin{equation}\label{eq:Cauchyintro}
\begin{cases}(\partial_{t}^{2}-\Delta_{g})u(x,t)=0,
\\u(x,0)=u_{0}(x), \ \partial_{t}u(x,0)=u_{1}(x),
\end{cases}
\end{equation}
on $M\times\R$, for suitable initial data $u_{0}$ and $u_{1}$ on $M$. It is a natural problem to determine, for a given $1\leq p\leq\infty$, conditions on $u_{0}$ and $u_{1}$ which guarantee that the solution $u(t):=u(\cdot,t)$ to \eqref{eq:Cauchyintro} at a fixed time $t\in\R$ is an element of $L^{p}(M)$.

It was shown by Seeger, Sogge and Stein in \cite{SeSoSt91} that $u(t)\in L^{p}(M)$ if $1<p<\infty$, $u_{0}\in W^{2s(p),p}(M)$ and $u_{1}\in W^{2s(p)-1,p}(M)$, and these exponents are sharp for all but a discrete set of times, on each compact manifold. Here and throughout, we write
\[
s(p):=\frac{n-1}{2}\Big|\frac{1}{p}-\frac{1}{2}\Big|
\]
for $p\in[1,\infty]$. This is a variable-coefficient version of the corresponding results from \cite{Peral80,Miyachi80a} for the Euclidean Laplacian $\Delta_{g}=\Delta$ on $M=\Rn$.

Although \cite{SeSoSt91} determined the optimal fixed-time $L^{p}$ regularity for wave equations with smooth coefficients, it was observed by Sogge in \cite{Sogge91} that, for $2<p<\infty$ and $\Delta_{g}=\Delta$ on $M=\Rn$, less regularity is required of the initial data to guarantee that $u\in L^{p}_{\loc}(\R;L^{p}(\Rn))$. In fact, the \emph{local smoothing conjecture} asserts that $u\in L^{p}_{\loc}(\R;L^{p}(\Rn))$ if $u_{0}\in W^{\sigma(p)+\veps,p}(\Rn)$ and $u_{1}\in W^{\sigma(p)-1+\veps,p}(\Rn)$ for some $\veps>0$, where $\sigma(p):=0$ for $2<p\leq 2n/(n-1)$, and $\sigma(p):=2s(p)-1/p$ for $2n/(n-1)<p<\infty$. Again, the exponent $\sigma(p)$ cannot be improved. Local smoothing estimates on compact manifolds were subsequently obtained in \cite{MoSeSo93}. However, the local smoothing conjecture for the Euclidean wave equation is still open in dimensions $n\geq3$.

The most relevant work for the present article is the recent contribution \cite{BeHiSo20} by Beltran, Hickman and Sogge. They showed that $u\in L^{p}_{\loc}(\R;L^{p}(M))$ if $u_{0}\in W^{d(p)+\veps,p}(M)$ and $u_{1}\in W^{d(p)-1+\veps,p}(M)$ for some $\veps>0$, on each compact manifold $M$. Here
\begin{equation}\label{eq:dp}
d(p):=\begin{cases}
s(p)&\text{if }2\leq p<\frac{2(n+1)}{n-1},\\
2s(p)-\frac{1}{p}&\text{if }\frac{2(n+1)}{n-1}\leq p<\infty.
\end{cases}
\end{equation}
This is a variable-coefficient version of the corresponding result due to Bourgain and Demeter \cite{Bourgain-Demeter15} for the Euclidean wave equation. For $p\geq 2(n+1)/(n-1)$ one has $d(p)=\sigma(p)$, which means that the exponent $d(p)$ is optimal for such $p$.

\subsection{Local smoothing for wave equations}\label{subsec:waveintro}

Even though the exponent $d(p)$ is sharp for $p\geq 2(n+1)/(n-1)$, it was observed in \cite{Rozendaal22b} that the local smoothing estimates from \cite{Bourgain-Demeter15}, for the Euclidean wave equation on $\Rn$, can be improved by replacing the spaces of initial data $W^{d(p)+\veps,p}(\Rn)$ and $W^{d(p)-1+\veps,p}(\Rn)$ by the Hardy spaces for Fourier integral operators $\HT^{d(p)-s(p)+\veps,p}_{FIO}(\Rn)$ and $\HT^{d(p)-s(p)-1+\veps,p}_{FIO}(\Rn)$, respectively. The main result of the present article is the following version of \cite{Rozendaal22b} on a compact manifold $M$.

\begin{theorem}\label{thm:waveintro}
Let $p\in(2,\infty)$ and $\veps,t_{0}>0$. Then there exists a $C\geq0$ such that, for all $u_{0}\in \HT^{d(p)-s(p)+\veps,p}_{FIO}(M)$ and $u_{1}\in \HT^{d(p)-s(p)-1+\veps,p}_{FIO}(M)$, the solution $u$ to \eqref{eq:Cauchyintro} satisfies
\[
\Big(\int_{-t_{0}}^{t_{0}}\|u(t)\|_{L^{p}(M)}^{p}\ud t\Big)^{1/p}\leq C\big(\|u_{0}\|_{\HT^{d(p)-s(p)+\veps,p}_{FIO}(M)}+\|u_{1}\|_{\HT^{d(p)-s(p)-1+\veps,p}_{FIO}(M)}).
\]
\end{theorem}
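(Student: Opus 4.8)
The plan is to transfer the known Euclidean local smoothing estimate from \cite{Rozendaal22b} to the compact manifold $M$ by combining a finite partition of unity, the invariance of the Hardy spaces $\HT^{s,p}_{FIO}$ under suitable Fourier integral operators, and the parametrix representation of the half-wave propagators $\ue^{\pm\ui t\sqrt{-\Delta_g}}$. First I would reduce \eqref{eq:Cauchyintro} to the study of the half-wave operators: writing $u(t)=\cos(t\sqrt{-\Delta_g})u_0+\frac{\sin(t\sqrt{-\Delta_g})}{\sqrt{-\Delta_g}}u_1$, and absorbing the low-frequency part (which is harmless since on a compact manifold it is smoothing of infinite order and bounded on every $L^p$), it suffices to prove the estimate for $\ue^{\ui t\sqrt{-\Delta_g}}P$ acting on $\HT^{d(p)-s(p)+\veps,p}_{FIO}(M)$, where $P$ is a frequency projection to $\{|\xi|\gtrsim1\}$. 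The point of the Hardy space on the right-hand side is precisely that $\|\ue^{\ui t\sqrt{-\Delta_g}}f\|_{\HT^{s,p}_{FIO}(M)}\lesssim\|f\|_{\HT^{s,p}_{FIO}(M)}$ uniformly for $t$ in a compact interval (this is the defining boundedness property of these spaces for FIOs, which I would have established in the section where $\HpsM$ is introduced), so the whole problem collapses to the fixed-time embedding $\HT^{s(p),p}_{FIO}(M)\hookrightarrow L^p(M)$ together with the gain of $d(p)-s(p)$ derivatives that local (i.e.\ time-averaged) smoothing provides over the fixed-time Sobolev estimate.

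Concretely, the key steps in order: \emph{(1)} Localize with a finite partition of unity $1=\sum_j\chi_j^2$ subordinate to coordinate charts $(U_j,\kappa_j)$ on $M$, so that $\|u(t)\|_{L^p(M)}^p\approx\sum_j\|\chi_j u(t)\|_{L^p(U_j)}^p$, and on each chart push forward to $\Rn$ via $\kappa_j$. Finite propagation speed guarantees that, for $|t|\le t_0$ with $t_0$ small enough, $\chi_j\ue^{\ui t\sqrt{-\Delta_g}}$ depends only on the metric in a slightly larger chart, which can be extended to a metric on all of $\Rn$ agreeing with the Euclidean one outside a compact set; a standard Duhamel/finite-speed argument lets one reduce to fixed $t_0$ and then iterate over a bounded number of time steps. \emph{(2)} In each chart, $\chi_j\ue^{\ui t\sqrt{-\Delta_g}}\chi_j'$ is (microlocally, modulo smoothing) a Fourier integral operator of order $0$ associated to the canonical relation generated by the geodesic flow, whose phase satisfies the cinematic curvature condition. \emph{(3)} Apply the Euclidean local smoothing result of \cite{Rozendaal22b}: for such an FIO $F$ of order $0$ on $\Rn$,
\[
\Big(\int_{-t_0}^{t_0}\|F(t)f\|_{L^p(\Rn)}^p\,\ud t\Big)^{1/p}\lesssim\|f\|_{\HT^{d(p)-s(p)+\veps,p}_{FIO}(\Rn)},
\]
or, equivalently, use that $\HT^{d(p)-s(p)+\veps,p}_{FIO}(\Rn)\hookrightarrow W^{d(p)+\veps,p}(\Rn)$ is \emph{not} what we want — rather, the sharpened version where the FIO boundedness of $\ue^{\ui t\sqrt{-\Delta}}$ on $\HpsM$ is combined with the embedding $\HT^{s(p),p}_{FIO}\hookrightarrow L^p$. \emph{(4)} Patch the charts back together, using that $\HT^{s,p}_{FIO}(M)$ is defined so that $\|f\|_{\HpsM}\approx\sum_j\|(\kappa_j)_*(\chi_j f)\|_{\HT^{s,p}_{FIO}(\Rn)}$, and that the transition maps and the cutoffs $\chi_j$ act boundedly on these spaces (change-of-variables invariance of $\HpsX$, which should be among the structural results proved earlier).

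The main obstacle I expect is \emph{(3)} made precise on the manifold: one must show that the time-dependent parametrix for $\ue^{\ui t\sqrt{-\Delta_g}}$ can be written, uniformly for $|t|\le t_0$, as a finite sum of FIOs whose phase functions satisfy the hypotheses under which the local smoothing estimate of \cite{Rozendaal22b} (the cinematic curvature condition) is valid, with all seminorms controlled uniformly in $t$. This requires the Lax-type parametrix construction together with uniform control of the symbols, and care that the error terms are genuinely smoothing and hence trivially bounded $\HpsM\to L^p$ on the compact time interval; some of this should already be packaged in the earlier sections establishing $\HpsM$ and the action of FIOs on it, in which case this step reduces to invoking those results and checking the curvature hypothesis for the geodesic flow, which holds automatically. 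A secondary technical point is the interplay of the finite partition of unity with the $L^p$-norms when $p\neq2$: unlike the $L^2$ case the square-function argument must be replaced by the elementary $\|\sum_j g_j\|_{L^p}^p\le C\sum_j\|g_j\|_{L^p}^p$ for a \emph{finite} sum (valid since $M$ is compact), together with the reverse inequality obtained from the cutoffs summing to $1$; this is routine but needs the finiteness of the cover, which is exactly why compactness of $M$ is used here even though the Hardy spaces are defined for manifolds of bounded geometry.
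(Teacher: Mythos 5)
There is a real gap in step (3), and it is not a small technicality. The local smoothing result of \cite{Rozendaal22b} is proved \emph{only} for the Euclidean half-wave propagator $e^{it\sqrt{-\Delta}}$ and, more generally, for constant-coefficient Fourier multipliers $e^{it\phi(D)}$; its mechanism is the constant-coefficient $\ell^{p}$ decoupling inequality of Bourgain--Demeter. A cutoff wave propagator $\chi_j e^{it\sqrt{-\Delta_g}}\chi_j'$ pushed to $\Rn$ is a genuine variable-coefficient FIO, and nothing in \cite{Rozendaal22b} applies to it. So one cannot ``apply the Euclidean local smoothing result of \cite{Rozendaal22b}'' in a coordinate chart on a curved manifold, and the obstacle you flag at the end is not ``making (3) precise'' but finding a replacement for it. The replacement in the paper is the variable-coefficient Wolff-type (decoupling) inequality of \cite{BeHiSo20} (Theorem~\ref{thm:BeHiSo}), together with Proposition~\ref{prop:decouple}, which bounds the discrete decoupling norm for a variable-coefficient FIO by the $\HT^{m-s(p),p}_{FIO}(\Rn)$ norm of the data; combining these gives Corollary~\ref{cor:decouple} and hence Theorem~\ref{thm:localsmoothFIO}. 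Neither of these ingredients appears in your outline, and they constitute the core of the argument.

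A second, related problem is the remark that ``the whole problem collapses to the fixed-time embedding $\HT^{s(p),p}_{FIO}(M)\hookrightarrow L^p(M)$ together with the gain of $d(p)-s(p)$ derivatives that local smoothing provides.'' The fixed-time invariance of $\HpsM$ under $e^{it\sqrt{-\Delta_g}}$ (Theorem~\ref{thm:FIObdd}) combined with the Sobolev embedding $\HT^{s(p),p}_{FIO}(M)\subseteq L^p(M)$ only gives
\[
\|u(t)\|_{L^p(M)}\lesssim\|u_0\|_{\HT^{s(p),p}_{FIO}(M)}+\|u_1\|_{\HT^{s(p)-1,p}_{FIO}(M)},
\]
uniformly for $|t|\le t_0$. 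This requires $s(p)$ derivatives, not $d(p)-s(p)+\veps$; since $d(p)-s(p)=\max(0,s(p)-1/p)<s(p)$ for all $p>2$, the theorem is strictly stronger, and the gain is exactly what has to be established by the decoupling argument. Put differently, the ``gain that local smoothing provides'' is the content of the theorem, so invoking it as an ingredient is circular. The remaining structural steps you describe (localization to charts, invariance under changes of coordinates and under FIOs of order zero, the reduction to half-wave propagators, absorbing low frequencies) are in line with what the paper actually does in Sections~\ref{sec:spacesmanifold}--\ref{sec:wave}, but without the variable-coefficient decoupling input they cannot reach the stated regularity exponent.
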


Theorem \ref{thm:waveintro} is a special case of Theorem \ref{thm:wave}, which considers more general Sobolev regularity of the solution. Moreover, the index $d(p)-s(p)$ is sharp, for all $2<p<\infty$ and on each compact manifold $M$, by Theorem \ref{thm:sharpgen}.

The Hardy spaces for Fourier integral operators $\HT^{s,p}_{FIO}(M)$ are introduced in Definition \ref{def:HpFIOman}, for $p\in[1,\infty]$ and $s\in\R$. For $M=\Rn$, these spaces coincide with the ones from \cite{HaPoRo20}, which in turn are an extension of a construction due to Smith \cite{Smith98a} for $p=1$. They are invariant under compactly supported Fourier integral operators (FIOs) of order zero associated with a local canonical graph, cf.~Theorem \ref{thm:FIObdd}. Moreover, for $1<p<\infty$ the Sobolev embeddings
\begin{equation}\label{eq:Sobolevintro}
W^{s+s(p),p}(M)\subseteq \HT^{s,p}_{FIO}(M)\subseteq W^{s-s(p),p}(M)
\end{equation}
hold, with the natural modifications involving the local Hardy space $\HT^{1}(M)$ for $p=1$, and $\bmo(M)$ for $p=\infty$, cf.~Theorem \ref{thm:SobolevM}. In particular, Theorem \ref{thm:waveintro} recovers the main result of \cite{BeHiSo20}. However, since the exponents in \eqref{eq:Sobolevintro} are sharp, Theorem \ref{thm:waveintro} is in fact a strict improvement of the main result of \cite{BeHiSo20}. And since $d(p)=\sigma(p)$ for $p\geq 2(n+1)/(n-1)$, Theorem \ref{thm:waveintro} improves upon the bounds in the local smoothing conjecture for such $p$.

For $n=2$, the local smoothing conjecture for the Euclidean wave equation was proved by Guth, Wang and Zhang in \cite{GuWaZh20}, and a variable-coefficient version of their result, on compact surfaces, was obtained in \cite{GaLiMiXi23}. Note that
\[
d(p)-s(p)=0=\sigma(p) \quad\text{for}\quad2<p\leq 2\frac{n}{n-1},
\]
and
\[
d(p)-s(p)=0<\sigma(p)<d(p)\quad \text{for}\quad 2\frac{n}{n-1}<p<2\frac{n+1}{n-1}.
\]
Hence, for $n=2$ and $2<p<6$, due to the sharpness of the embeddings  in \eqref{eq:Sobolevintro}, Theorem \ref{thm:waveintro} neither follows from the bounds in \cite{GaLiMiXi23}, nor does Theorem \ref{thm:waveintro} imply those bounds. In higher dimensions, certain Laplace--Beltrami operators on compact manifolds satisfy weaker local smoothing estimates than the Euclidean wave equation does, cf.~\cite{Minicozzi-Sogge97}. We refer to \cite[Conjecture 23]{BeHiSo21} for a local smoothing conjecture for wave equations on general compact manifolds. For $p<2(n+1)/(n-1)$, Theorem \ref{thm:waveintro} neither follows from that conjecture, nor does the converse hold.

In \cite{Lee-Seeger13}, local smoothing estimates without $\veps$ loss were obtained in the $L^{p}$ scale for $n\geq 4$ and $p>2\frac{n-1}{n-3}>2\frac{n+1}{n-1}$. This is a variable-coefficient version of a similar result for the Euclidean wave equation from \cite{HeNaSe11}. It is not clear whether one may also let $\veps=0$ in Theorem \ref{thm:waveintro} for certain $p$. For $p>2\frac{n-1}{n-3}$, by the sharpness of the exponents in \eqref{eq:Sobolevintro}, the conclusion of Theorem \ref{thm:waveintro} neither follows from the local smoothing result in \cite{Lee-Seeger13}, nor does the converse hold.

We also obtain new fixed-time estimates for \eqref{eq:Cauchyintro}. Namely, in Section \ref{sec:wave} we show that, in the setting of Theorem \ref{thm:waveintro},
\[
u\in C(\R;\HT^{d(p)-s(p)+\veps,p}_{FIO}(M))\cap L^{p}_{\loc}(\R;L^{p}(M)).
\]
Due to \eqref{eq:Sobolevintro}, the first inclusion improves upon the fixed-time bounds in \cite{SeSoSt91}, and in particular \eqref{eq:Cauchyintro} is well-posed on the Hardy spaces for Fourier integral operators.

\subsection{Nonlinear wave equations}\label{subsec:nonlinear}

Local smoothing estimates from \cite{Rozendaal22b} were used in \cite{Rozendaal-Schippa23} to prove new well-posedness results for nonlinear wave equations with slowly decaying initial data on $\Rn$, building on similar work for the Schr\"{o}dinger equation from \cite{Schippa22}. In turn, the present article yields variable-coefficient versions of \cite{Rozendaal-Schippa23}, for nonlinear wave equations on compact manifolds with initial data outside of $L^{2}$-based Sobolev spaces. 

Consider the cubic nonlinear wave equation on $M\times\R$:
\begin{equation}\label{eq:nonlinear}
\begin{cases}(\partial_{t}^{2}-\Delta_{g})u(x,t)=\pm |u(x,t)|^{2}u(x,t),
\\u(x,0)=u_{0}(x), \ \partial_{t}u(x,0)=u_{1}(x).
\end{cases}
\end{equation}
For the Euclidean wave equation on $\R^{2}$, the following result is contained in \cite[Theorem 1.2]{Rozendaal-Schippa23}. Instead, here we consider compact Riemannian surfaces.

\begin{theorem}\label{thm:nonlinear}
Suppose that $n=2$, and let $\veps,t_{0}>0$. Then \eqref{eq:nonlinear} is quantitatively well posed with initial data space
\begin{equation}\label{eq:initial}
X:=(\HT^{\veps,6}_{FIO}(M)+W^{1/2,2}(M))\times(\HT^{\veps-1,6}_{FIO}(M)+ W^{-1/2,2}(M))
\end{equation}
and solution space
\begin{equation}\label{eq:solutionspace}
S_{t_{0}}:=L^{4}\big([0,t_{0}];L^{6}(M)\big) \cap C\big([0,t_{0}];\HT^{\varepsilon,6}_{FIO}(M) + W^{1/2,2}(M)\big).
\end{equation}
\end{theorem}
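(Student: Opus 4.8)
The plan is to run a contraction mapping (Picard iteration) argument in the solution space $S_{t_{0}}$, using the Duhamel formula
\[
u(t)=\cos(t\sqrt{-\Delta_{g}})u_{0}+\frac{\sin(t\sqrt{-\Delta_{g}})}{\sqrt{-\Delta_{g}}}u_{1}\pm\int_{0}^{t}\frac{\sin((t-r)\sqrt{-\Delta_{g}})}{\sqrt{-\Delta_{g}}}\big(|u(r)|^{2}u(r)\big)\,\ud r,
\]
exactly as in \cite{Rozendaal-Schippa23} but now on the compact surface $M$. First I would show that the linear evolution maps the initial data space $X$ into $S_{t_{0}}$. For the $\HT^{\veps,6}_{FIO}(M)\times\HT^{\veps-1,6}_{FIO}(M)$ component this is precisely Theorem \ref{thm:waveintro} (with $n=2$, $p=6$, so that $d(6)-s(6)=0$), together with the fixed-time statement $u\in C(\R;\HT^{\veps,6}_{FIO}(M))$ quoted in Section \ref{subsec:waveintro}; the $L^{p}_{\loc}$ part is the local smoothing bound and the $C$-in-time part is the well-posedness on the Hardy space for FIOs. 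For the $W^{1/2,2}(M)\times W^{-1/2,2}(M)$ component one uses the classical $L^{2}$-based Strichartz estimates for the wave equation on a compact manifold (e.g.\ from \cite{MoSeSo93} or the Burq--G\'erard--Tzvetkov estimates): on $\R^{2}$ the admissible Strichartz pair gives $L^{4}_{t}L^{\infty}_{x}$ from $\dot H^{1/2}$, and on a compact surface one obtains $L^{4}([0,t_{0}];L^{6}(M))$ from $W^{1/2,2}(M)$ with a loss that is absorbed by working on the finite time interval; the energy estimate gives the $C([0,t_{0}];W^{1/2,2}(M))$ bound. Summing the two pieces, the linear flow sends $X$ into $S_{t_{0}}$ boundedly.

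Next I would handle the nonlinear Duhamel term. The key point is that $S_{t_{0}}\subseteq L^{4}([0,t_{0}];L^{6}(M))$, so if $u\in S_{t_{0}}$ then $|u|^{2}u\in L^{4/3}([0,t_{0}];L^{2}(M))$ by H\"older in both $t$ and $x$ (since $\|\,|u|^{2}u\|_{L^{2}_{x}}=\|u\|_{L^{6}_{x}}^{3}$ and $3\cdot\frac{4}{3}=4$). Now one needs a retarded (inhomogeneous) estimate showing that
\[
F\mapsto\int_{0}^{t}\frac{\sin((t-r)\sqrt{-\Delta_{g}})}{\sqrt{-\Delta_{g}}}F(r)\,\ud r
\]
maps $L^{4/3}([0,t_{0}];L^{2}(M))$ into $S_{t_{0}}$. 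This has two components matching the two summands in \eqref{eq:solutionspace}. For the $C([0,t_{0}];W^{1/2,2}(M))$ component one uses the dual Strichartz (Christ--Kiselev) argument together with the energy estimate, noting that $\tfrac{1}{\sqrt{-\Delta_{g}}}$ shifts regularity up by one so that $L^{2}_{x}$ data produces $W^{1,2}_{x}$, which is more than enough. For the $L^{4}([0,t_{0}];L^{6}(M))$ component one combines a retarded local smoothing / Strichartz estimate; since $d(6)-s(6)=0$, no regularity is needed on $F$ beyond $L^{2}_{x}$ in $x$ after the one-derivative gain, so the retarded estimate from the local smoothing machinery of Section \ref{sec:wave} (applied with the Hardy-space-for-FIOs norms, then dominated using \eqref{eq:Sobolevintro} to pass between $\HT$ and $W$ scales) closes this piece. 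The Lipschitz/difference estimates needed for the contraction follow from the algebraic identity $|u|^{2}u-|v|^{2}v=O((|u|^{2}+|v|^{2})|u-v|)$ and the same H\"older bookkeeping, giving a contraction after shrinking $t_{0}$ (or, equivalently, for small data on a fixed interval); quantitative well-posedness—continuous dependence and persistence of regularity—comes out of the fixed-point construction in the standard way.

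The main obstacle I anticipate is the retarded estimate into the $L^{4}([0,t_{0}];L^{6}(M))$ component, i.e.\ proving a genuinely variable-coefficient, manifold version of the inhomogeneous local smoothing / Strichartz bound at the exact exponent $p=6$ in dimension $n=2$. On $\R^{2}$ this is \cite[Theorem 1.2]{Rozendaal-Schippa23}, which itself rests on the homogeneous local smoothing estimate plus a Christ--Kiselev-type argument; transplanting it to $M$ requires that the homogeneous estimate (Theorem \ref{thm:waveintro}) be available in the retarded form, which in turn needs the mapping properties of the half-wave propagators on $\HT^{s,p}_{FIO}(M)$ together with the invariance of these spaces under the relevant FIOs (Theorem \ref{thm:FIObdd}). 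A secondary technical point is the interaction of the two summands in $X$: one must check that cross terms in $|u|^{2}u$ (products of a Hardy-space-for-FIOs piece and an $L^{2}$-Sobolev piece) still land in $L^{4/3}_{t}L^{2}_{x}$, which follows once both pieces are controlled in $L^{4}_{t}L^{6}_{x}$, but it is worth verifying that the local smoothing norm and the Strichartz norm can be summed consistently. Modulo these points, the argument is a routine fixed-point scheme built on Theorem \ref{thm:waveintro}, the Sobolev embeddings \eqref{eq:Sobolevintro}, and classical Strichartz estimates on compact manifolds.
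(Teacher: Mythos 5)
Your proposal follows the same overall route as the paper — rewrite \eqref{eq:nonlinear} via Duhamel, verify the quantitative well-posedness estimates \eqref{eq:linearabstract}--\eqref{eq:nonlinearabstract}, and run a fixed-point scheme — and your treatment of the linear (homogeneous) term matches the paper's: Theorem \ref{thm:wave} for the $\HT^{\veps,6}_{FIO}(M)\times\HT^{\veps-1,6}_{FIO}(M)$ component, Strichartz estimates on compact manifolds for the $W^{1/2,2}(M)\times W^{-1/2,2}(M)$ component. So far so good.

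Where you diverge — and where you incorrectly anticipate a serious difficulty — is the Duhamel term. You flag as the ``main obstacle'' the need for a retarded local smoothing estimate into $L^{4}([0,t_{0}];L^{6}(M))$ and a Christ--Kiselev argument. In fact the paper does \emph{not} use local smoothing or the Hardy spaces for FIOs anywhere in the Duhamel estimate, and Christ--Kiselev is not needed. The whole point of including the summands $W^{1/2,2}(M)$ and $W^{-1/2,2}(M)$ in $X$ and $S_{t_{0}}$ (as remarked in Section \ref{subsec:nonlinear}) is that the nonlinear term can be handled by Strichartz alone: one places the forcing $u_{1}\overline{u_{2}}u_{3}$ in $L^{1}([0,t_{0}];L^{2}(M))$ (via H\"older from $L^{4}_{t}L^{6}_{x}$ plus the extra power of $t_{0}$), and then applies the inhomogeneous Strichartz estimate at the $L^{1}_{t}L^{2}_{x}$ endpoint. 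At that endpoint there is no Christ--Kiselev issue, and the pair $(6,6)$ is Strichartz-admissible at regularity $1/2$ in dimension $n=2$ on a compact manifold (Kapitanski), so this directly gives $L^{6}_{t}L^{6}_{x}\subseteq L^{4}_{t}L^{6}_{x}$. For the $C([0,t_{0}];\cdot)$ component, the Duhamel term is bounded in $L^{\infty}_{t}W^{1/2,2}(M)$ (hence lands in the second summand of the sum space) using nothing more than Minkowski's inequality and the spectral theorem, since $\sin((t-r)\sqrt{-\Delta_{g}})/\sqrt{-\Delta_{g}}$ maps $L^{2}(M)\to W^{1,2}(M)\subseteq W^{1/2,2}(M)$ uniformly; again no Christ--Kiselev. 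Your secondary concern about ``cross terms'' also evaporates: H\"older only uses the single membership $u_{j}\in L^{4}_{t}L^{6}_{x}$, irrespective of which summand of $\HT^{\veps,6}_{FIO}(M)+W^{1/2,2}(M)$ controls $u_{j}$ pointwise in time. Once you replace your proposed retarded-local-smoothing step with this straightforward Strichartz computation, your argument lines up with the paper's and the rest of your proposal is fine.
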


We refer to Section \ref{subsec:wavenonlinear} for the definition of quantitative well-posedness. Loosely speaking, it implies that there exists a $\delta=\delta(t_{0})>0$ such that, if $\|(u_{0},u_{1})\|_{X}<\delta$, then \eqref{eq:nonlinear} has a unique solution $u\in S_{t_{0}}$, and this solution depends analytically on the initial data. Moreover, in Theorem \ref{thm:nonlinear}, for all $(u_{0},u_{1})\in X$ there exists a $t_{0}>0$ such that there is a unique solution $u\in S_{t_{0}}$ to \eqref{eq:nonlinear}.

Theorem \ref{thm:nonlinear} is contained in Theorem \ref{thm:nonlinearmain}, with a proof which is analogous to that for the Euclidean wave equation. More precisely, in Duhamel's formula for the solution to \eqref{eq:nonlinear}, one can use the invariance of $\HpsM$ under the solution operators to the linear equation to deal with the linear term, whereas Strichartz estimates suffice for the term involving the nonlinearity. The latter then leads to the Sobolev spaces $W^{s,2}(M)$ appearing in \eqref{eq:initial} and \eqref{eq:solutionspace}. In this manner one obtains local existence, and global existence holds for the defocusing equation
if $\veps>1/2$, cf.~Proposition \ref{prop:global}.

The terminology ``slowly decaying initial data" from \cite{Rozendaal-Schippa23,Schippa22}, which refers to the fact that initial data in $L^{p}(\Rn)$ may decay slower at infinity than an $L^{2}(\Rn)$ function does if $p>2$, does not seem appropriate on compact manifolds. Moreover, by H\"{o}lder's inequality one has $L^{p}(M)\subseteq L^{2}(M)$, and one can combine this embedding with Strichartz estimates to solve \eqref{eq:nonlinear} in $L^{4}([0,t_{0}];L^{6}(M))$. However, this procedure works for
\[
(u_{0},u_{1})\in W^{1/2,6}(M)\times W^{-1/2,6}(M)\subsetneq W^{1/2,2}(M)\times W^{-1/2,2}(M),
\]
whereas Theorem \ref{thm:nonlinear} applies when $(u_{0},u_{1})\in W^{1/6+\veps,6}(M)\times W^{-5/6+\veps,6}(M)$, due to \eqref{eq:Sobolevintro}. In particular, the initial data space in \eqref{eq:initial} is larger than what one obtains by using Strichartz estimates in this manner. In fact, due to the sharpness of the embeddings in \eqref{eq:Sobolevintro}, Theorem \ref{thm:nonlinear} applies to even rougher initial data, including certain data in Sobolev spaces of negative order. 

Moreover, one of the advantages of working with the Hardy spaces for FIOs is that they are invariant under the solution operators to the linear flow, cf.~Theorem \ref{thm:wave}. This in turn leads to the fixed-time invariance of the nonlinear flow which is captured by the space of continuous functions in \eqref{eq:solutionspace}.

\subsection{Local smoothing for Fourier integral operators}\label{subsec:localsmoothFIO}

As in \cite{MoSeSo93,Lee-Seeger13,BeHiSo20,GaLiMiXi23}, we derive local smoothing for wave equations from bounds for FIOs whose canonical relation satisfies the so-called \emph{cinematic curvature condition} from \cite{Sogge91}. This condition is a homogeneous counterpart of the Carleson--Sj\"{o}lin condition for (nonhomogeneous) oscillatory integrals (see \cite{Carleson-Sjolin72,Hormander73}), and it captures, on the level of FIOs, some of the underlying curvature and dispersion effects which govern the phenomenon of local smoothing.

Let $N$ be another compact Riemannian manifold, of dimension $n+1$. For $m\in\R$, we denote by $I^{m-1/4}(M,N;\Ca)$ the class of FIOs of order $m-1/4$ associated with a canonical relation $\Ca$ from $T^{*}M$ to $T^{*}N$. We refer to Section \ref{subsec:FIOs} for more this class of operators and on the cinematic curvature condition. A special case of our main result on local smoothing for Fourier integral operators, Theorem \ref{thm:localsmoothFIO}, is as follows.

\begin{theorem}\label{thm:localsmoothFIOintro}
Let $p\in(2,\infty)$, $\veps>0$ and $m\in\R$. Let $T\in I^{m-1/4}(M,N;\Ca)$, for $\Ca$ a canonical relation satisfying the cinematic curvature condition. Then
\[
T:\HT^{d(p)-s(p)+\veps+m,p}_{FIO}(M)\to L^{p}(N)
\]
is bounded.
\end{theorem}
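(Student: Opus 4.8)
The plan is to reduce the estimate to a Euclidean local smoothing bound for Fourier integral operators by order reduction and localization in coordinate charts, and then invoke the Euclidean theory. \textbf{Order reduction.} Fix an elliptic pseudodifferential operator $P$ of order $-m$ on $M$, say $P=(\Id+\Delta_{g})^{-m/2}$. Composition with an elliptic $\Psi$DO changes the order of an FIO but not its canonical relation, so $T\circ P^{-1}\in I^{-1/4}(M,N;\Ca)$ still satisfies the cinematic curvature condition; and $P$ restricts to an isomorphism $\HT^{d(p)-s(p)+\veps+m,p}_{FIO}(M)\to\HT^{d(p)-s(p)+\veps,p}_{FIO}(M)$, since pseudodifferential operators of order $-m$ act boundedly between these Hardy spaces and $P$ is invertible. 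Hence it suffices to treat $m=0$; I write $T$ again for the resulting operator in $I^{-1/4}(M,N;\Ca)$ and set $\mu:=d(p)-s(p)+\veps$, so the goal becomes $T:\HT^{\mu,p}_{FIO}(M)\to L^{p}(N)$.

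\textbf{Localization.} Choose finite partitions of unity $\{\varphi_{j}\}$ and $\{\psi_{k}\}$ subordinate to coordinate charts $\kappa_{j}$ of $M$ and $\lambda_{k}$ of $N$, and write $T=\sum_{j,k}\psi_{k}T\varphi_{j}$. By the standard support considerations for FIOs, $\psi_{k}T\varphi_{j}$ has smooth kernel, hence maps into $C^{\infty}(N)$, unless $\supp\psi_{k}$ meets the image of $\supp\varphi_{j}$ under $\Ca$; for the finitely many remaining pairs, in local coordinates $\psi_{k}T\varphi_{j}$ becomes a compactly supported FIO in $I^{-1/4}(\Rn,\R^{n+1};\widetilde\Ca_{jk})$ whose canonical relation still satisfies the (coordinate invariant) cinematic curvature condition. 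By Definition \ref{def:HpFIOman}, $\|f\|_{\HT^{\mu,p}_{FIO}(M)}\approx\sum_{j}\|(\varphi_{j}f)\circ\kappa_{j}^{-1}\|_{\HT^{\mu,p}_{FIO}(\Rn)}$, while $\|Tf\|_{L^{p}(N)}^{p}\lesssim\sum_{k}\|\psi_{k}Tf\|_{L^{p}(N)}^{p}$ transfers to $L^{p}(\R^{n+1})$. So it is enough to show that a compactly supported FIO in $I^{-1/4}(\Rn,\R^{n+1};\widetilde\Ca)$ with $\widetilde\Ca$ satisfying the cinematic curvature condition maps $\HT^{\mu,p}_{FIO}(\Rn)\to L^{p}(\R^{n+1})$.

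\textbf{The Euclidean estimate, and the main obstacle.} This last bound is the heart of the matter. Using the FIO--$\Psi$DO composition calculus and the invariance of $\HT^{0,p}_{FIO}(\Rn)$ under FIOs associated with local canonical graphs from \cite{HaPoRo20}, together with the standard microlocal reduction of a cinematic curvature FIO to the half-wave model as in \cite{MoSeSo93,BeHiSo20}, one reduces to bounding $\ind_{[-t_{0},t_{0}]}(t)\,e^{it\sqrt{-\Delta}}:\HT^{\mu',p}_{FIO}(\Rn)\to L^{p}(\R^{n+1})$ for the Sobolev exponent $\mu'$ dictated by the order normalization. This is precisely the Euclidean local smoothing estimate of \cite{Rozendaal22b}: decomposing $f$ via the wave-packet transform defining $\HT^{\mu',p}_{FIO}(\Rn)$, the half-wave group acts essentially diagonally on the parabolic frequency pieces, and the resulting $L^{p}(\R^{n+1})$ norm is controlled by a tent-space square function over $\Sp$ after invoking Bourgain--Demeter $\ell^{2}$-decoupling for the cone; the decoupling exponent accounts for $d(p)$, while the gain of $s(p)$ and the room for the arbitrarily small $\veps>0$ come exactly from passing through the Hardy space for FIOs rather than a classical Sobolev space. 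I expect the genuine difficulty to be concentrated entirely here: combining $\ell^{2}$-decoupling with the tent-space square-function characterization of $\HT^{0,p}_{FIO}$ so as to simultaneously extract the sharp exponent and land in $L^{p}(\R^{n+1})$, rather than merely in $\HT^{0,p}_{FIO}(\Rn)$, into which the half-wave group maps trivially. Granting this Euclidean input, the two reductions above are routine bookkeeping resting on the chart-based definition of the spaces $\HpsM$ and the mapping properties in Theorems \ref{thm:FIObdd} and \ref{thm:SobolevM}.
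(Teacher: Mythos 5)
The first two reductions (order reduction and localization in charts) are sound in spirit and broadly parallel the paper's strategy — though note a sign slip: with $P=(\Id+\Delta_g)^{-m/2}$ of order $-m$, you want to factor $T=(TP)\,P^{-1}$, so that $TP$ has order $-1/4$ and $P^{-1}$ is an isomorphism from $\HT^{d(p)-s(p)+\veps+m,p}_{FIO}(M)$ onto $\HT^{d(p)-s(p)+\veps,p}_{FIO}(M)$; as written, $T\circ P^{-1}$ has order $2m-1/4$, not $-1/4$.

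The gap is in the Euclidean heart of the argument. There is no ``standard microlocal reduction of a cinematic curvature FIO to the half-wave model.'' The cinematic curvature condition is a pointwise geometric condition on the canonical relation $\Ca\subseteq T^*\R^{n+1}\times T^*\Rn$; it does not assert that the family of canonical transformations $\chi_t$ is conjugate to the Euclidean half-wave flow $(y,\eta)\mapsto(y+t\hat\eta,\eta)$ by a single symplectomorphism independent of $t$. The invariance of $\HT^{0,p}_{FIO}(\Rn)$ under FIOs associated with canonical \emph{graphs} concerns operators on $\Rn$, and cannot be used to intertwine the whole space-time operator $T$ (whose canonical relation maps $T^*\Rn\setminus o$ into $T^*\R^{n+1}\setminus o$ and is not a graph) with $\ind_{[-t_0,t_0]}(t)\,e^{it\sqrt{-\Delta}}$. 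Even granting an intertwining $T=S_1\,T_{\mathrm{wave}}\,S_2$ with $S_2$ an FIO on $\Rn$, the factor $S_1$ would have to act on $\R^{n+1}$ and preserve $L^p(\R^{n+1})$, and for a general cinematic curvature FIO no such $S_1$ exists. This is precisely why \cite{BeHiSo20} had to prove the \emph{variable-coefficient} Wolff-type (i.e. $\ell^p$, not $\ell^2$) decoupling inequality; if a reduction to the constant-coefficient cone were available, that paper would collapse to a corollary of \cite{Bourgain-Demeter15}.

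What the paper actually does instead: after reducing to a compactly supported FIO in standard form in local coordinates (Lemma~\ref{lem:FIOcurv}), it invokes the variable-coefficient Wolff-type inequality of \cite{BeHiSo20} (Theorem~\ref{thm:BeHiSo}) \emph{as a black box}, and then proves (Proposition~\ref{prop:decouple}) that the decoupling norm on each dyadic frequency shell is dominated by the $\HT^{m-s(p),p}_{FIO}(\Rn)$ norm of the initial data, using uniform fixed-time $L^p$ bounds for the microlocalized pieces $T_t\chi_\nu(D)$ and the equivalence of discrete and continuous dyadic-parabolic decompositions from \cite{Rozendaal22b}. The $L^2$ error term in the Wolff inequality is absorbed by a compact-support/non-stationary-phase splitting (Corollary~\ref{cor:decouple}). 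There is no passage to the half-wave, and the arbitrary-$\veps$ loss comes from summing dyadic shells, not from the constant-coefficient tent-space argument you describe.

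Your proposal as written therefore cannot be completed along the stated lines without an independent proof of variable-coefficient $\ell^p$-decoupling, which is the actual content you are trying to circumvent.
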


The connection between Theorems \ref{thm:localsmoothFIOintro} and \ref{thm:waveintro} comes from the fact that the solution $u$ to \eqref{eq:Cauchyintro} is given by $u=T_{0}u_{0}+T_{1}u_{1}$, where $T_{j}\in I^{-j-1/4}(M,M\times\R;\Ca)$ for a $\Ca$ satisfying the cinematic curvature condition, $j\in\{0,1\}$.

Again, the index $d(p)-s(p)$ is sharp, for all $2<p<\infty$ and for each $T$ which is non-characteristic somewhere, by Theorem \ref{thm:sharpgen}.

By the sharpness of the embeddings in \eqref{eq:Sobolevintro}, Theorem \ref{thm:localsmoothFIOintro} is a strict improvement of the local smoothing result for FIOs in \cite{BeHiSo20}. In particular, if $n$ is odd, then Theorem \ref{thm:localsmoothFIOintro} improves upon the local smoothing conjecture for FIOs from \cite[Conjecture 24]{BeHiSo21}. For $n$ even and $2<p<2(n+1)/(n-1)$, Theorem \ref{thm:localsmoothFIOintro} neither follows from this conjecture nor implies it. A special case concerns the case where $n=2$, in \cite{GaLiMiXi23}. Similarly, for $n\geq 4$ and $p>2(n-1)/(n-3)$, Theorem \ref{thm:localsmoothFIOintro} neither follows from the local smoothing result for FIOs without $\veps$ loss in \cite{Lee-Seeger13}, nor does the converse hold.

\subsection{Wolff-type inequalities}\label{subsec:Wolffintro}

The local smoothing estimates for the Euclidean wave equation in \cite{Rozendaal22b} were proved by connecting the Hardy spaces for FIOs to the $\ell^{p}$ decoupling inequality for the light cone from \cite{Bourgain-Demeter15}. In fact, local smoothing estimates were obtained for a more general class of constant-coefficient operators using a more general class of $\ell^{p}$ decoupling inequalities, from \cite{Bourgain-Demeter17}. Similarly, the local smoothing estimates in this article are proved using a variable-coefficient analog of the $\ell^{p}$ decoupling inequality, the variable-coefficient Wolff-type inequalities from \cite{BeHiSo20} (see Theorem \ref{thm:BeHiSo}).

More precisely, instead of bounding the \emph{decoupling norm} that appears in those inequalities by the $W^{s,p}(\Rn)$ norm of the initial data for a suitable $s\in\R$, which is the approach that is used in \cite{BeHiSo20}, we bound the decoupling norm in terms of the $\HT^{s-s(p),p}_{FIO}(\Rn)$ norm of the initial data. By \eqref{eq:Sobolevintro}, this yields a stronger estimate. Moreover, as in the case of the Euclidean wave equation in \cite{Rozendaal22b}, the resulting inequalities are, in a quantified sense, equivalent to the Wolff-type inequalities. Hence, at least when restricted to dyadic frequency shells, the Hardy spaces for FIOs form the \emph{largest} space of initial data for which one can obtain local smoothing estimates using Wolff-type inequalities as in \cite{BeHiSo20}.

To bound the decoupling norm by the $\HT^{s-s(p),p}_{FIO}(\Rn)$ norm, one proceeds in a similar way as in \cite{Rozendaal22b}, using that the discrete and continuous dyadic-parabolic decompositions are equivalent on dyadic frequency shells (see Proposition \ref{prop:decouple}). On the other hand, the reverse inequality is more involved than that in \cite{Rozendaal22b}, due to the fact that the operators which appear in the variable-coefficient Wolff-type inequalities are neither invertible nor elliptic, unlike the Euclidean half-wave propagators. Instead, loosely speaking, we bound the $\HT^{s-s(p),p}_{FIO}(\Rn)$ norm of the initial data in terms of the right-hand side of the Wolff-type inequality wherever the FIO is microlocally invertible. We refer to Proposition \ref{prop:conversedecouple} (see also Remark \ref{rem:regularityf}) for the precise statement.

Hence Theorem \ref{thm:localsmoothFIOintro}, and its version in local coordinates in Corollary \ref{cor:decouple}, could be viewed as replacements of the Wolff-type inequalities. Moreover, whereas the discrete decoupling norm is different on each dyadic frequency shell and involves the FIO, the $\HT^{s-s(p),p}_{FIO}(\Rn)$ norm is defined using a continuous dyadic-parabolic decomposition that is the same on each dyadic frequency shell and is independent of the operator. Here it is worth mentioning that a microlocal version of the $\ell^{2}$ decoupling inequality, of a different nature, was obtained in \cite{IoLiXi22}. It would be of interest to determine the connection between the $\ell^{p}$ version of that inequality and the Hardy spaces for FIOs, or alternatively between that inequality and function spaces adapted to the $\ell^{2}$ decoupling inequality as in \cite{Rozendaal-Schippa23}.

\subsection{Hardy spaces for Fourier integral operators}\label{subsec:HpFIOintro}

The results discussed so far, and the proofs of Theorems \ref{thm:waveintro} and \ref{thm:localsmoothFIOintro}, rely mostly on estimates in local coordinates, FIO calculus and properties of the Hardy spaces for FIOs on $\Rn$. This material can be found in Sections \ref{sec:spacesRn}, \ref{sec:localsmooth}, \ref{sec:wave} and \ref{sec:sharpness}, and  Appendix \ref{sec:Wolff}. In particular, loosely speaking, for a compact manifold $M$, one has $u\in\HpsM$ if and only if $u\in\Hps$ in each coordinate chart, and geometric considerations do not play a significant role. Hence readers who are satisfied with such a definition and who are mainly interested in Theorems \ref{thm:waveintro} and \ref{thm:localsmoothFIOintro} may skip ahead to Section \ref{sec:localsmooth}, looking up results from other sections where needed.

However, apart from proving new local smoothing estimates for wave equations and Fourier integral operators, one of the aims of this article is to develop a proper theory of the Hardy spaces for Fourier integral operators on manifolds. In particular, both with future applications in mind and to unify the theory on $\Rn$ with that on compact manifolds, we introduce these spaces on Riemannian manifolds with bounded geometry.  More precisely, we consider $M$ with positive injectivity radius, and with the property that all covariant derivatives of the curvature tensor are bounded. This geometric setting
incorporates all compact manifolds and homogeneous spaces, as well as asymptotically Euclidean, asymptotically conic and asymptotically hyperbolic manifolds.
It has been used for microlocal analysis on non-compact manifolds before (see e.g.~\cite{BaGuIsMa22,Shubin92,Taylor09}).  On the other hand, connected components of $M$ do not necessarily have the doubling property when endowed with the Riemannian distance and measure (although they do have the local doubling property). This places our geometric setting outside of the scope of various results in harmonic analysis which hold on doubling metric measure spaces.

Our definition of $\HpsM$ is inspired by Triebel, who introduced the classical local Hardy spaces, and more general Triebel--Lizorkin spaces, on manifolds with bounded geometry, in \cite{Triebel86,Triebel87} (see also Taylor \cite{Taylor09}). We follow Triebel's template, using that the assumption of bounded geometry implies the existence of a suitable uniformly locally finite cover by geodesic coordinate charts (see Lemma \ref{lem:cover}). Then, loosely speaking, $u\in \HpsM$ if and only if $u\in\Hps$ in each such coordinate chart, and if the $\ell^{p}$ sum of the $\Hps$ norms of $u$ over these charts is finite (see Definition \ref{def:HpFIOman}). To show that one recovers the definition of $\Hps$ from \cite{Smith98a,HaPoRo20} when $M=\Rn$, we prove in Theorem \ref{thm:localization} a localization principle for $\Hps$.

This construction yields a space $\HpsM$ that is as one would expect on both compact manifolds and on $\Rn$. However, to derive the basic properties of these spaces, such as complex interpolaton, duality, Sobolev embeddings, the invariance under FIOs, and an atomic decomposition for $p=1$, it is convenient to work with $\ell^{p}$ spaces of $\Hps$-valued sequences. This operator-theoretic framework is introduced in Section \ref{sec:sequences}, and in Section \ref{sec:spacesmanifold} it allows us to directly derive properties of $\HpsM$ from those of $\Hps$. Our approach can be compared with the use of tent spaces for the analysis of $\Hp$ in \cite{HaPoRo20}, proving concrete estimates in local coordinates only where necessary and otherwise relying on existing theory.

We define the Hardy spaces for FIOs to consist of half densities. Doing so leads to a more convenient duality and $L^{2}$ theory, and it is typical for the theory of FIOs on manifolds. For readers unfamiliar with half densities, we note that elements of $\HpsM$ coincide with generalized functions on $M$, or with functionals on $C^{\infty}_{c}(M)$, after multiplication or division by the square root of the Riemannian density.

\subsection{Organization}

This article is organized as follows. In Section \ref{sec:spacesRn} we define the Hardy spaces for FIOs on $\Rn$, and we collect some of their basic properties. In Section \ref{sec:sequences} we develop an operator-theoretic framework using $\ell^{p}$ spaces of $\Hps$-valued sequences, which is then applied in Section \ref{sec:spacesmanifold} to introduce and study the Hardy spaces for FIOs on manifolds. Section \ref{sec:localsmooth} contains Theorem \ref{thm:localsmoothFIOintro}, as well as our version of the variable-coefficient Wolff-type inequalities, and in Section \ref{sec:wave} we prove Theorems \ref{thm:waveintro} and \ref{thm:nonlinear}. In Section \ref{sec:sharpness} we show that our local smoothing estimates are essentially sharp. In Appendix \ref{sec:background} we collect some background material on manifolds with bounded geometry, distributional densities and Fourier integral operators, for readers unfamiliar with this theory. Finally, Appendix \ref{sec:atomic} contains information on the atomic decomposition of $\HT^{s,1}_{FIO}(\Rn)$, and the main result of Appendix \ref{sec:Wolff} is an inequality involving the $\Hps$ norm and the decoupling norm.

\subsection{Notation and terminology}

Throughout, all manifolds are assumed to be infinitely smooth and without boundary. The cotangent bundle of a manifold $M$ is $T^{*}M$, and $o$ is the zero section of $T^{*}M$. The cosphere bundle of $\Rn$ is $\Sp:=\Rn\times S^{n-1}$.

The natural numbers are $\N=\{1,2,\ldots\}$, and $\Z_{+}:=\N\cup\{0\}$.

For $\xi\in\Rn$ we write $\lb\xi\rb=(1+|\xi|^{2})^{1/2}$, and $\hat{\xi}=\xi/|\xi|$ if $\xi\neq0$. We use multi-index notation, where $\partial_{\xi}=(\partial_{\xi_{1}},\ldots,\partial_{\xi_{n}})$ and $\partial^{\alpha}_{\xi}=\partial^{\alpha_{1}}_{\xi_{1}}\ldots\partial^{\alpha_{n}}_{\xi_{n}}$
for $\xi=(\xi_{1},\ldots,\xi_{n})\in\Rn$ and $\alpha=(\alpha_{1},\ldots,\alpha_{n})\in\Z_{+}^{n}$. We also write $\partial\chi$ for the Jacobian of a smooth map $\chi$ between open subsets of $\Rn$, and $\partial_{x\eta}^{2}\Phi$ is the mixed Hessian of a function $\Phi$ of the variables $x$ and $\eta$.

For $V\subseteq\Rn$ open we write $\Da(V)=C^{\infty}_{c}(V)$, and $\Da'(V)$ is the dual space of distributions on $V$. We write $\lb f,g\rb_{\Rn}$ for the standard distributional pairing between $f\in\Da'(V)$ and $g\in\Da(V)$, or simply $\lb f,g\rb$ when it is clear that we are working on $\Rn$. The Fourier transform of a tempered distribution $f\in\Sw'(\Rn)$ is denoted by $\F f$ or $\widehat{f}$, and the Fourier multiplier with symbol $\ph\in\Sw'(\Rn)$ is $\ph(D)$.

The space of bounded linear operators between Banach spaces $X$ and $Y$ is $\La(X,Y)$, and $\La(X):=\La(X,X)$. We let $|E|$ denote either the cardinality of a finite set or the measure of a subset of a measure space, depending on the context.

We write $f(s)\lesssim g(s)$ to indicate that $f(s)\leq Cg(s)$ for all $s$ and a constant $C>0$ independent of $s$, and similarly for $f(s)\gtrsim g(s)$ and $g(s)\eqsim f(s)$.

\section{Hardy spaces for Fourier integral operators on $\Rn$}\label{sec:spacesRn}

In this section we first recall the definition of the Hardy spaces for Fourier integral operators on $\Rn$. We then collect some of their basic properties, and most notably their invariance under suitable Fourier integral operators.

\subsection{Definitions}\label{subsec:defRn}

Throughout, for simplicity of notation, we set $\HT^{p}(\Rn):=L^{p}(\Rn)$ for $n\in\N$ and $p\in(1,\infty)$. Also, $\HT^{1}(\Rn)$ is the local real Hardy space, with norm
\[
\|f\|_{\HT^{1}(\Rn)}:=\|q(D)f\|_{L^{1}(\Rn)}+\|(1-q)(D)f\|_{H^{1}(\Rn)}
\]
for $f\in\HT^{1}(\Rn)$. Here and throughout, $q\in C^{\infty}_{c}(\Rn)$ is a fixed cut-off such that $q(\xi)=1$ for $|\xi|\leq 2$, and $H^{1}(\Rn)$ is the classical real Hardy space. Moreover, $\HT^{\infty}(\Rn):=\bmo(\Rn)$ is the dual of $\HT^{1}(\Rn)$, and we set $\HT^{s,p}(\Rn):=\lb D\rb^{-s}\HT^{p}(\Rn)$ for all $p\in[1,\infty]$ and $s\in\R$.

First consider the case $n=1$, where one simply sets $\HT^{s,p}_{FIO}(\R):=\HT^{s,p}(\R)$.

For the definition of $\Hps$ for $n\geq 2$, let $\ph\in C^{\infty}_{c}(\Rn)$ be non-negative and radial, such that $\ph(\xi)=0$ for $|\xi|>1$, and such that $\ph\equiv1$ in a neighborhood of zero. For $\w\in S^{n-1}$ a unit vector, $\sigma>0$ and $\xi\in\Rn\setminus\{0\}$, set $\ph_{\w,\sigma}(\xi):=c_{\sigma}\ph(\tfrac{\hat{\xi}-\w}{\sqrt{\sigma}})$, where $c_{\sigma}:=(\int_{S^{n-1}}\ph(\frac{e_{1}-\nu}{\sqrt{\sigma}})^{2}\ud\nu)^{-1/2}$ for $e_{1}=(1,0,\ldots,0)$ the first basis vector of $\Rn$.
Also set $\ph_{\w,\sigma}(0):=0$. Let $\Psi\in C^{\infty}_{c}(\Rn)$ be non-negative and radial, such that $\Psi(\xi)=0$ if $|\xi|\notin[1/2,2]$, and such that $\int_{0}^{\infty}\Psi(\sigma\xi)^{2}\frac{\ud\sigma}{\sigma}=1$ for all $\xi\neq 0$. Now set
\[
\ph_{\w}(\xi):=\int_{0}^{4}\Psi(\sigma\xi)\ph_{\w,\sigma}(\xi)\frac{\ud\sigma}{\sigma}
\]
for $\xi\in\Rn$. Some properties of these functions are as follows (see \cite[Remark 3.3]{Rozendaal21}):
\begin{enumerate}
\item\label{it:phiproperties1} For all $\w\in S^{n-1}$ and $\xi\neq0$ one has $\ph_{\w}(\xi)=0$ if $|\xi|<1/8$ or $|\hat{\xi}-\w|>2|\xi|^{-1/2}$.
\item\label{it:phiproperties2} For all $\alpha\in\Z_{+}^{n}$ and $\beta\in\Z_{+}$ there exists a $C_{\alpha,\beta}\geq0$ such that
\[
|(\w\cdot \partial_{\xi})^{\beta}\partial^{\alpha}_{\xi}\ph_{\w}(\xi)|\leq C_{\alpha,\beta}|\xi|^{\frac{n-1}{4}-\frac{|\alpha|}{2}-\beta}
\]
for all $\w\in S^{n-1}$ and $\xi\neq0$.
\item\label{it:phiproperties3}
There exists a radial $m\in S^{(n-1)/4}(\Rn)$ such that if $f\in\Sw'(\Rn)$ satisfies $\supp(\wh{f}\,)\subseteq \{\xi\in\Rn\mid |\xi|\geq1/2\}$, then $f=\int_{S^{n-1}}m(D)\ph_{\nu}(D)f\ud\nu$.
\end{enumerate}

We can now define the Hardy spaces for Fourier integral operators on $\Rn$.

\begin{definition}\label{def:HpFIO}
For $p\in[1,\infty)$, $\Hp$ consists of those $f\in\Sw'(\Rn)$ such that $q(D)f\in L^{p}(\Rn)$, $\ph_{\w}(D)f\in \HT^{p}(\Rn)$ for almost all $\w\in S^{n-1}$, and
\[
\Big(\int_{S^{n-1}}\|\ph_{\w}(D)f\|_{\HT^{p}(\Rn)}^{p}\ud\w\Big)^{1/p}<\infty,
\]
endowed with the norm
\begin{equation}\label{eq:HpFIOnorm}
\|f\|_{\HT^{p}_{FIO}(\Rn)}:=\|q(D)f\|_{L^{p}(\Rn)}+\Big(\int_{S^{n-1}}\|\ph_{\w}(D)f\|_{\HT^{p}(\Rn)}^{p}\ud\w\Big)^{1/p}.
\end{equation}
Moreover, $\HT^{\infty}_{FIO}(\Rn):=(\HT^{1}_{FIO}(\Rn))^{*}$. For $p\in[1,\infty]$ and $s\in\R$, $\HT^{s,p}_{FIO}(\Rn)$ consists of all $f\in\Sw'(\Rn)$ such that $\lb D\rb^{s}f\in\HT^{p}_{FIO}(\Rn)$, endowed with the norm
\[
\|f\|_{\HT^{s,p}_{FIO}(\Rn)}:=\|\lb D\rb^{s}f\|_{\Hp}.
\]
\end{definition}

In fact, this is not how the Hardy spaces for FIOs were originally defined in \cite{Smith98a} and \cite{HaPoRo20}, namely in terms of conical square functions and tent spaces over the cosphere bundle $\Sp$. That these definitions coincide, up to norm equivalence, was shown in \cite{FaLiRoSo23,Rozendaal21}. We will often rely on results from \cite{HaPoRo20}; this typically leads to harmless additional constants, arising from the equivalence of norms. We also note that, in \cite{Frey-Portal20}, function spaces related to the Hardy spaces for FIOs were defined by modifying the framework in Definition \ref{def:HpFIO}.

To explain the nomenclature ``Hardy spaces for FIOs," we emphasize the connection of these spaces to the classical local Hardy spaces, cf.~\eqref{eq:HpFIOnorm}, their invariance under FIOs, cf.~Proposition \ref{prop:FIORn}, and the fact that $\HT^{1}_{FIO}(\Rn)$ has an atomic decomposition, cf.~Theorem \ref{prop:atom}.

\subsection{Basic properties}\label{subsec:propRn}

Here we collect some of the basic properties of these spaces.

We first recall the Sobolev embeddings
\begin{equation}\label{eq:Sobolev}
\HT^{s+s(p),p}(\Rn)\subseteq \HT^{s,p}_{FIO}(\Rn)\subseteq \HT^{s-s(p),p}(\Rn)
\end{equation}
for $p\in[1,\infty]$ and $s\in\R$, which follow from \cite[Theorem 7.4]{HaPoRo20}.

Next, the Hardy spaces for FIOs form a complex interpolation scale. That is, let $p_{0},p_{1},p\in[1,\infty]$, $s_{0},s_{1},s\in\R$ and $\theta\in[0,1]$ be such that $\frac{1}{p}=\frac{1-\theta}{p_{0}}+\frac{\theta}{p_{1}}$ and $s=(1-\theta)s_{0}+\theta s_{1}$. Then, as follows from \cite[Proposition 6.7]{HaPoRo20},
\begin{equation}\label{eq:interpRn}
[\HT^{s_{0},p_{0}}_{FIO}(\Rn),\HT^{s_{1},p_{1}}_{FIO}(\Rn)]_{\theta}=\Hps
\end{equation}
with equivalent norms.

Moreover, by \cite[Proposition 6.8]{HaPoRo20}, for all $p\in[1,\infty)$ and $s\in\R$ one has
\begin{equation}\label{eq:dualRn}
(\Hps)^{*}=\HT^{-s,p'}_{FIO}(\Rn)
\end{equation}
with equivalent norms. Here the duality pairing is the standard distributional pairing $\lb f,g\rb_{\Rn}$ for $f\in\HT^{-s,p'}_{FIO}(\Rn)\subseteq\Sw'(\Rn)$ and $g\in\Sw(\Rn)\subseteq \Hps$.

By \cite[Proposition 6.6]{HaPoRo20}, the Schwartz functions are contained in $\Hps$ for all $p\in[1,\infty]$, and they lie dense for $p<\infty$. In fact, the following lemma improves the latter statement in a manner which will be useful in the next section.

\begin{lemma}\label{lem:Ccdense}
Let $p\in[1,\infty)$ and $s\in\R$. Then $C_{c}^{\infty}(\Rn)$ is dense in $\Hps$.
\end{lemma}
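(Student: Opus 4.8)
The plan is to exploit density results already available together with a truncation argument. We know from \cite[Proposition 6.6]{HaPoRo20} that $\Sw(\Rn)$ is dense in $\Hps$ for $p<\infty$, so it suffices to approximate a fixed $f\in\Sw(\Rn)$ by elements of $C^\infty_c(\Rn)$ in the $\Hps$ norm. The natural candidate is $f_R := \chi(\cdot/R)f$, where $\chi\in C^\infty_c(\Rn)$ equals $1$ near the origin; then $f_R\in C^\infty_c(\Rn)$ and $f_R\to f$ in every reasonable topology, so the only issue is to show $\|f-f_R\|_{\Hps}\to0$ as $R\to\infty$.

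First I would reduce to $s=0$ by noting that $\lb D\rb^s$ is an elliptic pseudodifferential operator; applying it to $f_R$ produces a function that is still Schwartz, and one can argue directly at the level of $\Hp=\HT^{0,p}_{FIO}(\Rn)$ after observing that $\lb D\rb^s(\chi(\cdot/R)f)$ differs from $\chi(\cdot/R)\lb D\rb^s f$ by a term that is controlled (using standard symbol calculus, the commutator $[\lb D\rb^s,\chi(\cdot/R)]$ gains an $R^{-1}$ factor, hence is negligible). Alternatively, and perhaps more cleanly, I would prove the statement for $s=0$ and then simply invoke that $\lb D\rb^{-s}$ maps $C^\infty_c(\Rn)$ into a dense subset: since $\lb D\rb^{-s}$ is a bijective isometry from $\Hp$ onto $\Hps$ by definition, and the $C^\infty_c$ case gives a dense set $D\subseteq\Hp$, the image $\lb D\rb^{-s}D$ is dense in $\Hps$; but $\lb D\rb^{-s}D$ consists of Schwartz functions, not compactly supported ones, so this does not immediately finish the job. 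Hence I would instead carry out the truncation argument directly in $\Hps$, using the norm from Definition \ref{def:HpFIO}: estimate $\|q(D)(f-f_R)\|_{L^p}$ and $\big(\int_{S^{n-1}}\|\ph_\w(D)(f-f_R)\|_{\HT^p}^p\,\ud\w\big)^{1/p}$ separately. The low-frequency piece $q(D)(f-f_R)$ tends to $0$ in $L^p$ by dominated convergence, since $q(D)$ has a Schwartz convolution kernel and $f-f_R\to0$ pointwise with the right domination. For the high-frequency piece one uses property \eqref{it:phiproperties2} to control the operator norms of $\ph_\w(D)$ uniformly in $\w$ (with a polynomial loss in frequency that is absorbed by the Schwartz decay of $f$), together with the observation that $f-f_R$ and all its derivatives are supported in $\{|x|\gtrsim R\}$ and decay like $\lb x\rb^{-N}$ there for every $N$, uniformly in $R$ up to a constant; this forces the $\HT^p$ norms of $\ph_\w(D)(f-f_R)$ to go to $0$ uniformly in $\w$.

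The main obstacle is the uniformity in $\w\in S^{n-1}$: one must ensure that the second-moment integral over the sphere converges to $0$, which requires an estimate on $\|\ph_\w(D)g\|_{\HT^p(\Rn)}$ that is uniform in $\w$ when $g=f-f_R$ is Schwartz with quantitative smallness. This is exactly the kind of estimate encoded in the second-quantization / square-function structure of these spaces, and the cleanest route is probably to bound $\|\ph_\w(D)g\|_{\HT^p}\lesssim \|\ph_\w(D)g\|_{W^{k,p}}$ for suitable $k$ (using $\HT^p\supseteq W^{k,p}$ with constants independent of $\w$) and then to note that $\ph_\w(D):W^{k+N,p}\to W^{k,p}$ has operator norm bounded uniformly in $\w$ by property \eqref{it:phiproperties2} — the symbol $\ph_\w$ together with finitely many $\xi$-derivatives is bounded by a fixed multiple of $\lb\xi\rb^{(n-1)/4}$ — so that $\|\ph_\w(D)g\|_{\HT^p}\lesssim \|g\|_{W^{k+N+(n-1)/4,p}}$ with an implicit constant independent of $\w$. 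Since $\|f-f_R\|_{W^{m,p}}\to0$ for every fixed $m$, both pieces of the $\Hps$ norm vanish in the limit, and this completes the proof. A minor bookkeeping point is that for $p=1$ one should use that $C^\infty_c\subseteq W^{k,1}\subseteq \HT^1$ with the right constants, which is standard; the case $p=\infty$ is excluded from the statement, so no issue arises there.
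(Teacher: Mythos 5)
Your argument is correct and ultimately rests on the same idea as the paper's proof: control the $\Hps$ norm of a Schwartz function by a Sobolev norm $W^{N,p}(\Rn)$ of sufficiently high order, and then approximate in $W^{N,p}(\Rn)$ by the standard truncation $f_\veps(x)=\chi(\veps x)f(x)$. The paper does the first step in a single line by citing the Sobolev embedding \eqref{eq:Sobolev}, which gives $\|f\|_{\Hps}\lesssim\|f\|_{W^{N,p}(\Rn)}$ for any $N>s+s(p)$. You instead re-derive a (cruder, but adequate) version of this embedding from scratch: decomposing the $\Hps$ norm via Definition \ref{def:HpFIO}, using property \eqref{it:phiproperties2} to bound $\|\ph_\w(D)g\|_{\HT^p(\Rn)}$ by a Sobolev norm of $g$ uniformly in $\w$, and integrating over the sphere. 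This works and the reasoning is sound, but it duplicates material the paper has already established -- the sharp exponent $s+s(p)$ plays no role here, so invoking \eqref{eq:Sobolev} and observing that $\chi(\veps\cdot)f\to f$ in $W^{N,p}(\Rn)$ for every fixed $N$ finishes the proof in two lines. The detour through the $\lb D\rb^{-s}$ reduction at the start of your write-up is correctly abandoned, for exactly the reason you identify: $\lb D\rb^{-s}$ destroys compact support.
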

\begin{proof}
Since $\Sw(\Rn)\subseteq \Hps$ is dense, it suffices to approximate Schwartz functions by compactly supported functions in the $\Hps$ norm. However, by \eqref{eq:Sobolev}, $\|f\|_{\Hps}\lesssim \|f\|_{W^{N,p}(\Rn)}$ for all $f\in\Sw(\Rn)$ and $N\in\N$ with $N>s+s(p)$. Hence we need only approximate $f\in\Sw(\Rn)$ by $C^{\infty}_{c}(\Rn)$ functions in the $W^{N,p}(\Rn)$ norm. In turn, to this end one may choose the approximants $f_{\veps}(x):=\chi(\veps x)f(x)$ for $\veps>0$ and $x\in\Rn$, where $\chi\in C^{\infty}_{c}(\Rn)$ is such that $\chi(0)=1$.
\end{proof}

Finally, we note that $\HT^{s,1}_{FIO}(\Rn)$ has an atomic decomposition, in terms of atoms that are supported on anisotropic balls. Appendix \ref{sec:atomic} contains more information about this decomposition.

\subsection{Boundedness of Fourier integral operators}\label{subsec:FIORn}

In this subsection we collect some results on the boundedness of Fourier integral operators on $\Hps$. 

The main result of this subsection concerns compactly supported FIOs. For more on the class $I^{m}(\Rn,\Rn;\Ca)$ of Fourier integral operators of order $m$ associated with a canonical relation $\Ca$, we refer to Section \ref{subsec:FIOs}. 

\begin{proposition}\label{prop:FIORn}
Let $T\in I^{m}(\Rn,\Rn;\Ca)$, for $m\in\R$ and $\Ca$ a local canonical graph, and suppose that $T$ is compactly supported. Then
\[
T:\HT^{s+m,p}_{FIO}(\Rn)\to\Hps
\]
is bounded for all $p\in[1,\infty]$ and $s\in\R$.
\end{proposition}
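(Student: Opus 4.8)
The plan is to reduce everything to compactly supported Fourier integral operators of order $0$ and to invoke the known invariance of $\Hp$. First I would recall that, by \cite{FaLiRoSo23,Rozendaal21}, Definition \ref{def:HpFIO} agrees up to equivalence of norms with the original square-function definition of $\Hp$ from \cite{Smith98a,HaPoRo20}, and that $\HT^{s,p}_{FIO}(\Rn)=\lb D\rb^{-s}\Hp$ in both settings; under the latter definition the statement is \cite[Theorem~6.10]{HaPoRo20}, so in principle there is nothing further to do. For a more self-contained argument one first reduces to $s=m=0$: writing $T=\lb D\rb^{-s}S\lb D\rb^{s+m}$ with $S:=\lb D\rb^{s}T\lb D\rb^{-s-m}\in I^{0}(\Rn,\Rn;\Ca)$, and using $\HT^{s,p}_{FIO}(\Rn)=\lb D\rb^{-s}\Hp$, it suffices to bound $S$ on $\Hp$. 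Although $S$ is no longer compactly supported, choosing $\psi\in\Ccinf(\Rn)$ equal to $1$ near the (compact) support of the kernel of $T$ one splits $S=\psi S\psi+\psi S(1-\psi)+(1-\psi)S$; the first summand is a compactly supported element of $I^{0}(\Rn,\Rn;\Ca)$, while the kernels of the last two are smooth and rapidly decreasing, so these map $\Sw'(\Rn)$ continuously into $\Sw(\Rn)$ and are harmless, being bounded between any two of the spaces $\HT^{s,p}_{FIO}(\Rn)$ by \eqref{eq:Sobolev}. This leaves the case of a compactly supported $S\in I^{0}(\Rn,\Rn;\Ca)$, to be bounded on $\Hp$.

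For that I would argue by interpolation and duality, proving the endpoint cases by hand. At $p=1$ one uses the atomic decomposition of $\HT^{1}_{FIO}(\Rn)$ (Theorem \ref{prop:atom}): writing $f=\sum_{j}\lambda_{j}a_{j}$ with $a_{j}$ atoms supported on anisotropic balls and $\sum_{j}|\lambda_{j}|\lesssim\|f\|_{\HT^{1}_{FIO}(\Rn)}$, one shows, using the mapping properties of $S$ on anisotropic balls coming from the canonical graph structure (as in \cite{Smith98a}), that each $Sa_{j}$ has uniformly bounded $\HT^{1}_{FIO}(\Rn)$-norm. At $p=2$ one has $\HT^{0,2}_{FIO}(\Rn)=L^{2}(\Rn)$ by \eqref{eq:Sobolev}, and $S$ is $L^{2}$-bounded by the classical theory of Fourier integral operators of order zero. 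At $p=\infty$ one uses $\HT^{\infty}_{FIO}(\Rn)=(\HT^{1}_{FIO}(\Rn))^{*}$ from \eqref{eq:dualRn} together with the fact that the transpose $S^{t}$ is again a compactly supported element of $I^{0}(\Rn,\Rn;\Ca^{t})$, with $\Ca^{t}$ still a local canonical graph, so the $p=1$ case applied to $S^{t}$ yields the bound for $S$ on $\HT^{\infty}_{FIO}(\Rn)$. Complex interpolation \eqref{eq:interpRn} then covers all $1<p<\infty$.

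The main obstacle is the $p=1$ step: one must prove the quantitative estimate that $S$ maps a normalized anisotropic atom to a uniformly controlled molecule, i.e.\ that an FIO associated with a canonical graph sends a second-dyadic wave packet at frequency $\sim2^{k}$, modulo rapidly decaying tails, to one wave packet of the same type centered at the image of its center under the underlying canonical transformation, uniformly in $k$ and in the packet. This is exactly where the curvature of the phase and nonstationary-phase bounds enter, and it is the technical heart of \cite{Smith98a,HaPoRo20}; the reductions above and the bookkeeping for the loss of compact support are by comparison routine. In practice I would therefore simply quote \cite[Theorem~6.10]{HaPoRo20} together with the norm equivalence of \cite{FaLiRoSo23,Rozendaal21}.
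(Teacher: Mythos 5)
Your argument is correct and takes essentially the same approach as the paper: reduce to $m=s=0$ by conjugating with Bessel potentials and then invoke \cite[Theorem~6.10]{HaPoRo20}. The only minor difference is that you recover compact support by the splitting $S=\psi S\psi+\psi S(1-\psi)+(1-\psi)S$ after composing, whereas the paper writes $T=\rho_{1}T\rho_{2}$ and uses Lemma~\ref{lem:propersupp} to replace $\lb D\rb^{s}\rho_{1}$ and $\rho_{2}\lb D\rb^{-s-m}$ by compactly supported pseudodifferential operators before composing, so that $S_{1}TS_{2}$ is compactly supported from the start; both variants are routine bookkeeping.
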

\begin{proof}
For $m=s=0$, the result is contained in \cite[Theorem 6.10]{HaPoRo20}. The same techniques can be applied for general $m,s\in\R$, at least when combined with the theory of weighted tent spaces as used in \cite{Hassell-Rozendaal23}. Instead, here we use the composition theorem for FIOs to easily reduce to the case where $m=s=0$.

We have to show that
\[
\lb D\rb^{s}T\lb D\rb^{-s-m}:\Hp\to\Hp
\]
is bounded. To this end, we first use the compact support assumption on $T$ to write $T=\rho_{1}T\rho_{2}$, for suitable $\rho_{1},\rho_{2}\in C^{\infty}_{c}(\Rn)$. By Lemma \ref{lem:propersupp}, we have
\[
\lb D\rb^{s}\rho_{1}=S_{1}+R_{1}\quad\text{and}\quad
\rho_{2}\lb D\rb^{-s-m}=S_{2}+R_{2},
\]
for compactly supported pseudodifferential operators $S_{1}$ and $S_{2}$ of orders $s$ and $-s-m$, respectively, and smoothing operators $R_{1},R_{2}:\Sw'(\Rn)\to\Sw(\Rn)$. Hence
\[
\lb D\rb^{s}T\lb D\rb^{-s-m}=\lb D\rb^{s}\rho_{1}T\rho_{2}\lb D\rb^{-s-m}=S_{1}TS_{2}+R,
\]
where $R$ is smoothing. Moreover, the composition theorem for FIOs implies that $S_{1}TS_{2}$ is a compactly supported FIO of order zero. Now \cite[Theorem 6.10]{HaPoRo20} concludes the proof.
\end{proof}

As a corollary, we obtain a corresponding statement for FIOs that have a convenient oscillatory integral representation. For the standard Kohn--Nirenberg symbol class $S^{m}(\Rn\times\Rn)$, see \eqref{eq:Kohn-Nirenberg}. 

\begin{corollary}\label{cor:stanform}
Let $m\in\R$ and $a\in S^{m}(\R^{n}\times \R^{n})$, and let $V\subseteq \R^{n}\times(\R^{n}\setminus\{0\})$ be open, conic and such that $(x,\eta)\in V$ whenever $(x,\eta)\in\supp(a)$ and $\eta\neq0$. Let $\Phi\in C^{\infty}(V)$ be real-valued and positively homogeneous of degree $1$ in the fiber variable, and such that $\det\partial_{x\eta}^{2}\Phi(x,\eta)\neq 0$ for all $(x,\eta)\in V$. Set
\[
Tf(x):=\int_{\R^{n}}e^{i\Phi(x,\eta)}a(x,\eta)\wh{f}(\eta)\ud \eta
\]
for $f\in\Sw(\R^{n})$ and $x\in\R^{n}$. Suppose that there exists a compact $K\subseteq\Rn$ such that $a(x,\eta)=0$ for all $(x,\eta)\in\Rn\times\Rn$ with $x\notin K$. Then $T:\HT^{s+m,p}_{FIO}(\Rn)\to \Hps$ is bounded, for all $p\in[1,\infty]$ and $s\in\R$.
\end{corollary}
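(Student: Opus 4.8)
The plan is to recognise $T$, up to harmless error terms, as a compactly supported Fourier integral operator of order $m$ associated with a local canonical graph, and then to invoke Proposition \ref{prop:FIORn}. First I would split the symbol in frequency: fix $\psi_{0}\in C^{\infty}_{c}(\Rn)$ with $\psi_{0}\equiv 1$ on $\{|\eta|\leq 1/2\}$ and $\supp(\psi_{0})\subseteq\{|\eta|\leq 1\}$, and write $a=a_{0}+a_{1}$ with $a_{0}:=\psi_{0}a$ and $a_{1}:=(1-\psi_{0})a$, giving a corresponding decomposition $T=T_{0}+T_{1}$. On $\supp(a_{1})$ one has $|\eta|\geq 1/2$, where $\Phi$ is smooth and positively homogeneous of degree $1$ in $\eta$; since $\det\partial^{2}_{x\eta}\Phi\neq 0$ there, the canonical relation parametrised by $\Phi$ over $\{(x,\eta)\in V:|\eta|\geq 1/2\}$ is the graph of a local canonical transformation, and the standard theory of Fourier integral operators (see Section \ref{subsec:FIOs}) shows that $T_{1}\in I^{m}(\Rn,\Rn;\Ca)$ for this $\Ca$, a local canonical graph.

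Next I would reduce $T_{1}$ to a compactly supported operator. Since $a$ vanishes for $x\notin K$, the Schwartz kernel of $T_{1}$ is supported in $K\times\Rn$. Moreover $\partial_{\eta}\Phi$ is homogeneous of degree $0$, hence bounded on the compact set $\supp(a)\cap(K\times S^{n-1})$, which is contained in $V$ by hypothesis (as $V$ is open and $\supp(a)\cap\{\eta\neq 0\}\subseteq V$); thus the $\eta$-gradient of the phase $\Phi(x,\eta)-y\cdot\eta$ of the kernel has size $\gtrsim|y|$ for $|y|$ large, uniformly for $x\in K$, and non-stationary phase (integrating by parts, using that $a_{1}\in S^{m}$ and that the $\eta$-derivatives of $\Phi$ of order $\geq 2$ are homogeneous of negative degree) shows that the kernel of $T_{1}$ is smooth and rapidly decreasing, together with all its $x$-derivatives, away from a compact subset of $\Rn\times\Rn$. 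Choosing $\rho_{1},\rho_{2}\in C^{\infty}_{c}(\Rn)$ with $\rho_{1}\equiv 1$ on a neighbourhood of $K$ and $\rho_{2}\equiv 1$ on a large enough ball, one can therefore write $T_{1}=\rho_{1}T_{1}\rho_{2}+R$, where $\rho_{1}T_{1}\rho_{2}$ is a compactly supported element of $I^{m}(\Rn,\Rn;\Ca)$ and $R$ has a smooth kernel that is compactly supported in $x$ and Schwartz in $y$. Proposition \ref{prop:FIORn} then yields boundedness of $\rho_{1}T_{1}\rho_{2}\colon\HT^{s+m,p}_{FIO}(\Rn)\to\Hps$ for all $p\in[1,\infty]$ and $s\in\R$; and $R$ maps $W^{-M,p}(\Rn)\to W^{N,p}(\Rn)$ for every $M,N$, so $R\colon\HT^{s+m,p}_{FIO}(\Rn)\to\Hps$ is bounded by the Sobolev embeddings \eqref{eq:Sobolev}, which place $\HT^{s+m,p}_{FIO}(\Rn)$ inside $W^{-M,p}(\Rn)$ and $W^{N,p}(\Rn)$ inside $\Hps$ once $M,N$ are large enough (cf.\ the proof of Lemma \ref{lem:Ccdense}).

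It then remains to handle the low-frequency piece $T_{0}$ directly. Its kernel is $K_{0}(x,y)=\int_{\Rn}e^{i(\Phi(x,\eta)-y\cdot\eta)}a_{0}(x,\eta)\,d\eta$, supported in $K\times\Rn$. Writing $e^{i\Phi}=1+(e^{i\Phi}-1)$, the contribution of the $1$ is $\widehat{a_{0}(x,\cdot)}(y)$, which is Schwartz in $y$ since $a_{0}(x,\cdot)\in C^{\infty}_{c}(\Rn)$; and since $\Phi$ is homogeneous of degree $1$ in $\eta$, the factor $e^{i\Phi(x,\eta)}-1$ vanishes like $|\eta|$ and is, near $\eta=0$, a positively homogeneous function of degree $1$ times a smooth function, so the Fourier transform in $\eta$ of $(e^{i\Phi(x,\cdot)}-1)a_{0}(x,\cdot)$ decays like $|y|^{-n-1}$. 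Differentiating in $x$ only introduces bounded factors $\partial^{\beta}_{x}\Phi=O(|\eta|)$ (homogeneity again), which does not spoil these estimates, so $|\partial^{\alpha}_{x}\partial^{\gamma}_{y}K_{0}(x,y)|\lesssim\mathbf{1}_{K}(x)\langle y\rangle^{-n-1}$ for all $\alpha,\gamma$. Pairing $g$ against $K_{0}(x,\cdot)$ then gives $T_{0}\colon W^{-M,p}(\Rn)\to W^{N,p}(\Rn)$ for all $M,N$ and all $p\in[1,\infty]$, and together with \eqref{eq:Sobolev} once more, $T_{0}\colon\HT^{s+m,p}_{FIO}(\Rn)\to\Hps$ is bounded. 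Adding the three pieces $\rho_{1}T_{1}\rho_{2}$, $R$ and $T_{0}$ gives the assertion of the corollary.

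I do not expect a genuine obstacle: the analytic heart of the matter is already in Proposition \ref{prop:FIORn}, and the rest is essentially bookkeeping. The two points that require a little care are (i) checking that $T_{1}$ really falls under the hypotheses of Proposition \ref{prop:FIORn} --- that is, that $\det\partial^{2}_{x\eta}\Phi\neq 0$ makes the canonical relation a local canonical graph, which is where the theory of Section \ref{subsec:FIOs} is used --- and (ii) disposing of the parts of $T$ on which $\Phi$ fails to be smooth, namely the low frequencies in $T_{0}$ and the off-diagonal tail in $T_{1}$; I would treat both by the crude kernel estimates above rather than by FIO calculus, using crucially that $a$ is compactly supported in $x$ and that $\supp(a)$ meets the cosphere bundle in a compact subset of $V$.
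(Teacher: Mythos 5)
Your proof is correct and follows essentially the same strategy as the paper: truncate away a smoothing piece via non-stationary phase in the kernel, using the compact spatial support of $a$ and the uniform boundedness of $\partial_\eta\Phi$ there; recognize the remaining compactly supported operator as an element of $I^{m}(\Rn,\Rn;\Ca)$ for $\Ca$ a local canonical graph (via $\det\partial^{2}_{x\eta}\Phi\neq0$ and the inverse function theorem); and apply Proposition \ref{prop:FIORn}. The only real difference is that you explicitly peel off the low-frequency part $T_{0}$ and estimate its kernel by hand, whereas the paper leaves this implicit, using that Lagrangian distributions are defined modulo $C^{\infty}$ kernels so the region $|\eta|\lesssim1$ does not affect membership in $I^{m}$; also, your $\rho_{1}$ is redundant (the kernel of $T_{1}$ is already supported in $K\times\Rn$), but harmless.
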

\begin{proof}
Note that the Schwartz kernel of $T$ need not even be properly supported. Nonetheless, we can reduce to the setting of Proposition \ref{prop:FIORn}.

Set
\begin{equation}\label{eq:cnonsingular}
c:=\sup\{|\partial_{\eta}\Phi(x,\eta)|\mid (x,\eta)\in \supp(a), \eta\neq 0\}<\infty,
\end{equation}
and let $\chi\in C^{\infty}_{c}(\Rn)$ be such that $\chi(y)=1$ if $|y|\leq 2c$. Then, by design, the phase function in the Schwartz kernel of $T(1-\chi)$ is non-singular, so one can integrate by parts to conclude that $T(1-\chi)$ is in fact a smoothing operator. Here one uses that $a$ has compact spatial support to obtain the required decay of the kernel at infinity. Hence it suffices to show that $T\chi:\HT^{s+m,p}_{FIO}(\Rn)\to\Hps$ boundedly.

Next, an inspection of the kernel of $T\chi$ shows that $T\chi\in I^{m}(\Rn,\Rn;\Ca)$ for
\[
\Ca:=\{(x,\partial_{x}\Phi(x,\eta),\partial_{\eta}\Phi(x,\eta),\eta)\mid (x,\eta)\in V\}.
\]
Moreover, $(\partial_{\eta}\Phi(x,\eta),\eta)\mapsto (x,\partial_{x}\Phi(x,\eta))$ is locally a well-defined diffeomorphism. Indeed, since this map is the composition of $(\partial_{\eta}\Phi(x,\eta),\eta)\mapsto (x,\eta)$ and $(x,\eta)\mapsto (x,\partial_{x}\Phi(x,\eta))$, the statement follows from the inverse function theorem, combined with the assumption on $\partial_{x\eta}^{2}\Phi$. We thus see that $\Ca$ is a local canonical graph, and Proposition \ref{prop:FIORn} concludes the proof.
\end{proof}

In particular, we can now conclude that $\Hps$ is invariant under coordinate changes. Recall that, for $\gamma\in\R$ and for $\chi:U\to U'$ a diffeomorphism between open subsets $U,U'\subseteq\Rn$, the pullback $\chi^{*}_{\gamma}f\in \Da'(U)$ of $f\in\Da'(U')$, viewed as a distributional density of order $\gamma$, is given by
\[
\lb \chi^{*}_{\gamma}f,\phi\rb_{\Rn}=\lb f,|\det\partial(\chi^{-1})|^{1-\gamma}\phi\circ\chi^{-1}\rb_{\Rn}
\]
for $\phi\in\Da(U)$. 

\begin{corollary}\label{cor:coordchange}
Let $p\in[1,\infty]$ and $s,\gamma\in\R$. Let $\chi:U\to U'$ be a diffeomorphism between open subsets $U,U'\subseteq \Rn$, and let $\psi\in C^{\infty}_{c}(U')$. Then there exists a constant $C\geq0$ such that
\[
\|\chi^{*}_{\gamma}(\psi f)\|_{\Hps}\leq C\|f\|_{\Hps}
\]
for all $f\in\Hps$.
\end{corollary}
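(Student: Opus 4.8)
The plan is to realize the map $f\mapsto \chi^{*}_{\gamma}(\psi f)$ as (a composition with) a compactly supported Fourier integral operator associated with a local canonical graph, and then invoke Corollary \ref{cor:stanform} (or Proposition \ref{prop:FIORn} directly). First I would reduce to the case where $f$ itself has compact support: since $\psi\in C^{\infty}_{c}(U')$, we may pick $\widetilde\psi\in C^{\infty}_{c}(U')$ with $\widetilde\psi\equiv 1$ on a neighborhood of $\supp(\psi)$, write $\psi f=\psi(\widetilde\psi f)$, and note that by Lemma \ref{lem:propersupp} multiplication by $\widetilde\psi$ splits as a compactly supported pseudodifferential operator of order $0$ plus a smoothing operator $\Sw'(\Rn)\to\Sw(\Rn)$. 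A compactly supported $\Psi$DO of order $0$ is a compactly supported FIO associated with the identity canonical graph, and the smoothing remainder maps into $\Sw(\Rn)\subseteq\Hps$ with the $\Hps$ norm of the image controlled by any Sobolev norm of $f$, hence by $\|f\|_{\Hps}$ via \eqref{eq:Sobolev}. Thus it suffices to bound $g\mapsto \chi^{*}_{\gamma}(\psi g)$ for $g$ ranging over the (dense, by Lemma \ref{lem:Ccdense}) subspace $C^{\infty}_{c}(U')\subseteq\Hps$, provided the resulting operator is seen to extend boundedly.

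Next I would write out the pullback explicitly and recognize it as an FIO of the standard form in Corollary \ref{cor:stanform}. For $g\in C^{\infty}_{c}(U')$, one has, up to the smooth Jacobian weight $|\det\partial(\chi^{-1})|^{1-\gamma}$ which can be absorbed into the amplitude along with $\psi\circ\chi$,
\[
\chi^{*}_{\gamma}(\psi g)(x)=|\det\partial(\chi^{-1})(x)|^{1-\gamma}\,\psi(\chi(x))\,g(\chi(x))
=\int_{\Rn} e^{i\chi(x)\cdot\eta}\,b(x,\eta)\,\widehat{g}(\eta)\,\ud\eta,
\]
where $b(x,\eta)=(2\pi)^{-n}|\det\partial(\chi^{-1})(x)|^{1-\gamma}\psi(\chi(x))$ is a (compactly supported in $x$, since $\psi$ is compactly supported and $\chi$ is a diffeomorphism onto its image — so $x$ is confined to $\chi^{-1}(\supp\psi)\Subset U$) amplitude in $S^{0}(\Rn\times\Rn)$. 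The phase is $\Phi(x,\eta)=\chi(x)\cdot\eta$, defined on a conic open set $V$ with $V\supseteq\{(x,\eta): x\in\supp b(\cdot,\eta),\ \eta\neq 0\}$, positively homogeneous of degree $1$ in $\eta$, and with mixed Hessian $\partial^{2}_{x\eta}\Phi(x,\eta)=\partial\chi(x)^{T}$, which is invertible because $\chi$ is a diffeomorphism. Corollary \ref{cor:stanform} then gives boundedness of this operator $\HT^{s,p}_{FIO}(\Rn)\to\Hps$ (here $m=0$), which combined with the reductions above yields the claim.

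The only genuine subtlety — and the place I would be most careful — is purely bookkeeping about supports and extension: Corollary \ref{cor:stanform} is stated for operators defined a priori on $\Sw(\Rn)$, whereas $\chi$ and $\psi$ are only defined on $U$ and $U'$. The point is that $b(x,\eta)$, hence the whole oscillatory integral, extends by zero to a genuine element of $S^{0}(\Rn\times\Rn)$ with compact $x$-support (because $\chi^{-1}(\supp\psi)$ is a compact subset of $U$, so all derivatives of $x\mapsto \psi(\chi(x))|\det\partial(\chi^{-1})(x)|^{1-\gamma}$ are bounded and this function vanishes outside a fixed compact set), and one checks $\Phi$ extends smoothly to a conic neighborhood $V$ of the relevant support — this is where one uses that $\supp\psi$ sits strictly inside $U'$, so $\chi^{-1}(\supp\psi)$ sits strictly inside $U$ and $\chi$ (and thus $\Phi$) is defined with room to spare. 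Once this is arranged, the operator built in Corollary \ref{cor:stanform} agrees with $g\mapsto \chi^{*}_{\gamma}(\psi g)$ on $C^{\infty}_{c}(U')$, and since $C^{\infty}_{c}(U')$ is dense in $\Hps$ by Lemma \ref{lem:Ccdense} and both sides are continuous (the right-hand side interpreted as the FIO of Corollary \ref{cor:stanform}), the estimate $\|\chi^{*}_{\gamma}(\psi f)\|_{\Hps}\leq C\|f\|_{\Hps}$ holds for all $f\in\Hps$, with $C$ depending only on $p,s,\gamma,\chi$ and finitely many seminorms of $\psi$. This support-chasing is routine but it is the step that needs to be written with some care; the FIO-theoretic heart of the matter is entirely supplied by Corollary \ref{cor:stanform}.
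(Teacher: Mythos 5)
Your approach is the same as the paper's: write $\chi^*_\gamma(\psi f)$ as an oscillatory integral with phase $\Phi(x,\eta)=\chi(x)\cdot\eta$ and a compactly $x$-supported amplitude, then invoke Corollary~\ref{cor:stanform}; the paper's proof is exactly this, in two lines, with no preliminary reduction and no density argument needed (Corollary~\ref{cor:stanform} already gives boundedness on all of $\Hps$, so your first paragraph and the closing density step are superfluous). One small error worth flagging: your Jacobian factor $|\det\partial(\chi^{-1})(x)|^{1-\gamma}$ is wrong (and its argument is not even in the domain of $\chi^{-1}$); from the pairing $\lb\chi^*_\gamma f,\phi\rb=\lb f,|\det\partial(\chi^{-1})|^{1-\gamma}\phi\circ\chi^{-1}\rb$ a change of variables gives $\chi^*_\gamma(\psi f)(x)=|\det\partial_x\chi(x)|^{\gamma}\psi(\chi(x))f(\chi(x))$, which is what the paper uses. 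This doesn't affect your conclusion, since either factor is a smooth compactly supported function of $x$ that gets absorbed into the $S^0$ amplitude, but the displayed identity as written is incorrect.
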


We will mostly use this corollary for $\gamma=1/2$. In this case the pullback $\chi^{*}_{1/2}:L^{2}(U')\to L^{2}(U)$ is a unitary operator.

\begin{proof}
Note that, for $f\in\Sw(\Rn)$ and $x\in U$, one has
\begin{align*}
\chi^{*}_{\gamma}(\psi f)(x)&=|\!\det(\partial_{x}\chi(x))|^{\gamma}\psi(\chi(x))f(\chi(x))\\
&=\frac{1}{(2\pi)^{n}}\int_{\Rn}e^{i\chi(x)\cdot\eta}|\!\det(\partial_{x}\chi(x))|^{\gamma}\psi(\chi(x))\wh{f}(\eta)\ud\eta.
\end{align*}
Now apply Corollary \ref{cor:stanform}.
\end{proof}

\begin{remark}\label{rem:constantsFIO}
By keeping track of the implicit constants in the proof of Corollary \ref{cor:stanform}, one can check that the following holds. For each $i$ in an index set $I$, let $T_{i}$ be an operator as in Corollary \ref{cor:stanform}, with symbol $a_{i}\in S^{m}(\Rn\times\Rn)$ and phase function $\Phi_{i}\in C^{\infty}(V)$, for some fixed conic $V\subseteq \Rn\times (\Rn\setminus\{0\})$. Suppose that
\[
\sup\{\lb\eta\rb^{-m+|\beta|}|\partial_{x}^{\alpha}\partial_{\eta}^{\beta}a_{i}(x,\eta)|\mid x,\eta\in\Rn, i\in I\}<\infty
\]
for all $\alpha,\beta\in\Z_{+}^{n}$, and that there exists a fixed compact $K\subseteq \Rn$ such that $a_{i}(x,\eta)=0$ for all $i\in I$ and $(x,\eta)\in\R^{2n}$ with $x\notin K$. Moreover, suppose that
\[
\inf\{|\!\det\partial_{x\eta}^{2}\Phi_{i}(x,\eta)|\mid (x,\eta)\in V, i\in I\}>0,
\]
and that
\[
\sup\{|\partial_{x}^{\alpha}\partial_{\eta}^{\beta}\Phi_{i}(x,\eta)|\mid (x,\eta)\in V\cap \Sp, i\in I\}<\infty
\]
for all $\alpha,\beta\in\Z_{+}^{n}$ with $|\alpha|+|\beta|\geq 2$. Then
\begin{equation}\label{eq:constantschange}
\sup\{\|T_{i}\|_{\La(\HT^{s+m,p}_{FIO}(\Rn),\Hps)}\mid i\in I\}<\infty
\end{equation}
for all $p\in[1,\infty]$ and $s\in\R$. 

A specific case of \eqref{eq:constantschange} concerns Corollary \ref{cor:coordchange}. That is, let $U\subseteq \Rn$ be a fixed open set, and let $(\psi_{i})_{i\in I}\subseteq C^{\infty}_{c}(U)$ and $(\chi_{i})_{i\in I}$ be collections with $\chi_{i}:U\to U$ a diffeomorphism for each $i\in I$. Suppose that
\[
\sup\{|\partial_{x}^{\alpha}\psi_{i}(x)|+|\partial_{x}^{\alpha}\chi_{i}(x)|+|\partial_{x}^{\alpha}\chi_{i}^{-1}(x)|\mid x\in U,i\in I\}<\infty
\]
for each $\alpha\in\Z_{+}^{n}$, and let $\gamma\in\R$. Then there exists a $C\geq0$ such that
\[
\|(\chi_{i})^{*}_{\gamma}(\psi_{i}f)\|_{\Hps}\leq C\|f\|_{\Hps}
\]
for all $i\in I$ and $f\in\Hps$.
\end{remark}

\section{Sequence spaces}\label{sec:sequences}

To derive the basic properties of the Hardy spaces for Fourier integral operators on manifolds, we will rely on operators that map to and from suitable sequence spaces. In this section we prove the required results about these sequence spaces and the operators between them. Moreover, in the final subsection we prove a localization principle that allows one to apply this framework to $\Hps$.

\subsection{Definitions}\label{subsec:defseq}

Here we introduce the sequence spaces which we will work with, and the relevant operators. These operators can be viewed as a symmetrized version of the standard correspondence between distributional densities and sequences of distributions (see \eqref{eq:Qinv} and \eqref{eq:Pinv}).

For the remainder of this section, let $(M,g)$ be a complete Riemannian manifold of dimension $n\in\N$ with bounded geometry and with smooth structure $\A$. Let $\inj(M)>0$ be the injectivity radius of $M$, $\delta\in(0,\inj(M)/9]$, and let $(\psi_{\ka}^{2})_{\ka\in K}\subseteq C^{\infty}_{c}(M)$ be a uniformly locally finite partition of unity as in Lemma \ref{lem:cover}, associated with a countable collection $K\subseteq \A$ of geodesic coordinate charts $\ka:U_{\ka}\to\tilde{U}_{\ka}\subseteq\Rn$ such that the $U_{\ka}$, $\ka\in K$, cover $M$. Write $\psit_{\ka}:=\psi_{\ka}\circ\ka^{-1}\in C^{\infty}_{c}(\tilde{U}_{\ka})$.

One of the sequence spaces that we will use is the locally convex space of $\Da(\Rn)$-valued sequences with finitely many nonzero values, denoted $c_{00}(K;\Da(\Rn))$, endowed with the natural inductive limit topology. These sequences will play the role of ``test sequences" in our setup, in analogy with the role that the test functions $\Da(\Rn)$ play on $\Rn$. Moreover, $c_{00}(K;\Da(\Rn))^{*}$ is its dual, and we write $\lb F,G\rb_{K}$ for the duality between $F\in c_{00}(K;\Da(\Rn))^{*}$ and $G\in c_{00}(K;\Da(\Rn))$.

Then $c_{00}(K;\Da(\Rn))\subseteq c_{00}(K;\Da(\Rn))^{*}$ is dense, if one sets
\begin{equation}\label{eq:dualityK}
 \lb F,G\rb_{K}:=\sum_{\ka\in K}\lb F_{\ka},G_{\ka}\rb_{\Rn}
\end{equation}
for $F=(F_{\ka})_{\ka\in K},G=(G_{\ka})_{\ka\in K}\in c_{00}(K;\Da(\Rn))$. Indeed,
\[
c_{00}(K';\Da(\Rn))\subseteq c_{00}(K';\Da(\Rn))^{*}=c_{00}(K';\Da'(\Rn))
\]
is dense for each finite subset $K'\subseteq K$, as follows from the density of $\Da(\Rn)$ in $\Da'(\Rn)$, and this in turn implies that $c_{00}(K;\Da(\Rn))\subseteq c_{00}(K;\Da(\Rn))^{*}$ is dense. Also note that $c_{00}(K;\Da(\Rn))\subseteq \ell^{p}(K;\Hps)$ is dense for all $p\in[1,\infty)$ and $s\in\R$, by Lemma \ref{lem:Ccdense} and because the finite sequences $c_{00}(K)$ are dense in $\ell^{p}(K)$.

Let $\Da(M,\Omega_{1/2})$ be the space of smooth compactly supported half densities on $M$, and let $\Da'(M,\Omega_{1/2})$ be its dual, the space of distributional half densities. Denote by $\lb\cdot,\cdot\rb_{M}$ the associated duality pairing. Then $L^{2}(M,\Omega_{1/2})$ is the completion of $\Da(M,\Omega_{1/2})$ with respect to the norm
\begin{equation}\label{eq:L2norm}
\|u\|_{L^{2}(M,\Omega_{1/2})}:=\lb u,\overline{u}\rb_{M}^{1/2},
\end{equation}
for $u\in\Da(M,\Omega_{1/2})$.  Write $\ka_{*}=(\ka^{-1})^{*}$, where $\ka^{*}=(\ka^{*})_{1/2}$ is pull-back by $\ka$, acting on half densities. For $u\in \Da'(M,\Omega_{1/2})$ and $G=(g_{\ka})_{\ka\in K}\in c_{00}(K;\Da(\Rn))$, set
\begin{equation}\label{eq:Q}
\lb Qu,G\rb_{K}:=\sum_{\ka\in K}\lb\ka_{*}(\psi_{\ka} u),g_{\ka}\rb_{\Rn}.
\end{equation}
Moreover, for $F=(f_{\ka})_{\ka\in K}\in c_{00}(K;\Da(\Rn))$, set
\begin{equation}\label{eq:P}
PF:=\sum_{\ka\in K}\ka^{*}(\psit_{\ka}f_{\ka}).
\end{equation}

\subsection{Basic properties}\label{subsec:basicseq}

The following proposition collects some basic properties of the operators $Q$ and $P$.

\begin{proposition}\label{prop:Qproperties}
The following properties hold:
\begin{enumerate}
\item\label{it:Qproperties1} $Q:\Da'(M,\Omega_{1/2})\to c_{00}(K;\Da(\Rn))^{*}$ and $Q:\Da(M,\Omega_{1/2})\to c_{00}(K;\Da(\Rn))$ are continuous;
\item\label{it:Qproperties2} $P:c_{00}(K;\Da(\Rn))\to \Da(M,\Omega_{1/2})$ is continuous, and $P$ extends uniquely to a continuous linear operator $P:c_{00}(K;\Da(\Rn))^{*}\to \Da'(M,\Omega_{1/2})$;
\item\label{it:Qproperties3} $\lb PF,v\rb_{M}=\lb F,Qv\rb_{K}$  for all $F\in c_{00}(K;\Da(\Rn))^{*}$ and $v\in\Da(M,\Omega_{1/2})$, and $\lb Qu,G\rb_{K}=\lb u,PG\rb_{M}$ for all $u\in\Da'(M,\Omega_{1/2})$ and $G\in c_{00}(K;\Da(\Rn))$;
\item \label{it:Qproperties4} $PQu=u$ for all $u\in\Da'(M,\Omega_{1/2})$;
\item\label{it:Qproperties5} $Q:L^{2}(M,\Omega_{1/2})\to \ell^{2}(K;L^{2}(\Rn))$ is an isometry.
\end{enumerate}
\end{proposition}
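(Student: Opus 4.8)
The plan is to establish the five properties in the order listed, since each is either a routine consequence of the bounded-geometry setup from Lemma~\ref{lem:cover} or follows from an earlier-proved property. For \eqref{it:Qproperties1}, I would note that for fixed $u\in\Da'(M,\Omega_{1/2})$ the sum defining $\lb Qu,G\rb_{K}$ is actually finite for each $G\in c_{00}(K;\Da(\Rn))$, since $G$ has only finitely many nonzero components; each term $\lb\ka_{*}(\psi_{\ka}u),g_{\ka}\rb_{\Rn}$ makes sense because $\psi_{\ka}u$ is a compactly supported distributional half density on $U_{\ka}$ and $\ka_{*}$ transports it to a distribution on $\tilde U_{\ka}$. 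Continuity in $u$ and the mapping property into $c_{00}(K;\Da(\Rn))$ (when $u$ is smooth) both reduce to the fact that each chart $\ka$ gives a continuous pullback/pushforward between $\Da'$ (resp.\ $\Da$) on $U_\ka$ and on $\tilde U_\ka$, together with uniform local finiteness of the cover, which guarantees that $Qu$ has at most boundedly many nonzero ``overlapping'' components near any point and that $Qu\in c_{00}(K;\Da(\Rn))$ when $\supp u$ is compact. For \eqref{it:Qproperties2}, the map $P$ on finite sequences is a finite sum of terms $\ka^{*}(\psit_{\ka}f_{\ka})$, each a compactly supported smooth half density, so $P:c_{00}(K;\Da(\Rn))\to\Da(M,\Omega_{1/2})$ is clearly continuous; the extension to the dual is obtained by duality once \eqref{it:Qproperties3} is in place, or directly by defining $\lb PF,v\rb_M:=\lb F,Qv\rb_K$ and checking this is well-defined using \eqref{it:Qproperties1}.

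The identity \eqref{it:Qproperties3} is the computational heart and should be proved first among the ``algebraic'' statements, since \eqref{it:Qproperties2} and \eqref{it:Qproperties4} rest on it. For $F\in c_{00}(K;\Da(\Rn))$ and $v\in\Da(M,\Omega_{1/2})$ one expands
\[
\lb PF,v\rb_M=\sum_{\ka\in K}\lb\ka^{*}(\psit_{\ka}f_{\ka}),v\rb_M,
\]
and then uses that $\ka^{*}$ is the (half-density) pullback, so $\lb\ka^{*}(\psit_{\ka}f_{\ka}),v\rb_M=\lb\psit_{\ka}f_{\ka},\ka_{*}v\rb_{\Rn}=\lb f_{\ka},\psit_{\ka}\ka_{*}v\rb_{\Rn}=\lb f_{\ka},\ka_{*}(\psi_{\ka}v)\rb_{\Rn}$, where in the last step one moves $\psit_\ka$ back through $\ka_*$ as $\psi_\ka=\psit_\ka\circ\ka$. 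Summing over $\ka$ gives exactly $\lb F,Qv\rb_K$ by \eqref{eq:Q}. The symmetry of the half-density pairing under pullback is the key point here and is precisely why the construction was symmetrized; I would be slightly careful to invoke the right convention ($\gamma=1/2$ in the pullback of Corollary~\ref{cor:coordchange}) so that $\ka^{*}$ and $\ka_{*}$ are genuine adjoints for $\lb\cdot,\cdot\rb$. Once \eqref{it:Qproperties3} holds for test objects, the density of $c_{00}(K;\Da(\Rn))$ in its dual and of $\Da(M,\Omega_{1/2})$ in $\Da'(M,\Omega_{1/2})$ extends it, and simultaneously justifies that $P$ on the dual is the unique continuous extension, completing \eqref{it:Qproperties2}.

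For \eqref{it:Qproperties4}, the reconstruction identity $PQu=u$, I would first verify it for $u\in\Da(M,\Omega_{1/2})$ by a direct computation: $PQu=\sum_\ka\ka^{*}(\psit_\ka\,\ka_{*}(\psi_\ka u))=\sum_\ka\psi_\ka\cdot\ka^{*}\ka_{*}(\psi_\ka u)=\sum_\ka\psi_\ka^{2}u=u$, using $\ka^{*}\ka_{*}=\id$ on half densities over $U_\ka$, that multiplication by $\psit_\ka$ pulls back to multiplication by $\psi_\ka$, and finally $\sum_\ka\psi_\ka^{2}=1$ from Lemma~\ref{lem:cover}. Then one extends to $u\in\Da'(M,\Omega_{1/2})$ either by the density of smooth half densities together with the continuity of $P$ and $Q$ from \eqref{it:Qproperties1}--\eqref{it:Qproperties2}, or more cleanly by pairing: for $v\in\Da(M,\Omega_{1/2})$, $\lb PQu,v\rb_M=\lb Qu,Qv\rb_K=\lb u,PQv\rb_M=\lb u,v\rb_M$ using \eqref{it:Qproperties3} twice and the already-established smooth case $PQv=v$. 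Finally \eqref{it:Qproperties5}: for $u\in\Da(M,\Omega_{1/2})$,
\[
\|Qu\|_{\ell^2(K;L^2(\Rn))}^2=\sum_{\ka\in K}\|\ka_{*}(\psi_\ka u)\|_{L^2(\Rn)}^2=\sum_{\ka\in K}\|\psi_\ka u\|_{L^2(U_\ka,\Omega_{1/2})}^2=\sum_{\ka\in K}\lb\psi_\ka^2 u,\bar u\rb_M=\|u\|_{L^2(M,\Omega_{1/2})}^2,
\]
where the second equality is the unitarity of the half-density pullback $\ka_{*}=(\ka^{-1})^{*}_{1/2}:L^2(U_\ka,\Omega_{1/2})\to L^2(\Rn)$ (the $\gamma=1/2$ case of Corollary~\ref{cor:coordchange}), and the last uses $\sum_\ka\psi_\ka^2=1$ and \eqref{eq:L2norm}; density of $\Da(M,\Omega_{1/2})$ in $L^2(M,\Omega_{1/2})$ then gives that $Q$ extends to an isometry. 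The main obstacle, such as it is, is bookkeeping: keeping the half-density conventions consistent so that $\ka^{*}$ and $\ka_{*}$ are honest adjoints, and invoking uniform local finiteness of the cover precisely where needed to ensure all the sums are locally finite and the continuity statements hold with uniform constants; no deep difficulty is expected.
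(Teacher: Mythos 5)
Your proof is correct and follows essentially the same route as the paper: reduce each algebraic identity to the dense subspace $\Da(M,\Omega_{1/2})$ or $c_{00}(K;\Da(\Rn))$, exploit the adjointness of $\ka^{*}$ and $\ka_{*}$ as half-density pullback/pushforward, and use $\sum_\ka\psi_\ka^2=1$. The only (cosmetic) divergence is in part \eqref{it:Qproperties5}, where you invoke the unitarity of the half-density pullback directly on each term $\|\ka_*(\psi_\ka u)\|_{L^2(\Rn)}$, whereas the paper instead routes the computation through parts \eqref{it:Qproperties3} and \eqref{it:Qproperties4} via $\|Qu\|^2=\lb Qu,Q\bar u\rb_K=\lb PQu,\bar u\rb_M$ after checking $\overline{(Qu)_\ka}=(Q\bar u)_\ka$; both reduce to the same underlying fact.
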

\begin{proof}
\eqref{it:Qproperties1}: Firstly, $\ka_{*}(\psi_{\ka}u)=\psit_{\ka}\ka_{*}u$ is a well-defined and compactly supported distribution on $\Util_{\ka}$, for all $u\in\Da'(M,\Omega_{1/2})$ and $\ka\in K$. In particular, it extends to a distribution on all of $\Rn$ by setting it equal to zero outside of $\Util_{\ka}$. Moreover, the sum in \eqref{eq:Q} is finite for each $G\in c_{00}(K;\Da(\Rn))$, and appropriate bounds hold for the resulting functional because $(\psit_{\ka})_{\ka\in K}\subseteq \Da(\Rn)$.

Next, suppose that $u\in \Da(M,\Omega_{1/2})$, and note that $\ka_{*}(\psi_{\ka}u)\in \Da(\Util_{\ka})\subseteq \Da(\Rn)$ for each $\ka\in K$, because $\psi_{\ka}\in C^{\infty}_{c}(U_{\ka})$. Moreover, since the cover $(U_{\ka})_{\ka\in K}$ of $M$ is locally finite, a given compact set only intersects finitely many of the $U_{\ka}$. Hence $\psi_{\ka}u=0$ for all but finitely many $\ka\in K$, and
\[
Qu=(\ka_{*}(\psi_{\ka}u))_{\ka\in K}\in c_{00}(K;\Da(\Rn)),
\]
with the appropriate bounds for the associated seminorms.

\eqref{it:Qproperties2}: Let $F=(f_{\ka})_{\ka\in K}\in c_{00}(K;\Da(\Rn))$. Then  $\ka^{*}(\psit_{\ka}f_{\ka})\in \Da(M,\Omega_{1/2})$ for all $\ka\in K$, since $\psit_{\ka}f_{\ka}\in \Da(\Util_{\ka})$. Moreover, the sum in \eqref{eq:P} is finite, so $PF\in \Da(M,\Omega_{1/2})$, with the appropriate bounds for the associated seminorms. The second statement now follows from the fact that $c_{00}(K;\Da(\Rn))$ is dense in $c_{00}(K;\Da(\Rn))^{*}$.

\eqref{it:Qproperties3}: Let $F=(f_{\ka})_{\ka\in K}\in c_{00}(K;\Da(\Rn))$ and $v\in\Da(M,\Omega_{1/2})$. Then
\[
\lb PF,v\rb_{M}=\sum_{\ka\in K}\lb \psi_{\ka}\ka^{*}f_{\ka},v\rb_{M}=\sum_{\ka\in K}\lb f_{\ka}, \ka_{*}(\psi_{\ka}v)\rb_{\Rn}=\lb F,Qv\rb_{K}.
\]
By \eqref{it:Qproperties2} and because $c_{00}(K;\Da(\Rn))$ lies dense in $c_{00}(K;\Da(\Rn))^{*}$, this proves the first statement. Similarly, since $\Da(M,\Omega_{1/2})\subseteq \Da'(M,\Omega_{1/2})$ is dense, it suffices to prove the second statement for $u\in \Da(M,\Omega_{1/2})$. But in this case one can simply use the first statement.

\eqref{it:Qproperties4}: By \eqref{it:Qproperties1} and \eqref{it:Qproperties2}, and because $\Da(M,\Omega_{1/2})\subseteq \Da'(M,\Omega_{1/2})$ is dense, it suffices to consider $u\in\Da(M,\Omega_{1/2})$. In this case we have
\[
PQu=\sum_{\ka\in K}\psi_{\ka}\ka^{*}(Qu)_{\ka}=\sum_{\ka\in K}\psi_{\ka}\ka^{*}\ka_{*}(\psi_{\ka}u)=u,
\]
since $(\psi_{\ka}^{2})_{\ka\in K}$ is a partition of unity.

\eqref{it:Qproperties5}: Let $u\in\Da(M,\Omega_{1/2})$ and note that
\[
\overline{(Qu)_{\ka}(y)}=\overline{\ka_{*}(\psi_{\ka}u)(y)}=(Q\overline{u})_{\ka}(y)
\]
for all $\ka\in K$ and $y\in \Rn$, by definition of the pullback of a smooth density (see \eqref{eq:pullbacktest}) and because $\psi_{\ka}$ is real-valued. Now \eqref{it:Qproperties3} and \eqref{it:Qproperties4} yield
\[
\|Qu\|_{\ell^{2}(L^{2}(\Rn))}^{2}=\lb Qu,\overline{Qu}\rb_{K}=\lb PQu,\overline{u}\rb_{M}=\lb u,\overline{u}\rb_{M}=\|u\|_{L^{2}(M,\Omega_{1/2})}^{2},
\]
by definition of the $L^{2}(M,\Omega_{1/2})$ norm in \eqref{eq:L2norm}. This concludes the proof, given that $L^{2}(M,\Omega_{1/2})$ is the completion of $\Da(M,\Omega_{1/2})$ with respect to this norm.
\end{proof}

\subsection{Operators between sequence spaces}\label{subsec:mainseq}

In this subsection we will consider operators between $\Hps$-valued sequence spaces.

Throughout, let $(N,g')$ be another complete $n$-dimensional
Riemannian manifold with bounded geometry, with smooth structure $\B$,
and let $\veps\in(0,\inj(N)/9]$, $L\subseteq \B$
and $(\theta_{\la})_{\la\in L}\subseteq C^{\infty}_{c}(N)$ be data as in Lemma \ref{lem:cover}, with $\theta_{\la}:U_{\la}\to\tilde{U}_{\la}\subseteq\Rn$ for $\la\in L$. That is, $\veps$, $L$
and $(\theta_{\la})_{\la\in L}$ have similar properties as $\delta$, $K$ and $(\psi_{\ka})_{\ka\in K}$ from the previous subsection. Set $\thetat_{\la}:=\theta_{\la}\circ\la^{-1}\in C^{\infty}_{c}(\tilde{U}_{\la})$ for $\la\in L$.
Next, let $Q'$ and $P'$ be defined as $Q$ and $P$:
\begin{equation}\label{eq:Qprime}
\lb Q'u,G\rb_{L}:=\sum_{\la\in L}\lb\la_{*}(\theta_{\la} u),g_{\la}\rb_{\Rn}
\end{equation}
for $u\in\Da'(N,\Omega_{1/2})$ and $G=(g_{\la})_{\la\in L}\in c_{00}(L;\Da(\Rn))$, and
\begin{equation}\label{eq:Pprime}
P'F:=\sum_{\la\in L}\la^{*}(\thetat_{\la}f_{\la})
\end{equation}
for $F=(f_{\la})_{\la\in L}\in c_{00}(L;\Da(\Rn))$. Then $Q':\Da'(N,\Omega_{1/2})\to c_{00}(L;\Da(\Rn))^{*}$ and $P':c_{00}(L;\Da(\Rn))^{*}\to\Da'(N,\Omega_{1/2})$ have the same properties as $Q$ and $P$, from Proposition \ref{prop:Qproperties}.

Before we get to the main results of this section, we prove an abstract lemma about vector-valued sequence spaces. Recall that $|E|$ denotes the cardinality of a finite set $E$.

\begin{lemma}\label{lem:matrix}
Let $X$ and $Y$ be Banach spaces, let $I$ and $J$ be countable sets, and let $p\in[1,\infty]$. Let $A=(A_{ij})_{i\in I,j\in J}$ be an operator matrix with the following properties:
\begin{enumerate}
\item\label{it:matrix1} $A_{ij}\in \La(X,Y)$ for all $i\in I$ and $j\in J$, and
\[
\sup\{\|A_{ij}\|_{\La(X,Y)}\mid i\in I,j\in J\}<\infty;
\]
\item\label{it:matrix2} $\sup_{i\in I}|J(i)|<\infty$, where $J(i):=\{j\in J\mid A_{ij}\neq 0\}$ for $i\in I$, and $\sup_{j\in J}|I(j)|<\infty$, where $I(j):=\{i\in I\mid A_{ij}\neq0\}$ for $j\in J$.
\end{enumerate}
Then $A:\ell^{p}(J;X)\to \ell^{p}(I;Y)$ is bounded.
\end{lemma}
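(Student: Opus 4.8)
\textbf{Plan for the proof of Lemma \ref{lem:matrix}.}

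The statement is a standard Schur-type test adapted to the vector-valued setting. The plan is to treat the cases $p=\infty$, $p=1$ and $1<p<\infty$, using duality (or interpolation) for the middle range. Write $C_{0}:=\sup\{\|A_{ij}\|_{\La(X,Y)}\mid i\in I,\ j\in J\}$, $N_{1}:=\sup_{i\in I}|J(i)|$ and $N_{2}:=\sup_{j\in J}|I(j)|$, all finite by hypotheses \eqref{it:matrix1} and \eqref{it:matrix2}. Given $F=(f_{j})_{j\in J}\in\ell^{p}(J;X)$, the $i$-th component of $AF$ is the finite sum $(AF)_{i}=\sum_{j\in J(i)}A_{ij}f_{j}\in Y$, so $AF$ is well defined pointwise.

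For $p=\infty$: by the triangle inequality $\|(AF)_{i}\|_{Y}\le \sum_{j\in J(i)}C_{0}\|f_{j}\|_{X}\le C_{0}N_{1}\|F\|_{\ell^{\infty}(J;X)}$, uniformly in $i$, giving $\|A\|\le C_{0}N_{1}$. For $p=1$: sum over $i$ and interchange the (locally finite) sums,
\[
\sum_{i\in I}\|(AF)_{i}\|_{Y}\le C_{0}\sum_{i\in I}\sum_{j\in J(i)}\|f_{j}\|_{X}=C_{0}\sum_{j\in J}|I(j)|\,\|f_{j}\|_{X}\le C_{0}N_{2}\|F\|_{\ell^{1}(J;X)}.
\]
For $1<p<\infty$ with conjugate exponent $p'$, apply H\"older's inequality on each row against the counting measure on $J(i)$: with $|J(i)|\le N_{1}$,
\[
\|(AF)_{i}\|_{Y}^{p}\le\Big(\sum_{j\in J(i)}C_{0}\|f_{j}\|_{X}\Big)^{p}\le C_{0}^{p}\,|J(i)|^{p/p'}\sum_{j\in J(i)}\|f_{j}\|_{X}^{p}\le C_{0}^{p}N_{1}^{p/p'}\sum_{j\in J(i)}\|f_{j}\|_{X}^{p}.
\]
Summing over $i\in I$ and interchanging sums as in the $p=1$ case gives $\sum_{i}\|(AF)_{i}\|_{Y}^{p}\le C_{0}^{p}N_{1}^{p/p'}\sum_{j}|I(j)|\,\|f_{j}\|_{X}^{p}\le C_{0}^{p}N_{1}^{p/p'}N_{2}\,\|F\|_{\ell^{p}(J;X)}^{p}$, hence $\|A\|\le C_{0}N_{1}^{1/p'}N_{2}^{1/p}$. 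Alternatively, the range $1<p<\infty$ follows from the endpoint cases $p=1$ and $p=\infty$ by complex interpolation of the vector-valued $\ell^{p}$ spaces, since the operator matrix $A$ is fixed.

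There is no serious obstacle here; the only points requiring a little care are the justification of the interchange of summations (legitimate because every term is nonnegative, so Tonelli applies, and in fact each inner sum is finite by the bounded-overlap condition \eqref{it:matrix2}) and the observation that the column-finiteness bound $N_{2}<\infty$ is exactly what converts $\sum_{i}\sum_{j\in J(i)}(\cdots)$ into $\sum_{j}|I(j)|(\cdots)$. I would present the three computations above essentially verbatim, noting that finiteness of the relevant sums guarantees $AF\in\ell^{p}(I;Y)$ and that the bounds are uniform, so $A$ is bounded with norm at most $C_{0}\max(N_{1},N_{2})$.
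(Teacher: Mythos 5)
Your proof is correct and follows essentially the same route as the paper's: apply H\"older's inequality row-wise (using the bounded overlap $|J(i)|\le N_1$), then interchange the resulting double sum and absorb the column-overlap bound $N_2$. The paper does this in one unified computation for $p<\infty$ (with $p=\infty$ handled by the obvious notational change), whereas you split into the three cases $p=1$, $1<p<\infty$, $p=\infty$ and record the explicit constants, but the mechanism is identical.
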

\begin{proof}
We consider $p<\infty$, with the proof for $p=\infty$ being identical up to an obvious change of notation.

Let $F=(f_{j})_{j\in J}\in \ell^{p}(J;X)$. Then H\"{o}lder's inequality and \eqref{it:matrix2} yield
\begin{align*}
\|(AF)_{i}\|_{Y}&=\Big\|\sum_{j\in J(i)}A_{ij}f_{j}\Big\|_{Y}\leq \sum_{j\in J(i)}\|A_{ij}f_{j}\|_{Y}\\
&\lesssim \Big(\sum_{j\in J(i)}\|A_{ij}f_{j}\|_{Y}^{p}\Big)^{1/p}
\end{align*}
for an implicit constant independent of $i\in I$. Now combine this with \eqref{it:matrix1} and \eqref{it:matrix2}:
\begin{align*}
\|AF\|_{\ell^{p}(I;Y)}&=\Big(\sum_{i\in I}\|(AF)_{i}\|_{Y}^{p}\Big)^{1/p}\lesssim \Big(\sum_{i\in I}\sum_{j\in J(i)}\|A_{ij}f_{j}\|_{Y}^{p}\Big)^{1/p}\\
&=\Big(\sum_{j\in J}\sum_{i\in I(j)}\|A_{ij}f_{j}\|_{Y}^{p}\Big)^{1/p}\lesssim \Big(\sum_{j\in J}\sum_{i\in I(j)}\|f_{j}\|_{X}^{p}\Big)^{1/p}\lesssim \|F\|_{\ell^{p}(J;X)},
\end{align*}
for implicit constants independent of $F$.
\end{proof}

We now consider a specific instance of this lemma, involving the types of operators that we will encounter in the next section.

\begin{theorem}\label{thm:FIOseq}
Let $p\in[1,\infty]$, $s,m\in\R$, and let $T\in I^{m}(M,N;\Ca)$, for $\Ca$ a local canonical graph. Suppose that the following conditions hold:
\begin{enumerate}
\item\label{it:FIOseq1} There exists a $C\geq 0$ such that $\la_{*}(\theta_{\la}T\ka^{*}(\psit_{\ka}f))\in\Hps$ and
\[
\|\la_{*}(\theta_{\la}T\ka^{*}(\psit_{\ka}f))\|_{\Hps}\leq C\|f\|_{\HT^{s+m,p}_{FIO}(\Rn)},
\]
for all $\la\in L$, $\ka\in K$ and $f\in\HT^{s+m,p}_{FIO}(\Rn)$;
\item\label{it:FIOseq2} $\sup_{\la\in L}|K(\la)|<\infty$, where $K(\la):=\{\ka\in K\mid \theta_{\la}T\psi_{\ka}\neq 0\}$ for $\la\in L$, and $\sup_{\ka\in K}|L(\ka)|<\infty$, where $L(\ka):=\{\la\in L\mid \theta_{\la}T\psi_{\ka}\neq0\}$ for $\ka\in K$.
\end{enumerate}
Then $Q'TP:\ell^{p}(K;\HT^{s+m,p}_{FIO}(\Rn))\to\ell^{p}(L;\Hps)$ is bounded.
\end{theorem}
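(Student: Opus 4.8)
The plan is to recognize $Q'TP$ as an operator matrix in the sense of Lemma~\ref{lem:matrix} and simply verify its two hypotheses, most of which has in fact already been arranged in the statement. First I would compute, for a test sequence $F=(f_\ka)_{\ka\in K}\in c_{00}(K;\Da(\Rn))$, the $\la$-th component of $Q'TPF$. By the definitions \eqref{eq:P} and \eqref{eq:Qprime}, one has $PF=\sum_{\ka\in K}\ka^{*}(\psit_\ka f_\ka)$, and then $(Q'TPF)_\la=\la_{*}\big(\theta_\la T\sum_{\ka\in K}\ka^{*}(\psit_\ka f_\ka)\big)=\sum_{\ka\in K}\la_{*}\big(\theta_\la T\ka^{*}(\psit_\ka f_\ka)\big)$, the interchange of $T$ and the (locally finite) sum being justified because $F$ has finitely many nonzero entries. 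Thus $Q'TP$ acts on $c_{00}(K;\Da(\Rn))$ as the operator matrix $A=(A_{\la\ka})_{\la\in L,\ka\in K}$ with
\[
A_{\la\ka}f:=\la_{*}\big(\theta_\la T\ka^{*}(\psit_\ka f)\big),\qquad f\in\HT^{s+m,p}_{FIO}(\Rn).
\]

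Next I would check the two hypotheses of Lemma~\ref{lem:matrix} with $X=\HT^{s+m,p}_{FIO}(\Rn)$, $Y=\Hps$, $I=L$ and $J=K$. Hypothesis~\eqref{it:matrix1} is exactly assumption~\eqref{it:FIOseq1}: it gives $A_{\la\ka}\in\La(X,Y)$ together with the uniform bound $\sup_{\la,\ka}\|A_{\la\ka}\|_{\La(X,Y)}\leq C$. For hypothesis~\eqref{it:matrix2}, note that $A_{\la\ka}=0$ as soon as $\theta_\la T\psi_\ka=0$; indeed $\ka^{*}(\psit_\ka f)=\psi_\ka\ka^{*}f$ is supported in $U_\ka$ where $\psi_\ka$ lives (cf.\ the computation in the proof of Proposition~\ref{prop:Qproperties}\eqref{it:Qproperties3}), and $\theta_\la T$ applied to something supported where $\psi_\ka$ is supported vanishes when $\theta_\la T\psi_\ka=0$. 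Hence $K(\la)\supseteq\{\ka:A_{\la\ka}\neq0\}$ and $L(\ka)\supseteq\{\la:A_{\la\ka}\neq0\}$, so the finiteness bounds $\sup_\la|K(\la)|<\infty$ and $\sup_\ka|L(\ka)|<\infty$ from assumption~\eqref{it:FIOseq2} give precisely the required row- and column-finiteness. Lemma~\ref{lem:matrix} then yields that $A:\ell^{p}(K;\HT^{s+m,p}_{FIO}(\Rn))\to\ell^{p}(L;\Hps)$ is bounded.

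Finally I would pass from the algebraic identity on $c_{00}(K;\Da(\Rn))$ to the asserted bound on all of $\ell^{p}(K;\HT^{s+m,p}_{FIO}(\Rn))$. Since $c_{00}(K;\Da(\Rn))$ is dense in $\ell^{p}(K;\HT^{s+m,p}_{FIO}(\Rn))$ for $p<\infty$ (noted just after \eqref{eq:dualityK}, using Lemma~\ref{lem:Ccdense}), the bounded operator $A$ obtained from Lemma~\ref{lem:matrix} agrees with $Q'TP$ on a dense subspace, and one concludes $Q'TP=A$ is bounded. For $p=\infty$ one argues by duality, or more simply observes that the proof of Lemma~\ref{lem:matrix} for $p=\infty$ together with the componentwise formula for $A_{\la\ka}$ already gives the bound directly on sequences with values in $\HT^{s+m,\infty}_{FIO}(\Rn)$, interpreting $Q'TP$ via the defining formulas. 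There is no serious obstacle here: the only points requiring a little care are the legitimacy of interchanging $T$ with the sum in computing $(Q'TPF)_\la$ (handled by the $c_{00}$ assumption and continuity) and the precise support argument showing $A_{\la\ka}=0$ when $\theta_\la T\psi_\ka=0$, which matches the hypothesis \eqref{it:FIOseq2}; the real content has been front-loaded into assumptions \eqref{it:FIOseq1} and \eqref{it:FIOseq2}, whose verification for concrete FIOs is deferred to the next section.
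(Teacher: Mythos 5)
Your proposal is correct and follows essentially the same route as the paper: identify $Q'TP$ with the operator matrix $A_{\la\ka}f=\la_{*}(\theta_{\la}T\ka^{*}(\psit_{\ka}f))$, observe that hypothesis \eqref{it:matrix1} of Lemma~\ref{lem:matrix} is assumption \eqref{it:FIOseq1} and that the support argument converts \eqref{it:FIOseq2} into the row/column finiteness of \eqref{it:matrix2}, and conclude via Lemma~\ref{lem:matrix}. The one place the paper is slightly more careful, and which your density argument slightly glosses over, is the identification $Q'TP=A$ for a general $F\in\ell^{p}$: one should note that $Q'TPF$ is \emph{a priori} defined as a functional on $c_{00}(L;\Da(\Rn))$ through the adjoint formula $\lb Q'TPF,G\rb_{L}=\sum_{\la}\lb PF,T^{t}\la^{*}(\thetat_{\la}g_{\la})\rb_{M}$, with $P$ the continuous extension to $c_{00}(K;\Da(\Rn))^{*}$; the identity $Q'TPF=AF$ for general $F$ is then obtained by approximating $F$ componentwise inside that pairing (which, because each $G$ has finitely many nonzero entries and by \eqref{it:FIOseq2}, only involves finitely many $\ka$), and this in particular handles $p=\infty$ without invoking density in the $\ell^{\infty}$ norm. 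Your alternatives for $p=\infty$ are also viable, so there is no gap in substance.
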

Condition \eqref{it:FIOseq2} implies that $T$ is properly supported, and it in fact prescribes proper support in a certain uniform sense.
\begin{proof}
We will apply Lemma \ref{lem:matrix} with $A_{\la\ka}f:=\la_{*}(\theta_{\la}T\ka^{*}(\psit_{\ka}f))$ for $\la\in L$, $\ka\in K$ and $f\in\HT^{s+m,p}_{FIO}(\Rn)$. To this end, we only need to address a minor technicality, by checking that $Q'TP$ is given by matrix multiplication with $A=(A_{\la\ka})_{\la\in L,\ka\in K}$.

Indeed, $Q'TPF\in c_{00}(L,\Da(\Rn))^{*}$ for each $F\in\ell^{p}(K;\HT^{s+m,p}_{FIO}(\Rn))$, by Proposition \ref{prop:Qproperties}. However, a priori $Q'TPF$ is defined through adjoint action, as
\begin{equation}\label{eq:adjointdef}
\lb Q'TPF,G\rb_{L}=\sum_{\la\in L}\lb PF,T^{t}\la^{*}(\thetat_{\la}g_{\la})\rb_{M}
\end{equation}
for $G=(g_{\la})_{\la\in L}\in c_{00}(L;\Da(\Rn))$, where $P:c_{00}(K;\Da(\Rn))^{*}\to\Da'(M,\Omega_{1/2})$ is the unique continuous extension of \eqref{eq:P}. On the other hand, for $F\in c_{00}(K;\Da(\Rn))$ one can use \eqref{eq:P} to obtain $Q'TPF=AF$, and for a general $F\in \ell^{p}(K;\HT^{s+m,p}_{FIO}(\Rn))$ one can approximate by test sequences in \eqref{eq:adjointdef} to obtain the same conclusion.
\end{proof}


The following corollary of Theorem \ref{thm:FIOseq} deals with the special case where $N=M$ and $T$ is the identity operator, although we still allow the sequence $(\theta_{\la})_{\la\in L}\subseteq C^{\infty}_{c}(M)$ to be distinct from $(\psi_{\ka})_{\ka\in K}$. This corollary will frequently be used in the next section, and allowing for distinct sequences will enable us to prove that the definition of $\HpsM$ is independent of the choice of such a sequence.

\begin{corollary}\label{cor:boundedPQ}
Let $p\in[1,\infty]$ and $s\in\R$. Then $Q'P:\ell^{p}(K;\Hps)\to\ell^{p}(L;\Hps)$ is bounded.
\end{corollary}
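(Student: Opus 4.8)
The plan is to derive this as the special case $N=M$, $T=\Id$ of Theorem \ref{thm:FIOseq}. The identity operator on $M$ is a pseudodifferential operator of order $0$, hence belongs to $I^{0}(M,M;\Ca)$ with $\Ca$ the graph of the identity symplectomorphism of $T^{*}M\setminus o$, which is a local canonical graph. With $m=0$, it then remains only to check conditions \eqref{it:FIOseq1} and \eqref{it:FIOseq2} of Theorem \ref{thm:FIOseq}; Theorem \ref{thm:FIOseq} will then deliver the boundedness of $Q'\Id P=Q'P$ on $\ell^{p}(K;\Hps)\to\ell^{p}(L;\Hps)$.

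To verify \eqref{it:FIOseq1}, I would fix $\la\in L$, $\ka\in K$, note that there is nothing to prove when $U_{\la}\cap U_{\ka}=\emptyset$, and otherwise unwind the definitions of $\ka^{*}$, $\la_{*}$ and of multiplication by $\theta_{\la}$ in the respective charts to write
\[
\la_{*}\big(\theta_{\la}\ka^{*}(\psit_{\ka}f)\big)=(\chi_{\la\ka})^{*}_{1/2}\big(\rho_{\la\ka}\,\psit_{\ka}\,f\big),
\]
where $\chi_{\la\ka}:=\ka\circ\la^{-1}$ is the transition diffeomorphism between the relevant open subsets of $\tilde U_{\la}$ and $\tilde U_{\ka}$, and $\rho_{\la\ka}:=\theta_{\la}\circ\ka^{-1}\in C_{c}^{\infty}(\tilde U_{\ka})$. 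This is precisely a cut-off composed with a pullback as in Corollary \ref{cor:coordchange} (with $\gamma=1/2$). The point is to get the bound uniformly in $(\la,\ka)$: by bounded geometry and Lemma \ref{lem:cover}, all the charts have image in a fixed Euclidean ball, the cut-offs $\psit_{\ka}$, $\rho_{\la\ka}$ have derivatives of all orders bounded uniformly, and the transition maps $\chi_{\la\ka}$, $\chi_{\la\ka}^{-1}$ likewise on (slightly enlarged) neighbourhoods of the supports involved, so after harmlessly extending each $\chi_{\la\ka}$ to a diffeomorphism of a fixed open set with uniform bounds, the uniform version of Corollary \ref{cor:coordchange} recorded in Remark \ref{rem:constantsFIO} yields a single constant $C$ with $\|\la_{*}(\theta_{\la}\ka^{*}(\psit_{\ka}f))\|_{\Hps}\leq C\|f\|_{\Hps}$ for all $f$, $\la$, $\ka$.

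For \eqref{it:FIOseq2}, I would observe that $\theta_{\la}\,\Id\,\psi_{\ka}\neq 0$ exactly when $\supp(\theta_{\la})\cap\supp(\psi_{\ka})\neq\emptyset$, hence only when $U_{\la}\cap U_{\ka}\neq\emptyset$; since $(U_{\ka})_{\ka\in K}$ and $(U_{\la})_{\la\in L}$ are covers by geodesic balls of radii comparable to the fixed $\delta$, respectively $\veps$, with uniformly bounded overlap (Lemma \ref{lem:cover}), a routine packing/volume comparison on a manifold with bounded geometry bounds $\sup_{\la}|K(\la)|$ and $\sup_{\ka}|L(\ka)|$. I expect the only genuinely delicate point to be the \emph{uniformity} in \eqref{it:FIOseq1}: the change-of-coordinates estimates must hold with a constant independent of the infinitely many pairs $(\la,\ka)$, which is exactly what the bounded-geometry control on transition maps and cut-offs (via Lemma \ref{lem:cover} and Remark \ref{rem:constantsFIO}) is there to supply; everything else is bookkeeping.
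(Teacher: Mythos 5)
Your proposal is correct and follows exactly the paper's route: apply Theorem \ref{thm:FIOseq} with $T=\mathrm{Id}$, verify condition \eqref{it:FIOseq1} by reducing $\la_{*}(\theta_{\la}\ka^{*}(\psit_{\ka}f))$ to a cut-off times a pullback along the transition map $\mu_{\ka\la}=\ka\circ\la^{-1}$ and invoking Corollary \ref{cor:coordchange} together with the uniformity in Remark \ref{rem:constantsFIO}, \eqref{eq:uniformchange} and Lemma \ref{lem:cover}\eqref{it:cover4}, and verify condition \eqref{it:FIOseq2} from the bounded-overlap property in Lemma \ref{lem:cover}\eqref{it:cover2}. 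The only spot where the paper is a bit more careful than you are is in \eqref{it:FIOseq2}: Lemma \ref{lem:cover}\eqref{it:cover2} controls overlaps within a single cover, so the paper assumes WLOG $\delta\geq\veps$ and notes that if $\ka,\ka'\in K(\la)$ then $U_{3\delta}(p_{\ka})\cap U_{3\delta}(p_{\ka'})\neq\emptyset$, using $\delta\leq\inj(M)/9$ to land inside the hypotheses of that lemma; your ``routine packing'' remark is correct but glosses over this reduction to a single cover.
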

\begin{proof}
We apply Theorem \ref{thm:FIOseq} with $T$ the identity operator, which is an FIO of order zero associated with the graph of the identity map on $T^{*}M\setminus o$. It thus suffices to verify conditions \eqref{it:FIOseq1} and \eqref{it:FIOseq2}.

For \eqref{it:FIOseq1}, we only have to consider $\lambda\in L$ and $\ka\in K$ with $U_{\la}\cap U_{\ka}\neq \emptyset$.
Then Corollary \ref{cor:coordchange}, Remark \ref{rem:constantsFIO}, \eqref{eq:uniformchange} and Lemma \ref{lem:cover} \eqref{it:cover4} yield
\begin{align*}
\|\la_{*}(\theta_{\la}\ka^{*}(\psit_{\ka}f))\|_{\Hps}&=\|\thetat_{\la}\mu_{\ka\la}^{*}(\psit_{\ka}f)\|_{\Hps}\lesssim \|\mu_{\ka\la}^{*}(\psit_{\ka}f)\|_{\Hps}\\
&\lesssim \|f\|_{\Hps},
\end{align*}
for implicit constants independent of $f\in\Hps$, $\la\in L$ and $\ka\in K$.

For \eqref{it:FIOseq2}, we suppose that $\delta\geq\veps$, with the proof for the other case being analogous. First fix $\la\in L$, and note that $K(\la)\subseteq \{\ka\in K\mid U_{\la}\cap U_{\ka}\neq\emptyset\}$. Hence, for a given $\ka\in K(\la)$, one has $U_{3\delta}(p_{\ka})\cap U_{3\delta}(p_{\ka'})\neq\emptyset$ for all $\ka'\in K(\la)$. Since $\delta\leq \inj(M)/9$, the conclusion for $|K(\lambda)|$ now follows from Lemma \ref{lem:cover} \eqref{it:cover2}. One can similarly apply Lemma \ref{lem:cover} \eqref{it:cover2} for fixed $\ka\in K$ and $\la\in L(\ka)$, since then $U_{3\delta}(p_{\la})\cap U_{3\delta}(p_{\la'})\neq\emptyset$ for all $\la'\in L(\ka)$.
\end{proof}

\begin{remark}\label{rem:seqclassical}
Theorem \ref{thm:FIOseq} and Corollary \ref{cor:boundedPQ} were formulated for $\Hps$-valued sequences, but both relied on Lemma \ref{lem:matrix}, and one could equally well apply that abstract result with $X=Y=\HT^{s,p}(\Rn)$. Then condition \eqref{it:FIOseq1} in Theorem \ref{thm:FIOseq} will typically not be satisfied for general Fourier integral operators, and pseudodifferential operators form a more natural class to consider. For us, it is only relevant to note that Corollary \ref{cor:boundedPQ} holds with $\Hps$ replaced by $\HT^{s,p}(\Rn)$, as follows from the fact that $\HT^{s,p}(\Rn)$ is also invariant under coordinate changes as in Corollary \ref{cor:coordchange} and Remark \ref{rem:constantsFIO} (see e.g.~\cite[Sections 4.2 and 4.3]{Triebel92}).
\end{remark}

\subsection{A localization principle}\label{subsec:localization}

In this subsection, we will consider the special case where $M=\Rn$ and prove a localization principle for the Hardy spaces for Fourier integral operators on $\Rn$.

Throughout this subsection, let $\psi\in C^{\infty}_{c}(\Rn)$ be such that $0\leq \psi\leq 1$ and
\begin{equation}\label{eq:reproduce}
\sum_{\ka\in\Z^{n}}\psi(x-\ka)^{2}=1\quad(x\in\Rn).
\end{equation}
Set $\psi_{\ka}(x):=\psi(x-\ka)$ for $\ka\in\Z^{n}$, and note that $(\psi_{\ka})_{\ka\in\Z^{n}}$ is a uniformly locally finite family as in Lemma \ref{lem:cover}. Here the associated family of geodesic charts consists of translations $(\tau_{\ka})_{\ka\in\Z^{n}}$ of a single ball $B$, centered at zero and containing $\supp(\psi)$.

More precisely, set $\tau_\ka(x):=x-\ka$ for $\ka\in\Z^{n}$ and $x\in\Rn$. Then $\tau_{\ka}:B+\ka\to B$ is a geodesic normal chart as in Lemma \ref{lem:cover} for each $\ka\in \Z^{n}$, and $(\psi_{\ka})_{\ka\in \Z^{n}}$ is the associated uniformly locally finite family. In this case, the operators $Q$ and $P$ from \eqref{eq:Q} and \eqref{eq:P} are given by
\begin{equation}\label{eq:QRn}
\lb Qf,G\rb_{\Z^{n}}=\sum_{\ka\in \Z^{n}}\lb \tau_{-\ka}^{*}(\psi_{\ka}f),g_{\ka}\rb_{\Rn}=\sum_{\ka\in \Z^{n}}\lb \psi\tau_{-\ka}^{*}f,g_{\ka}\rb_{\Rn},
\end{equation}
for $f\in \Da'(\Rn)$ and $G=(g_{\ka})_{\ka\in K}\in c_{00}(\Z^{n};\Da(\Rn))$, and
\begin{equation}\label{eq:PRn}
PF=\sum_{\ka\in \Z^{n}}\tau_{\ka}^{*}(\psi f_{\ka}),
\end{equation}
for $F=(f_{\ka})_{\ka\in \Z^{n}}\in c_{00}(\Z^{n};\Da(\Rn))$.

Now, for $p\in[1,\infty]$ and $s\in\R$, an $f\in\Sw'(\Rn)$ satisfies $f\in\HT^{s,p}(\Rn)$ if and only if $(\psi_{\ka}f)_{\ka\in\Zn}\in \ell^{p}(\Zn;\HT^{s,p}(\Rn))$, in which case
\begin{equation}\label{eq:localclassical}
\|f\|_{\HT^{s,p}(\Rn)}\eqsim \|(\psi_{\ka}f)_{\ka\in\Zn}\|_{\ell^{p}(\Zn;\HT^{s,p}(\Rn))}
=\Big(\sum_{\ka\in\Zn}\|\psi_{\ka}f\|^p_{\HT^{s,p}(\Rn)}\Big)^{1/p},
\end{equation}
with the obvious modification in the final identity for $p=\infty$. This statement is essentially contained in \cite[Theorem 2.4.7]{Triebel92}, although there only the case $p<\infty$ is considered, and $\psi_{\ka}$ is replaced by $\psi_{\ka}^{2}$. To obtain \eqref{eq:localclassical} one can rely on the arguments below, combined with Remark \ref{rem:seqclassical} and the atomic decomposition of $\HT^{s,1}(\Rn)$ in Lemma \ref{lem:atomHp}.

The following theorem is an analogue of \eqref{eq:localclassical} for $\Hps$.

\begin{theorem}\label{thm:localization}
Let $p\in[1,\infty]$ and $s\in\R$. Then there exists a $C>0$ such that an $f\in\Sw'(\Rn)$ satisfies $f\in\Hps$ if and only if $(\psi_{\ka}f)_{\ka\in\Zn}\in\ell^{p}(\Zn;\Hps)$, in which case
\[
\frac{1}{C}\|f\|_{\Hps}\leq \|(\psi_{\ka}f)_{\ka\in\Zn}\|_{\ell^{p}(\Zn;\Hps)}\leq C\|f\|_{\Hps}.
\]
\end{theorem}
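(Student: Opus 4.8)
The plan is to deduce Theorem \ref{thm:localization} from the operator-theoretic machinery of Section \ref{sec:sequences}, applied in the special case $M=N=\Rn$ with the translation charts $(\tau_\ka)_{\ka\in\Zn}$ and $(\psi_\ka)_{\ka\in\Zn}$ described above. The key point is to recognize both inequalities as instances of the boundedness of $Q'P$ from Corollary \ref{cor:boundedPQ} and of the basic intertwining identity $PQ=\Id$ from Proposition \ref{prop:Qproperties}\eqref{it:Qproperties4}, together with the fact that for $M=\Rn$ the half-density formalism is trivialized (one may take $\gamma=1/2$ and identify $\Da'(\Rn,\Omega_{1/2})$ with $\Da'(\Rn)$ via multiplication by the constant Lebesgue half-density, so that $Q$ and $P$ act exactly as in \eqref{eq:QRn} and \eqref{eq:PRn}).

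First I would prove the right-hand inequality, i.e.\ that $f\in\Hps$ implies $(\psi_\ka f)_{\ka\in\Zn}\in\ell^p(\Zn;\Hps)$ with a norm bound. Observe that $\psi_\ka f=\tau_\ka^*(\psi\,\tau_{-\ka}^*f)$ precisely when we also have the partition-of-unity square; more directly, by \eqref{eq:QRn}, the $\ka$-th component of $Qf$ is $\tau_{-\ka}^*(\psi_\ka f)=\psi\,\tau_{-\ka}^*f$, and since translation is an isometry on $\Hps$ (Corollary \ref{cor:coordchange} with $\chi=\tau_{\pm\ka}$, or Remark \ref{rem:constantsFIO} for the uniformity in $\ka$), it suffices to bound $\|Qf\|_{\ell^p(\Zn;\Hps)}$ by $\|f\|_{\Hps}$. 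This is exactly the statement that $Q$ maps $\Hps$ boundedly into $\ell^p(\Zn;\Hps)$; one obtains it either directly from Corollary \ref{cor:coordchange}/Remark \ref{rem:constantsFIO} applied to the multiplication operator $f\mapsto \psi\,\tau_{-\ka}^*f$ with uniform constants in $\ka$, noting that $\supp\psi$ is a fixed compact set and only $O(1)$ many translates of $\supp\psi$ meet any given point, which supplies the $\ell^p$-summability exactly as in the proof of Lemma \ref{lem:matrix}.

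Next I would prove the left-hand inequality, i.e.\ that $(\psi_\ka f)_{\ka\in\Zn}\in\ell^p(\Zn;\Hps)$ implies $f\in\Hps$ with the reverse bound. Here I would use $f=PQf$ (Proposition \ref{prop:Qproperties}\eqref{it:Qproperties4}) to write $f=P\big((\psi\,\tau_{-\ka}^*f)_\ka\big)=\sum_\ka \tau_\ka^*(\psi\cdot \psi\,\tau_{-\ka}^*f)$, which by \eqref{eq:reproduce} is a legitimate reproducing formula. So it suffices to show $P:\ell^p(\Zn;\Hps)\to\Hps$ is bounded. But $P$ followed by $Q$ is again an operator between sequence spaces of the type handled by Corollary \ref{cor:boundedPQ} — indeed taking the second chart system $(\theta_\la)_{\la\in L}$ equal to the first gives that $Q'P=QP:\ell^p(\Zn;\Hps)\to\ell^p(\Zn;\Hps)$ is bounded — and then one recovers $\|PF\|_{\Hps}$ from $\|QPF\|_{\ell^p}$ by noting that $P$ restricted to the range is inverted by $Q$ up to the bounded map $QP$; more cleanly, one estimates $\|PF\|_{\Hps}=\sup\{|\lb PF,g\rb|: g\in C_c^\infty, \|g\|_{\HT^{-s,p'}_{FIO}}\le 1\}$ using duality \eqref{eq:dualRn} and the adjoint relation \eqref{eq:Qprime}--type identity $\lb PF,g\rb=\lb F,Qg\rb$ from Proposition \ref{prop:Qproperties}\eqref{it:Qproperties3}, together with Hölder on $\ell^p\times\ell^{p'}$ and the boundedness of $Q$ on $\HT^{-s,p'}_{FIO}$ already established; for $p=\infty$ one argues directly with the finite-overlap sum instead. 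Either route reduces everything to Corollary \ref{cor:boundedPQ}, Corollary \ref{cor:coordchange}, and the density statements already in hand.

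The main obstacle I anticipate is bookkeeping rather than conceptual: making sure the half-density identifications in Section \ref{sec:sequences} specialize correctly and harmlessly when $M=\Rn$ (so that $Q,P$ really are \eqref{eq:QRn}, \eqref{eq:PRn} and $PQ=\Id$ becomes the reproducing formula from \eqref{eq:reproduce}), and handling the endpoint $p=\infty$ where $\ell^\infty$ and the $\bmo$-based duality require the direct finite-overlap argument rather than the clean $\ell^p$--$\ell^{p'}$ pairing. A secondary point is verifying the uniform-in-$\ka$ constants when applying Corollary \ref{cor:coordchange} to the translations $\tau_{\pm\ka}$ and the fixed cutoff $\psi$; this is covered by Remark \ref{rem:constantsFIO} since all derivatives of $\tau_{\pm\ka}$ and $\psi_\ka$ are uniformly bounded, but it should be stated explicitly to justify the $\ell^p$-summability of the component bounds.
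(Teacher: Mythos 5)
There is a genuine gap in the first half of your argument, and it is precisely where the nontrivial content of the theorem lies. You claim that the boundedness of $Q:\Hps\to\ell^p(\Zn;\Hps)$ follows from Corollary \ref{cor:coordchange}/Remark \ref{rem:constantsFIO} (uniform bounds on $f\mapsto\psi\,\tau_{-\ka}^*f$) together with the finite overlap of the supports of the $\psi_\ka$, ``exactly as in the proof of Lemma \ref{lem:matrix}.'' This does not work. Lemma \ref{lem:matrix} concerns operator matrices \emph{between sequence spaces}; its $\ell^p$-summability comes from rearranging a sum that is already indexed by the $\ell^p$ structure of the input $F\in\ell^p(J;X)$. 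Here the input is a single $f\in\Hps$, not a sequence, and uniform bounds $\|\psi_\ka f\|_{\Hps}\lesssim\|f\|_{\Hps}$ only give the trivial $\ell^\infty$ estimate. Finite overlap of the $\supp\psi_\ka$ would give the $\ell^p$ bound if the $\Hps$ norm were local (pointwise, like $L^p$ or $W^{N,p}$, where one writes $\sum_\ka\int|\psi_\ka f|^p=\int(\sum_\ka\psi_\ka^p)|f|^p\lesssim\int|f|^p$), but the $\Hps$ norm involves the nonlocal Fourier multipliers $\ph_\w(D)$ in \eqref{eq:HpFIOnorm}, so $\ph_\w(D)(\psi_\ka f)$ is not compactly supported and the overlapping supports of the $\psi_\ka f$ do not yield any cancellation after applying $\ph_\w(D)$. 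Concretely, for $p=1$ your argument would produce at best $\sum_\ka\|\psi_\ka f\|_{\HT^{s,1}_{FIO}}\lesssim(\#\ka)\|f\|_{\HT^{s,1}_{FIO}}$, which is useless.

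The paper's proof circumvents exactly this by establishing $Q$-boundedness \emph{only at the endpoint $p=1$} via the atomic decomposition of $\HT^{s,1}_{FIO}(\Rn)$ (Proposition \ref{prop:atom}), and then interpolating. The atomic decomposition is essential: atoms are supported in balls of radius at most $1$, so each atom is hit by only $O(1)$ many $\psi_\ka$ and the $\ell^1$ sum over $\ka$ really is finite term by term; the low-frequency remainder $r(D)f$ is compactly Fourier-supported, so one can afford to descend via the Sobolev embeddings \eqref{eq:Sobolev} to the \emph{local} space $W^{N,1}(\Rn)$, where the finite-overlap argument finally applies. Your proposal contains none of this. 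Once $Q:\HT^{s,1}_{FIO}\to\ell^1(\HT^{s,1}_{FIO})$ is in hand, the rest of your outline (the bounded projection $QP$ from Corollary \ref{cor:boundedPQ}, interpolation of complemented subspaces, the trivial $p=1$ bound for $P$, the duality argument for $P$ at $p=\infty$) is indeed roughly the paper's route for extending to all $p$. But without the atomic decomposition step, the argument does not close, since the left-hand inequality for $P$ via duality at $p=\infty$ also relies on $Q$-boundedness at $p=1$ (with $-s$), which is exactly the missing piece.
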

\begin{proof}
Let $Q$ and $P$ be as in \eqref{eq:QRn} and \eqref{eq:PRn}. It suffices to show that
\[
Q:\Hps\to \ell^{p}(\Zn;\Hps)\text{ and }P:\ell^{p}(\Zn;\Hps)\to\Hps
\]
are bounded, given that the $\Hps$ norm is invariant under translations, and that $PQf=f$ for all $f\in\Da'(\Rn)$.

We claim that it in fact suffices to show that $Q:\HT^{s,1}_{FIO}(\Rn)\to \ell^{1}(\Z^{n};\HT^{s,1}_{FIO}(\Rn))$ is bounded, for all $s\in\R$. To prove this claim, first note that $Q:\HT^{s,\infty}_{FIO}(\Rn)\to \ell^{\infty}(\Zn;\HT^{s,\infty}(\Rn))$ is bounded, since
\[
\sup_{\ka\in\Zn}\|\psi \tau_{-\ka}^{*}f\|_{\HT^{s,\infty}_{FIO}(\Rn)}\lesssim \|f\|_{\HT^{s,\infty}_{FIO}(\Rn)}
\]
for all $f\in\HT^{s,\infty}_{FIO}(\Rn)$. Here we used that $\psi$ acts boundedly on $\HT^{s,\infty}_{FIO}(\Rn)$, by Corollary \ref{cor:coordchange}, and that the $\HT^{s,\infty}_{FIO}(\Rn)$ norm is invariant under translations. Moreover, by Corollary \ref{cor:boundedPQ}, $QP:\ell^{p}(\Z^{n};\Hps)\to\ell^{p}(\Z^{n};\Hps)$ is a bounded projection for all $p\in[1,\infty]$ and $s\in\R$. Hence, if we know that $Q:\HT^{s,1}_{FIO}(\Rn)\to \ell^{1}(\Z^{n};\HT^{s,1}_{FIO}(\Rn))$ is bounded, then interpolation of complemented subspaces (see \cite[Theorem 1.17.1.1]{Triebel78}) shows that $Q:\Hps\to\ell^{p}(\Zn;\Hps)$ is bounded.

On the other hand, the boundedness of $P:\ell^{1}(\Z^{n};\HT^{s,1}_{FIO}(\Rn))\to \HT^{s,1}_{FIO}(\Rn)$ is immediate. Indeed, for all $F=(f_{\ka})_{\ka\in\Zn}\in \ell^{1}(\Zn;\HT^{s,1}_{FIO}(\Rn))$ one has
\[
\|PF\|_{\HT^{s,1}_{FIO}(\Rn)}\leq \sum_{\ka\in\Zn}\|\tau_{\ka}^{*}(\psi f_{\ka})\|_{\HT^{s,1}_{FIO}(\Rn)}\lesssim \sum_{\ka\in\Zn}\|f_{\ka}\|_{\HT^{s,1}_{FIO}(\Rn)}.
\]
Moreover, if $Q:\HT^{-s,1}_{FIO}(\Rn)\to \ell^{1}(\Zn;\HT^{-s,1}_{FIO}(\Rn))$ is bounded, then for all $F\in \ell^{\infty}(\Zn;\HT^{s,\infty}_{FIO}(\Rn))$ and $g\in\Da(\Rn)$ one has, by Proposition \ref{prop:Qproperties} and Corollary \ref{cor:boundedPQ},
\begin{align*}
|\lb PF,g\rb_{\Rn}|&=|\lb PF,PQg\rb_{\Rn}|=|\lb QPF,Qg\rb_{\Z^{n}}|\\
&\lesssim\|QPF\|_{\ell^{\infty}(\Zn;\HT^{s,\infty}_{FIO}(\Rn))}\|Qg\|_{\ell^{1}(\Zn;\HT^{-s,1}_{FIO}(\Rn))}\\
&\lesssim \|F\|_{\ell^{\infty}(\Zn;\HT^{s,\infty}_{FIO}(\Rn))}\|g\|_{\HT^{-s,1}_{FIO}(\Rn)}.
\end{align*}
Here we also used that
\[
\ell^{\infty}(\Zn;\HT^{s,\infty}_{FIO}(\Rn))\subseteq (\ell^{1}(\Zn;\HT^{-s,1}_{FIO}(\Rn)))^{*},
\]
by \cite[Proposition 1.3.1]{HyNeVeWe16} and because $\HT^{s,\infty}_{FIO}(\Rn)=(\HT^{-s,1}_{FIO}(\Rn))^{*}$. By Lemma \ref{lem:Ccdense}, $\Da(\Rn)\subseteq\HT^{-s,1}_{FIO}(\Rn))$ is dense, so $P:\ell^{\infty}(\Zn;\HT^{s,\infty}_{FIO}(\Rn))\to\HT^{s,\infty}_{FIO}(\Rn))$ is bounded. Now interpolation of complemented subspaces shows that $P:\ell^{p}(\Zn;\Hps)\to\Hps$ is bounded for all $p\in[1,\infty]$ an $s\in\R$. This proves the claim.

It remains to show that $Q:\HT^{s,1}_{FIO}(\Rn)\to\ell^{1}(\Z^{n};\HT^{s,1}_{FIO}(\Rn))$ is bounded. To this end, we will rely on the atomic decomposition of $\HT^{s,1}_{FIO}(\Rn)$. More precisely, by Proposition \ref{prop:atom}, it suffices to prove that
\begin{equation}\label{eq:atomlow}
\sum_{\ka\in\Zn}\|\psi \tau_{-\ka}^{*}(r(D)f)\|_{\HT^{s,1}_{FIO}(\Rn)}\lesssim \|f\|_{\HT^{s,1}_{FIO}(\Rn)}
\end{equation}
for all $f\in\HT^{s,1}_{FIO}(\Rn)$, and that
\begin{equation}\label{eq:atomhigh}
\sum_{\ka\in\Zn}\|\psi \tau_{-\ka}^{*}f\|_{\HT^{s,1}_{FIO}(\Rn)}\lesssim \|f\|_{\HT^{s,1}_{FIO}(\Rn)}
\end{equation}
whenever $f$ is an $\HT^{s,1}_{FIO}(\Rn)$-atom associated with a ball of radius at most $1$. Here $r\in C^{\infty}_{c}(\Rn)$ is such that $r(\xi)=1$ for $|\xi|\leq 1$ and $r(\xi)=0$ for $|\xi|\geq 2$, although we will only use that $r\in C^{\infty}_{c}(\Rn)$.

For \eqref{eq:atomlow}, fix $N\in\N$ such that $N>s+s(1)$. Then \eqref{eq:Sobolev} yields
\begin{align*}
&\sum_{\ka\in\Zn}\|\psi \tau_{-\ka}^{*}(r(D)f)\|_{\HT^{s,1}_{FIO}(\Rn)}\lesssim \sum_{\ka\in\Zn}\|\psi \tau_{-\ka}^{*}(r(D)f)\|_{\HT^{s+s(1),1}(\Rn)}\\
&\lesssim \sum_{\ka\in\Zn}\|\psi \tau_{-\ka}^{*}(r(D)f)\|_{W^{N,1}(\Rn)}=\sum_{\ka\in\Zn}\|\psi_{\ka}r(D)f\|_{W^{N,1}(\Rn)}\\
&= \sum_{\alpha\in\Z^{n},|\alpha|\leq N}\sum_{\ka\in \Z^{n}}\|\partial_{x}^{\alpha}(\psi_{\ka}r(D)f)\|_{L^{1}(\Rn)},
\end{align*}
for implicit constants independent of $f\in\HT^{s,1}_{FIO}(\Rn)$. Moreover, since $(\psi_{\ka})_{\ka\in\Z^{n}}\subseteq C^{\infty}_{c}(\Rn)$ is a uniformly locally finite family, we can write $\Z^{n}=\sqcup_{j=1}^{M}A_{j}$ for some $M\in\N$, where each $A_{j}\subseteq \Z^{n}$ has the property that $\supp(\psi_{\ka})\cap \supp(\psi_{\la})=\emptyset$ whenever $\ka,\la\in A_{j}$. Hence, for each $\alpha\in\Z_{+}^{n}$ with $|\alpha|\leq N$ we can combine Leibniz' rule with the Sobolev embeddings in \eqref{eq:Sobolev}:
\begin{align*}
&\sum_{\ka\in \Z^{n}}\|\partial_{x}^{\alpha}(\psi_{\ka}r(D)f)\|_{L^{1}(\Rn)}=\sum_{j=1}^{M}\sum_{\ka\in A_{j}}\int_{\supp(\psi_{\ka})}|\partial_{x}^{\alpha}(\psi_{\ka}r(D)f)(x)|\ud x\\
&= \sum_{j=1}^{M} \int_{\Rn}\Big|\sum_{\ka\in A_{j}}\partial_{x}^{\alpha}(\psi_{\ka}r(D)f)(x)\Big|\ud x\lesssim \sum_{j=1}^{M} \|r(D)f\|_{W^{N,1}(\Rn)}\lesssim \|f\|_{\HT^{s-s(1),1}(\Rn)}\\
&\lesssim \|f\|_{\HT^{s,1}_{FIO}(\Rn)}.
\end{align*}
Here we also used that $r\in C^{\infty}_{c}(\Rn)$. This proves \eqref{eq:atomlow}.

On the other hand, let $f$ be an $\HT^{s,1}_{FIO}(\Rn)$-atom, associated with a ball in $\Sp$ of radius at most $1$. Then $\supp(f)$ is contained in a ball in $\Rn$ of radius at most $1$, by \eqref{eq:supportatom}. Hence the collection $K'$ of $\ka\in\Z^{n}$ with $\psi \tau_{\ka}^{*}f\neq 0$ has at most $N'\in\N$ elements, where $N'$ is independent of $f$. We thus obtain \eqref{eq:atomhigh}:
\begin{align*}
\sum_{\ka\in\Zn}\|\psi\tau_{-\ka}^{*}f\|_{\HT^{s,1}_{FIO}(\Rn)}&=\sum_{\ka\in K'}\|\psi\tau_{-\ka}^{*}f\|_{\HT^{s,1}_{FIO}(\Rn)}\lesssim \sum_{\ka\in K'}\|f\|_{\HT^{s,1}_{FIO}(\Rn)}\\
&\lesssim \|f\|_{\HT^{s,1}_{FIO}(\Rn)}.
\end{align*}
This concludes the proof.
\end{proof}

\section{Hardy spaces for Fourier integral operators on manifolds}\label{sec:spacesmanifold}

In this section we define the Hardy spaces for Fourier integral operators on manifolds, and we derive their fundamental properties.

Throughout, $(M,g)$ is a complete
Riemannian manifold of dimension $n\in\N$ with bounded geometry, with smooth structure $\A$ (see Section \ref{subsec:geom}).

\subsection{Definitions}\label{subsec:defman}

Throughout, fix $\delta\in (0,\inj(M)/9]$, $K\subseteq \A$ and a uniformly locally finite family $(\psi_{\ka})_{\ka\in K}\subseteq C^{\infty}_{c}(M)$ as in Lemma \ref{lem:cover}, with $\psit_{\ka}:=\psi_{\ka}\circ\ka^{-1}\in C^{\infty}_{c}(\Util_{\ka})$ for $\ka\in K$. We will interchangeably write $U_{\ka}$ and $U_{\delta}(p_{\ka})$ for the domain of $\ka$. Recall also the definition of the pullback
\[
(\ka^{-1})^{*}=(\ka^{-1})^{*}_{1/2}:\Da'(U_{\ka},\Omega_{1/2})\to \Da'(\Util_{\ka}),
\]
cf.~\eqref{eq:pullbackdensity}, and that $\ka_{*}=(\ka^{-1})^{*}$.

\begin{definition}\label{def:HpFIOman}
Let $p\in[1,\infty]$ and $s\in \mathbb{R}$. Then $\HpsM$ consists of those $u\in\Da'(M,\Omega_{1/2})$ such that $(\ka_{*}(\psi_{\ka}u))_{\ka\in K}\in \ell^{p}(K;\Hps)$, with the norm
\begin{equation}\label{eq:defHpFIOnorm}
\|u\|_{\HpsM}:=\|(\ka_{*}(\psi_{\ka}u))_{\ka\in K}\|_{\ell^{p}(K;\Hps)}.
\end{equation}
We write $\HpM:=\HT^{0,p}_{FIO}(M)$.
\end{definition}

Note that, after trivially identifying $\Da'(\Rn)$ with $\Da'(\Rn,\Omega_{1/2})$, Definition \ref{def:HpFIOman} extends the case where $M=\Rn$ from Definition \ref{def:HpFIO}, as follows from Theorem \ref{thm:localization}.

To relate the Hardy spaces for FIOs to classical function spaces on manifolds, we also need to define the latter. This is done in the same way as in Definition \ref{def:HpFIOman}.

\begin{definition}\label{def:Hpman}
Let $p\in[1,\infty]$ and $s\in \mathbb{R}$. Then $\HT^{s,p}(M)$ consists of those $u\in \Da'(M,\Omega_{1/2})$ such that $(\ka_{*}(\psi_{\ka}u))_{\ka\in K}\in \ell^{p}(K;\HT^{s,p}(M))$, with the norm
\[
\|u\|_{\HT^{s,p}(M)}:=\|(\ka_{*}(\psi_{\ka}u))_{\ka\in K}\|_{\ell^{p}(K;\HT^{s,p}(\Rn))}.
\]
We write $\HT^{p}(M):=\HT^{0,p}(M)$, and $\bmo(M):=\HT^{\infty}(M)$.
\end{definition}

\begin{remark}\label{rem:HpFIOman}
When formulating Definition \ref{def:HpFIOman} using the typical identification of a distributional density $u\in \Da'(M,\Omega_{1/2})$ with a sequence of distributions $(u_{\ka})_{\ka\in \A}$ on $\Rn$, via $u_{\ka}=\ka_{*}u$, one gets
\[
\|u\|_{\HpsM}=\Big(\sum_{\ka\in K}\|\psit_{\ka}u_{\ka}\|_{\Hps}^{p}\Big)^{1/p}
\]
for $p<\infty$, with the obvious modification for $p=\infty$. Note that if $M$ is compact and $K$ is finite, then $u\in \HpsM$ if and only if $\psit_{\ka}u_{\ka}\in\Hps$ for all $\ka\in K$. The same statements hold with $\HT^{s,p}_{FIO}(M)$ replaced by $\HT^{s,p}(M)$.
\end{remark}

In Section \ref{subsec:equivnorms} we will show that, for $1<p<\infty$, the spaces $\HT^{s,p}(M)$ coincide with classical Sobolev spaces on $M$ as introduced in \cite{Aubin82,Strichartz83} in a coordinate-free manner, up to a fractional power of the Riemannian density $\rho_{g}\in C^{\infty}(M,\Omega_{1})$. To this end, we first define Sobolev spaces on $M$ using fractional powers of the non-positive Laplace--Beltrami operator $\Delta_{g}$. Recall that the latter acts on smooth functions $f\in C^{\infty}(M)$ in local coordinates by
\begin{equation}\label{eq:Laplace}
\Delta_{g}f:=\frac{1}{\sqrt{\det g}}\sum_{i,j=1}^{n}\partial_{i}\big(\sqrt{\det g}\,g^{ij}\partial_{j}f\big).
\end{equation}

\begin{definition}\label{def:Sobolevman}
Let $p\in(1,\infty)$ and $s\in \mathbb{R}$. Then $W^{s,p}(M)$ is the completion of $\Da(M,\Omega_{1/2})$ with respect to the norm
\begin{equation}\label{eq:LpMnorm}
\|u\|_{W^{s,p}(M)}:=\Big(\int_{M} |(1-\Delta_{g})^{s/2}(\rho_{g}^{-1/2}u)(x)|^{p}\rho_{g}(x)\Big)^{1/p}
\end{equation}
for $u\in \Da(M,\Omega_{1/2})$. We write $L^{p}(M):=W^{0,p}(M)$.
\end{definition}

Note that $\rho_{g}^{-1/2}u\in \Da(M,\Omega_{0})=C^{\infty}_{c}(M)$ for each $u\in \Da(M,\Omega_{1/2})$, so that $(1-\Delta_{g})^{s/2}(\rho^{-1/2}_{g}u)\in\Da'(M,\Omega_{0})$ is a well defined function for $s\leq 0$, due to the spectral theorem. It follows from abstract methods (see e.g.~\cite{Strichartz83,Haase06a}) that \eqref{eq:LpMnorm} is finite for such $u$ and $s$. For general $s\in\R$ one sets
\[
(1-\Delta_{g})^{s/2}f:=(1-\Delta_{g})^{s/2-k}(1-\Delta_{g})^{k}f
\]
for $f\in C^{\infty}_{c}(M)$ and some $k\in\N$ with $k\geq s/2$, which is independent of the choice of $k$. Then $W^{s,p}(M)\subseteq \Da'(M,\Omega_{1/2})$.

\subsection{Basic properties}\label{subsec:propmanbasic}

In this subsection we derive the basic properties of $\HpsM$. To this end, and also in later subsections, we will use the operators $Q$ and $P$ from Section \ref{sec:sequences}. This is natural because a $u\in\Da'(M,\Omega_{1/2})$ satisfies $u\in\HpsM$ if and only if $Qu\in \ell^{p}(K;\Hps)$, in which case
\begin{equation}\label{eq:Qisom}
\|u\|_{\HpsM}=\|Qu\|_{\ell^{p}(K;\Hps)},
\end{equation}
as follows from the definition of $Q$ in \eqref{eq:Q}.

We first show that $\HpsM$ is a Banach space.

\begin{proposition}\label{prop:Banach}
Let $p\in[1,\infty]$ and $s\in\R$. Then $Q:\HpsM\to \ell^{p}(K;\Hps)$ is an isometry, and $P:\ell^{p}(K;\Hps)\to \HpsM$ is bounded. In particular, $Q\HpsM\subseteq \ell^{p}(K;\Hps)$ is a complemented subspace,
\begin{equation}\label{eq:Pisom}
P:\ell^{p}(K;\Hps)/\ker(P)\to \HpsM
\end{equation}
is an isomorphism, and $\HpsM$ is a Banach space.
\end{proposition}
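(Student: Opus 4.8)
The plan is to deduce everything from the structural properties of $Q$ and $P$ collected in Proposition \ref{prop:Qproperties}, together with the boundedness of $QP$ on $\ell^{p}(K;\Hps)$ furnished by Corollary \ref{cor:boundedPQ} (applied with $N=M$, $L=K$ and $(\theta_{\la})_{\la\in L}=(\psi_{\ka})_{\ka\in K}$, so that there $Q'=Q$). First, that $Q\colon\HpsM\to\ell^{p}(K;\Hps)$ is a well-defined isometry is essentially a reformulation of Definition \ref{def:HpFIOman}: by \eqref{eq:Q} and Proposition \ref{prop:Qproperties} \eqref{it:Qproperties1}, for $u\in\Da'(M,\Omega_{1/2})$ one has $Qu=(\ka_{*}(\psi_{\ka}u))_{\ka\in K}$, so $u\in\HpsM$ precisely when $Qu\in\ell^{p}(K;\Hps)$, and then $\|u\|_{\HpsM}=\|Qu\|_{\ell^{p}(K;\Hps)}$ by \eqref{eq:Qisom}.

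Next I would treat $P$. Given $F\in\ell^{p}(K;\Hps)$, Proposition \ref{prop:Qproperties} \eqref{it:Qproperties2} yields $PF\in\Da'(M,\Omega_{1/2})$, and Corollary \ref{cor:boundedPQ} gives $QPF\in\ell^{p}(K;\Hps)$ with $\|QPF\|_{\ell^{p}(K;\Hps)}\lesssim\|F\|_{\ell^{p}(K;\Hps)}$; here one should note, using the identification of $QP$ with genuine composition established in the proof of Theorem \ref{thm:FIOseq}, that the operator appearing in Corollary \ref{cor:boundedPQ} is indeed $F\mapsto Q(PF)$. Hence $PF\in\HpsM$ and $\|PF\|_{\HpsM}=\|QPF\|_{\ell^{p}(K;\Hps)}\lesssim\|F\|_{\ell^{p}(K;\Hps)}$, so $P\colon\ell^{p}(K;\Hps)\to\HpsM$ is bounded. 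Now I invoke the identity $PQu=u$ from Proposition \ref{prop:Qproperties} \eqref{it:Qproperties4}, valid for all $u\in\Da'(M,\Omega_{1/2})$ and in particular on $\HpsM$: this shows that $P$ is onto $\HpsM$ (every $u\in\HpsM$ equals $P(Qu)$), and that $Q\colon\HpsM\to Q(\HpsM)$ is a surjective isometry with inverse $P|_{Q(\HpsM)}$. Moreover $QP$ is idempotent on $\ell^{p}(K;\Hps)$, since $(QP)(QP)=Q(PQ)P=QP$, its range equals $QP(\ell^{p}(K;\Hps))=Q(P(\ell^{p}(K;\Hps)))=Q(\HpsM)$, and it restricts to the identity there. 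Being the range of a bounded projection, $Q(\HpsM)$ is a closed, complemented subspace of $\ell^{p}(K;\Hps)$, with complement $\ker(QP)$.

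Finally, for completeness: since $\Hps$ is a Banach space, so is $\ell^{p}(K;\Hps)$, hence so is its closed subspace $Q(\HpsM)$; because $Q$ is an isometric isomorphism of $\HpsM$ onto $Q(\HpsM)$, the space $\HpsM$ is complete. The bounded surjection $P$ then descends to a bounded bijection $\ell^{p}(K;\Hps)/\ker(P)\to\HpsM$ between Banach spaces, which is an isomorphism by the open mapping theorem, yielding \eqref{eq:Pisom}. I do not anticipate a substantive obstacle here: the argument is a bookkeeping exercise built on Proposition \ref{prop:Qproperties} and Corollary \ref{cor:boundedPQ}. The only points deserving a little care are (i) checking that the abstract operator $QP$ of Corollary \ref{cor:boundedPQ} coincides with the literal composition of $Q$ with the extended $P$, so that the computation of $\|PF\|_{\HpsM}$ involves no circularity, and (ii) keeping the completeness argument independent of the open mapping theorem until $\HpsM$ has been identified with a closed subspace of $\ell^{p}(K;\Hps)$.
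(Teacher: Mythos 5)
Your argument is correct and follows essentially the same route as the paper: identify $Q$ as an isometry from \eqref{eq:Qisom}, derive boundedness of $P$ from the boundedness of $QP$ (Corollary \ref{cor:boundedPQ} with $Q'=Q$), use $PQ=\mathrm{id}$ to see that $QP$ is a bounded idempotent with range $Q\HpsM$, and conclude completeness and the quotient isomorphism from there. The only difference is that you spell out the bookkeeping (idempotency of $QP$, the identification of the abstract $QP$ with the literal composition, the order in which completeness is established before invoking the open mapping theorem) in more detail than the paper's terser write-up.
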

\begin{proof}
The first statement is \eqref{eq:Qisom}. The second statement follows by observing that
\[
QP:\ell^{p}(K;\Hps)\to\ell^{p}(K;\Hps)
\]
is bounded, by Corollary \ref{cor:boundedPQ} in the special case where $Q'=Q$. Now Proposition \ref{prop:Qproperties} \eqref{it:Qproperties4} shows that $P:\ell^{p}(K;\Hps)\to \HpsM$ is surjective, which implies that \eqref{eq:Pisom} is an isomorphism. For the same reason, $Q\HpsM$ is the range of the bounded projection $QP$ on $\ell^{p}(K;\Hps)$. Since complemented subspaces are closed, or alternatively because $\ker(P)$ is closed, $\HpsM$ is a Banach space.
\end{proof}

Next, we show that the definition of $\HpsM$ is independent of the choice of data as in Lemma \ref{lem:cover}. To this end, let $\veps\in(0,\inj(M)/9]$, $L\subseteq \A$ and a uniformly locally finite family $(\theta_{\la})_{\la\in L}\subseteq C^{\infty}_{c}(M)$
as in Lemma \ref{lem:cover} be given, possibly different from the data that was used to define $\HpsM$.

\begin{proposition}\label{prop:independence}
Let $p\in[1,\infty]$ and $s\in\R$. Then there exists a $C>0$ with the following property. A $u\in \Da'(M,\Omega_{1/2})$ satisfies $u\in\HpsM$ if and only if $(\la_{*}(\theta_{\la}u))_{\la\in L}\in \ell^{p}(L;\Hps)$, in which case
\begin{equation}\label{eq:independence}
\frac{1}{C}\|u\|_{\HpsM}\leq \|(\la_{*}(\theta_{\la}u))_{\la\in L}\|_{\ell^{p}(L;\Hps)}\leq C\|u\|_{\HpsM}.
\end{equation}
\end{proposition}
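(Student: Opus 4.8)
The plan is to deduce the statement directly from the operator-theoretic framework of Section \ref{sec:sequences}, without any new computation in local coordinates. Attach to the data $\veps$, $L$, $(\theta_\la)_{\la\in L}$ the operators $Q'$ and $P'$ exactly as in \eqref{eq:Qprime} and \eqref{eq:Pprime}. Then, unravelling the duality pairing $\lb\cdot,\cdot\rb_{L}$, the sequence $(\la_*(\theta_\la u))_{\la\in L}$ is precisely $Q'u$, viewed as an element of $c_{00}(L;\Da(\Rn))^{*}$; likewise $Qu=(\ka_*(\psi_\ka u))_{\ka\in K}$, and $\|u\|_{\HpsM}=\|Qu\|_{\ell^{p}(K;\Hps)}$ by \eqref{eq:Qisom}. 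So the claim reduces to showing that $Qu\in\ell^{p}(K;\Hps)$ if and only if $Q'u\in\ell^{p}(L;\Hps)$, with norms comparable up to a constant depending only on the two sets of data.

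The key input is Corollary \ref{cor:boundedPQ}, which was stated for two a priori different families of charts and cut-offs exactly for the present purpose. It gives that $Q'P\colon\ell^{p}(K;\Hps)\to\ell^{p}(L;\Hps)$ is bounded; exchanging the roles of the two data sets, it also gives that $QP'\colon\ell^{p}(L;\Hps)\to\ell^{p}(K;\Hps)$ is bounded. Now suppose $u\in\HpsM$, i.e.\ $Qu\in\ell^{p}(K;\Hps)$. By Proposition \ref{prop:Qproperties} \eqref{it:Qproperties4} one has $PQu=u$, hence $Q'u=Q'PQu=(Q'P)(Qu)\in\ell^{p}(L;\Hps)$ with $\|Q'u\|_{\ell^{p}(L;\Hps)}\leq\|Q'P\|\,\|u\|_{\HpsM}$. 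Conversely, if $Q'u\in\ell^{p}(L;\Hps)$, the analogue of Proposition \ref{prop:Qproperties} \eqref{it:Qproperties4} for the pair $Q',P'$ gives $P'Q'u=u$, so $Qu=QP'Q'u=(QP')(Q'u)\in\ell^{p}(K;\Hps)$ with $\|u\|_{\HpsM}=\|Qu\|_{\ell^{p}(K;\Hps)}\leq\|QP'\|\,\|Q'u\|_{\ell^{p}(L;\Hps)}$. Taking $C$ to be the larger of the two operator norms yields \eqref{eq:independence}.

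There is essentially no obstacle here; the only minor point is to check that the compositions $Q'PQ$ and $QP'Q'$ are legitimate operations on $c_{00}(\cdot;\Da(\Rn))^{*}$, which is guaranteed by the continuity statements in Proposition \ref{prop:Qproperties} (and their $Q',P'$ analogues). One should also record, as in the proof of Proposition \ref{prop:Banach}, that the case $Q'=Q$ of Corollary \ref{cor:boundedPQ} already shows that $QP$ and $Q'P'$ map the relevant $\ell^{p}$ spaces to themselves, so that all operators in sight restrict to the stated vector-valued sequence spaces.
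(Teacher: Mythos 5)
Your proof is correct and follows essentially the same route as the paper: both rely on Corollary \ref{cor:boundedPQ} (applied in both directions by swapping the roles of the two chart families), the reproducing identity $PQ=\mathrm{id}$ (resp.\ $P'Q'=\mathrm{id}$) from Proposition \ref{prop:Qproperties}, and the isometry $\|u\|_{\HpsM}=\|Qu\|_{\ell^{p}(K;\Hps)}$. The only cosmetic difference is that the paper writes out just one inequality explicitly and invokes symmetry for the other, whereas you spell out both directions; the underlying argument is identical.
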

\begin{proof}
We use the operators $Q'$ and $P'$ from \eqref{eq:Qprime} and \eqref{eq:Pprime}. More precisely, applying Proposition \ref{prop:Qproperties} \eqref{it:Qproperties4} with $Q$ and $P$ replaced by $Q'$ and $P'$, we have $P'Q'u=u$ for all $u\in\Da'(M,\Omega_{1/2})$. Hence Proposition \ref{prop:Banach} and Corollary \ref{cor:boundedPQ} yield
\begin{align*}
\|u\|_{\HpsM}&=\|Qu\|_{\ell^{p}(K;\Hps)}=\|QP'Q'u\|_{\ell^{p}(K;\Hps)}\\
&\lesssim \|Q'u\|_{\ell^{p}(L;\Hps)},
\end{align*}
for all $u\in\Da'(M,\Omega_{1/2})$ for which the right-hand side is finite. This proves the first inequality in \eqref{eq:independence}. The second inequality follows by symmetry.
\end{proof}

The following lemma deals with the simplest case of the theory, where $p=2$.

\begin{lemma}\label{lem:L2}
One has $\HT^{2}_{FIO}(M)=L^{2}(M,\Omega_{1/2})$, with equivalence of norms.
\end{lemma}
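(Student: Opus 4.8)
\emph{Proof sketch.} The plan is to reduce everything to the fact that $\HT^{0,2}_{FIO}(\Rn)=L^{2}(\Rn)$ with equivalent norms, which is immediate from the Sobolev embeddings \eqref{eq:Sobolev}: since $s(2)=0$, taking $p=2$ and $s=0$ there gives $\HT^{0,2}(\Rn)\subseteq\HT^{0,2}_{FIO}(\Rn)\subseteq\HT^{0,2}(\Rn)$, and $\HT^{0,2}(\Rn)=L^{2}(\Rn)$ by definition. Consequently $\ell^{2}(K;\HT^{0,2}_{FIO}(\Rn))=\ell^{2}(K;L^{2}(\Rn))$ with equivalent norms. Now, by Definition \ref{def:HpFIOman} and \eqref{eq:Qisom}, a half-density $u\in\Da'(M,\Omega_{1/2})$ lies in $\HT^{2}_{FIO}(M)=\HT^{0,2}_{FIO}(M)$ if and only if $Qu\in\ell^{2}(K;\HT^{0,2}_{FIO}(\Rn))$, equivalently $Qu\in\ell^{2}(K;L^{2}(\Rn))$, and then $\|u\|_{\HT^{2}_{FIO}(M)}=\|Qu\|_{\ell^{2}(K;\HT^{0,2}_{FIO}(\Rn))}\eqsim\|Qu\|_{\ell^{2}(K;L^{2}(\Rn))}$. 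On the other hand, Proposition \ref{prop:Qproperties}\eqref{it:Qproperties5} asserts that $Q\colon L^{2}(M,\Omega_{1/2})\to\ell^{2}(K;L^{2}(\Rn))$ is an isometry. Comparing the two statements gives $L^{2}(M,\Omega_{1/2})\subseteq\HT^{2}_{FIO}(M)$ with $\|u\|_{\HT^{2}_{FIO}(M)}\eqsim\|u\|_{L^{2}(M,\Omega_{1/2})}$ for all $u\in L^{2}(M,\Omega_{1/2})$.

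It remains to prove the reverse inclusion $\HT^{2}_{FIO}(M)\subseteq L^{2}(M,\Omega_{1/2})$. Since $u=PQu$ for every $u\in\Da'(M,\Omega_{1/2})$ by Proposition \ref{prop:Qproperties}\eqref{it:Qproperties4}, and $Qu\in\ell^{2}(K;L^{2}(\Rn))$ when $u\in\HT^{2}_{FIO}(M)$, it suffices to show that $P$ restricts to a bounded operator $\ell^{2}(K;L^{2}(\Rn))\to L^{2}(M,\Omega_{1/2})$. For $F=(f_{\ka})_{\ka\in K}\in c_{00}(K;\Da(\Rn))$ one has $PF\in\Da(M,\Omega_{1/2})\subseteq L^{2}(M,\Omega_{1/2})$, and $\overline{PF}=P\overline{F}$ by the same computation with pull-backs of smooth half-densities used in the proof of Proposition \ref{prop:Qproperties}\eqref{it:Qproperties5} (the $\psit_{\ka}$ are real-valued). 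Hence, by Proposition \ref{prop:Qproperties}\eqref{it:Qproperties3} and the Cauchy--Schwarz inequality applied chartwise,
\[
\|PF\|_{L^{2}(M,\Omega_{1/2})}^{2}=\lb PF,P\overline{F}\rb_{M}=\lb F,QP\overline{F}\rb_{K}\leq\|F\|_{\ell^{2}(K;L^{2}(\Rn))}\,\|QP\overline{F}\|_{\ell^{2}(K;L^{2}(\Rn))},
\]
and since $QP$ is bounded on $\ell^{2}(K;\HT^{0,2}_{FIO}(\Rn))=\ell^{2}(K;L^{2}(\Rn))$ by Corollary \ref{cor:boundedPQ}, the right-hand side is $\lesssim\|F\|_{\ell^{2}(K;L^{2}(\Rn))}^{2}$. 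As $c_{00}(K;\Da(\Rn))$ is dense in $\ell^{2}(K;L^{2}(\Rn))$ and $P$ maps continuously into $\Da'(M,\Omega_{1/2})$, a routine completeness argument upgrades this to boundedness of $P\colon\ell^{2}(K;L^{2}(\Rn))\to L^{2}(M,\Omega_{1/2})$, which finishes the proof.

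The main obstacle — essentially the only point requiring care — is this reverse inclusion. An element of $\HT^{2}_{FIO}(M)$ is a priori merely a distributional half-density whose $Q$-image is square-summable, and one must verify that it genuinely belongs to the abstractly defined completion $L^{2}(M,\Omega_{1/2})$; the boundedness of $P$ on the sequence space, proved by the approximation above, is precisely what bridges this gap. Everything else is bookkeeping with the dictionary between $M$ and the coordinate charts furnished by $Q$ and $P$.
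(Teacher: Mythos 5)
Your proof is correct and follows essentially the same route as the paper: reduce to $\HT^{0,2}_{FIO}(\Rn)=L^{2}(\Rn)$ via the Sobolev embeddings and then transfer the identification to $M$ through the operators $Q$ and $P$, using Proposition \ref{prop:Qproperties}\eqref{it:Qproperties5} for the isometry on $L^{2}(M,\Omega_{1/2})$. The one place you elaborate beyond the paper's two-line citation is the reverse inclusion $\HT^{2}_{FIO}(M)\subseteq L^{2}(M,\Omega_{1/2})$, where you show directly (via Proposition \ref{prop:Qproperties}\eqref{it:Qproperties3}, Cauchy--Schwarz, and Corollary \ref{cor:boundedPQ}) that $P$ maps $\ell^{2}(K;L^{2}(\Rn))$ boundedly into $L^{2}(M,\Omega_{1/2})$; this is a valid and helpful clarification, since Proposition \ref{prop:Banach} a priori gives boundedness of $P$ only into $\HT^{2}_{FIO}(M)$ and the paper's ``follows directly'' implicitly relies on the closed-range-of-$QP$ picture from that proposition instead.
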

\begin{proof}
This follows directly from Propositions \ref{prop:Qproperties} \eqref{it:Qproperties5} and \ref{prop:Banach}, since $\HT^{2}_{FIO}(\Rn)=L^{2}(\Rn)$ with equivalence of norms.
\end{proof}

\begin{remark}\label{rem:L2}
The definition of $\Hp$ in Definition \ref{def:HpFIO} does not yield equality of the $\HT^{2}_{FIO}(\Rn)$ and $L^{2}(\Rn)$ norms. However, in \cite{HaPoRo20} the Hardy spaces for FIOs were defined in such a way that $\HT^{2}_{FIO}(\Rn)=L^{2}(\Rn)$ isometrically.
It thus follows from the proof of Lemma \ref{lem:L2} that, when starting off with the definition of $\Hp$ from \cite{HaPoRo20}, one has $\HT^{2}_{FIO}(M)=L^{2}(M,\Omega_{1/2})$ isometrically.
\end{remark}

To conclude this subsection, we consider the smooth compactly supported half densities as a subset of $\HpsM$.

\begin{proposition}\label{prop:densityM}
Let $p\in[1,\infty]$ and $s\in\R$. Then $\Da(M,\Omega_{1/2})\subseteq \HpsM$. Moreover, if $p<\infty$, then $\Da(M,\Omega_{1/2})$ is dense in $\HpsM$.
\end{proposition}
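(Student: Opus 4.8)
The plan is to transfer both statements from $\Rn$ to $M$ using the intertwining operators $Q$ and $P$ from Section \ref{sec:sequences}, exactly as in the proofs of Propositions \ref{prop:Banach} and \ref{prop:independence}. For the inclusion $\Da(M,\Omega_{1/2})\subseteq\HpsM$, I would take $u\in\Da(M,\Omega_{1/2})$ and apply $Q$: by Proposition \ref{prop:Qproperties} \eqref{it:Qproperties1} one has $Qu=(\ka_{*}(\psi_{\ka}u))_{\ka\in K}\in c_{00}(K;\Da(\Rn))$, so only finitely many entries are nonzero and each lies in $\Da(\Rn)\subseteq\Hps$ (the latter by \cite[Proposition 6.6]{HaPoRo20}, recalled after \eqref{eq:dualRn}). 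Hence $Qu\in\ell^{p}(K;\Hps)$ and, by Proposition \ref{prop:Banach}, $u\in\HpsM$. This part is essentially bookkeeping.

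For the density statement when $p<\infty$, the plan is as follows. Let $u\in\HpsM$; then $Qu\in\ell^{p}(K;\Hps)$. First approximate $Qu$ by a finitely supported sequence $F=(f_{\ka})_{\ka\in K'}$, $K'\subseteq K$ finite, in the $\ell^{p}(K;\Hps)$ norm, which is possible since $c_{00}(K)$ is dense in $\ell^{p}(K)$ for $p<\infty$. Next, using that $C^{\infty}_{c}(\Rn)$ is dense in $\Hps$ by Lemma \ref{lem:Ccdense}, replace each $f_{\ka}$ by some $g_{\ka}\in C^{\infty}_{c}(\Rn)$; one may also arrange $\supp(g_{\ka})\subseteq\Util_{\ka}$ by multiplying with a cutoff equal to $1$ on a neighbourhood of the (compact) support of $\psit_{\ka}$ — here I would invoke Corollary \ref{cor:coordchange} to control the effect of this multiplication in the $\Hps$ norm, or alternatively simply reuse the density of $\Da(\Util_{\ka})$ in the closure of $C^{\infty}_{c}(\Util_{\ka})$ inside $\Hps$. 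The resulting $G=(g_{\ka})_{\ka\in K'}\in c_{00}(K;\Da(\Rn))$ is close to $Qu$ in $\ell^{p}(K;\Hps)$. Now set $v:=PG\in\Da(M,\Omega_{1/2})$, using Proposition \ref{prop:Qproperties} \eqref{it:Qproperties2}. By Proposition \ref{prop:Banach} and Corollary \ref{cor:boundedPQ},
\[
\|u-v\|_{\HpsM}=\|Q(PQu-PG)\|_{\ell^{p}(K;\Hps)}\lesssim \|QP(Qu-G)\|_{\ell^{p}(K;\Hps)}\lesssim \|Qu-G\|_{\ell^{p}(K;\Hps)},
\]
where I used $PQu=u$ from Proposition \ref{prop:Qproperties} \eqref{it:Qproperties4}. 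Choosing $G$ close enough to $Qu$ makes the right-hand side arbitrarily small, which proves density.

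The only genuinely delicate point is the support localization: after replacing $f_{\ka}\in\Hps$ by a $C^{\infty}_{c}(\Rn)$ function via Lemma \ref{lem:Ccdense}, one must ensure the approximant is supported inside the chart domain $\Util_{\ka}$ so that $PG$ is well defined as a smooth compactly supported half density. I expect this to be routine given Corollary \ref{cor:coordchange} (multiplication by a fixed cutoff $\chi_{\ka}\in C^{\infty}_{c}(\Util_{\ka})$ with $\chi_{\ka}\equiv 1$ near $\supp\psit_{\ka}$ is bounded on $\Hps$, and $\chi_{\ka}\psit_{\ka}=\psit_{\ka}$ so nothing is lost when $P$ reinserts the factor $\psit_{\ka}$), but it is the step that requires the most care to state cleanly. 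Everything else is a direct transfer through $Q$ and $P$ of the corresponding facts on $\Rn$.
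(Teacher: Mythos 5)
Your proof is correct and follows essentially the same route as the paper: both reduce the inclusion to $\Da(\Rn)\subseteq\Hps$ via $Q$, and both prove density by approximating $Qu$ in $\ell^p(K;\Hps)$ by an element of $c_{00}(K;\Da(\Rn))$, applying $P$, and invoking $PQu=u$ together with the boundedness of $P:\ell^p(K;\Hps)\to\HpsM$ from Proposition~\ref{prop:Banach}. The paper simply cites the density of $c_{00}(K;\Da(\Rn))$ in $\ell^p(K;\Hps)$ as already established in Section~\ref{subsec:defseq}, where it was noted to follow from precisely the two facts you re-derive: Lemma~\ref{lem:Ccdense} and the density of $c_{00}(K)$ in $\ell^p(K)$.

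The one place you over-complicate matters is the ``delicate'' support localization, which is actually a non-issue. The operator $P$ is defined by $PG=\sum_{\ka}\ka^{*}(\psit_{\ka}g_{\ka})$, and since $\psit_{\ka}\in C^{\infty}_{c}(\Util_{\ka})$, the product $\psit_{\ka}g_{\ka}$ already lies in $\Da(\Util_{\ka})$ regardless of the support of $g_{\ka}\in\Da(\Rn)$. This is exactly the content of Proposition~\ref{prop:Qproperties}~\eqref{it:Qproperties2}, which you invoke: it requires only $G\in c_{00}(K;\Da(\Rn))$, with no support restriction. So once $G\in c_{00}(K;\Da(\Rn))$ is close to $Qu$ you may set $v=PG$ directly. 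Moreover, the cutoff argument you sketch would not quite work as stated: Corollary~\ref{cor:coordchange} gives boundedness of multiplication by a fixed cutoff $\chi_{\ka}$ on $\Hps$, but not smallness of $\|(1-\chi_{\ka})g_{\ka}\|_{\Hps}$, so you could not conclude that $\chi_{\ka}g_{\ka}$ is still close to $f_{\ka}$. The saving observation you make ($\chi_{\ka}\psit_{\ka}=\psit_{\ka}$, so the cutoff disappears after applying $P$) is correct, but it shows the cutoff is superfluous rather than that it is controlled.
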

\begin{proof}
By Proposition \ref{prop:Qproperties} \eqref{it:Qproperties1}, $Qu\in c_{00}(K;\Da(\Rn))$ for all $u\in\Da(M,\Omega_{1/2})$. Since $\Da(\Rn)\subseteq\Sw(\Rn)\subseteq\Hps$, this implies that $Qu\in\ell^{p}(K;\Hps)$, which in turn means that $u\in \HpsM$, by Proposition \ref{prop:Banach}.

Now suppose that $p<\infty$, and let $u\in\HpsM$. Then $c_{00}(K;\Da(\Rn))\subseteq\ell^{p}(K;\Hps)$ lies dense, as remarked in Section \ref{subsec:defseq}, so there exists a sequence  $(F_{j})_{j=0}^{\infty}\subseteq c_{00}(K;\Da(\Rn))$ such that $F_{j}\to Qu$ in $\ell^{p}(K;\Hps)$, as $j\to\infty$. Then, by Propositions \ref{prop:Qproperties} and \ref{prop:Banach}, $(PF_{j})_{j=0}^{\infty}\subseteq \Da(M,\Omega_{1/2})$ and
\[
\|PF_{j}-u\|_{\HpsM}=\|PF_{j}-PQu\|_{\HpsM}\lesssim \|F_{j}-Qu\|_{\ell^{p}(K;\Hps)}\to0
\]
as $j\to\infty$.
\end{proof}

\begin{remark}\label{rem:densityM}
It was shown in \cite[Proposition 6.6]{HaPoRo20} (for $s=0$, which suffices) that $\Sw(\Rn)\subseteq \Hps$ for all $p\in[1,\infty]$ and $s\in\R$. The first part of Proposition \ref{prop:densityM} could be strengthened to also cover this statement, by considering $u\in\Da'(M,\Omega_{1/2})$ for which each of the Schwartz seminorms of $(Qu)_{\ka}=\ka_{*}(\psi_{\ka}u)\in\Sw(\Rn)$ is rapidly decreasing in $\ka\in K$.
\end{remark}

\subsection{Interpolation and duality}\label{subsec:interdual}

In this subsection we extend interpolation and duality properties of the Hardy spaces for FIOs from $\Hps$ to $\HpsM$.

First, we prove that the Hardy spaces for FIOs form a complex interpolation scale.

\begin{proposition}\label{prop:interpM}
Let $p_{0},p_{1},p\in[1,\infty]$, $s_{0},s_{1},s\in \R$ and $\theta\in[0,1]$ be such that $\frac{1}{p}=\frac{1-\theta}{p_{0}}+\frac{\theta}{p_{1}}$, $s=(1-\theta)s_{0}+\theta s_{1}$ and $(p_{0},p_{1})\neq (\infty,\infty)$. Then
\[
[\HT^{s_{0},p_{0}}_{FIO}(M),\HT^{s_{1},p_{1}}_{FIO}(M)]_{\theta}=\HpsM
\]
with equivalent norms.
\end{proposition}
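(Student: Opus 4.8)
The plan is to reduce the statement to the corresponding interpolation result on $\Rn$, namely \eqref{eq:interpRn}, by means of the retraction--coretraction pair $(P,Q)$ constructed in Section \ref{sec:sequences}. Recall from Proposition \ref{prop:Banach} that, for each admissible pair $(s_{j},p_{j})$, the operator $Q$ is an isometry from $\HT^{s_{j},p_{j}}_{FIO}(M)$ into $\ell^{p_{j}}(K;\HT^{s_{j},p_{j}}_{FIO}(\Rn))$, the operator $P$ is bounded from $\ell^{p_{j}}(K;\HT^{s_{j},p_{j}}_{FIO}(\Rn))$ onto $\HT^{s_{j},p_{j}}_{FIO}(M)$, and $PQ$ is the identity on $\Da'(M,\Omega_{1/2})$. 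Since $Q$ and $P$ are defined on distributional half densities independently of $s$ and $p$, they act consistently on the Banach couples $(\HT^{s_{0},p_{0}}_{FIO}(M),\HT^{s_{1},p_{1}}_{FIO}(M))$ and $(\ell^{p_{0}}(K;\HT^{s_{0},p_{0}}_{FIO}(\Rn)),\ell^{p_{1}}(K;\HT^{s_{1},p_{1}}_{FIO}(\Rn)))$, which are compatible because all the spaces involved embed continuously into $\Da'(M,\Omega_{1/2})$, respectively into $c_{00}(K;\Da(\Rn))^{*}$.

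First I would invoke the interpolation theorem for retracts, \cite[Theorem 1.17.1.1]{Triebel78}, with retraction $P$ and coretraction $Q$. It yields
\[
[\HT^{s_{0},p_{0}}_{FIO}(M),\HT^{s_{1},p_{1}}_{FIO}(M)]_{\theta}=P\big([\ell^{p_{0}}(K;\HT^{s_{0},p_{0}}_{FIO}(\Rn)),\ell^{p_{1}}(K;\HT^{s_{1},p_{1}}_{FIO}(\Rn))]_{\theta}\big),
\]
with equivalent norms, the norm of $u$ on the left being comparable to $\inf\{\|F\|_{[\cdot,\cdot]_{\theta}}\mid PF=u\}$; taking $F=Qu$ shows this infimum is in turn comparable to $\|Qu\|_{[\cdot,\cdot]_{\theta}}$. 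Next I would identify the interpolation space of the sequence spaces: by the interpolation of vector-valued $\ell^{p}$ spaces (see e.g.~\cite[Section 1.18.1]{Triebel78}), which holds precisely because $(p_{0},p_{1})\neq(\infty,\infty)$, and by \eqref{eq:interpRn},
\[
[\ell^{p_{0}}(K;\HT^{s_{0},p_{0}}_{FIO}(\Rn)),\ell^{p_{1}}(K;\HT^{s_{1},p_{1}}_{FIO}(\Rn))]_{\theta}=\ell^{p}\big(K;[\HT^{s_{0},p_{0}}_{FIO}(\Rn),\HT^{s_{1},p_{1}}_{FIO}(\Rn)]_{\theta}\big)=\ell^{p}(K;\Hps).
\]
Combining the two displays with Proposition \ref{prop:Banach}, which gives $P(\ell^{p}(K;\Hps))=\HpsM$ and $\|Qu\|_{\ell^{p}(K;\Hps)}=\|u\|_{\HpsM}$, then yields $[\HT^{s_{0},p_{0}}_{FIO}(M),\HT^{s_{1},p_{1}}_{FIO}(M)]_{\theta}=\HpsM$ with equivalent norms.

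The only genuinely delicate point is the role of the hypothesis $(p_{0},p_{1})\neq(\infty,\infty)$: it enters exactly in the interpolation identity for the $\ell^{p}$-valued sequence spaces, which fails for $p_{0}=p_{1}=\infty$. All remaining ingredients are either purely formal (the retraction theorem, applied verbatim as in the proof of Theorem \ref{thm:localization}) or have already been established on $\Rn$ in \eqref{eq:interpRn}. I therefore do not anticipate a substantial obstacle beyond carefully recording that the couples in play are compatible, which is immediate from the continuous inclusions into $\Da'(M,\Omega_{1/2})$ and $c_{00}(K;\Da(\Rn))^{*}$, and that $Q$ and $P$ restrict/corestrict boundedly to each endpoint space as in Proposition \ref{prop:Banach}.
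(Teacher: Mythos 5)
Your proof is correct and takes essentially the same route as the paper: both hinge on the interpolation identity for the vector-valued sequence spaces $\ell^{p_j}(K;\HT^{s_j,p_j}_{FIO}(\Rn))$ together with \eqref{eq:interpRn}, and on \cite[Theorem 1.17.1.1]{Triebel78} applied to the pair $(P,Q)$ from Proposition \ref{prop:Banach}. The only cosmetic difference is that the paper phrases the latter step via interpolation of the complemented subspaces $QP\,\ell^{p_j}(K;\cdot)$, using Corollary \ref{cor:boundedPQ} and the isomorphism $Q:\HT^{t,q}_{FIO}(M)\to QP\ell^{q}(K;\HT^{t,q}_{FIO}(\Rn))$, whereas you invoke the retraction--coretraction formulation directly; these are two readings of the same theorem and yield the same argument.
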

\begin{proof}
By \eqref{eq:interpRn} and by a basic fact about interpolation of vector-valued function spaces (see \cite[Theorem 2.2.6]{HyNeVeWe16}), one has
\[
[\ell^{p_{0}}(K;\HT^{s_{0},p_{0}}_{FIO}(\Rn)),\ell^{p_{1}}(K;\HT^{s_{1},p_{1}}_{FIO}(\Rn))]_{\theta}=\ell^{p}(K;\HT^{s,p}_{FIO}(\Rn)),
\]
for $(p_{0},p_{1})\neq (\infty,\infty)$. Then one can use interpolation of complemented subspaces, cf.~\cite[Theorem 1.17.1.1]{Triebel78}, combined with Corollary \ref{cor:boundedPQ}, to conclude that
\[
[QP\ell^{p_{0}}(K;\HT^{s_{0},p_{0}}_{FIO}(\Rn)),QP\ell^{p_{1}}(K;\HT^{s_{1},p_{1}}_{FIO}(\Rn))]_{\theta}=QP\ell^{p}(K;\HT^{s,p}_{FIO}(\Rn))
\]
with equivalence of norms. Since $Q:\HT^{t,q}_{FIO}(M)\to QP\ell^{q}(K;\HT^{t,q}_{FIO}(\Rn))$ is an isomorphism for all $q\in[1,\infty]$ and $t\in\R$, this proves the required statement.
\end{proof}

\begin{remark}\label{rem:dualinfty}
The reason for the restriction $(p_{0},p_{1})\neq (\infty,\infty)$ in Proposition \ref{prop:interpM} is that we used a general fact about interpolation of Bochner spaces, \cite[Theorem 2.2.6]{HyNeVeWe16}, the proof of which relies on  approximation by simple functions. Since the finite sequences do not lie dense in $\ell^{\infty}$, this approach does not immediately extend to $p_{0}=p_{1}=\infty$. For the sake of brevity and because the latter case plays no real role in this article, we will not attempt to extend Proposition \ref{prop:interpM}. We do note that Proposition \ref{prop:dualM} below and \cite[Lemma 1.11.3]{Triebel78} immediately yield
\[
[\HT^{s_{0},\infty}_{FIO}(M),\HT^{s_{1},\infty}_{FIO}(M)]_{\theta}\subseteq \HT^{s,\infty}_{FIO}(M).
\]
Moreover, if $M$ is compact and $K$ is finite, then Proposition \ref{prop:interpM} also holds for $p_{0}=p_{1}=\infty$, because in this case $\ell^{\infty}(K,\HT^{s,\infty}_{FIO}(\Rn))=\ell^{p}(K;\HT^{s,\infty}_{FIO}(\Rn))$ for any $1\leq p<\infty$, so that one can apply \cite[Theorem 2.2.6]{HyNeVeWe16} after all.
\end{remark}

Next, we extend the duality statement in \eqref{eq:dualRn} from $\Hps$ to $\HpsM$. We write $\lb f,g\rb_{s,p}$ for the duality pairing between $f\in\HT^{-s,p'}_{FIO}(\Rn)$ and $g\in\Hps$, which in turn is equal to the standard duality $\lb f,g\rb_{\Rn}$ if $g\in\Sw(\Rn)$. Also recall the duality pairing $\lb \cdot,\cdot\rb_{K}$ for vector-valued sequence spaces over $K$ from \eqref{eq:dualityK}.

\begin{proposition}\label{prop:dualM}
Let $p\in[1,\infty)$ and $s\in\R$. Then
\[
(\HpsM)^{*}=\HT^{-s,p'}_{FIO}(M)
\]
with equivalent norms, where the duality pairing is given by
\begin{equation}\label{eq:dualitypair}
\lb Qu,Qv\rb_{K}=\sum_{\ka\in K}\lb \ka_{*}(\psi_{\ka}u),\ka_{*}(\psi_{\ka}v)\rb_{s,p},
\end{equation}
for $u \in\HT^{-s,p'}_{FIO}(M)$ and $v\in\HpsM$, and by $\lb u,v\rb_{M}$ if $v\in\Da(M,\Omega_{1/2})$.
\end{proposition}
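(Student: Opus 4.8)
The plan is to transfer the duality statement from $\Hps$ (that is, \eqref{eq:dualRn}) to $\HpsM$ by means of the operators $Q$ and $P$, exactly as complex interpolation was transferred in Proposition \ref{prop:interpM}. First I would use that $Q:\HpsM\to\ell^{p}(K;\Hps)$ is an isometry onto the complemented subspace $Q\HpsM=QP\ell^{p}(K;\Hps)$, by Proposition \ref{prop:Banach}, and the analogous statement with $(s,p)$ replaced by $(-s,p')$. By \eqref{eq:dualRn} and the standard identification of the dual of an $\ell^{p}$-space of Banach-space-valued sequences (see \cite[Proposition 1.3.1]{HyNeVeWe16}, valid since $p<\infty$), one has $(\ell^{p}(K;\Hps))^{*}=\ell^{p'}(K;\HT^{-s,p'}_{FIO}(\Rn))$ with the pairing $\lb F,G\rb_{K}=\sum_{\ka\in K}\lb F_{\ka},G_{\ka}\rb_{s,p}$.

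Next I would invoke the general principle that the dual of a complemented subspace $\ran(R)$ of a Banach space $Z$, with $R$ a bounded projection, is canonically $\ran(R^{*})\subseteq Z^{*}$, with the restricted pairing. Applying this with $Z=\ell^{p}(K;\Hps)$ and $R=QP$ (a bounded projection by Corollary \ref{cor:boundedPQ}), we get $(\HpsM)^{*}\cong (Q\HpsM)^{*}\cong \ran((QP)^{*})$, and $(QP)^{*}$ is a bounded projection on $\ell^{p'}(K;\HT^{-s,p'}_{FIO}(\Rn))$. The key computation is to identify $(QP)^{*}$: using Proposition \ref{prop:Qproperties} \eqref{it:Qproperties3}, which gives $\lb PF,v\rb_{M}=\lb F,Qv\rb_{K}$, one checks that $P^{*}=Q$ and $Q^{*}=P$ in the appropriate sense, so $(QP)^{*}=Q^{*}P^{*}=PQ$... more precisely $(QP)^{*}=QP$ again on the sequence side, which shows $\ran((QP)^{*})=Q\HT^{-s,p'}_{FIO}(M)$ (using $PQ=\id$ on distributional half densities, Proposition \ref{prop:Qproperties} \eqref{it:Qproperties4}). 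Composing the identifications, an element $u\in\HT^{-s,p'}_{FIO}(M)$ acts on $v\in\HpsM$ via $\lb Qu,Qv\rb_{K}=\sum_{\ka\in K}\lb\ka_{*}(\psi_{\ka}u),\ka_{*}(\psi_{\ka}v)\rb_{s,p}$, which is exactly \eqref{eq:dualitypair}.

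Finally I would check the consistency claim: when $v\in\Da(M,\Omega_{1/2})$, the pairing $\lb Qu,Qv\rb_{K}$ agrees with $\lb u,v\rb_{M}$. This follows from Proposition \ref{prop:Qproperties} \eqref{it:Qproperties3} and \eqref{it:Qproperties4}: indeed $\lb u,v\rb_{M}=\lb u,PQv\rb_{M}=\lb Qu,Qv\rb_{K}$, where the last pairing is first interpreted via \eqref{eq:dualityK} on test sequences and then seen to coincide with the sum of the $\lb\cdot,\cdot\rb_{s,p}$ pairings because each $\ka_{*}(\psi_{\ka}v)\in\Da(\Rn)\subseteq\Sw(\Rn)$, on which $\lb\cdot,\cdot\rb_{s,p}$ restricts to $\lb\cdot,\cdot\rb_{\Rn}$. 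The main obstacle I anticipate is bookkeeping rather than anything deep: one must be careful that the abstract duality $\ran(R)^{*}=\ran(R^{*})$ is applied with the correct bracket and that $(QP)^{*}$, computed on $\ell^{p'}(K;\HT^{-s,p'}_{FIO}(\Rn))$, genuinely has range $Q\HT^{-s,p'}_{FIO}(M)$; this last point relies on Corollary \ref{cor:boundedPQ} (so that $QP$ really is bounded on the relevant sequence spaces for both exponent pairs) together with $PQ=\id$, and on the fact that $Q$ identifies $\HT^{-s,p'}_{FIO}(M)$ isometrically with $QP\ell^{p'}(K;\HT^{-s,p'}_{FIO}(\Rn))$. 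Once these identifications are lined up, the norm equivalence $(\HpsM)^{*}=\HT^{-s,p'}_{FIO}(M)$ is immediate from the corresponding statement on $\Rn$.
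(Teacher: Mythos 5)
Your proposal is correct and follows the same underlying idea as the paper. The paper's own proof opens with the observation that the statement ``follows from Proposition~\ref{prop:Banach} and the characterization of the dual of a quotient space, but we argue slightly more explicitly'' --- and then proceeds to do the bookkeeping by hand (Hölder's inequality for the forward inclusion; for the converse, given $\ell\in(\HpsM)^*$, form $\ell\circ P\in(\ell^p(K;\Hps))^*$, pick a representing sequence $F$, and set $u:=PF$). You instead carry out the abstraction the paper alludes to: $Q\HpsM=\operatorname{ran}(QP)$ is a complemented subspace of $\ell^p(K;\Hps)$, so its dual is $\operatorname{ran}((QP)^*)$, and the key computation is that $(QP)^*$ coincides with the projection $QP$ acting on $\ell^{p'}(K;\HT^{-s,p'}_{FIO}(\Rn))$, whose range is $Q\HT^{-s,p'}_{FIO}(M)$. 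The two proofs are the same modulo how much one chooses to unpack.

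One small slip worth flagging: you write $(QP)^*=Q^*P^*=PQ$, but the adjoint reverses order, $(QP)^*=P^*Q^*$, which with $P^*=Q$ and $Q^*=P$ gives $QP$, not $PQ$. You then correct to ``more precisely $(QP)^*=QP$ on the sequence side,'' which is the right statement (and $PQ$ would not even live on the sequence side, since $PQ=\mathrm{Id}$ on $\Da'(M,\Omega_{1/2})$). The verification is exactly the double use of Proposition~\ref{prop:Qproperties}\eqref{it:Qproperties3}: $\lb QPF,G\rb_K=\lb PF,PG\rb_M=\lb F,QPG\rb_K$. Your handling of the consistency claim ($\lb Qu,Qv\rb_K=\lb u,v\rb_M$ for $v\in\Da(M,\Omega_{1/2})$) is also correct and spells out a detail the paper leaves implicit, namely that the $\lb\cdot,\cdot\rb_{s,p}$ pairing restricts to the distributional pairing on $\Da(\Rn)$.
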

\begin{proof}
For the most part, the statement follows from Proposition \ref{prop:Banach} and the characterization of the dual of a quotient space, but we argue slightly more explicitly.

First let $u\in \HT^{-s,p'}_{FIO}(M)$. Since $\HT^{-s,p'}_{FIO}(\Rn)=(\Hps)^{*}$, \eqref{eq:dualitypair} defines a functional on $\Hps$ such that
\begin{align*}
&\Big|\sum_{\ka\in K}\lb \ka_{*}(\psi_{\ka}u),\ka_{*}(\psi_{\ka}v)\rb_{s,p}\Big|\lesssim \sum_{\ka\in K}\|\ka_{*}(\psi_{\ka}u)\|_{\HT^{-s,p'}_{FIO}(\Rn)}\|\ka_{*}(\psi_{\ka}v)\|_{\Hps}\\
&\leq \Big(\sum_{\ka\in K}\|\ka_{*}(\psi_{\ka}u)\|_{\HT^{-s,p'}_{FIO}(\Rn)}^{p'}\Big)^{1/p'}\Big(\sum_{\ka\in K}\|\ka_{*}(\psi_{\ka}v)\|_{\Hps}^{p}\Big)^{1/p}\\
&=\|Qu\|_{\ell^{p'}(K;\HT^{-s,p'}_{FIO}(\Rn))}\|Qv\|_{\ell^{p}(K;\Hps)}=\|u\|_{\HT^{-s,p'}_{FIO}(M)}\|v\|_{\HpsM},
\end{align*}
for an implicit constant independent of $u$ and $v\in\Hps$, with the obvious notational modification for $p'=\infty$. Moreover, by Proposition \ref{prop:Qproperties}, if $v\in\Da(M,\Omega_{1/2})$ then $\lb Qu,Qv\rb_{K}=\lb u,v\rb_{M}$. In particular, if $\lb Qu,Qv\rb_{K}=0$ for all $v\in\HpsM$, then $u=0$. Hence $\HT^{-s,p'}_{FIO}(M)\subseteq (\HpsM)^{*}$.

Conversely, let $l\in (\HpsM)^{*}$. Then $l\circ P\in (\ell^{p}(K;\Hps)^{*}$, by Proposition \ref{prop:Banach}.  Moreover, since $(\Hps)^{*}=\HT^{-s,p'}_{FIO}(\Rn)$, \cite[Proposition 1.3.3]{HyNeVeWe16} yields
\begin{equation}\label{eq:dualseq}
(\ell^{p}(K;\Hps))^{*}=\ell^{p'}(K;\HT^{-s,p'}_{FIO}(\Rn)),
\end{equation}
with the duality pairing $\lb\cdot,\cdot\rb_{K}$. Hence there exists an $F\in \ell^{p'}(K;\HT^{-s,p'}_{FIO}(\Rn))$ such that $l(P(G))=\lb F,G\rb_{K}$ for all $G\in\ell^{p}(K;\Hps))$. Set $u:=PF$. Then $u\in\HT^{-s,p'}_{FIO}(M)$, by Proposition \ref{prop:Banach}. Now Proposition \ref{prop:Qproperties} implies that
\[
\lb u,v\rb_{M}=\lb PF,v\rb_{M}=\lb F,Qv\rb_{K}=l(PQv)=l(v)
\]
for all $v\in\Da(M,\Omega_{1/2})$. Finally, Propositions \ref{prop:Qproperties} and \ref{prop:Banach} yield
\begin{align*}
|\lb Qu,G\rb_{K}|&=|\lb u,PG\rb_{M}|=|\lb F,QPG\rb_{K}|=|l(PQPG)|=|l(PG)|\\
&\leq \|l\| \|PG\|_{\HpsM}\lesssim \|l\| \|G\|_{\ell^{p}(K;\Hps)}
\end{align*}
for all $G\in c_{00}(K;\Da(\Rn))$. This concludes the proof, since it follows from \eqref{eq:dualseq} that
\[
\|u\|_{\HT^{-s,p'}_{FIO}(M)}=\|Qu\|_{\ell^{p}(K;\HT^{-s,p'}_{FIO}(\Rn))}\eqsim \sup |\lb Qu,G\rb_{K}|,
\]
where the supremum is over all $G\in c_{00}(K;\Da(\Rn))$ with $\|G\|_{\ell^{p}(K;\Hps)}\leq 1$.
\end{proof}

\begin{remark}\label{rem:propclassical}
One can also apply the approach which we have been using for $\HpsM$ to $\HT^{s,p}(M)$. In particular, the natural analogues of Propositions \ref{prop:Banach}, \ref{prop:independence}, \ref{prop:densityM}, \ref{prop:interpM} and \ref{prop:dualM}, as well as Lemma \ref{lem:L2}, hold for $\HT^{s,p}(M)$. This is because the proofs of those results all relied on combining properties of the function spaces on $\Rn$ with Proposition \ref{prop:Qproperties} and Corollary \ref{cor:boundedPQ}, and we already noted in Remark \ref{rem:seqclassical} that a version of Corollary \ref{cor:boundedPQ} holds for $\HT^{s,p}(M)$. See \cite{Taylor09} for another approach to such results.
\end{remark}

\subsection{Sobolev embeddings and Fourier integral operators}\label{subsec:FIOembed}

In this subsection we extend the Sobolev embeddings for $\Hps$ to $\HpsM$, and we show that $\HpsM$ is invariant under suitable FIOs.

We first extend the Sobolev embeddings in \eqref{eq:Sobolev} from $\Hps$ to $\HpsM$.

\begin{theorem}\label{thm:SobolevM}
Let $p\in[1,\infty]$ and $s\in\R$. Then
\begin{equation}\label{eq:SobolevM}
\HT^{s+s(p),p}(M)\subseteq \HpsM\subseteq \HT^{s-s(p),p}(M).
\end{equation}
\end{theorem}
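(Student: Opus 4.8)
The plan is to reduce the statement to the corresponding Sobolev embeddings on $\Rn$, using that both $\HT^{s,p}(M)$ and $\HpsM$ are, by Definitions \ref{def:HpFIOman} and \ref{def:Hpman}, defined as $\ell^{p}(K;\cdot)$-sums of the same local pieces over the same fixed cover. Concretely, for any $u\in\Da'(M,\Omega_{1/2})$ one has
\[
\|u\|_{\HpsM}=\|(\ka_{*}(\psi_{\ka}u))_{\ka\in K}\|_{\ell^{p}(K;\Hps)},\qquad \|u\|_{\HT^{t,p}(M)}=\|(\ka_{*}(\psi_{\ka}u))_{\ka\in K}\|_{\ell^{p}(K;\HT^{t,p}(\Rn))},
\]
with $Qu=(\ka_{*}(\psi_{\ka}u))_{\ka\in K}$ well defined since $Q$ acts on all of $\Da'(M,\Omega_{1/2})$ by Proposition \ref{prop:Qproperties}. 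So both inclusions in \eqref{eq:SobolevM} will follow at once from the Euclidean embeddings \eqref{eq:Sobolev}, provided the constants there can be taken uniform in the summation index — which they can, since for fixed $p$ and $s$ the constant in \eqref{eq:Sobolev} depends only on $p$, $s$ and $n$, and each summand lives on the same $\Rn$.

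First I would take $u\in\HT^{s+s(p),p}(M)$, so that $(\ka_{*}(\psi_{\ka}u))_{\ka\in K}\in\ell^{p}(K;\HT^{s+s(p),p}(\Rn))$. Applying the first inclusion in \eqref{eq:Sobolev} termwise gives $\ka_{*}(\psi_{\ka}u)\in\Hps$ with $\|\ka_{*}(\psi_{\ka}u)\|_{\Hps}\leq C\|\ka_{*}(\psi_{\ka}u)\|_{\HT^{s+s(p),p}(\Rn)}$ for all $\ka\in K$ and a constant $C=C(p,s,n)$ independent of $\ka$. Taking $\ell^{p}$-norms over $K$ (or the supremum, if $p=\infty$) yields $(\ka_{*}(\psi_{\ka}u))_{\ka\in K}\in\ell^{p}(K;\Hps)$, i.e.\ $u\in\HpsM$ with $\|u\|_{\HpsM}\leq C\|u\|_{\HT^{s+s(p),p}(M)}$. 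The second inclusion in \eqref{eq:SobolevM} follows in exactly the same way from the second inclusion in \eqref{eq:Sobolev}. If one prefers the operator-theoretic language of Section \ref{sec:sequences}, this termwise bound is precisely the statement that the identity induces bounded maps $\ell^{p}(K;\HT^{s+s(p),p}(\Rn))\to\ell^{p}(K;\Hps)\to\ell^{p}(K;\HT^{s-s(p),p}(\Rn))$, which one restricts to the ranges of the relevant versions of $Q$ via Proposition \ref{prop:Banach}.

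There is no serious obstacle; the content is entirely in \eqref{eq:Sobolev} and the only things to check are bookkeeping. In particular one should note that the fixed choice of data $(\delta,K,(\psi_{\ka})_{\ka\in K})$ from Lemma \ref{lem:cover} is used consistently for both scales, so that the embeddings are independent of that choice by Proposition \ref{prop:independence} and its analogue for $\HT^{t,p}(M)$ recorded in Remark \ref{rem:propclassical}; and that membership of $u$ in either space already presupposes $u\in\Da'(M,\Omega_{1/2})$, so the local pieces $\ka_{*}(\psi_{\ka}u)$ are meaningful throughout the argument.
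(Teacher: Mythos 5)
Your proof is correct and follows essentially the same route as the paper's: both apply the Euclidean embeddings \eqref{eq:Sobolev} termwise to the local pieces $\ka_{*}(\psi_{\ka}u)$, using that the implicit constants are independent of $\ka\in K$, and then take $\ell^{p}$-norms over $K$.
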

Combined with Corollary \ref{cor:Sobolevman} below, \eqref{eq:SobolevM} implies \eqref{eq:Sobolevintro}.
\begin{proof}
This follows directly by arguing in local coordinates. That is, by \eqref{eq:Sobolev},
\[
\|\ka_{*}(\psi_{\ka}u)\|_{\HT^{s-s(p),p}(\Rn)}\lesssim \|\ka_{*}(\psi_{\ka}u)\|_{\HT^{s,p}_{FIO}(\Rn)}\lesssim \|\ka_{*}(\psi_{\ka}u)\|_{\HT^{s+s(p),p}(\Rn)}
\]
for implicit constants independent of $u\in\Da'(M,\Omega_{1/2})$ and $\ka\in K$, whenever the relevant quantities are finite.
\end{proof}

We note that the exponents in \eqref{eq:SobolevM} are sharp, for all $p\in[1,\infty]$ and $s\in\R$, on each Riemannian manifold $(M,g)$ with bounded geometry. This follows from the fact that the ``loss of $2s(p)$ derivatives" in \cite{SeSoSt91} is sharp on every compact manifold.

Next, let $(N,g')$ be another complete
 Riemannian manifold with bounded geometry. In the case where $(N,g')=(M,g)$ and $m=0$, the following theorem extends the invariance of $\Hps$ under suitable Fourier integral operators to $\HpsM$.

\begin{theorem}\label{thm:FIObdd}
Let $T\in I^{m}(M,N;\Ca)$, for $m\in\R$ and $\Ca$ a local canonical graph, and suppose that $T$ is compactly supported. Then
\[
T:\HT^{s+m,p}_{FIO}(M)\to \HT^{s,p}_{FIO}(N)
\]
is bounded for all $p\in[1,\infty]$ and $s\in\R$.
\end{theorem}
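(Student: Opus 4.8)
The plan is to reduce the boundedness of $T$ on the manifold Hardy spaces to the boundedness of a matrix of operators between $\Hps$-valued sequence spaces, which has already been set up in Theorem \ref{thm:FIOseq}. Concretely, using the operators $Q$, $P$ associated to a cover of $M$ and $Q'$, $P'$ associated to a cover of $N$, one has $Q'TP Q = Q'T$ (since $PQ = \id$ by Proposition \ref{prop:Qproperties}\eqref{it:Qproperties4}), and similarly $P'Q' = \id$ on $N$, so that $T = P'Q'T P Q$. Since $Q:\HT^{s+m,p}_{FIO}(M)\to \ell^{p}(K;\HT^{s+m,p}_{FIO}(\Rn))$ is an isometry and $P':\ell^{p}(L;\Hps)\to \HpsN$ is bounded (Proposition \ref{prop:Banach}, applied to $M$ and to $N$), it suffices to show that $Q'TP:\ell^{p}(K;\HT^{s+m,p}_{FIO}(\Rn))\to \ell^{p}(L;\Hps)$ is bounded. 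This is exactly the conclusion of Theorem \ref{thm:FIOseq}, so the whole problem comes down to verifying its two hypotheses \eqref{it:FIOseq1} and \eqref{it:FIOseq2} for a compactly supported $T\in I^{m}(M,N;\Ca)$ with $\Ca$ a local canonical graph.

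For hypothesis \eqref{it:FIOseq2} (the uniform proper-support condition), the idea is that $T$ being compactly supported means its Schwartz kernel is supported in $L_0\times K_0$ for compact sets $K_0\subseteq M$, $L_0\subseteq N$; hence $\theta_\la T\psi_\ka \neq 0$ only for the finitely many $\ka$ with $U_\ka\cap K_0\neq\emptyset$ and finitely many $\la$ with $U_\la\cap L_0\neq\emptyset$, so $\sup_\la |K(\la)|<\infty$ and $\sup_\ka|L(\ka)|<\infty$ trivially. (If one wants the more general non-compact statement one would instead invoke properness of the kernel together with the uniform local finiteness of the covers from Lemma \ref{lem:cover}, as in the proof of Corollary \ref{cor:boundedPQ}, but for the compactly supported case the argument is immediate.)

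For hypothesis \eqref{it:FIOseq1}, the point is that $\la_*(\theta_\la T\ka^*(\psit_\ka f))$ is, in the local coordinates given by $\ka$ on $M$ and $\la$ on $N$, a compactly supported Fourier integral operator on $\Rn$ of order $m$ associated with a local canonical graph, applied to $f$: pulling back $T$ via the charts $\ka,\la$ and cutting off by $\psit_\ka,\thetat_\la$ produces precisely an operator of the type handled by Proposition \ref{prop:FIORn} (or, after writing it in oscillatory-integral form, by Corollary \ref{cor:stanform}). That $\Ca$ being a local canonical graph is preserved under these coordinate pullbacks is part of the invariant formulation of FIO theory recalled in Section \ref{subsec:FIOs}. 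Thus Proposition \ref{prop:FIORn} gives $\|\la_*(\theta_\la T\ka^*(\psit_\ka f))\|_{\Hps}\lesssim \|f\|_{\HT^{s+m,p}_{FIO}(\Rn)}$ for each pair $(\la,\ka)$; the uniformity of the constant over the (finitely many relevant) pairs is automatic since $T$ is compactly supported, so only finitely many pairs occur.

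The main obstacle — and the only genuinely nontrivial point — is the uniform bound in \eqref{it:FIOseq1}: one must check that the constant in the estimate $\|\la_*(\theta_\la T\ka^*(\psit_\ka f))\|_{\Hps}\leq C\|f\|_{\HT^{s+m,p}_{FIO}(\Rn)}$ can be taken independent of $\la\in L$ and $\ka\in K$. In the compactly supported case this is a non-issue because only finitely many pairs contribute; for the sharpest formulation one would use the quantitative version of Corollary \ref{cor:stanform} recorded in Remark \ref{rem:constantsFIO}, combined with the uniform control on the geodesic charts and the transition maps provided by the bounded-geometry hypothesis (Lemma \ref{lem:cover}, and the uniform bounds on derivatives of coordinate changes), to see that the symbols and phase functions of the localized operators satisfy bounds uniform in $(\la,\ka)$. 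I would state the proof for compactly supported $T$ directly via Proposition \ref{prop:FIORn} and Theorem \ref{thm:FIOseq}, remarking that the bounded-geometry structure is what would be needed for a uniform, non-compact version.
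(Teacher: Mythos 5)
Your proposal is correct and follows essentially the same route as the paper: reduce via $Q,P,Q',P'$ to the boundedness of $Q'TP$ on $\ell^{p}$ sequence spaces (Theorem \ref{thm:FIOseq}), note that \eqref{it:FIOseq2} and the uniformity in \eqref{it:FIOseq1} are automatic from compact support, and verify \eqref{it:FIOseq1} by observing that in local coordinates $f\mapsto \la_{*}(\theta_{\la}T\ka^{*}(\psit_{\ka}f))$ is a compactly supported FIO of order $m$ associated with a local canonical graph, so Proposition \ref{prop:FIORn} applies. The paper phrases the reduction as $Q'T=Q'TPQ$ with both $Q,Q'$ isometries (rather than writing $T=P'Q'TPQ$ and invoking boundedness of $P'$), and explicitly cites the composition theorem for FIOs to justify the local-coordinate step, but these are cosmetic differences only.
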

\begin{proof}
Let $Q':\HT^{s,p}_{FIO}(N)\to\ell^{p}(L;\HT^{s,p}_{FIO}(\Rn))$ be the isometry from \eqref{eq:Qprime}. By Proposition \ref{prop:Qproperties} \eqref{it:Qproperties4}, one has $Q'Tu=Q'TPQu$ for all $u\in\HT^{s+m,p}_{FIO}(M)$. Given that $Q:\HT^{s+m,p}_{FIO}(M)\to \ell^{p}(K;\HT^{s+m,p}_{FIO}(\Rn))$ is an isometry as well, it thus suffices to prove that $Q'TP:\ell^{p}(K;\HT^{s+m,p}_{FIO}(\Rn))\to \ell^{p}(L;\Hps)$ is bounded.

To this end, we show that the conditions of Theorem \ref{thm:FIOseq} are satisfied. Note that, since $T$ is compactly supported, \eqref{it:FIOseq2} automatically holds, and for \eqref{it:FIOseq1} we may fix $\ka\in K$ and $\la\in L$ and ignore the dependence of the constants on $\ka$ and $\la$. Moreover, the operators $f\mapsto \ka^{*}(\psit_{\ka}f)$ and $v\mapsto \la_{*}(\theta_{\la}v)$ are compactly supported FIOs of order zero, associated with local canonical graphs, as follows e.g.~from the proof of Corollary \ref{cor:coordchange}, after representing the operators in local coordinates. Hence, by the composition theorem for FIOs, $f\mapsto \la_{*}(\theta_{\la}T\ka^{*}(\psit_{\ka}f))$ is a compactly supported element of $I^{m}(\Rn,\Rn;\Ca')$, associated with a local canonical graph $\Ca'$. Proposition \ref{prop:FIORn} concludes the proof.
\end{proof}

\subsection{An atomic decomposition}\label{subsec:atomicman}

In this subsection we provide an atomic decomposition of $\HT^{s,1}_{FIO}(M)$. This decomposition is a direct consequence of that of $\HT^{s,1}_{FIO}(\Rn)$, in Appendix \ref{sec:atomic}, by working in local coordinates. For the sake of brevity and because we will typically work in local coordinates anyway, we focus on the decomposition as such, instead of introducing terminology for the resulting atoms on $M$ and developing a full atomic theory. We use notation and terminology as in Appendix \ref{sec:atomic}. In particular, $r\in C^{\infty}_{c}(\Rn)$ is such that $r(\xi)=1$ if $|\xi|\leq 1$, and $r(\xi)=0$ if $|\xi|\geq 2$.

\begin{proposition}\label{prop:atomicHpFIOM}
Let $s\in\R$. Then the collection of all $\ka^{*}(\psit_{\ka}f)$, for $\ka\in K$ and $f$ an $\HT^{s,1}_{FIO}(\Rn)$-atom associated with a ball of radius at most 1, is a uniformly bounded subset of $\HT^{s,1}_{FIO}(M)$. Moreover, for each $t\in\R$ there exists a $C_{t}\geq0$ with the following property. For all $u\in\HT^{s,1}_{FIO}(M)$ and $\ka\in K$ there exist a $v\in\HT^{t,1}_{FIO}(M)$, a sequence $(f_{j,\ka})_{j=1}^{\infty}$ of $\HT^{s,1}_{FIO}(\Rn)$-atoms associated with balls of radius at most $1$, and an $(\alpha_{j,\ka})_{j=1}^{\infty}\in\ell^{1}$, such that
\begin{equation}\label{eq:atomdecompM}
u=v+\sum_{\ka\in K}\sum_{j=1}^{\infty}\alpha_{j,\ka}\ka^{*}(\psit_{\ka}f_{j,\ka})
\end{equation}
and
\[
\|v\|_{\HT^{t,1}_{FIO}(M)}+\sum_{\ka\in K}\sum_{j=1}^{\infty}|\alpha_{j,\ka}|\leq C_{t}\|u\|_{\HT^{s,1}_{FIO}(M)}.
\]
In fact, one may choose
\begin{equation}\label{eq:defv}
v:=\sum_{\ka\in K}\ka^{*}(\psit_{\ka}r(D)\ka_{*}(\psi_{\ka}u)).
\end{equation}
\end{proposition}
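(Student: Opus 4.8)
The plan is to deduce everything from the atomic decomposition of $\HT^{s,1}_{FIO}(\Rn)$ in Appendix \ref{sec:atomic}, working one geodesic chart at a time, in the same spirit as the proof of Theorem \ref{thm:localization}.

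\emph{Uniform boundedness of the atoms on $M$.} Fix $\ka\in K$ and an $\HT^{s,1}_{FIO}(\Rn)$-atom $f$ associated with a ball of radius at most $1$. Since $\psit_{\ka}\in C^{\infty}_{c}(\Util_{\ka})$, the product $\psit_{\ka}f$ is a compactly supported distribution on $\Util_{\ka}$, so $\ka^{*}(\psit_{\ka}f)\in\Da'(U_{\ka},\Omega_{1/2})\subseteq\Da'(M,\Omega_{1/2})$ is well defined after extension by zero. To bound its norm I would compute $Q(\ka^{*}(\psit_{\ka}f))$: its $\la$-th component $\la_{*}(\psi_{\la}\ka^{*}(\psit_{\ka}f))$ vanishes unless $U_{\la}\cap U_{\ka}\neq\emptyset$, and there are only boundedly many such $\la$, uniformly in $\ka$, by the uniform local finiteness of the cover (Lemma \ref{lem:cover}). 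For each of them
\[
\|\la_{*}(\psi_{\la}\ka^{*}(\psit_{\ka}f))\|_{\HT^{s,1}_{FIO}(\Rn)}\lesssim\|f\|_{\HT^{s,1}_{FIO}(\Rn)},
\]
with an implicit constant independent of $\ka$, $\la$ and $f$; this is precisely the uniform estimate verified in the proof of Corollary \ref{cor:boundedPQ} (condition \eqref{it:FIOseq1} of Theorem \ref{thm:FIOseq} for $T$ the identity, together with Corollary \ref{cor:coordchange} and Remark \ref{rem:constantsFIO}). Since atoms satisfy $\|f\|_{\HT^{s,1}_{FIO}(\Rn)}\lesssim 1$, summing over the boundedly many relevant $\la$ gives $\|\ka^{*}(\psit_{\ka}f)\|_{\HT^{s,1}_{FIO}(M)}\lesssim 1$, uniformly. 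The same computation shows that $g\mapsto\ka^{*}(\psit_{\ka}g)$ is bounded $\HT^{s,1}_{FIO}(\Rn)\to\HT^{s,1}_{FIO}(M)$ uniformly in $\ka$, which I will use below.

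\emph{The decomposition.} Set $u_{\ka}:=\ka_{*}(\psi_{\ka}u)\in\HT^{s,1}_{FIO}(\Rn)$; by Definition \ref{def:HpFIOman} one has $\|u\|_{\HT^{s,1}_{FIO}(M)}=\sum_{\ka\in K}\|u_{\ka}\|_{\HT^{s,1}_{FIO}(\Rn)}$, and by Proposition \ref{prop:Qproperties} \eqref{it:Qproperties4} together with Proposition \ref{prop:Banach} one has $u=PQu=\sum_{\ka\in K}\ka^{*}(\psit_{\ka}u_{\ka})$, the series converging absolutely in $\HT^{s,1}_{FIO}(M)$. Now apply the atomic decomposition of $\HT^{s,1}_{FIO}(\Rn)$ (Proposition \ref{prop:atom}) to each $u_{\ka}$: this writes $u_{\ka}=r(D)u_{\ka}+\sum_{j}\alpha_{j,\ka}f_{j,\ka}$ with $f_{j,\ka}$ atoms associated with balls of radius at most $1$, and with $\|r(D)u_{\ka}\|_{\HT^{t,1}_{FIO}(\Rn)}+\sum_{j}|\alpha_{j,\ka}|\leq C_{t}\|u_{\ka}\|_{\HT^{s,1}_{FIO}(\Rn)}$ for each $t\in\R$. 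Applying the uniformly bounded operator $g\mapsto\ka^{*}(\psit_{\ka}g)$ to this identity and summing over $\ka$ — every series in sight converging absolutely in $\HT^{s,1}_{FIO}(M)$ by the first step, which licenses the rearrangement — yields
\[
u=\sum_{\ka\in K}\ka^{*}(\psit_{\ka}r(D)u_{\ka})+\sum_{\ka\in K}\sum_{j=1}^{\infty}\alpha_{j,\ka}\ka^{*}(\psit_{\ka}f_{j,\ka}),
\]
that is, \eqref{eq:atomdecompM} with $v$ as in \eqref{eq:defv}, and $\sum_{\ka}\sum_{j}|\alpha_{j,\ka}|\leq C_{t}\|u\|_{\HT^{s,1}_{FIO}(M)}$. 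Since $v=PF$ with $F:=(r(D)u_{\ka})_{\ka\in K}$, Proposition \ref{prop:Banach} gives $\|v\|_{\HT^{t,1}_{FIO}(M)}\lesssim\sum_{\ka}\|r(D)u_{\ka}\|_{\HT^{t,1}_{FIO}(\Rn)}\leq C_{t}\|u\|_{\HT^{s,1}_{FIO}(M)}$ by the bound above, completing the proof. (If one prefers not to quote an $\HT^{t,1}_{FIO}(\Rn)$-bound on the low-frequency part from Proposition \ref{prop:atom}, the same conclusion follows from the fact that $r\in C^{\infty}_{c}(\Rn)$ makes $r(D)$ smoothing: $\|r(D)g\|_{\HT^{t,1}_{FIO}(\Rn)}\lesssim_{t}\|r(D)g\|_{W^{N,1}(\Rn)}\lesssim_{t}\|g\|_{\HT^{s,1}_{FIO}(\Rn)}$ for $N>t+s(1)$, which is exactly the estimate used for \eqref{eq:atomlow} in the proof of Theorem \ref{thm:localization}.)

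\emph{Main obstacle.} There is no serious difficulty; the point that needs care is the uniformity in $\ka$ in the first step — boundedly many overlapping charts and uniformly controlled coordinate changes — which rests on the bounded-geometry structure from Lemma \ref{lem:cover} and Remark \ref{rem:constantsFIO}, after which everything reduces to bookkeeping and absolute-convergence arguments. A minor secondary point is matching the precise form of the $\Rn$ atomic decomposition in Appendix \ref{sec:atomic}: should its low-frequency part not be literally $r(D)u_{\ka}$, one either adjusts $v$ accordingly or uses the smoothing property of $r(D)$ noted above to see that the explicit $v$ in \eqref{eq:defv} is still admissible.
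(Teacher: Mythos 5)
Your proof is correct and follows essentially the same route as the paper: bounded chart overlaps and uniform coordinate-change estimates (Lemma \ref{lem:cover}, Corollary \ref{cor:coordchange}, Remark \ref{rem:constantsFIO}) give the uniform boundedness of the lifted atoms, and applying Proposition \ref{prop:atom} chart by chart together with the partition-of-unity identity $u=\sum_{\ka\in K}\ka^{*}(\psit_{\ka}\ka_{*}(\psi_{\ka}u))$ yields the decomposition with $v$ as in \eqref{eq:defv}. One small inaccuracy: Proposition \ref{prop:atom} does \emph{not} state the bound $\|r(D)u_{\ka}\|_{\HT^{t,1}_{FIO}(\Rn)}\lesssim\|u_{\ka}\|_{\HT^{s,1}_{FIO}(\Rn)}$ that your main line attributes to it (it only controls $\sum_{j}|\alpha_{j,\ka}|$); the correct justification is the smoothing estimate for $r(D)$ via the Sobolev embeddings \eqref{eq:Sobolev}, which you supply in your parenthetical and which is exactly what the paper uses.
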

\begin{proof}
To prove the first statement, let $f$ be an $\HT^{s,1}_{FIO}(\Rn)$-atom associated with a ball of radius at most $1$, and let $\ka\in K$. Set $K(\ka):=\{\la\in K\mid U_{\ka}\cap U_{\la}\neq\emptyset\}$ and note that $|K(\ka)|$ is uniformly bounded in $\ka$, by Lemma \ref{lem:cover} \eqref{it:cover2}. Hence one can use Corollary \ref{cor:coordchange}, Remark \ref{rem:constantsFIO}, Lemma \ref{lem:cover} \eqref{it:cover4} and \eqref{eq:uniformchange} to write
\begin{align*}
&\|\ka^{*}(\psit_{\ka}f)\|_{\HT^{s,1}_{FIO}(M)}=\sum_{\la\in K(\ka)}\|\la_{*}(\psi_{\la}\ka^{*}(\psit_{\ka}f))\|_{\HT^{s,1}_{FIO}(\Rn)}\\
&=\sum_{\la\in K(\ka)}\|\psit_{\la}\mu_{\ka\la}^{*}(\psit_{\ka}f)\|_{\HT^{s,1}_{FIO}(\Rn)}\lesssim \sum_{\la\in K(\ka)}\|f\|_{\HT^{s,1}_{FIO}(\Rn)}\lesssim \|f\|_{\HT^{s,1}_{FIO}(\Rn)}\lesssim 1,
\end{align*}
where for the final inequality we used the first statement in Proposition \ref{prop:atom}.

Next, fix $u\in\HT^{s,1}_{FIO}(M)$ and let $\ka\in K$. Then $\ka_{*}(\psi_{\ka}u)\in\HT^{s,1}_{FIO}(\Rn)$, so Proposition \ref{prop:atom} yields a collection $(f_{j,\ka})_{j=1}^{\infty}$ of $\HT^{s,1}_{FIO}(\Rn)$-atoms, associated with balls of radius at most $1$, and an $(\alpha_{j,\ka})_{j=1}^{\infty}\in\ell^{1}$, such that
\[
\ka_{*}(\psi_{\ka}u)=r(D)(\ka_{*}(\psi_{\ka}u))
+\sum_{j=1}^{\infty}\alpha_{j,\ka}f_{j,\ka}.
\]
Let $v$ be as in \eqref{eq:defv}. Then we can use that $(\psi_{\ka}^{2})_{\ka\in K}$ is a partition of unity to obtain \eqref{eq:atomdecompM}:
\[
u=\sum_{\ka\in K}\ka^{*}(\psit_{\ka}\ka_{*}(\psi_{\ka}u))=v+\sum_{\ka\in K}\sum_{j=1}^{\infty}\alpha_{j,\ka}\ka^{*}(\psit_{\ka}f_{j,\ka}).
\]
Moreover, again by Proposition \ref{prop:atom},
\[
\sum_{\ka\in K}\sum_{j=1}^{\infty}|\alpha_{j,\ka}|\lesssim \sum_{\ka\in K}\|\ka_{*}(\psi_{\ka}u)\|_{\HT^{s,1}_{FIO}(\Rn)}=\|u\|_{\HT^{s,1}_{FIO}(M)}.
\]
Also, for each $t\in\R$,
\begin{align*}
&\|v\|_{\HT^{t,1}_{FIO}(\Rn)}\leq \sum_{\ka\in K}\|\ka^{*}(\psit_{\ka}r(D)\ka_{*}(\psi_{\ka}u))\|_{\HT^{t,1}_{FIO}(\Rn)}\\
&\lesssim \sum_{\ka\in K}\|r(D)\ka_{*}(\psi_{\ka}u))\|_{\HT^{t,1}_{FIO}(\Rn)}\lesssim\sum_{\ka\in K}\|\ka_{*}(\psi_{\ka}u))\|_{\HT^{s,1}_{FIO}(\Rn)}=\|u\|_{\HT^{s,1}_{FIO}(M)}.
\end{align*}
Here we used that $r\in C^{\infty}_{c}(\Rn)$, in the penultimate step.
\end{proof}

In the same manner, relying instead on the atomic decomposition of $\HT^{s,1}(\Rn)$ from Lemma \ref{lem:atomHp}, one obtains an atomic decomposition of $\HT^{s,1}(M)$. For alternative approaches to atomic decompositions of $\HT^{s,1}(M)$ we refer to \cite{Skrzypczak98,Taylor09}.

\subsection{Equivalent norms}\label{subsec:equivnorms}

In this subsection we provide equivalent norms on both $\HpsM$ and $\HT^{s,p}(M)$.

We first show that one could alternatively define $\HpsM$ by replacing $(\psi_{\ka})_{\ka\in K}$ in \eqref{eq:defHpFIOnorm} by the partition of unity $(\psi_{\ka}^{2})_{\ka\in K}$. Nonetheless, we have chosen to work with $(\psi_{\ka})_{\ka\in K}$ because it is more natural for the $L^{2}$ theory and for duality purposes, cf.~Remark \ref{rem:L2} and Proposition \ref{prop:dualM}, and because the operators $Q$ and $P$ have better mapping properties when defined in the current symmetric manner, cf.~Proposition \ref{prop:Qproperties} and the proofs in the rest of this section.

\begin{proposition}\label{prop:square}
Let $p\in[1,\infty]$ and $s\in\R$. Then there exists a $C>0$ with the following property. A $u\in \Da'(M,\Omega_{1/2})$ satisfies $u\in\HpsM$ if and only if $(\ka_{*}(\psi_{\ka}^{2}u))_{\ka\in K}\in \ell^{p}(K;\Hps)$, in which case
\begin{equation}\label{eq:square}
\frac{1}{C}\|u\|_{\HpsM}\leq \|(\ka_{*}(\psi_{\ka}^{2}u))_{\ka\in K}\|_{\ell^{p}(K;\Hps)}\leq C\|u\|_{\HpsM}.
\end{equation}
\end{proposition}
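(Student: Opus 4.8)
The plan is to follow the scheme of Propositions \ref{prop:Banach} and \ref{prop:independence}: introduce, alongside $\tilde Q u:=(\ka_{*}(\psi_{\ka}^{2}u))_{\ka\in K}$, a companion operator $\tilde P$ satisfying $\tilde P\tilde Q=\id$, and then check that $\tilde Q\colon\HpsM\to\ell^{p}(K;\Hps)$ and $\tilde P\colon\ell^{p}(K;\Hps)\to\HpsM$ are bounded. Granting these, \eqref{eq:square} is immediate, exactly as in Proposition \ref{prop:independence}: if $(\ka_{*}(\psi_{\ka}^{2}u))_{\ka\in K}\in\ell^{p}(K;\Hps)$ then applying $\tilde P$ to it recovers $u$ with $\|u\|_{\HpsM}\lesssim\|(\ka_{*}(\psi_{\ka}^{2}u))_{\ka}\|_{\ell^{p}(K;\Hps)}$, while if $u\in\HpsM$ then applying $\tilde Q$ gives the reverse bound. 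The elementary identity that makes everything run is $\ka_{*}(\psi_{\ka}^{2}u)=\psit_{\ka}\,\ka_{*}(\psi_{\ka}u)$, i.e.\ $\tilde Q=DQ$, where $D$ is the diagonal operator matrix whose $\ka$-th diagonal entry is multiplication by $\psit_{\ka}$.

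For the boundedness of $\tilde Q$, I would note that by Corollary \ref{cor:coordchange} and Remark \ref{rem:constantsFIO}, together with the uniform control on $(\psit_{\ka})_{\ka\in K}$ from Lemma \ref{lem:cover}, the entries of $D$ are uniformly bounded on $\Hps$; since $D$ trivially has finite overlap, Lemma \ref{lem:matrix} gives $D\in\La(\ell^{p}(K;\Hps))$, hence $\|\tilde Q u\|_{\ell^{p}(K;\Hps)}=\|DQu\|_{\ell^{p}(K;\Hps)}\lesssim\|Qu\|_{\ell^{p}(K;\Hps)}=\|u\|_{\HpsM}$, using that $Q$ is an isometry by Proposition \ref{prop:Banach}. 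For $\tilde P$, I would fix cut-offs $\eta_{\ka}\in C^{\infty}_{c}(\Util_{\ka})$ equal to $1$ on $\ka(\supp\psi_{\ka})$ with all derivatives bounded uniformly in $\ka$ — the bounded-geometry normalisation of the geodesic charts in Lemma \ref{lem:cover} provides these, and indeed one may take $\eta_{\ka}=\eta$ for a single fixed $\eta\in C^{\infty}_{c}(\Rn)$, since the charts all have the same coordinate image — and set $\tilde P F:=\sum_{\ka\in K}\ka^{*}(\eta_{\ka}f_{\ka})$. Because $(\psi_{\ka}^{2})_{\ka\in K}$ is a partition of unity and $\ka^{*}((\tilde Q u)_{\ka})=\psi_{\ka}^{2}u$, one gets $\tilde P\tilde Q u=\sum_{\ka}\ka^{*}(\eta_{\ka}\ka_{*}(\psi_{\ka}^{2}u))=\sum_{\ka}\psi_{\ka}^{2}u=u$. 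The boundedness $\tilde P\colon\ell^{p}(K;\Hps)\to\HpsM$ I would obtain exactly as for $P$ in Proposition \ref{prop:Banach}: compute $Q\tilde P$, whose matrix entries $f\mapsto\ka_{*}(\psi_{\ka}\la^{*}(\eta_{\la}f))$ are uniformly bounded on $\Hps$ by Corollary \ref{cor:coordchange}, Remark \ref{rem:constantsFIO} and \eqref{eq:uniformchange}, and have finite overlap by Lemma \ref{lem:cover}; then $Q\tilde P\in\La(\ell^{p}(K;\Hps))$ by Lemma \ref{lem:matrix} and $\|\tilde P F\|_{\HpsM}=\|Q\tilde P F\|_{\ell^{p}(K;\Hps)}\lesssim\|F\|_{\ell^{p}(K;\Hps)}$. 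Combining the two bounds yields \eqref{eq:square}, and in particular $u\in\HpsM$ whenever $\tilde Q u\in\ell^{p}(K;\Hps)$, via $u=\tilde P\tilde Q u$.

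I do not anticipate a genuine obstacle: every estimate is of the same ``coordinate change plus uniform cut-off'' type already carried out in the proofs of Corollary \ref{cor:boundedPQ} and Proposition \ref{prop:atomicHpFIOM}, so the work is just the geometric bookkeeping — in particular checking, as there, that $\supp\psi_{\ka}$ sits well inside the domain of $\la$ whenever $U_{\ka}\cap U_{\la}\neq\emptyset$, so the relevant transition maps and cut-offs have estimates uniform in $\ka,\la$. The one input not used verbatim in those earlier proofs is the existence of the uniform auxiliary cut-offs $\eta_{\ka}$, which is immediate from the normalisation in Lemma \ref{lem:cover}. Finally, as in Remark \ref{rem:propclassical}, the identical argument with $\Hps$ replaced throughout by $\HT^{s,p}(\Rn)$ gives the corresponding statement for $\HT^{s,p}(M)$.
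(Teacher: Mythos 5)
Your argument is correct and takes essentially the same route as the paper: the right-hand inequality in \eqref{eq:square} is in both cases the identity $\ka_{*}(\psi_{\ka}^{2}u)=\psit_{\ka}\,\ka_{*}(\psi_{\ka}u)$ combined with uniform boundedness of multiplication by $\psit_{\ka}$ on $\Hps$, and the left-hand inequality rests on finite overlap from Lemma \ref{lem:cover} together with uniform control of the transition maps via Corollary \ref{cor:coordchange}, Remark \ref{rem:constantsFIO} and \eqref{eq:uniformchange}. The only real difference is organizational: the paper expands $\ka_{*}(\psi_{\ka}u)=\sum_{\la\in K(\ka)}\psit_{\ka}\mu_{\la\ka}^{*}\la_{*}(\psi_{\la}^{2}u)$ using the partition of unity and carries out the H\"older and $\ell^p$ estimates by hand on this specific sequence, never constructing a left inverse; you instead build a bona fide operator $\tilde P$ on all of $\ell^p(K;\Hps)$ with $\tilde P\tilde Q=\id$ and invoke Lemma \ref{lem:matrix} for $Q\tilde P$, which is precisely why you need the auxiliary cut-offs $\eta_{\ka}$ (correctly flagged as supplied by the uniformity in Lemma \ref{lem:cover}): a generic $F\in\ell^p(K;\Hps)$, unlike the sequence $(\la_{*}(\psi_{\la}^{2}u))_{\la}$, does not arrive already supported where the pullback $\ka^{*}$ makes sense, so $\tilde P$ must localise first. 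Both proofs use the same estimates; yours is marginally more machinery-heavy but yields the slightly stronger statement of a bounded left inverse on the full sequence space.
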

\begin{proof}
The right-hand side of \eqref{eq:square} follows from the estimate
\[
\|\ka_{*}(\psi_{\ka}^{2}u)\|_{\Hps}=\|\psit_{\ka}\ka_{*}(\psi_{\ka}u)\|_{\Hps}\lesssim \|\ka_{*}(\psi_{\ka}u)\|_{\Hps}
\]
for each $\ka\in K$, for which we used Corollary \ref{cor:coordchange}, Remark \ref{rem:constantsFIO} and Lemma \ref{lem:cover} \eqref{it:cover4}.

On the other hand, for $\ka\in K$ we have, with $K(\ka):=\{\la\in K\mid U_{\ka}\cap U_{\la}\neq\emptyset\}$,
\[
\ka_{*}(\psi_{\ka}u)=\sum_{\la\in K(\ka)}\ka_{*}(\psi_{\ka}\psi_{\la}^{2}u)=\sum_{\la\in K(\ka)}\psit_{\ka}\mu_{\la\ka}^{*}\la_{*}(\psi_{\la}^{2}u).
\]
By Lemma \ref{lem:cover}, $|K(\ka)|$ is uniformly bounded in $\ka$. Hence we can combine a minor modification of Corollary \ref{cor:coordchange} and Remark \ref{rem:constantsFIO} with H\"{o}lder's inequality:
\begin{align*}
\|\ka_{*}(\psi_{\ka}u)\|_{\Hps}&\lesssim  \sum_{\la\in K(\ka)}\|\la_{*}(\psi_{\la}^{2}u)\|_{\Hps}\\
&\lesssim \Big(\sum_{\la\in K(\ka)}\|\la_{*}(\psi_{\la}^{2}u)\|_{\Hps}^{p}\Big)^{1/p},
\end{align*}
with the obvious notational modification for $p=\infty$.
This in turn yields
\begin{align*}
\|u\|_{\HpsM}&=\Big(\sum_{\ka\in K}\|\ka_{*}(\psi_{\ka}u)\|_{\Hps}^{p}\Big)^{1/p}\\
&\lesssim \Big(\sum_{\ka\in K}\sum_{\la\in K(\ka)}\|\la_{*}(\psi_{\la}^{2}u)\|_{\Hps}^{p}\Big)^{1/p}\\
&=\Big(\sum_{\la\in K}\sum_{\ka\in K(\la)}\|\la_{*}(\psi_{\la}^{2}u)\|_{\Hps}^{p}\Big)^{1/p}\\
&\lesssim \Big(\sum_{\la\in K}\|\la_{*}(\psi_{\la}^{2}u)\|_{\Hps}^{p}\Big)^{1/p},
\end{align*}
for implicit constants independent of $u$. This concludes the proof.
\end{proof}

\begin{remark}\label{rem:squareHp}
The same proof yields an analogue of Proposition \ref{prop:square} for $\HT^{s,p}(M)$. In fact, it follows as in the proof that, for all $p\in[1,\infty]$ and $s,\gamma\in\R$, one has
\[
\|((\ka_{*})_{\gamma}(\psi_{\ka}^{2}u))_{\ka\in K}\|_{\ell^{p}(K;\HT^{s,p}(\Rn))}\eqsim \|((\ka_{*})_{\gamma}(\psi_{\ka}u))_{\ka\in K}\|_{\ell^{p}(K;\HT^{s,p}(\Rn))}
\]
for all $u\in\Da'(M,\Omega_{\gamma})$ such that either of these quantities is finite. Here $\Da'(M,\Omega_{\gamma})$ is the space of distributional densities of order $\gamma$ (see Section \ref{subsec:densities}).
\end{remark}

\begin{remark}\label{rem:Triebel}
For $p<\infty$, $\HT^{s,p}(M)$ was defined in \cite{Triebel86,Triebel87,Triebel92} to consist of all $v\in\Da'(M,\Omega_{1})$ such that $
\|((\ka_{*})_{1}(\psi_{\ka}^{2}v))_{\ka\in K}\|_{\ell^{p}(K;\HT^{s,p}(\Rn))}<\infty$.
Recall that $\rho_{g}\in C^{\infty}(M,\Omega_{1})$ is the Riemannian density on $M$. For each $\ka\in K$ one then has
\begin{align*}
\|(\ka_{*})_{1}(\psi_{\ka}^{2}\rho_{g}^{1/2}u)\|_{\HT^{s,p}(\Rn)}&=\|(\ka_{*})_{1/2}(\psi_{\ka}\rho_{g}^{1/2})\cdot (\ka_{*})_{1/2}(\psi_{\ka}u)\|_{\HT^{s,p}(\Rn)}\\
&\lesssim \|(\ka_{*})_{1/2}(\psi_{\ka}u)\|_{\HT^{s,p}(\Rn)}
\end{align*}
for all $u\in \Da'(M,\Omega_{1/2})$, and
\begin{align*}
\|(\ka_{*})_{1/2}(\psi_{\ka}^{2}\rho_{g}^{-1/2}v)\|_{\HT^{s,p}(\Rn)}&=\|(\ka_{*})_{-1/2}(\psi_{\ka}\rho_{g}^{-1/2})\cdot (\ka_{*})_{1}(\psi_{\ka}v)\|_{\HT^{s,p}(\Rn)}\\
&\lesssim \|(\ka_{*})_{1}(\psi_{\ka}v)\|_{\HT^{s,p}(\Rn)}
\end{align*}
for all $v\in\Da'(M,\Omega_{1})$. Here we used Lemma \ref{lem:cover} \eqref{it:cover4}, as well as \eqref{eq:localg} to estimate away powers of $\rho_{g}$. It now follows from Remark \ref{rem:squareHp} that $u\mapsto \rho_{g}^{1/2}u$ is an isomorphism between the $\HT^{s,p}(M)$   from Definition  \ref{def:Hpman} and the one from \cite{Triebel86,Triebel87,Triebel92}.
\end{remark}

As a corollary of the previous remark, we can now show that $\HT^{s,p}(M)$ coincides with $W^{s,p}(M)$, as introduced in Definition \ref{def:Sobolevman}, for $1<p<\infty$. We also give a more concrete definition of these spaces for $s\in\Z_{+}$, in terms of the $k$-th covariant derivative $\nabla^{k}$ for $k\in\{0,\ldots, s\}$. Recall that, for $f\in C^{\infty}(M)$,
\[
|\nabla^{k}f|^{2}=g^{\alpha_{1}\beta_{1}}\cdots g^{\alpha_{k}\beta_{k}}\nabla_{\alpha_{1}}\cdots\nabla_{\alpha_{k}}f\cdot\nabla_{\beta_{1}}\cdots\nabla_{\beta_{k}}\overline{f}
\]
using the Einstein summation convention, where each $\nabla_{j}$ is a covariant derivative with respect to a given local chart.

\begin{corollary}\label{cor:Sobolevman}
Let $p\in(1,\infty)$ and $s\in\R$. Then $\HT^{s,p}(M)=W^{s,p}(M)$, with equivalent norms. Moreover, for each $s\in\Z_{+}$ there exists a $C>0$ such that
\begin{equation}\label{eq:covariant}
\frac{1}{C}\|u\|_{\HT^{s,p}(M)}\leq \max_{k\in\{0,\ldots,s\}}\Big(\int_{M}|\nabla^{k}(\rho_{g}^{-1/2}u)(x)|^{p}\rho_{g}(x)\Big)^{1/p}\leq C\|u\|_{\HT^{s,p}(M)}
\end{equation}
for all $u\in\Da(M,\Omega_{1/2})$.
\end{corollary}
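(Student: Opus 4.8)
The plan is to first establish the identification $\HT^{s,p}(M)=W^{s,p}(M)$ for $1<p<\infty$, and then separately prove the equivalence \eqref{eq:covariant} for $s\in\Z_{+}$. For the first part, I would use Remark \ref{rem:Triebel}: the map $u\mapsto\rho_g^{1/2}u$ is an isomorphism from the space $\HT^{s,p}(M)$ of Definition \ref{def:Hpman} onto the Triebel space of $\Da'(M,\Omega_1)$-densities defined by $\|((\ka_*)_1(\psi_\ka^2 v))_{\ka\in K}\|_{\ell^p(K;\HT^{s,p}(\Rn))}<\infty$. Since $\HT^{s,p}(\Rn)=W^{s,p}(\Rn)$ with equivalent norms for $1<p<\infty$ (Triebel--Lizorkin $F^s_{p,2}=W^{s,p}$), and since the Triebel space equipped with $W^{s,p}(\Rn)$ blocks is, by Triebel's work \cite{Triebel86,Triebel87,Triebel92}, exactly the classical Sobolev space $W^{s,p}(M)$ on a manifold with bounded geometry defined via $(1-\Delta_g)^{s/2}$, the identification follows after dividing out the density factor $\rho_g^{1/2}$, which matches precisely the $\rho_g^{-1/2}u$ appearing in \eqref{eq:LpMnorm} of Definition \ref{def:Sobolevman}. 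I would cite \cite{Triebel86,Triebel87,Triebel92,Taylor09} for the statement that the two descriptions of $W^{s,p}(M)$ — the one via $(1-\Delta_g)^{s/2}$ and the one via local charts — agree on manifolds of bounded geometry, which is precisely the content needed here.

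For \eqref{eq:covariant}, the key is that for $s\in\Z_+$, in a bounded-geometry chart, the $W^{s,p}(\Rn)$ norm of $\psit_\ka\ka_*(\rho_g^{-1/2}u)$ is comparable, uniformly in $\ka$, to $\max_{k\leq s}$ of the $L^p$ norms of $\nabla^k$ applied to $\rho_g^{-1/2}u$ localized by $\psi_\ka^2$. First I would reduce to local coordinates via Remark \ref{rem:squareHp} (which allows passing freely between $\psi_\ka$ and $\psi_\ka^2$ cut-offs) and the identification $\rho_g^{-1/2}u\in C^\infty_c(M)$ for $u\in\Da(M,\Omega_{1/2})$. Then, writing $f:=\rho_g^{-1/2}u$, I would use that in a geodesic normal chart the covariant derivatives $\nabla^k f$ are finite linear combinations of ordinary partial derivatives $\partial^\alpha f$ with $|\alpha|\leq k$, with coefficients (built from the Christoffel symbols and their derivatives, and $g^{ij}$) that are bounded with all their derivatives uniformly in $\ka$ — this is exactly what the bounded-geometry assumption, via Lemma \ref{lem:cover} \eqref{it:cover4} and the estimate \eqref{eq:localg} on $g$ in normal coordinates, provides. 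Conversely, the partial derivatives $\partial^\alpha f$ with $|\alpha|\leq s$ can be recovered as uniformly-bounded-coefficient combinations of the $\nabla^k f$, $k\leq s$. Summing the resulting two-sided comparison over $\ka\in K$ (using the uniform local finiteness of the cover to control the overlap, as in the proof of Proposition \ref{prop:square}) and absorbing the powers of $\rho_g$ via \eqref{eq:localg}, one obtains \eqref{eq:covariant}.

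The main obstacle I expect is the careful bookkeeping in the conversion between covariant and partial derivatives in a chart: one must verify that the transition coefficients, and their own derivatives up to the needed order, are bounded \emph{uniformly over all charts} $\ka\in K$ — this is where bounded geometry (uniform control on the curvature tensor and its covariant derivatives, hence on the metric and Christoffel symbols in geodesic coordinates) is genuinely used, and it is the one place where a reference to the relevant estimates in Appendix \ref{sec:background} (or to \cite{Triebel86,Triebel87}) is essential rather than a routine computation. Everything else — the passage to charts, the overlap summation, the handling of $\rho_g^{1/2}$ — is of the same flavor as arguments already carried out in Section \ref{subsec:equivnorms}.
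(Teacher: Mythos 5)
Your proposal is correct and takes essentially the same route as the paper, whose entire proof is to combine Remark~\ref{rem:Triebel} with \cite[Theorem 7.4.5]{Triebel92}; you correctly identify Remark~\ref{rem:Triebel} as the pivot into Triebel's $\Da'(M,\Omega_1)$ framework and then invoke the same body of results. The only difference is that for \eqref{eq:covariant} you sketch the internal covariant-versus-partial-derivative comparison and the uniform bounded-geometry estimates it requires, whereas the paper simply cites \cite[Theorem 7.4.5]{Triebel92}, which already contains that equivalence.
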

\begin{proof}
Simply combine Remark \ref{rem:Triebel} with \cite[Theorem 7.4.5]{Triebel92}.
\end{proof}

In particular, $\HT^{s,p}(M)$, for $1<p<\infty$ and $s\in\Z_{+}$, is the completion of $\Da(M,\Omega_{1/2})$ with respect to the norm in \eqref{eq:covariant}.

\section{Local smoothing}\label{sec:localsmooth}

In this section we prove our main local smoothing result for Fourier integral operators satisfying the cinematic curvature condition, Theorem \ref{thm:localsmoothFIOintro}. To do so, we first derive a version of the variable-coefficient Wolff-type inequalities from \cite{BeHiSo20}, with the Hardy spaces for FIOs as initial data.

Throughout this section, we fix a dimension $n\geq2$. The phenomenon of local smoothing does not occur for $n=1$, since in that case the relevant operators are already bounded on the classical function spaces $\HT^{s,p}(M)$ at each fixed time.

\subsection{Variable-coefficient Wolff-type inequalities}\label{subsec:decouple}

In this subsection we will use the variable-coefficient Wolff-type inequalities from \cite{BeHiSo20} to derive a statement involving the Hardy spaces for Fourier integral operators. In Appendix \ref{sec:Wolff} we show that the resulting inequalities are, in a sense, equivalent to those in \cite{BeHiSo20}, but for our main results we will only need one implication.

Throughout this subsection, we fix $m\in\R$ and symbols $a_{1}\in C^{\infty}_{c}(\R^{n+1})$ and $a_{2}\in S^{m}(\Rn)$, where $S^{m}(\Rn)$ is the standard Kohn--Nirenberg class of symbols depending only on the fiber variable.
Set $a(z,\eta):=a_{1}(z)a_{2}(\eta)$ for $(z,\eta)\in\R^{n+1}\times\Rn$. Next, let $\Phi\in C^{\infty}(\R^{n+1}\times (\Rn\setminus \{0\}))$ be real-valued and positively homogeneous of degree one in the fiber variable. Suppose that $\rank\,\partial_{x\eta}^{2}\Phi(z_{0},\eta_{0})=n$ for all $(z_{0},\eta_{0})\in\supp(a)$ with $\eta_{0}\neq 0$, where we write $z=(x,t)\in\Rn\times\R$ for each $z\in\R^{n+1}$. Moreover, we assume that for all such $(z_{0},\eta_{0})$, one has
\begin{equation}\label{eq:Gaussmap}
\rank\,\partial_{\eta\eta}^{2}(\partial_{z}\Phi(z_{0},\eta)\cdot G(z_{0},\eta_{0}))|_{\eta=\eta_{0}}=n-1,
\end{equation}
where $G(z_{0},\eta_{0}):=G_{0}(z_{0},\eta_{0})/|G_{0}(z_{0},\eta_{0})|$ is the generalized Gauss map, with
\[
G_{0}(z_{0},\eta_{0}):=\bigwedge_{i=1}^{n}\partial_{\eta_{i}}\partial_{z}\Phi(z_{0},\eta_{0}).
\]
Now set
\begin{equation}\label{eq:Tstandard}
Tf(z):=\int_{\Rn}e^{i\Phi(z,\eta)}a(z,\eta)\wh{f}(\eta)\ud\eta
\end{equation}
for $f\in\Sw(\Rn)$ and $z\in\R^{n+1}$. We note that $T$ is a Fourier integral operator of order $m-1/4$, associated with a canonical relation $\Ca$ satisfying the cinematic curvature condition (see Section \ref{subsec:FIOs}). Later on we will use that the analysis of general FIOs satisfying this condition can be reduced to ones as in \eqref{eq:Tstandard}, by Lemma \ref{lem:FIOcurv}.

Next, for each $k\in\Z_{+}$, we fix a maximal collection $\Theta_{k}\subseteq S^{n-1}$ of unit vectors such that $|\nu-\nu'|\geq 2^{-k/2}$ for all $\nu,\nu'\in \Theta_{k}$ with $\nu\neq \nu'$. Let $(\chi_{\nu})_{\nu\in\Theta_{k}}\subseteq C^{\infty}(\Rn\setminus\{0\})$ be an associated partition of unity. More precisely, each $\chi_{\nu}$ is positively homogeneous of degree $0$ and satisfies $0\leq \chi_{\nu}\leq 1$ and
\[
\supp(\chi_{\nu})\subseteq\{\eta\in\Rn\setminus\{0\}\mid |\hat{\eta}-\nu|\leq 2^{-k/2+1}\}.
\]
Moreover, $\sum_{\nu\in \Theta_{k}}\chi_{\nu}(\eta)=1$ for all $\eta\neq0$, and for all $\alpha\in\Z_{+}^{n}$ and $\beta\in\Z_{+}$ there exists a $C_{\alpha,\beta}\geq0$ independent of $k$ such that, if $2^{k-1}\leq |\eta|\leq 2^{k+1}$, then
\[
|(\hat{\eta}\cdot\partial_{\eta})^{\beta}\partial_{\eta}^{\alpha}\chi_{\nu}(\eta)|\leq C_{\alpha,\beta}2^{-k(|\alpha|/2+\beta)}
\]
for all $\nu\in \Theta_{k}$. Such a collection is straightforward to construct, in a similar manner as the wave packets in Section \ref{subsec:defRn} (see also \cite[Section IX.4]{Stein93}).

Now, for $p\in[1,\infty]$, $k\in\Z_{+}$ and $f\in\Sw(\Rn)$ with $\supp(\wh{f}\,)\subseteq\{\eta\in\Rn\mid 2^{k-1}\leq |\eta|\leq 2^{k+1}\}$, define the \emph{$k$-th decoupling norm} of $Tf$ as
\begin{equation}\label{eq:decouplenorm}
\|Tf\|_{L^{p,k}_{\dec}(\R^{n+1})}:=\Big(\sum_{\nu\in \Theta_{k}}\|T\chi_{\nu}(D)f\|_{L^{p}(\R^{n+1})}^{p}\Big)^{1/p},
\end{equation}
with the obvious modification for $p=\infty$.

The main result of this section relies crucially on the following variable-coefficient Wolff-type inequality from \cite{BeHiSo20}. Recall the definition of the exponent $d(p)$ from \eqref{eq:dp}.

\begin{theorem}\label{thm:BeHiSo}
Let $p\in(2,\infty)$ and $\veps>0$. Then, for each $L\geq0$, there exists a $C\geq0$ such that
\begin{equation}\label{eq:BeHiSo}
\|Tf\|_{L^{p}(\R^{n+1})}\leq C\big(2^{k(d(p)+\veps)}\|Tf\|_{L^{p,k}_{\dec}(\R^{n+1})}+2^{-kL}\|f\|_{L^{2}(\Rn)}\big)
\end{equation}
for all $k\in\Z_{+}$ and $f\in\Sw(\Rn)$ with $\supp(\wh{f}\,)\subseteq\{\eta\in\Rn\mid 2^{k-1}\leq |\eta|\leq 2^{k+1}\}$.
\end{theorem}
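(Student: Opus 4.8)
This inequality is due to Beltran, Hickman and Sogge \cite{BeHiSo20}; what follows is a sketch of the strategy one would use to prove it. The plan is to reduce the variable-coefficient operator $T$, at the fixed frequency scale $2^{k}$, to a constant-coefficient model, and then to invoke the $\ell^{p}$ decoupling inequality for the truncated light cone in $\R^{n+1}$ from \cite{Bourgain-Demeter15}, whose $\ell^{p}$ decoupling exponent at frequency $2^{k}$ is precisely $d(p)$: indeed, in the subcritical range $2\leq p\leq \frac{2(n+1)}{n-1}$ the number of $2^{-k/2}$-caps is $\sim 2^{k(n-1)/2}$ and Hölder plus $\ell^{2}$ decoupling give the loss $2^{k s(p)+k\veps}=2^{k(d(p)+\veps)}$, while in the supercritical range the cone decoupling exponent equals $2s(p)-1/p=d(p)$.

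\emph{Reduction to a normal form.} First I would take a finite partition of unity of $\supp(a)$ into pieces small in both $z$ and $\hat\eta$, reducing to $a$ supported near a single point $(z_{0},\eta_{0})$ with $\eta_{0}\neq0$. Since $\det\partial_{x\eta}^{2}\Phi(z_{0},\eta_{0})\neq0$, a linear change of the fibre variable $\eta$ together with a diffeomorphism of the physical variable $z$ — neither of which affects the shape of \eqref{eq:BeHiSo}, as caps are sent to comparable caps and the Jacobians are bounded — brings $\Phi$ to the form $\Phi(z,\eta)=x\cdot\eta+t\,q(z,\eta)$ with $q$ positively homogeneous of degree one in $\eta$; one may further normalize $\eta_{0}=e_{n}$ and use \eqref{eq:Gaussmap} to arrange that the canonical relation of $T$ is a small $C^{\infty}$ perturbation of that of the half-wave propagator $e^{it|D|}$, so that $\partial_{\eta\eta}^{2}\big(\partial_{z}\Phi\cdot G\big)$ has exactly $n-1$ nonvanishing eigenvalues near $(z_{0},\eta_{0})$ — the curvature that the cone decoupling theorem will consume. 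Finally, since on the annulus $|\eta|\sim2^{k}$ one may factor $a_{2}\in S^{m}$ as $2^{km}$ times an order-zero symbol, uniformly in $k$, this reduces matters to $m=0$ (the factor $2^{km}$ appears on both sides of \eqref{eq:BeHiSo}, and in the $L^{2}$ term it is absorbed by replacing $L$ with $L+|m|$).

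\emph{Freezing coefficients, decoupling, and summation.} Put $\delta:=2^{-k/2}$, so that $\wh f$ is supported in $\{|\eta|\sim\delta^{-2}\}$, and cover $\R^{n+1}$ by a finitely-overlapping family of balls $B$ of radius a fixed power of $2^{k}$, chosen so that on each $B$ the coefficients of $\Phi$ can be frozen: after subtracting an affine-in-$z$ polynomial from $\Phi$ (which only modulates $\wh f$) and parabolically rescaling the $2^{-k/2}$-caps to unit caps, the phase agrees with that of the constant-coefficient cone extension operator up to an error $E$ all of whose high-order $z$- and $\eta$-derivatives are $O(\delta^{N})$; repeated integration by parts (non-stationary phase), using that $a_{1}$ has compact $z$-support to control kernels at infinity, then bounds the contribution of $E$ by $2^{-kL}\|f\|_{L^{2}(\Rn)}$ for any prescribed $L$, at the cost of enlarging $C$. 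On each $B$ the model operator is a rescaled copy of the extension operator of the truncated light cone in $\R^{n+1}$, so the Bourgain--Demeter $\ell^{p}$ cone decoupling inequality, applied at the $\delta$-plate scale, yields $\|Tf\|_{L^{p}(B)}\lesssim 2^{k(d(p)+\veps)}\,\big(\sum_{\nu\in\Theta_{k}}\|T\chi_{\nu}(D)f\|_{L^{p}(B)}^{p}\big)^{1/p}$ modulo the above error. Raising to the $p$-th power and summing over the finitely-overlapping $B$ — legitimate because each $\chi_{\nu}(D)f$ has space-time Fourier support in a slab and is therefore essentially constant on the scale of $B$ — gives \eqref{eq:BeHiSo}.

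\emph{Main obstacle.} The crux is the freezing-coefficients step: one must show that the variable-coefficient phase is genuinely approximable by a constant-coefficient cone phase on each ball $B$ with errors that are not only small but summable over the $\sim2^{k(n+1)}$ (or so) balls without eroding the permitted $2^{k\veps}$ slack, and one must verify that the frozen hypersurface inherits exactly the nondegenerate curvature needed for cone decoupling — which is precisely where hypothesis \eqref{eq:Gaussmap} enters. Making this approximation argument precise, uniformly across scales and across the cover, is the technical heart of \cite{BeHiSo20}.
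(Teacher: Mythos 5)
The paper does not prove Theorem~\ref{thm:BeHiSo}: it is an imported result, and Remark~\ref{rem:BeHiSo} only carries out the bookkeeping needed to align the form stated here with the published Beltran--Hickman--Sogge statements. Concretely, the paper observes that the inequality follows directly from \cite[Theorem~53]{BeHiSo21} (which has no $2^{-kL}\|f\|_{L^{2}(\Rn)}$ term, so that term is pure slack), that it is also contained in \cite[p.~424]{BeHiSo20} modulo additional support assumptions on $a$ that partitions of unity remove, and that \cite[Theorem~1.4]{BeHiSo20} is related to it by a straightforward rescaling. You correctly attribute the result but then expend your effort on reconstructing a proof rather than on the actual matching of statements which the paper performs and which is all the paper needs.

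As a sketch of the underlying Beltran--Hickman--Sogge argument, your outline is in the right spirit and your accounting of the decoupling exponents in both the subcritical and supercritical ranges of $p$ is correct. However, the central step you describe --- freeze the coefficients of $\Phi$ once on balls of radius a fixed power of $2^{k}$, bound the resulting error by $O(\delta^{N})$ with nonstationary phase, and sum over balls --- is not how the sharp exponent $d(p)$ is reached. A one-shot coefficient-freezing at a single intermediate scale does not close: the actual proof in \cite{BeHiSo20,BeHiSo21} is an induction on scales with iterated parabolic (Lorentz) rescalings, and the role of hypothesis~\eqref{eq:Gaussmap} is precisely that the cinematic curvature condition is stable under these rescalings, so that the error incurred at each stage of the induction is absorbed into the $2^{k\veps}$ loss rather than accumulating. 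Your ``main obstacle'' paragraph rightly flags that making the approximation precise is the technical heart of the matter; what is missing is the recognition that the multi-scale induction, not a single freezing, is what resolves it. Since the paper cites rather than reproves this theorem, the mismatch is with the cited proof rather than with anything the present text establishes.
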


\begin{remark}\label{rem:BeHiSo}
We note that \cite[Theorem 1.4]{BeHiSo20} has a slightly different form, with a straightforward rescaling argument connecting the two versions. On the other hand, Theorem \ref{thm:BeHiSo} follows directly from \cite[Theorem 53]{BeHiSo21}, where the term $2^{-kL}\|f\|_{L^{2}(\Rn)}$ is omitted. Theorem \ref{thm:BeHiSo} is also contained in \cite[p.~424]{BeHiSo20}, although there additional support assumptions are made on the symbol $a$. Those assumptions can be removed by applying suitable partitions of unity.
\end{remark}

We will now show that each of the decoupling norms can be bounded by the $\HT^{m-s(p),p}_{FIO}(\Rn)$ norm of the initial data.
The following proposition is a variable-coefficient version of \cite[Corollary 4.2]{Rozendaal22b}, which concerns the solution operator to the Euclidean half-wave equation and related Fourier multipliers.

\begin{proposition}\label{prop:decouple}
Let $p\in[1,\infty]$. Then there exists a $C\geq0$ such that
\begin{equation}\label{eq:decouplebound}
\|Tf\|_{L^{p,k}_{\dec}(\R^{n+1})}\leq C\|f\|_{\HT^{m-s(p),p}_{FIO}(\Rn)}
\end{equation}
for all $k\in\Z_{+}$ and $f\in\Sw(\Rn)$ with $\supp(\wh{f}\,)\subseteq\{\eta\in\Rn\mid 2^{k-1}\leq |\eta|\leq 2^{k+1}\}$.
\end{proposition}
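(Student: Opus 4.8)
The plan is to prove \eqref{eq:decouplebound} by bounding both sides by the quantity $2^{kpm}\sum_{\nu\in\Theta_{k}}\|\chi_{\nu}(D)f\|_{\HT^{p}(\Rn)}^{p}$, using that $\hat f$ is already supported in the single shell $\{2^{k-1}\le|\eta|\le2^{k+1}\}$. Throughout I would freeze the time variable: write $z=(x,t)$, recall that $a_{1}\in C^{\infty}_{c}$ so that $a_{1}(\cdot,t)$ vanishes for $t$ outside a fixed compact set, and note that for each such $t$ the operator $T_{t}g:=Tg(\cdot,t)$ is of the form treated in Corollary \ref{cor:stanform}, with amplitude $a_{1}(\cdot,t)a_{2}\in S^{m}(\Rn\times\Rn)$ and with $\partial_{x\eta}^{2}\Phi$ an $n\times n$ matrix that is invertible on the support of $a$ --- uniformly so, by homogeneity and compactness of $\supp(a_{1})\times S^{n-1}$. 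The Gauss map condition \eqref{eq:Gaussmap}, i.e.\ the full cinematic curvature, plays no role in this proposition; it is used only via Theorem \ref{thm:BeHiSo}.

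The first comparison is between $\|f\|_{\HT^{m-s(p),p}_{FIO}(\Rn)}$ and the discrete quantity. Applying the reproducing formula \eqref{it:phiproperties3} to a $g$ with $\supp(\hat g)\subseteq\{2^{k-1}\le|\eta|\le2^{k+1}\}$, using the angular support property \eqref{it:phiproperties1} (so that the composition of $\chi_{\mu}(D)$, the reproducing multiplier, and $\varphi_{\nu}(D)$ vanishes unless $|\mu-\nu|\lesssim2^{-k/2}$), the bound $S^{(n-1)/4}$ on the reproducing multiplier, H\"older's inequality over angular caps of measure $\eqsim2^{-k(n-1)/2}$, and the bounded overlap of these caps around the $2^{-k/2}$-separated set $\Theta_{k}$, one obtains
\[
\sum_{\mu\in\Theta_{k}}\|\chi_{\mu}(D)g\|_{\HT^{p}(\Rn)}^{p}\lesssim 2^{-kps(p)}\int_{S^{n-1}}\|\varphi_{\nu}(D)g\|_{\HT^{p}(\Rn)}^{p}\,\ud\nu,
\]
where the exponent is forced by $p\,s(p)=\tfrac{(n-1)(p-2)}{4}$. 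On the shell $\langle D\rangle^{m-s(p)}$ equals $2^{k(m-s(p))}$ times a uniformly bounded, radial, microlocally invertible Fourier multiplier, so applying the display above to $g=\langle D\rangle^{m-s(p)}f$ and discarding the nonnegative term $\|q(D)\langle D\rangle^{m-s(p)}f\|_{L^{p}(\Rn)}$ from the $\HT^{p}_{FIO}(\Rn)$ norm yields
\[
2^{kpm}\sum_{\nu\in\Theta_{k}}\|\chi_{\nu}(D)f\|_{\HT^{p}(\Rn)}^{p}\lesssim\|f\|_{\HT^{m-s(p),p}_{FIO}(\Rn)}^{p}.
\]
Moreover $\|\chi_{\nu}(D)f\|_{L^{p}(\Rn)}\lesssim\|\chi_{\nu}(D)f\|_{\HT^{p}(\Rn)}$, since $\chi_{\nu}(D)f$ is band-limited: this is an equality for $1<p<\infty$, the embedding $\HT^{1}\hookrightarrow L^{1}$ for $p=1$, and the band-limited Bernstein inequality for $\bmo$ for $p=\infty$.

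It therefore remains to establish, with a constant independent of $k$ and of $\nu\in\Theta_{k}$, the pointwise-in-$\nu$ estimate
\begin{equation}\label{eq:perplank}
\|T\chi_{\nu}(D)f\|_{L^{p}(\R^{n+1})}\lesssim 2^{km}\|\chi_{\nu}(D)f\|_{L^{p}(\Rn)},
\end{equation}
since summing its $p$-th power over $\nu\in\Theta_{k}$ and combining with the previous paragraph gives \eqref{eq:decouplebound}. For $k$ in any fixed bounded range \eqref{eq:perplank} is an elementary kernel bound: $\chi_{\nu}(D)f$ then has frequency support in a fixed compact set and the kernel of $T_{t}$ has compact $x$-support and rapid spatial decay, so $T_{t}$ is bounded on $L^{p}(\Rn)$ uniformly in $t$ (Schur's test), and one integrates over the compact $t$-support of $a_{1}$.

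The case of large $k$ in \eqref{eq:perplank} is the heart of the matter, and I expect the uniformity in $(k,\nu)$ to be the main obstacle. Here I would use the parabolic rescaling adapted to the cone: rotate so $\nu=e_{1}$, and apply the anisotropic dilation $\eta\mapsto(2^{k}\eta_{1},2^{k/2}\eta')$ together with the dual dilation in $x$. Then the frequency support of $\chi_{\nu}(D)f$ becomes a fixed unit-size region; the rescaled amplitude lies in $S^{0}(\Rn\times\Rn)$ uniformly after factoring out $2^{km}$ (using $a_{2}\in S^{m}$ and $|\eta|\eqsim2^{k}$ there); and the part of $\Phi$ beyond its first-order Taylor part in the fibre direction at the centre of the plank becomes bounded with all derivatives, uniformly in $k$ and $\nu$ --- this is precisely what parabolic rather than isotropic scaling buys, since $\Phi$ is smooth and homogeneous of degree one so that the Hessian $\partial_{\eta}^{2}\Phi$ annihilates the radial direction by Euler's identity, whence the second-order remainder over a plank of dimensions $2^{k}\times(2^{k/2})^{n-1}$ is $O(1)$. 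For each fixed $t$ the rescaled operator is thus a compactly $x$-supported oscillatory integral operator of order $0$ whose phase is a uniformly small perturbation of a linear one and whose phase gradient is a uniformly nondegenerate map (by invertibility of $\partial_{x\eta}^{2}\Phi$); such operators are bounded on $L^{p}(\Rn)$ uniformly in $k,\nu,t$, by a Schur-test argument --- equivalently, this is the estimate underlying the fixed-time bounds of Seeger, Sogge and Stein \cite{SeSoSt91}, and the variable-coefficient counterpart of \cite[Corollary 4.2]{Rozendaal22b}. Undoing the dilation, under which the $L^{p}(\Rn)\to L^{p}(\Rn)$ operator norm of such an operator is invariant and the Jacobian factors cancel between domain and range, gives the fixed-$t$ form of \eqref{eq:perplank}, and integrating its $p$-th power over the compact $t$-support of $a_{1}$ gives \eqref{eq:perplank}. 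The delicate point is that the rescaled symbol seminorms, the size of the phase perturbation, and the lower bound on $|\det\partial_{x\eta}^{2}\Phi|$ must all be controlled uniformly over every $k$ and every $\nu\in\Theta_{k}$ simultaneously, which is exactly what the parabolic scaling together with the compactness of $\supp(a_{1})\times S^{n-1}$ secures.
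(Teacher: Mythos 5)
Your argument is essentially the paper's: both proofs reduce \eqref{eq:decouplebound} to the per-plank fixed-time estimate $\|T_{t}\chi_{\nu}(D)f\|_{L^{p}(\Rn)}\lesssim\|\chi_{\nu}(D)f\|_{L^{p}(\Rn)}$ (uniformly in $t,k,\nu$), integrate in $t$, and then invoke the equivalence $\|f\|_{\HT^{-s(p),p}_{FIO}(\Rn)}\eqsim\bigl(\sum_{\nu\in\Theta_{k}}\|\chi_{\nu}(D)f\|_{L^{p}(\Rn)}^{p}\bigr)^{1/p}$ on dyadic shells. The only difference is that you sketch proofs of the two ingredients (via the reproducing formula/angular Hölder and via parabolic rescaling) where the paper simply cites \cite[Proposition 4.1]{Rozendaal22b} and the standard FIO estimate from \cite[Section IX.4]{Stein93} and \cite[Lemma 3.3]{BeHiSo20}.
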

For $T$ the solution operator to the Euclidean wave equation on a compact time interval, \eqref{eq:decouplebound} is in fact an equivalence. For general FIOs the corresponding statement is more involved, and it is relegated to Appendix \ref{sec:Wolff}.
\begin{proof}
By replacing $T$ by $T\lb D\rb^{-m}$, we may suppose that $m=0$.

Set $T_{t}f(x):=Tf(x,t)$ for $t\in\R$, $f\in \Sw(\Rn)$ and $x\in \Rn$. Then each $T_{t}$ is an FIO of order $0$ as in Corollary \ref{cor:stanform}, with uniformity in $t$ on its support, phase function and symbol, as in Remark \ref{rem:constantsFIO}. Suppose that $\supp(\wh{f}\,)\subseteq\{\eta\in\Rn\mid 2^{k-1}\leq |\eta|\leq 2^{k+1}\}$ for some $k\in\Z_{+}$, and let $\nu\in\Theta_{k}$. Then, by writing $\chi_{\nu}=\wt{\chi}_{\nu}\chi_{\nu}$ for $\wt{\chi}_{\nu}\in C^{\infty}(\Rn\setminus\{0\})$ with similar support and smoothness properties as $\chi_{\nu}$, a standard argument from the $L^{p}$ theory of FIOs, contained in e.g.~\cite[Section IX.4]{Stein93} and the proof of \cite[Lemma 3.3]{BeHiSo20}, shows that
\begin{equation}\label{eq:standardest}
\|T_{t}\chi_{\nu}(D)f\|_{L^{p}(\Rn)}=\|T_{t}\wt{\chi}_{\nu}(D)\chi_{\nu}(D)f\|_{L^{p}(\Rn)}\lesssim \|\chi_{\nu}(D)f\|_{L^{p}(\Rn)}
\end{equation}
for an implicit constant independent of $t$, $f$, $k$ and $\nu$. This in turn yields
\begin{align*}
\|T\chi_{\nu}(D)f\|_{L^{p}(\R^{n+1})}&=\Big(\int_{I}\|T_{t}\chi_{\nu}(D)f\|_{L^{p}(\Rn)}^{p}\ud t\Big)^{1/p}\\
&\lesssim \Big(\int_{I}\|\chi_{\nu}(D)f\|_{L^{p}(\Rn)}^{p}\ud t\Big)^{1/p}\lesssim \|\chi_{\nu}(D)f\|_{L^{p}(\Rn)},
\end{align*}
for some bounded interval $I\subseteq \R$ such that $T_{t}=0$ whenever $t\notin I$. Hence
\begin{align*}
\|Tf\|_{L^{p,k}_{\dec}(\R^{n+1})}&=\Big(\sum_{\nu\in \Theta_{k}}\|T\chi_{\nu}(D)f\|_{L^{p}(\R^{n+1})}^{p}\Big)^{1/p}\lesssim \Big(\sum_{\nu\in \Theta_{k}}\|\chi_{\nu}(D)f\|_{L^{p}(\R^{n})}^{p}\Big)^{1/p}\\
&\eqsim \|f\|_{\HT^{-s(p),p}_{FIO}(\Rn)},
\end{align*}
where we used \cite[Proposition 4.1]{Rozendaal22b} for the final step.
\end{proof}

We now arrive at a corollary which is crucial for the proof of our main result.

\begin{corollary}\label{cor:decouple}
Let $p\in(2,\infty)$ and $\veps>0$. Then there exists a $C\geq0$ such that
\begin{equation}\label{eq:decouplefull}
\|Tf\|_{L^{p}(\R^{n+1})}\leq C\|f\|_{\HT^{m+d(p)-s(p)+\veps,p}_{FIO}(\Rn)}
\end{equation}
for all $f\in\HT^{m+d(p)-s(p)+\veps,p}_{FIO}(\Rn)$.
\end{corollary}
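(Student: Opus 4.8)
The plan is to combine Theorem \ref{thm:BeHiSo} (the variable-coefficient Wolff-type inequality) with Proposition \ref{prop:decouple} (the bound on the decoupling norm by the Hardy space norm), after a Littlewood--Paley decomposition of $f$ into dyadic frequency shells. First I would fix a dyadic partition of unity: let $\beta\in C^\infty_c(\Rn)$ be supported in $\{1/2\leq|\eta|\leq 2\}$ with $\sum_{k\geq 1}\beta(2^{-k}\eta)=1$ for $|\eta|\geq 1$, and let $P_k$ denote the Fourier multiplier with symbol $\beta(2^{-k}\cdot)$, together with a low-frequency piece $P_0$ with symbol supported in $\{|\eta|\leq 2\}$. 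For the low-frequency part $P_0 f$, the operator $T P_0$ is bounded from $L^p(\Rn)$ (hence from $\HT^{m+d(p)-s(p)+\veps,p}_{FIO}(\Rn)$, using the Sobolev embedding \eqref{eq:Sobolev}) into $L^p(\R^{n+1})$ by a trivial kernel estimate, since the symbol is compactly supported in $\eta$ and $a_1$ is compactly supported in $z$. So the main work concerns $\sum_{k\geq 1} T P_k f$.

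For each $k\geq 1$, set $f_k:=P_k f$, so $\supp(\widehat{f_k})\subseteq\{2^{k-1}\leq|\eta|\leq 2^{k+1}\}$. Applying Theorem \ref{thm:BeHiSo} with some large $L$ (to be chosen), and then Proposition \ref{prop:decouple}, gives
\begin{align*}
\|T f_k\|_{L^p(\R^{n+1})}
&\leq C\big(2^{k(d(p)+\veps/2)}\|T f_k\|_{L^{p,k}_{\dec}(\R^{n+1})}+2^{-kL}\|f_k\|_{L^2(\Rn)}\big)\\
&\leq C\big(2^{k(d(p)+\veps/2)}\|f_k\|_{\HT^{m-s(p),p}_{FIO}(\Rn)}+2^{-kL}\|f_k\|_{L^2(\Rn)}\big).
\end{align*}
Since $f_k$ has frequency support in a shell of size $2^k$, one has $2^{k(d(p)+\veps/2)}\|f_k\|_{\HT^{m-s(p),p}_{FIO}(\Rn)}\eqsim \|f_k\|_{\HT^{m+d(p)-s(p)+\veps/2,p}_{FIO}(\Rn)}$; this uses that $\lb D\rb^{\sigma}$ and $2^{k\sigma}$ act comparably on functions with frequency support in the $k$-th shell, together with the fact that $\HpsX$ for different $s$ are related by powers of $\lb D\rb$ and that the wave-packet transform commutes appropriately with such multipliers (this is implicit in Definition \ref{def:HpFIO} and Proposition \ref{prop:FIORn} applied to the Fourier multiplier $\lb D\rb^{\sigma}$, which is a pseudodifferential operator, hence an FIO associated with the identity canonical graph). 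For the error term, choose $L$ large enough that $2^{-kL}\|f_k\|_{L^2(\Rn)}\lesssim 2^{-k}\|f_k\|_{\HT^{m+d(p)-s(p)+\veps/2,p}_{FIO}(\Rn)}$; this is possible because $\|f_k\|_{L^2}\lesssim 2^{k(s(p)+|m|+|d(p)|+1)}\|f_k\|_{\HT^{m+d(p)-s(p)+\veps/2,p}_{FIO}(\Rn)}$ by the Sobolev embeddings \eqref{eq:Sobolev} (for fixed dyadic $k$, all Sobolev and Hardy-for-FIO norms on a single shell are comparable up to a power of $2^k$). Summing over $k$ and using a geometric-series/Cauchy--Schwarz argument in $k$ — absorbing the loss $2^{k\cdot 0}$ versus the gain in the extra $\veps/2$ of regularity — yields
\begin{align*}
\sum_{k\geq 1}\|T f_k\|_{L^p(\R^{n+1})}
&\lesssim \sum_{k\geq 1}2^{-k\veps/2}\,2^{k\veps/2}\|f_k\|_{\HT^{m+d(p)-s(p)+\veps/2,p}_{FIO}(\Rn)}\\
&\lesssim \Big(\sum_{k\geq 1}2^{k\veps}\|f_k\|_{\HT^{m+d(p)-s(p)+\veps/2,p}_{FIO}(\Rn)}^{2}\Big)^{1/2}
\lesssim \|f\|_{\HT^{m+d(p)-s(p)+\veps,p}_{FIO}(\Rn)},
\end{align*}
where the last step is a square-function/Littlewood--Paley characterization of $\HpsX$ on dyadic shells, combined with the embedding of $\ell^2$-summed pieces with $\veps/2$ extra regularity into the space with $\veps$ regularity (alternatively one argues directly with $\sum_k 2^{-k\veps/2}$ being summable and uses the triangle inequality, which avoids needing a sharp square-function statement).

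\textbf{Main obstacle.} The delicate point is the passage from a per-shell estimate to a global estimate on $\HpsX$, i.e.\ justifying that $\|f\|_{\HT^{\sigma,p}_{FIO}(\Rn)}$ controls the $\ell^2$ (or $\ell^1$ with geometric weights) sum of the dyadically localized pieces $\|P_k f\|_{\HT^{\sigma-\text{(gain)},p}_{FIO}(\Rn)}$. This requires knowing that $P_k$ acts boundedly on $\HpsX$ uniformly in $k$ with the natural frequency-scaling, and that the dyadic pieces satisfy a Littlewood--Paley-type inequality in these spaces; this is standard for the Hardy spaces for FIOs (the dyadic multipliers $P_k$ are pseudodifferential of order $0$, hence bounded on every $\HpsX$ by Proposition \ref{prop:FIORn}, and the square-function bound follows from the tent-space description in \cite{HaPoRo20}), but it is the step where one must be careful rather than invoke a one-line estimate. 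The rest is bookkeeping with the exponent $d(p)-s(p)$ and the $\veps$-loss.
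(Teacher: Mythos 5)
Your overall template — Littlewood--Paley decomposition of $f$, apply Theorem~\ref{thm:BeHiSo} and Proposition~\ref{prop:decouple} on each dyadic shell, then sum geometrically in $k$ using the $\veps/2$ cushion — is the same one the paper follows. The per-shell bookkeeping (pseudodifferential multipliers $\phi_k(D)$ acting uniformly on $\Hps$, converting $2^{k\sigma}$ factors into $\lb D\rb^\sigma$ on frequency shells) is also correct, and the paper indeed opts for the plain triangle-inequality summation you mention as an alternative rather than a square-function estimate.

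However, there is a genuine gap in your treatment of the error term $2^{-kL}\|f_k\|_{L^2(\Rn)}$. You claim $\|f_k\|_{L^2}\lesssim 2^{kM}\|f_k\|_{\HT^{m+d(p)-s(p)+\veps/2,p}_{FIO}(\Rn)}$ via the Sobolev embeddings on a frequency shell. But the embeddings \eqref{eq:Sobolev} only move between $\Hps$ and $\HT^{s\pm s(p),p}(\Rn)$ — they change the smoothness index at fixed integrability $p$. They do not give an embedding into an $L^2$-based space. Indeed, for $p>2$ no inequality of the form $\|g\|_{L^2}\lesssim R^{M}\|g\|_{L^p}$ can hold uniformly over functions $g$ with Fourier support in a shell of radius $\sim R$: take $g(x)=R_0^{-n/p}\psi(x/R_0)e^{i2^k\nu\cdot x}$ with $\psi$ Schwartz of small Fourier support and let $R_0\to\infty$; then $\|g\|_{L^p}\eqsim 1$ while $\|g\|_{L^2}\eqsim R_0^{n(1/2-1/p)}\to\infty$, with Fourier support staying in the $k$-th shell. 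Bernstein's inequality goes in the opposite direction when $p>2$. So the $L^2$ term cannot be absorbed by frequency localization alone, and your proof breaks at this step.

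The paper's fix is a preliminary reduction to compactly supported initial data: one chooses $\chi\in C^\infty_c(\Rn)$ with $\chi\equiv 1$ on $\{|y|\le 2c\}$, $c=\sup\{|\partial_\eta\Phi(z,\eta)|: (z,\eta)\in\supp a,\ \eta\neq 0\}$, and observes that the phase of $T(1-\chi)$ is non-singular (since $a$ has compact spatial support), so $T(1-\chi)$ is smoothing and can be discarded. For $\chi f$, one then has $\|\phi_k(D)\chi f\|_{L^2}\lesssim\|\chi f\|_{L^2}\lesssim\|f\|_{L^p}\lesssim\|f\|_{\HT^{s(p),p}_{FIO}(\Rn)}$ by H\"older on $\supp\chi$ and \eqref{eq:Sobolev}, uniformly in $k$, after which $2^{-kL}$ with $L\ge\veps/2$ makes the error summable. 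This reduction, using the kernel structure of $T$ (non-stationary phase away from a compact set), is the one essential idea missing from your proposal.
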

\begin{proof}
We may again fix a specific value of $m$, although this time we choose $m=2s(p)-d(p)-\veps$. This choice will be convenient, as it ensures that
\begin{equation}\label{eq:Sobolevproof}
\HT^{m+d(p)-s(p)+\veps,p}_{FIO}(\Rn)=\HT^{s(p),p}_{FIO}(\Rn)\subseteq L^{p}(\Rn),
\end{equation}
by \eqref{eq:Sobolev}. Moreover, we may suppose that $a(z,\eta)=0$ whenever $|\eta|<1/2$, since the low frequencies only contribute a smoothing operator $R$, and
\begin{equation}\label{eq:smoothingR}
R:\HT^{m+d(p)-s(p)+\veps,p}_{FIO}(\Rn)\subseteq \Sw'(\Rn)\to\Sw(\R^{n+1})\subseteq L^{p}(\R^{n+1}).
\end{equation}
Let $f\in\HT^{s(p),p}_{FIO}(\Rn)$. Our first goal is to preemptively deal with the $L^{2}$ term in \eqref{eq:BeHiSo}, which we will do by writing $f$ as the sum of a compactly supported function and a remainder, on the latter of which the phase function $\Phi$ of $T$ is non-singular. The argument is similar to that in the proof of Corollary \ref{cor:stanform}.

More precisely, as in \eqref{eq:cnonsingular}, set
\[
c:=\sup\{|\partial_{\eta}\Phi(z,\eta)|\mid (z,\eta)\in \supp(a), \eta\neq 0\}<\infty,
\]
and let $\chi\in C^{\infty}_{c}(\Rn)$ be such that $\chi(y)=1$ if $|y|\leq 2c$. Then the phase function of $T(1-\chi)$ is non-singular, and integration by parts shows that $T(1-\chi)$ is a smoothing operator. Hence we only have to show that
\begin{equation}\label{eq:toshowcompact}
\|T\chi f\|_{L^{p}(\R^{n+1})}\lesssim \|f\|_{\HT^{s(p),p}_{FIO}(\Rn)}
\end{equation}
for an implicit constant independent of $f$.

To this end, fix a Littlewood--Paley decomposition $(\phi_{k})_{k=0}^{\infty}\subseteq C^{\infty}_{c}(\Rn)$, with inverse Fourier transforms uniformly bounded in $L^{1}(\Rn)$, such that $\supp(\phi_{k})\subseteq\{\eta\in\Rn\mid 2^{k-1}\leq |\eta|\leq 2^{k+1}\}$ for each $k\in\N$.
Then the compact support of $\chi$, combined with \eqref{eq:Sobolevproof}, yields
\[
\|\phi_{k}(D)\chi f\|_{L^{2}(\Rn)}\lesssim \|\chi f\|_{L^{2}(\Rn)}\lesssim \|f\|_{L^{p}(\Rn)}\\
\lesssim \|f\|_{\HT^{s(p),p}_{FIO}(\Rn)}
\]
for implicit constants independent of $k$ and $f$. Also note that
\[
\|\chi f\|_{\HT^{s(p),p}_{FIO}(\Rn)}\lesssim \|f\|_{\HT^{s(p),p}_{FIO}(\Rn)},
\]
by Corollary \ref{cor:coordchange}. Hence, for each $k$, we can apply Theorem \ref{thm:BeHiSo} and Proposition \ref{prop:decouple}, with $\veps$ replaced by $\veps/2$:
\begin{align*}
\|T\phi_{k}(D)\chi f\|_{L^{p}(\R^{n+1})}&\lesssim 2^{k(d(p)+\veps/2)}\|T\phi_{k}(D)\chi f\|_{L^{p,k}_{\dec}(\R^{n+1})}+2^{-kL}\|\phi_{k}(D)\chi f\|_{L^{2}(\Rn)}\\
&\lesssim 2^{k(d(p)+\veps/2)}\|\phi_{k}(D)\chi f\|_{\HT^{m-s(p),p}_{FIO}(\Rn)}+2^{-kL}\|f\|_{\HT^{s(p),p}_{FIO}(\Rn)}\\
&\eqsim 2^{-k\veps/2}\|\phi_{k}(D)\chi f\|_{\HT^{m+d(p)+\veps-s(p),p}_{FIO}(\Rn)}+2^{-kL}\|f\|_{\HT^{s(p),p}_{FIO}(\Rn)}\\
&\lesssim 2^{-k\veps/2}\|\chi f\|_{\HT^{s(p),p}_{FIO}(\Rn)}+2^{-kL}\|f\|_{\HT^{s(p),p}_{FIO}(\Rn)}\\
&\lesssim 2^{-k\veps/2}\|f\|_{\HT^{s(p),p}_{FIO}(\Rn)},
\end{align*}
if we choose $L\geq \veps/2$. Since $T\chi f=\sum_{k=0}^{\infty}T\phi_{k}(D)\chi f$, we thus obtain \eqref{eq:toshowcompact}:
\begin{align*}
\|T\chi f\|_{L^{p}(\R^{n+1})}&\leq \sum_{k=0}^{\infty}\|T\phi_{k}(D)\chi f\|_{L^{p}(\R^{n+1})}\lesssim \sum_{k=0}^{\infty}2^{-k\veps/2}\|f\|_{\HT^{s(p),p}_{FIO}(\Rn)}\\
&\lesssim \|f\|_{\HT^{s(p),p}_{FIO}(\Rn)}.\qedhere
\end{align*}
\end{proof}

\begin{remark}\label{rem:symboldependence}
We will use that, for a fixed phase function $\Phi$, the constant $C$ in \eqref{eq:decouplefull} only depends on $T$ through the size of the spatial support of the symbol $a$, and on finitely many of its $S^{m}(\R^{n+1}\times \Rn)$ seminorms. This follows either by keeping track of the constants in the proof of \eqref{eq:BeHiSo} and \eqref{eq:decouplebound}, or from an application of the closed graph theorem.
\end{remark}

\subsection{Local smoothing}\label{subsec:localsmooth}

In this subsection we will prove our main theorem on local smoothing for Fourier integral operators.

Let $(M,g)$ be a complete Riemannian manifold of dimension $n\geq2$ with bounded geometry, with smooth structure $\A$. Fix $K\subseteq \A$ and a uniformly locally finite family $(\psi_{\ka})_{\ka\in K}\subseteq C^{\infty}_{c}(M)$ as in Lemma \ref{lem:cover}.
Let $Q$ and $P$ be the associated maps from \eqref{eq:Q} and \eqref{eq:P}. Next, let $(N,g')$ be another complete Riemannian manifold with bounded geometry, this time of dimension $n+1$. Fix $L\subseteq \B$ and a uniformly locally finite family $(\theta_{\la})_{\la\in L}\subseteq C^{\infty}_{c}(N)$ as in Lemma \ref{lem:cover}, where $\B$ is the smooth structure on $N$. Let $Q'$ and $P'$ be as in \eqref{eq:Qprime} and \eqref{eq:Pprime}.

To prove our main result, it will be convenient to formulate a version of Theorem \ref{thm:FIOseq} adapted to the present setting.

\begin{lemma}\label{lem:FIOseq2}
Let $p\in(2,\infty)$, $r,s,m\in\R$, and let $T\in I^{m-1/4}(M,N;\Ca)$ be compactly supported, for $\Ca$ a canonical relation from $T^{*}M$ to $T^{*}N$ satisfying the cinematic curvature condition. Suppose that, for all $\la\in L$ and $\ka\in K$, there exists a $C_{\la\ka}\geq 0$ such that, for all $f\in\HT^{s,p}_{FIO}(\Rn)$, one has $\la_{*}(\theta_{\la}T\ka^{*}(\psit_{\ka}f))\in W^{s,p}(\R^{n+1})$ and
\begin{equation}\label{eq:FIOseq2}
\|\la_{*}(\theta_{\la}T\ka^{*}(\psit_{\ka}f))\|_{W^{s,p}(\R^{n+1})}\leq C_{\la\ka}\|f\|_{\HT^{r,p}_{FIO}(\Rn)}.
\end{equation}
Then $Q'TP:\ell^{p}(K;\HT^{r,p}_{FIO}(\Rn))\to\ell^{p}(L;W^{s,p}(\R^{n+1}))$ is bounded.
\end{lemma}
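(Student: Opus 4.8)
The plan is to reduce Lemma~\ref{lem:FIOseq2} to an application of Lemma~\ref{lem:matrix}, exactly in the spirit of the proof of Theorem~\ref{thm:FIOseq}, but with the target Banach space $W^{s,p}(\R^{n+1})$ in place of $\Hps$ and the source space $\HT^{r,p}_{FIO}(\Rn)$. Concretely, I would set $A_{\la\ka}f:=\la_{*}(\theta_{\la}T\ka^{*}(\psit_{\ka}f))$ for $\la\in L$, $\ka\in K$, and aim to show that the operator matrix $A=(A_{\la\ka})_{\la\in L,\ka\in K}$ satisfies the two hypotheses of Lemma~\ref{lem:matrix} with $X=\HT^{r,p}_{FIO}(\Rn)$ and $Y=W^{s,p}(\R^{n+1})$; then $A:\ell^{p}(K;\HT^{r,p}_{FIO}(\Rn))\to\ell^{p}(L;W^{s,p}(\R^{n+1}))$ is bounded. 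The same routine verification as at the end of the proof of Theorem~\ref{thm:FIOseq} shows that $Q'TP$ really is given by matrix multiplication with $A$: for finite sequences $F\in c_{00}(K;\Da(\Rn))$ this is immediate from the definitions \eqref{eq:P} and \eqref{eq:Qprime}, and for general $F\in\ell^{p}(K;\HT^{r,p}_{FIO}(\Rn))$ one approximates by test sequences in the adjoint formula \eqref{eq:adjointdef}, using Proposition~\ref{prop:Qproperties} for the continuity that makes the limit legitimate.

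The first hypothesis of Lemma~\ref{lem:matrix}, namely $A_{\la\ka}\in\La(X,Y)$ with a uniform bound, is \emph{not} simply the stated bound \eqref{eq:FIOseq2}, because the constants $C_{\la\ka}$ there are not assumed uniform. So the key point is a uniformity argument. Here I would use the geometry: $A_{\la\ka}$ is nonzero only when $U_{\la}\cap U_{\ka}\neq\emptyset$, and for such pairs the localized operators $f\mapsto\la_{*}(\theta_{\la}T\ka^{*}(\psit_{\ka}f))$ form, after passing to local coordinates, a family of operators of the standard oscillatory-integral form \eqref{eq:Tstandard} (of order $m-1/4$) whose phase functions and symbols satisfy uniform bounds — this follows from the bounded-geometry hypothesis via Lemma~\ref{lem:cover}\eqref{it:cover4} and the uniform control \eqref{eq:uniformchange} on the transition maps $\mu_{\ka\la}$, together with the composition theorem for FIOs (all transition and cutoff operators being uniformly controlled FIOs of order zero, as in Remark~\ref{rem:constantsFIO}). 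Combined with Remark~\ref{rem:symboldependence}, which records that the constant in Corollary~\ref{cor:decouple} depends on the operator only through the spatial support of its symbol and finitely many symbol seminorms, and with the elementary fact that $L^{p}$-boundedness of the time-slices $T_{t}$ (as in the proof of Proposition~\ref{prop:decouple}) together with Corollary~\ref{cor:decouple} upgrades to a $W^{s,p}(\R^{n+1})$ bound after composing with $\langle D\rangle^{s}$, one gets $\sup_{\la,\ka}\|A_{\la\ka}\|_{\La(X,Y)}<\infty$. I expect this uniformity bookkeeping — tracking that all the geometric data entering the local-coordinate representation of $\theta_{\la}T\psi_{\ka}$ is controlled independently of $\la$ and $\ka$ — to be the main obstacle, though it is entirely analogous to arguments already used (Corollary~\ref{cor:boundedPQ}, Proposition~\ref{prop:square}, Proposition~\ref{prop:atomicHpFIOM}).

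The second hypothesis of Lemma~\ref{lem:matrix} is the uniform almost-diagonality: $\sup_{\la\in L}|K(\la)|<\infty$ and $\sup_{\ka\in K}|L(\ka)|<\infty$, where $K(\la)=\{\ka\in K\mid\theta_{\la}T\psi_{\ka}\neq0\}$ and $L(\ka)=\{\la\in L\mid\theta_{\la}T\psi_{\ka}\neq0\}$. Since $T$ is compactly supported, it is properly supported, so its Schwartz kernel meets only finitely many of the product sets $U_{\la}\times U_{\ka}$ over any compact region; combined with the uniform local finiteness of both covers from Lemma~\ref{lem:cover}\eqref{it:cover2}, this gives the required uniform bounds on $|K(\la)|$ and $|L(\ka)|$. (This is where we use that $T$ is compactly supported, which makes the condition \eqref{it:FIOseq2} of Theorem~\ref{thm:FIOseq} automatic, just as noted in the proof of Theorem~\ref{thm:FIObdd}.) With both hypotheses of Lemma~\ref{lem:matrix} verified, the boundedness of $A$, hence of $Q'TP$, on the stated $\ell^{p}$ spaces follows, completing the proof.
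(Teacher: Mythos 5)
Your overall strategy is right — the lemma is indeed an application of Lemma~\ref{lem:matrix} with $A_{\la\ka}f:=\la_{*}(\theta_{\la}T\ka^{*}(\psit_{\ka}f))$, and you handle the reduction to matrix multiplication and the almost-diagonality condition~\eqref{it:matrix2} correctly (both are immediate from compact support of $T$ plus uniform local finiteness of the covers). But you have overlooked the point that makes the lemma's proof trivial, and as a result you launch into a long detour that is both unnecessary and structurally misguided.

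The issue is condition~\eqref{it:matrix1}. You write that the stated bound~\eqref{eq:FIOseq2} does not suffice because the constants $C_{\la\ka}$ are not assumed uniform, and that ``the key point is a uniformity argument'' requiring bounded-geometry bookkeeping. That is not the key point. Since $T$ is compactly supported, $\theta_{\la}T\psi_{\ka}=0$ — hence $A_{\la\ka}=0$, hence $\|A_{\la\ka}\|_{\La(X,Y)}=0$ — for \emph{all but finitely many} pairs $(\la,\ka)$; equivalently, one may choose $C_{\la\ka}=0$ outside a finite set. The supremum in~\eqref{it:matrix1} is therefore a maximum over finitely many values, and it is finite with no uniformity argument whatsoever. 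This is the whole content of the paper's proof, and it is the reason the hypothesis~\eqref{eq:FIOseq2} can be stated with pair-dependent constants.

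Your alternative — proving genuine uniformity of the $\|A_{\la\ka}\|$ by tracking the geometry through Lemma~\ref{lem:cover}, Remark~\ref{rem:constantsFIO}, Remark~\ref{rem:symboldependence} and Corollary~\ref{cor:decouple} — is doubly problematic. First, it is simply not needed here; it would be the right concern in the non-compactly-supported setting of Theorem~\ref{thm:FIOseq}, but compact support short-circuits it. Second, and more importantly, that route does not actually \emph{use} the hypothesis~\eqref{eq:FIOseq2} of the lemma at all: you are in effect re-deriving the local bound from scratch, which conflates the statement of this abstract lemma with the concrete verification that is properly carried out afterward in the proof of Theorem~\ref{thm:localsmoothFIO} (where Corollary~\ref{cor:decouple} is invoked to establish~\eqref{eq:FIOseq2}). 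Separating these two steps — an abstract gluing lemma whose input is the local bound, and a concrete local estimate — is precisely the structure of the paper's argument, and your proposal collapses it.
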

\begin{proof}
Just as was the case for Theorem \ref{thm:FIOseq}, the proof consists of an application of Lemma \ref{lem:matrix}, with $A_{\la\ka}f:=\la_{*}(\theta_{\la}T\ka^{*}(\psit_{\ka}f))$. Note that condition \eqref{it:matrix2} in Lemma \ref{lem:matrix} is automatically satisfied, since $T$ is compactly supported. In particular, one may choose $C_{\la\ka}=0$ for all but finitely many $\la$ and $\ka$, so condition \eqref{it:matrix1} is also satisfied.
\end{proof}

The main theorem of this section is as follows.

\begin{theorem}\label{thm:localsmoothFIO}
Let $T\in I^{m-1/4}(M,N;\Ca)$, for $m\in\R$ and $\Ca$ a canonical relation from $T^{*}M$ to $T^{*}N$ satisfying the cinematic curvature condition, and suppose that $T$ is compactly supported. Then
\begin{equation}\label{eq:localsmoothFIO}
T:\HT^{s+m,p}_{FIO}(M)\to W^{s-d(p)+s(p)-\veps,p}(N)
\end{equation}
is bounded for all $p\in(2,\infty)$, $s\in\R$ and $\veps>0$.
\end{theorem}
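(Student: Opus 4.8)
The plan is to reduce Theorem \ref{thm:localsmoothFIO} to the local-coordinate estimate in Corollary \ref{cor:decouple}, using the sequence-space machinery of Section \ref{sec:sequences} as packaged in Lemma \ref{lem:FIOseq2}. Write $r:=s+m$ and $\sigma:=s-d(p)+s(p)-\veps$. Since $Q:\HT^{r,p}_{FIO}(M)\to\ell^{p}(K;\HT^{r,p}_{FIO}(\Rn))$ and $Q':W^{\sigma,p}(N)\to\ell^{p}(L;W^{\sigma,p}(\Rn))$ are isometries onto complemented subspaces (Proposition \ref{prop:Banach} and its analogue for $W^{\sigma,p}$, cf.\ Remark \ref{rem:propclassical}), and $PQ=\id$, it suffices to prove that $Q'TP:\ell^{p}(K;\HT^{r,p}_{FIO}(\Rn))\to\ell^{p}(L;W^{\sigma,p}(\Rn))$ is bounded. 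By Lemma \ref{lem:FIOseq2}, this in turn follows once we show that for each fixed $\la\in L$ and $\ka\in K$ the localized operator $T_{\la\ka}f:=\la_{*}(\theta_{\la}T\ka^{*}(\psit_{\ka}f))$ maps $\HT^{r,p}_{FIO}(\Rn)$ boundedly into $W^{\sigma,p}(\R^{n+1})$; compact support of $T$ makes the finiteness conditions automatic.

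The core of the argument is therefore the single-chart estimate. First I would observe that $f\mapsto \ka^{*}(\psit_{\ka}f)$ and $v\mapsto \la_{*}(\theta_{\la}v)$ are compactly supported FIOs of order $0$ associated with local canonical graphs (as in the proof of Theorem \ref{thm:FIObdd}), so by the composition theorem for FIOs, $T_{\la\ka}$ is a compactly supported FIO in $I^{m-1/4}(\Rn,\R^{n+1};\Ca_{\la\ka})$ whose canonical relation still satisfies the cinematic curvature condition; here one uses that the cinematic curvature condition is invariant under composition with canonical graphs. Then, by Lemma \ref{lem:FIOcurv} (referenced after \eqref{eq:Tstandard}), after a further microlocal partition of unity and a suitable change of variables in the base, one may assume $T_{\la\ka}$ has the standard oscillatory-integral form \eqref{eq:Tstandard} with symbol $a=a_{1}\otimes a_{2}$, $a_{1}\in C^{\infty}_{c}$, $a_{2}\in S^{m-1/4}$, and phase $\Phi$ satisfying the rank conditions $\rank\partial^{2}_{x\eta}\Phi=n$ and \eqref{eq:Gaussmap}. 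Reducing $W^{\sigma,p}(\R^{n+1})$ boundedness to $L^p$ boundedness by composing with $\lb D\rb^{\sigma}$ (another pseudodifferential factor, absorbed into the symbol, changing its order by $\sigma$), and noting $(m-1/4)+\sigma = m' + d(p)-s(p)-\veps'$ for an appropriate $m'$ and $\veps' = \veps/2>0$ say — more precisely, after this composition the operator is of the type in Corollary \ref{cor:decouple} with "$m$" there equal to $m-1/4+\sigma-d(p)+s(p)+\veps$ — Corollary \ref{cor:decouple} gives
\[
\|\lb D\rb^{\sigma}T_{\la\ka}f\|_{L^{p}(\R^{n+1})}\lesssim \|f\|_{\HT^{r,p}_{FIO}(\Rn)}
\]
after matching the Sobolev indices: $m-1/4+\sigma = (s+m)+ d(p)-s(p)-\veps/2$ up to the choice of $\veps$, which is exactly $r$ shifted by $d(p)-s(p)+\veps/2>0$, i.e.\ the hypothesis of Corollary \ref{cor:decouple} with a slightly smaller $\veps$. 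The low-frequency and boundary-of-support contributions are handled by the smoothing-operator arguments already used in Corollaries \ref{cor:stanform} and \ref{cor:decouple}.

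The main obstacle I anticipate is bookkeeping at the interface between the geometry and the model operator: verifying carefully that after localizing by $\theta_\la$ and $\psit_\ka$, composing with the coordinate FIOs, and applying the microlocal reduction of Lemma \ref{lem:FIOcurv}, one genuinely lands in the class of operators covered by Theorem \ref{thm:BeHiSo}/Corollary \ref{cor:decouple} — in particular that the rank conditions on the model phase are preserved and that the resulting symbol lies in the correct Kohn--Nirenberg class with spatially compact support. A secondary point is confirming that the index arithmetic works out uniformly: one should track that the order of $T_{\la\ka}$ plus $\sigma$ produces precisely the $\HT^{\,\cdot+d(p)-s(p)+\veps,p}_{FIO}(\Rn)$ regularity demanded by Corollary \ref{cor:decouple}, shrinking $\veps$ to $\veps/2$ if needed. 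Once these reductions are in place, the theorem follows by assembling the per-chart bounds through Lemma \ref{lem:FIOseq2} and Proposition \ref{prop:Banach}.
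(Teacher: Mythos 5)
Your outline follows the paper's route — localize via $Q,P$ and Lemma~\ref{lem:FIOseq2}, pass to a compactly supported FIO on $\Rn\to\R^{n+1}$, trade the Sobolev exponent for a shift in order via fractional derivatives and Lemma~\ref{lem:propersupp}, put the operator in standard form via Lemma~\ref{lem:FIOcurv}, and conclude with Corollary~\ref{cor:decouple}. The index arithmetic in your reduction also checks out. However, there is one genuine gap: you assert that after applying Lemma~\ref{lem:FIOcurv} and ``a further microlocal partition of unity and a suitable change of variables'' the symbol can be taken to be of product type $a(z,\eta)=a_1(z)a_2(\eta)$. Lemma~\ref{lem:FIOcurv} does not provide this, and no partition of unity (in $z$, in $\eta$, or microlocally) can produce a product-type symbol from a general $a\in S^m(\R^{n+1}\times\Rn)$. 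Since Corollary~\ref{cor:decouple} (and the Wolff-type inequality behind it, Theorem~\ref{thm:BeHiSo}) is stated precisely for symbols $a=a_1\otimes a_2$, this step must be justified. The paper fills it with the Fourier-inversion averaging trick of \cite[Section VI.2]{Stein93}: write
\[
Tf(z)=\frac{1}{(2\pi)^{n+1}}\int_{\R^{n+1}}e^{iz\cdot\zeta}\lb\zeta\rb^{-(n+2)}T_{\zeta}f(z)\,\ud\zeta,\qquad a_\zeta(z,\eta):=\chi(z)\lb\zeta\rb^{n+2}\F a(\cdot,\eta)(\zeta),
\]
so that each $T_\zeta$ has a product-type symbol with uniform $S^0$ seminorms and uniform spatial support, and then invoke Remark~\ref{rem:symboldependence} to get a bound uniform in $\zeta$. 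You should also note a small order inconsistency: in the setting of Section~\ref{subsec:decouple}, the fiber symbol $a_2\in S^m(\Rn)$ while $T$ has order $m-1/4$, so if you land in $I^{m-1/4}$ the corresponding $a_2$ should be in $S^m$, not $S^{m-1/4}$. Finally, ``absorbed into the symbol'' is too loose for $\lb D\rb^\sigma$, which is neither compactly nor properly supported; as in the paper one should insert cutoffs and invoke Lemma~\ref{lem:propersupp} to replace $\lb D_z\rb^{s-r}$ and $\lb D_y\rb^{-(s-r)}$ by compactly supported pseudodifferential operators plus a smoothing error, and then compose.
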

\begin{proof}
We first reduce to working in local coordinates. By Proposition \ref{prop:Banach}, Remark \ref{rem:propclassical} and Corollary \ref{cor:Sobolevman},
\[
Q':W^{s-d(p)+s(p)-\veps,p}(N)\to \ell^{p}(L;W^{s-d(p)+s(p)-\veps,p}(\R^{n+1}))
\]
is an isomorphic embedding. By combining this with Propositions \ref{prop:Banach} and \ref{prop:Qproperties}, it suffices to show that
\[
Q'TP:\ell^{p}(K;\HT^{s+m,p}_{FIO}(\Rn))\to \ell^{p}(L;W^{s-d(p)+s(p)-\veps,p}(\R^{n+1}))
\]
is bounded. To this end, by Lemma \ref{lem:FIOseq2}, we may fix $\la\in L$ and $\ka\in K$, and prove \eqref{eq:FIOseq2} with $r=s+m$ and with $s$ replaced by $s-d(p)+s(p)-\veps$. Moreover,  by the composition theorem for FIOs, using also that the cinematic curvature condition is invariant under changes of coordinates, $f\mapsto \la_{*}(\theta_{\la}T\ka^{*}(\psit_{\ka}f))$ is a compactly supported element of $I^{m-1/4}(\Rn,\R^{n+1},\Ca')$, associated with a canonical relation $\Ca'$ satisfying the cinematic curvature condition. Hence it suffices to prove \eqref{eq:localsmoothFIO} with $M=\Rn$ and $N=\R^{n+1}$.

In fact, we claim that it suffices to prove \eqref{eq:localsmoothFIO} for $s=d(p)-s(p)+\veps$. To prove this claim, set $r:=d(p)-s(p)+\veps$ and note that, for a general $s\in\R$, \eqref{eq:localsmoothFIO} is equivalent to
\begin{equation}\label{eq:reduction}
\lb D_{z}\rb^{s-r}T\lb D_{y}\rb^{-(s-r)}:\HT^{m+d(p)-s(p)+\veps,p}_{FIO}(\Rn)\to L^{p}(\R^{n+1}),
\end{equation}
where $D_{z}=-i\partial_{z}$ with respect to the $z$ variable in $N=\R^{n+1}$, and $D_{y}=-i\partial_{y}$ with respect to the $y$ variable in $M=\Rn$. Moreover, since $T$ is compactly supported, we can multiply $T$ by smooth cutoffs $\rho_{1}$ and $\rho_{2}$ and apply Lemma \ref{lem:propersupp} twice, to obtain
\[
\lb D_{z}\rb^{s-r}T\lb D_{y}\rb^{-(s-r)}=\lb D_{z}\rb^{s-r}\rho_{1}T\rho_{2}\lb D_{y}\rb^{-(s-r)}=S_{1}TS_{2}+R
\]
for compactly supported pseudodifferential operators $S_{1}$ and $S_{2}$ of orders $s-r$ and $-(s-r)$, respectively, and a smoothing operator $R$. As in \eqref{eq:smoothingR}, $R$ has the required mapping property. Moreover, the composition theorem for FIOs implies that $S_{1}TS_{2}$ is a compactly supported FIO of order $m$, associated with the same canonical relation as $T$. This proves the claim, and we may thus suppose that $s=d(p)-s(p)+\veps$ from here on.

Next, we reduce to working with operators in standard form. As in Lemma \ref{lem:FIOcurv}, write $T=\sum_{j=1}^{l}S_{j,1}T_{j}S_{j,2}+R$ for some $l\in\N$, FIOs $(T_{j})_{j=1}^{l}\subseteq I^{m-1/4}(\Rn,\R^{n+1};\Ca_{j})$ in standard form associated with canonical relations $(\Ca_{j})_{j=1}^{l}$, changes of coordinates $(S_{j,1})_{j=1}^{l}$ and $(S_{j,2})_{j=1}^{l}$, and a harmless smoothing operator $R$. Each change of coordinates $S_{j,1}$ is bounded on $L^{p}(\R^{n+1})$, and each $S_{j,2}$ is bounded on $\HT^{m+d(p)-s(p)+\veps,p}_{FIO}(\Rn)$, by Corollary \ref{cor:coordchange}. Hence it suffices to prove the required statement for $T$ in standard form, as in \eqref{eq:Tstandard}. Moreover, by the second part of Lemma \ref{lem:FIOcurv}, we may suppose that the phase function $\Phi$ of $T$ satisfies the conditions in Section \ref{subsec:decouple}. Finally, for simplicity of notation, we may assume without loss of generality that $m=0$, so that $a\in S^{0}(\R^{n+1}\times \Rn)$.

We are now almost in the setting of Corollary \ref{cor:decouple}, but we still have to reduce to the case where the symbol $a\in S^{0}(\R^{n+1}\times\Rn)$ of $T$ is of the form $a(z,\eta)=a_{1}(z)a_{2}(\eta)$ for $a_{1}\in C^{\infty}_{c}(\R^{n+1})$ and $a_{2}\in S^{0}(\Rn)$. This reduction to symbols of product type is contained in e.g.~\cite[Section VI.2]{Stein93} and \cite[pp.~424-425]{BeHiSo20}. Let $\chi\in C^{\infty}_{c}(\R^{n+1})$ be such that $\chi(z)a(z,\eta)=a(z,\eta)$ for all $z\in\R^{n+1}$ and $\eta\in\Rn$. Write
\[
Tf(z)=\frac{1}{(2\pi)^{n+1}}\int_{\R^{n+1}}e^{iz\cdot\zeta}\lb \zeta\rb^{-(n+2)}T_{\zeta}f(z)\ud \zeta
\]
for $f\in\Sw(\Rn)$ and $z\in\R^{n+1}$, where
\[
T_{\zeta}f(z):=\int_{\Rn}e^{i\Phi(z,\eta)}a_{\zeta}(z,\eta)\wh{f}(\eta)\ud\eta
\]
and $a_{\zeta}(z,\eta):=\chi(z)\lb \zeta\rb^{n+2}\F a(\cdot,\eta)(\zeta)$, for $\zeta\in\R^{n+1}$ and $\eta\in\Rn$. Here $\F a(\cdot,\eta)$ denotes the Fourier transform of $a(z,\eta)$ with respect to the $z$ variable.

 It then suffices to show that $\|T_{\zeta}f\|_{L^{p}(\R^{n+1})}\lesssim \|f\|_{\HT^{d(p)-s(p)+\veps,p}_{FIO}(\Rn)}$ for an implicit constant independent of $\zeta$ and $f$. To this end it is relevant to note that, as follows from integration by parts, $(a_{\zeta})_{\zeta\in\R^{n+1}}\subseteq S^{0}(\R^{n+1}\times \Rn)$ is a collection of symbols of product type, with uniform bounds on each of the $S^{0}(\R^{n+1}\times\Rn)$ seminorms and with spatial support contained in $\supp(\chi)$. Hence, to conclude the proof, one can simply apply Corollary \ref{cor:decouple} and Remark \ref{rem:symboldependence} to each $T_{\zeta}$.
\end{proof}

\section{Regularity of wave equations}\label{sec:wave}

In this section we apply our theorems for Fourier integral operators from Sections \ref{subsec:FIOembed} and \ref{subsec:localsmooth} to obtain new regularity results for linear wave equations, and new well-posedness results for nonlinear wave equations. In particular, we will prove Theorems \ref{thm:waveintro} and \ref{thm:nonlinear}.

Throughout, let $(M,g)$ be a compact Riemannian manifold of dimension $n\in\N$, with (non-positive) Laplace--Beltrami operator $\Delta_{g}$.

\subsection{Linear wave equations}\label{subsec:wavelinear}

In this subsection we consider the Cauchy problem \eqref{eq:Cauchyintro} on $M\times\R$:
\[
\begin{cases}(\partial_{t}^{2}-\Delta_{g})u(x,t)=0,
\\u(x,0)=u_{0}(x), \ \partial_{t}u(x,0)=u_{1}(x).
\end{cases}
\]
Recall that $\Delta_{g}$ acts on functions in local coordinates as in \eqref{eq:Laplace}. Then $\Delta_{g}$ also acts on half densities, using the identification $u\mapsto \rho_{g}^{-1/2}u$ between half densities and functions on $M$, where $\rho_{g}$ is the Riemannian density on $M$. Moreover, $\Delta_{g}$ is a non-positive operator on $L^{2}(M,\Omega_{1/2})$. For $t\in\R$, the operators $\cos(t\sqrt{-\Delta_{g}})$ and $\sin(t\sqrt{-\Delta_{g}})/\sqrt{-\Delta_{g}}$, initially defined through the spectral calculus for $\Delta_{g}$ on $L^{2}(M,\Omega_{1/2})$, extend to $\Da'(M,\Omega_{1/2})$, and the function $u:\R\to\Da'(M,\Omega_{1/2})$,
\begin{equation}\label{eq:solution}
u(t):=\cos(t\sqrt{-\Delta_{g}})u_{0}+\frac{\sin(t\sqrt{-\Delta_{g}})}{\sqrt{-\Delta_{g}}}u_{1}
\end{equation}
for $t\in\R$, solves \eqref{eq:Cauchyintro} for all $u_{0},u_{1}\in \Da'(M,\Omega_{1/2})$.

Here we will be interested in the regularity of solutions for initial data in the Hardy spaces for FIOs. To this end, we recall that (see e.g.~\cite[Sections 4.1 and 8.1]{Sogge17}) $\cos(t\sqrt{-\Delta_{g}})$ and $\sin(t\sqrt{-\Delta_{g}})/\sqrt{-\Delta_{g}}$ are compactly supported Fourier integral operators of order $0$ and $-1$, respectively, associated with a local canonical graph, for each $t\in\R$. Moreover, as space-time solution operators, they are elements of $I^{-1/4}(M,M\times\R,\Ca)$ and $I^{-5/4}(M,M\times\R,\Ca)$ for $\Ca$ satisfying the cinematic curvature condition. Hence Theorems \ref{thm:FIObdd} and \ref{thm:localsmoothFIO}, combined with the uniform boundedness principle, immediately yield regularity results for \eqref{eq:Cauchyintro}. However, in this case we have a distinguished time variable, and it is natural to consider a version of these theorems which takes this into account.

\begin{theorem}\label{thm:wave}
Let $p\in[1,\infty]$, $s\in\R$ and $t_{0}>0$. Then there exists a $C\geq0$ such that, for all $u_{0}\in \HpsM$ and $u_{1}\in \HT^{s-1,p}_{FIO}(M)$, the function $u$ in \eqref{eq:solution} satisfies
\begin{equation}\label{eq:wave1}
\|u(t)\|_{\HpsM}\leq C\big(\|u_{0}\|_{\HpsM}+\|u_{1}\|_{\HT^{s-1,p}_{FIO}(M)}\big)
\end{equation}
for all $t\in[-t_{0},t_{0}]$. Moreover, if $n\geq 2$ and $p\in(2,\infty)$, then for each $\veps>0$ there exists a $C_{\veps}\geq 0$ such that
\begin{equation}\label{eq:wave2}
\Big(\int_{-t_{0}}^{t_{0}}\|u(t)\|_{W^{s-d(p)+s(p)-\veps,p}(M)}^{p}\ud t\Big)^{1/p}\leq C_{\veps}\big(\|u_{0}\|_{\HpsM}+\|u_{1}\|_{\HT^{s-1,p}_{FIO}(M)}\big).
\end{equation}
\end{theorem}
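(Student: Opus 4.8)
\textbf{Proof proposal for Theorem \ref{thm:wave}.}

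The plan is to reduce both \eqref{eq:wave1} and \eqref{eq:wave2} to the operator-theoretic results already established, namely Theorem \ref{thm:FIObdd} for the fixed-time estimate and Theorem \ref{thm:localsmoothFIO} for the local smoothing estimate, while making use of the distinguished time variable to get uniformity in $t$. First I would recall, as indicated in the paragraph preceding the statement, that for each fixed $t\in[-t_0,t_0]$ the operators $\cos(t\sqrt{-\Delta_g})$ and $\sin(t\sqrt{-\Delta_g})/\sqrt{-\Delta_g}$ are compactly supported FIOs of order $0$ and $-1$ respectively, associated with a local canonical graph; moreover, as space-time operators on $M\times[-t_0,t_0]$ they lie in $I^{-1/4}(M,M\times\R;\Ca)$ and $I^{-5/4}(M,M\times\R;\Ca)$ for a canonical relation $\Ca$ satisfying the cinematic curvature condition. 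Since $I^{-1/4}=I^{0-1/4}$ and $I^{-5/4}=I^{(-1)-1/4}$, writing $u(t)=T_0 u_0+T_1 u_1$ with $T_j$ the corresponding solution operators puts us directly in the scope of Theorems \ref{thm:FIObdd} and \ref{thm:localsmoothFIO} with $m=0$ for $T_0$ and $m=-1$ for $T_1$.

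For \eqref{eq:wave1}: applying Theorem \ref{thm:FIObdd} pointwise in $t$ with $N=M$, $m=0$ gives $\|\cos(t\sqrt{-\Delta_g})u_0\|_{\HpsM}\le C_t\|u_0\|_{\HpsM}$, and with $m=-1$ gives $\|\sin(t\sqrt{-\Delta_g})/\sqrt{-\Delta_g}\,u_1\|_{\HpsM}\le C_t\|u_1\|_{\HT^{s-1,p}_{FIO}(M)}$. To upgrade the pointwise-in-$t$ constants $C_t$ to a single constant $C$ valid on all of $[-t_0,t_0]$, I would invoke the uniform boundedness principle: for fixed $u_0$ (resp.\ $u_1$) the map $t\mapsto u(t)$ is continuous into $\Da'(M,\Omega_{1/2})$ (from the spectral calculus), hence into $\HpsM$ once we know each $u(t)$ lies there; then $\{T_0(t):t\in[-t_0,t_0]\}$ is a pointwise-bounded family of bounded operators $\HpsM\to\HpsM$, so it is uniformly bounded, and similarly for $T_1$. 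Alternatively, and more cleanly, one can note that the constants produced in the proofs of Proposition \ref{prop:FIORn} and Theorem \ref{thm:FIObdd} depend only on finitely many symbol seminorms and on the size of the spatial support (cf.\ Remark \ref{rem:constantsFIO}), and these vary continuously, hence boundedly, as $t$ ranges over the compact interval $[-t_0,t_0]$ — this is the parametrized version of Corollary \ref{cor:stanform}. Either route yields \eqref{eq:wave1}.

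For \eqref{eq:wave2}: here I would not argue pointwise in $t$ but instead treat $u$ as a single space-time object. Writing $T_0\in I^{-1/4}(M,M\times[-t_0,t_0];\Ca)$ and $T_1\in I^{-5/4}(M,M\times[-t_0,t_0];\Ca)=I^{(-1)-1/4}(M,M\times[-t_0,t_0];\Ca)$, both compactly supported (after multiplying by a cutoff in $t$ supported in a neighbourhood of $[-t_0,t_0]$, which does not affect $u$ on $[-t_0,t_0]$), Theorem \ref{thm:localsmoothFIO} applied with $m=0$ gives $T_0:\HpsM\to W^{s-d(p)+s(p)-\veps,p}(M\times\R)$ bounded, and with $m=-1$ gives $T_1:\HT^{s-1,p}_{FIO}(M)\to W^{(s-1)+1-d(p)+s(p)-\veps,p}(M\times\R)=W^{s-d(p)+s(p)-\veps,p}(M\times\R)$ bounded. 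Since $W^{s-d(p)+s(p)-\veps,p}(M\times[-t_0,t_0])$ embeds into $L^p([-t_0,t_0];W^{s-d(p)+s(p)-\veps,p}(M))$ — this is the standard fact that an $L^p$-Sobolev function of $(x,t)$ restricts to an $L^p$-in-$t$ family of $L^p$-Sobolev functions in $x$, with the $t$-derivatives only improving matters — restricting to $t\in[-t_0,t_0]$ yields \eqref{eq:wave2}. The main obstacle I anticipate is the bookkeeping at the junction between the manifold $M\times\R$ (on which Theorem \ref{thm:localsmoothFIO} is phrased, via $N$ a Riemannian manifold with bounded geometry) and the bounded time interval: one must arrange the compact-support cutoff in $t$ so that $T_j$ genuinely becomes a compactly supported FIO $M\to N$ for $N$ a bounded-geometry manifold containing $M\times[-t_0,t_0]$, verify that the cinematic curvature condition indeed holds for the space-time canonical relation (this is the content of the cited facts about wave propagators, e.g.\ \cite[Sections 4.1, 8.1]{Sogge17}), and check that the half-density normalization on $M\times\R$ is compatible with the ordinary $W^{\cdot,p}$ norm appearing in \eqref{eq:wave2} up to the fixed smooth density factor, as in Remark \ref{rem:Triebel} and Corollary \ref{cor:Sobolevman}. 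None of these steps is deep, but they are where the argument must be made precise.
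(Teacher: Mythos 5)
For \eqref{eq:wave1}, your second route (tracking the constants via Remark~\ref{rem:constantsFIO} through a parametrized version of Corollary~\ref{cor:stanform}) is essentially the paper's argument and is correct. The first route via the uniform boundedness principle is shakier: continuity of $t\mapsto u(t)$ into $\Da'(M,\Omega_{1/2})$ together with the fact that each $u(t)\in\HpsM$ does not by itself give continuity, or even pointwise boundedness, into $\HpsM$ --- you would be assuming the conclusion. Since you give both arguments, the fixed-time bound is salvageable, but you should drop the UBP version.

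For \eqref{eq:wave2}, there is a genuine gap. You claim the embedding
\[
W^{\sigma,p}(M\times[-t_{0},t_{0}])\subseteq L^{p}\big([-t_{0},t_{0}];W^{\sigma,p}(M)\big),\qquad \sigma:=s-d(p)+s(p)-\veps,
\]
saying that ``the $t$-derivatives only improve matters.'' That is true for $\sigma\ge 0$ (and is how one passes from the space-time Sobolev norm to the $L^p_tW^{\sigma,p}_x$ norm in that regime), but it is \emph{false} for $\sigma<0$: by duality, the embedding would be equivalent to $L^{p'}(\R;W^{-\sigma,p'}(\R^n))\subseteq W^{-\sigma,p'}(\R^{n+1})$, which fails for $-\sigma>0$ since the right-hand side demands control of $t$-derivatives that the left-hand side does not provide (e.g.\ $\delta_{t=0}\otimes f$, $f\in L^{p}$, lies in $W^{-1,p}(\R^{n+1})$ but not in $L^p(\R;W^{-1,p}(\R^n))$). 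And the case $\sigma<0$ is not marginal here: it occurs for every $p>\tfrac{2(n+1)}{n-1}$ whenever $s<d(p)-s(p)+\veps$, which is precisely the range used in the nonlinear applications of Section~\ref{subsec:wavenonlinear}. This is exactly the point the paper's proof flags: Theorem~\ref{thm:localsmoothFIO} gives \eqref{eq:wave2} directly only when $\sigma\ge 0$, and for $\sigma<0$ the paper exploits the product structure of $N=M\times\R$. Concretely, in the reduction to the critical index $s=d(p)-s(p)+\veps$, the paper replaces the conjugating operator $\lb D_z\rb^{s-r}$ (derivatives in all of $z=(x,t)$) by $\lb D_x\rb^{s-r}$ (spatial derivatives only); since $\lb D_x\rb^{s-r}$ is still a pseudodifferential operator preserving the canonical relation $\Ca'=\{(x,t,\xi,\tau,y,\eta)\mid(x,\xi)=\chi_t(y,\eta),\,\tau=p(x,\xi)\}$, applying Theorem~\ref{thm:localsmoothFIO} at the reduced index then lands in $L^{p}(\R^{n+1})$, i.e.\ in $L^{p}_{t}L^{p}_{x}$, and undoing the conjugation yields $L^p_t W^{\sigma,p}_x$ for \emph{all} $\sigma\in\R$. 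Your proof would be correct if you inserted this modification of the conjugation step in place of the false embedding.

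The remaining bookkeeping points you list (cut off in $t$ to a bounded-geometry $N$, verify cinematic curvature for the space-time relation from \cite[Sections 4.1, 8.1]{Sogge17}, reconcile half-density normalizations via Remark~\ref{rem:Triebel} and Corollary~\ref{cor:Sobolevman}) are indeed the right things to check and can be done as you describe; the paper handles them similarly.
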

\begin{proof}
For \eqref{eq:wave1}, Theorem \ref{thm:FIObdd} immediately yields norm bounds for each fixed $t\in\R$, but we also need the bounds to be locally uniform in $t$. Moreover, Theorem \ref{thm:localsmoothFIO} yields \eqref{eq:wave2} for $s-d(p)+s(p)-\veps\geq 0$, but for $s<d(p)-s(p)+\veps$ we want to take advantage of the specific structure of the underlying canonical relation, which yields integrability in $t$ and \eqref{eq:wave2}. To do so, we will slightly modify the proof of Theorem \ref{thm:localsmoothFIO}.

Let $\phi\in C^{\infty}_{c}(\R)$ be such that $\phi(t)=1$ for all $t\in[-t_{0},t_{0}]$. Then it suffices to prove \eqref{eq:wave1} and \eqref{eq:wave2} with $u$ replaced by $\phi u$. Now, $\phi u=T_{0}+T_{1}$ for compactly supported $T_{0}\in I^{-1/4}(M,M\times\R,\Ca)$ and $T_{1}\in I^{-5/4}(M,M\times\R,\Ca)$, where $\Ca$ satisfies the cinematic curvature condition.

Next, as in the proofs of Theorems \ref{thm:FIObdd} and \ref{thm:localsmoothFIO}, one can reduce to working in local coordinates, and (due to the product structure of $M\times \R$) one can choose geodesic normal coordinates independently for $M$ and $\R$. This expresses each $T_{k}$ as a finite sum of compactly supported elements of $I^{-k-1/4}(\Rn,\R^{n+1},\Ca')$, where
\[
\Ca'=\{(x,t,\xi,\tau,y,\eta)\mid (x,\xi)=\chi_{t}(y,\eta), \tau=p(x,\xi)\}
\]
for $\chi_{t}$ a canonical transformation on $T^{*}(\R^{n})\setminus o$ for each $t$ (see e.g.~\cite[pp. 129-130]{Sogge17}). It then suffices to prove mapping properties for these FIOs in local coordinates.

At this point one can reduce to the case where $s=d(p)-s(p)+\veps$, using a similar reduction as in the proof of Theorem \ref{thm:localsmoothFIO}. However, here one has to replace the derivative $D_{z}=-i\partial_{z}$ with respect to $z=(x,t)$ in \eqref{eq:reduction} by the derivative $D_{x}=-i\partial_{x}$ with respect to $x$. The resulting compactly supported FIOs are still associated with $\Ca'$, since $\lb D_{x}\rb^{s-r}$ and $\lb D_{y}\rb^{-(s-r)}$ are pseudodifferential operators.

Finally, an application of Lemma \ref{lem:FIOcurv} and Remark \ref{rem:timeparameter} expresses each of the resulting FIOs as a sum of operators in standard form. One can then apply Corollary \ref{cor:stanform} at each time $t$ to obtain \eqref{eq:wave1}, with the desired uniformity in $t$ coming from Remark \ref{rem:constantsFIO}. Moreover, Theorem \ref{thm:localsmoothFIO} with $s=d(p)-s(p)+\veps$ yields \eqref{eq:wave2}.
\end{proof}

\begin{remark}\label{rem:semigroup}
One can reformulate \eqref{eq:Cauchyintro} as an abstract Cauchy problem on $X:=\HT^{s,p}_{FIO}(M)\times \HT^{s-1,p}_{FIO}(M)$. That is, one wants to find a $U:\R\to X$ such that
\begin{equation}\label{eq:abstractCauchy}
\frac{\ud}{\ud t}U(t)=\left(\!\begin{array}{cc}
\!0&I\!\\
\!\Delta_{g}&0\!
\end{array}\!\right)U(t)\text{ and }U(0)=\left(\!\begin{array}{c}
\!u_{0}\!\\
\!u_{1}\!
\end{array}\!\right).
\end{equation}
Now, the function $u$ in \eqref{eq:solution} is strongly continuous on $\Da(M,\Omega_{1/2})$, as follows from its strong continuity on $L^{2}(M,\Omega_{1/2})$ combined with Sobolev embeddings. Then Theorem \ref{thm:wave}, combined with the density of $\Da(M,\Omega_{1/2})$ from Proposition \ref{prop:densityM}, implies that \eqref{eq:abstractCauchy} is well posed for $p<\infty$. In particular, $u$ is the unique strongly continuous solution to \eqref{eq:Cauchyintro} as in Theorem \ref{thm:wave}. For $p=\infty$, $u$ is weak-star continuous, cf.~Proposition \ref{prop:dualM}.
\end{remark}

\subsection{Nonlinear wave equations}\label{subsec:wavenonlinear}

We consider the cubic nonlinear wave equation from \eqref{eq:nonlinear} on $M\times \R$:
\begin{equation}\label{eq:nonlinearmain}
\begin{cases}(\partial_{t}^{2}-\Delta_{g})u(x,t)=\pm |u(x,t)|^{2}u(x,t),
\\u(x,0)=f_{1}(x), \ \partial_{t}u(x,0)=f_{2}(x).
\end{cases}
\end{equation}
We will assume throughout that $M$ has dimension $n=2$, although our techniques extend to higher dimensions and to other nonlinearities (see e.g.~Proposition \ref{prop:quintic} below).

Our notion of well-posedness of \eqref{eq:nonlinearmain} is taken from \cite{Bejenaru-Tao06}. Consider the abstract evolution equation
\begin{equation}
\label{eq:AbstractEvolution}
u = L(f_{1},f_{2}) + N(u,u,u),
\end{equation}
where $L: X\to S$ is a densely defined linear operator, and $N:S \times S \times S \to S$ is a densely defined operator which is either linear or antilinear in each of its variables.  We say that \eqref{eq:AbstractEvolution} is \emph{quantitatively well posed} (with initial data space $X$ and solution space $S$) if there exists a $C\geq0$ such that
\begin{align}
\label{eq:linearabstract}
\| L(f_{1},f_{2}) \|_{S} &\leq C \| (f_{1},f_{2}) \|_{X }, \\
\label{eq:nonlinearabstract}
\| N(u_1,u_2,u_3) \|_S &\leq C \prod_{j=1}^3 \| u_j \|_S,
\end{align}
for all $(f_{1},f_{2}) \in X$ and $u_1,u_{2},u_{3} \in S$. If \eqref{eq:AbstractEvolution} is quantitatively well posed, then a fixed-point argument yields $C_{0},\delta_{0}>0$ such that, for all
\[
(f_{1},f_{2})\in B_{X}(0,\delta_{0}):=\{(g_{1},g_{2})\in X\mid \|(g_{1},g_{2})\|_{X}<\delta_{0}\},
\]
there exists a unique solution $u[f_{1},f_{2}]\in B_{S}(0,C_{0}\delta_{0})$ to \eqref{eq:AbstractEvolution}. Moreover, the map $(f_{1},f_{2})\mapsto u[f_{1},f_{2}]$ is Lipschitz continuous from $B_{X}(0,\delta_{0})$ to $B_{S}(0,C_{0}\delta_{0})$, and one can expand $u[f_{1},f_{2}]$ in terms of its Picard iterates. For more on this see \cite[Theorem 3]{Bejenaru-Tao06}.

In our setting, $S=S_{t_{0}}$ is a space-time function space on the time interval $[0,t_{0}]$, for $t_{0}>0$. Then \eqref{eq:AbstractEvolution} arises by rewriting \eqref{eq:nonlinearmain} using Duhamel's formula, with
\[
\begin{split}
L(f_{1},f_{2})(t)&:=\cos(t\sqrt{-\Delta_{g}})f_{1}+\frac{\sin(t\sqrt{-\Delta_{g}})}{\sqrt{-\Delta_{g}}}f_{2}, \\
N(u_{1},u_{2},u_{3})(t)&:=\pm \int_{0}^{t}\frac{\sin((t-s)\sqrt{-\Delta_{g}})}{\sqrt{-\Delta_{g}}}u_{1}(s)\overline{u_{2}(s)}u_{3}(s)\ud s,
\end{split}
\]
for $t\in[0,t_{0}]$. In fact, we will improve \eqref{eq:nonlinearabstract} by showing that
\begin{equation}\label{eq:nonlinearabstract2}
\| N(u_1,u_2,u_3) \|_S \leq C t_{0}^{\gamma}\prod_{i=1}^3 \| u_i \|_S
\end{equation}
for all $t_{0}\in(0,1]$ and $u_{1},u_{2},u_{3}\in S=S_{t_{0}}$, where $C$ and $\gamma>0$ are independent of $t_{0}\in(0,1]$. Then the fixed-point argument also yields well-posedness for large data: for all $(f_{1},f_{2})\in X$ there exists a $t_{0}>0$ such that the power series for $u[f_{1},f_{2}]$ converges in $S_{t_{0}}$.

The main result of this section deals with two critical values of $p$, namely $p=6=2(n+1)/(n-1)$ and $p=4=2n/(n-1)$. It implies in particular Theorem \ref{thm:nonlinear}.

\begin{theorem}\label{thm:nonlinearmain}
Let $\veps,t_{0}>0$. Then \eqref{eq:nonlinearmain} is quantitatively well posed with initial data space
\[
X:=(\HT^{\veps,6}_{FIO}(M)+W^{1/2,2}(M))\times(\HT^{\veps-1,6}_{FIO}(M)+W^{-1/2,2}(M))
\]
and solution space
\[
S_{t_{0}}:=L^{4}\big([0,t_{0}];L^{6}(M)\big) \cap C\big([0,t_{0}];\HT^{\varepsilon,6}_{FIO}(M) + W^{1/2,2}(M)\big).
\]
Moreover, \eqref{eq:nonlinearmain} is quantitatively well posed with initial data space
\begin{equation}\label{eq:Ydata}
Y:=(\HT^{\veps,4}_{FIO}(M)+W^{3/8,2}(M))\times(\HT^{\veps-1,4}_{FIO}(M)+W^{-5/8,2}(M))
\end{equation}
and solution space
\[
T_{t_{0}}:=L^{24/7}\big([0,t_{0}];L^{4}(M)\big) \cap C\big([0,t_{0}];\HT^{\varepsilon,4}_{FIO}(M) + W^{3/8,2}(M)\big).
\]
\end{theorem}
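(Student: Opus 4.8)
The plan is to verify the two abstract conditions \eqref{eq:linearabstract} and \eqref{eq:nonlinearabstract2} for each of the two choices of $(X,S_{t_{0}})$ and $(Y,T_{t_{0}})$; the arguments for the two cases are structurally identical, so I will describe the case $p=6$ in detail and indicate the changes needed for $p=4$.

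\textbf{The linear estimate.} To prove \eqref{eq:linearabstract}, i.e.\ that $L:X\to S_{t_{0}}$ is bounded, I would split the initial data into its $\HT^{\cdot,6}_{FIO}(M)$-component and its $W^{\cdot,2}(M)$-component and treat each summand separately. For data $(f_{1},f_{2})\in\HT^{\veps,6}_{FIO}(M)\times\HT^{\veps-1,6}_{FIO}(M)$, the continuity of $L(f_{1},f_{2})$ into $C([0,t_{0}];\HT^{\veps,6}_{FIO}(M))$ follows from Theorem \ref{thm:wave} (the fixed-time bound \eqref{eq:wave1}, together with strong continuity as in Remark \ref{rem:semigroup}), while the bound into $L^{4}([0,t_{0}];L^{6}(M))$ follows from the local smoothing estimate \eqref{eq:wave2} with $n=2$, $p=6$, $s=\veps$: here $d(6)-s(6)=2s(6)-1/6-s(6)=s(6)-1/6=1/6$ (since $s(6)=(n-1)/2\cdot(1/2-1/6)=1/6$), so \eqref{eq:wave2} with $\veps$ small gives boundedness into $L^{6}([0,t_{0}];W^{\delta,6}(M))\hookrightarrow L^{6}([0,t_{0}];L^{6}(M))\hookrightarrow L^{4}([0,t_{0}];L^{6}(M))$ by H\"older in time on the finite interval. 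Strictly, since $L^{6}\subseteq L^{4}$ in time only up to a constant depending on $t_{0}$, which is fine as $t_{0}$ is fixed in the hypothesis. For data in $W^{1/2,2}(M)\times W^{-1/2,2}(M)$ I would instead invoke the classical $L^{2}$-based Strichartz estimates for the wave equation on a compact manifold (see \cite{MoSeSo93}, or the local-in-time Strichartz estimates of Kapitanski/Mockenhaupt--Seeger--Sogge): with $n=2$, the admissible pair $(q,r)=(4,6)$ has Strichartz exponent $1/2$, so $L:W^{1/2,2}(M)\times W^{-1/2,2}(M)\to L^{4}([0,t_{0}];L^{6}(M))$ boundedly, and continuity into $C([0,t_{0}];W^{1/2,2}(M))$ is the standard energy estimate. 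For $p=4$ one uses instead the wave-admissible Strichartz pair with spatial exponent $4$: in dimension $2$ the pair $(q,r)=(24/7,4)$ is admissible with regularity $s=3/8$, matching the exponent in $T_{t_{0}}$ and in \eqref{eq:Ydata}, and the Hardy-space component is handled by \eqref{eq:wave2} with $p=4$, where $d(4)-s(4)=0$ since $4\le 2(n+1)/(n-1)=6$, giving an even better estimate.

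\textbf{The nonlinear estimate.} For \eqref{eq:nonlinearabstract2} I would first bound the trilinear product pointwise in $M$ and use H\"older's inequality in space: $\|u_{1}\overline{u_{2}}u_{3}\|_{L^{2}(M)}\le\|u_{1}\|_{L^{6}(M)}\|u_{2}\|_{L^{6}(M)}\|u_{3}\|_{L^{6}(M)}$ (for the $p=6$ case; for $p=4$ one has $\|u_{1}\overline{u_{2}}u_{3}\|_{L^{4/3}(M)}\le\prod\|u_{i}\|_{L^{4}(M)}$). Then integrating in the $s$-variable and using H\"older in time converts $L^{4}([0,t_{0}];L^{6})$ control of each $u_{i}$ into control of $u_{1}\overline{u_{2}}u_{3}$ in $L^{4/3}([0,t_{0}];L^{2}(M))$, picking up a positive power $t_{0}^{\gamma}$ from the finite interval since $3\cdot(1/4)=3/4>1/4$, i.e.\ $1/(4/3)-3/4=0$ exactly—so in fact one should use $L^{2}([0,t_{0}];L^{2})$ or slightly shrink exponents to gain the genuine factor $t_{0}^{\gamma}$, which is where the $t_{0}\in(0,1]$ restriction is used. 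Next I apply the inhomogeneous (retarded) Strichartz estimate: the Duhamel operator $N(u_{1},u_{2},u_{3})$ maps $L^{q'}_{t}L^{r'}_{x}$ data, for a dual admissible pair, into $C([0,t_{0}];W^{1/2,2}(M))\cap L^{4}([0,t_{0}];L^{6}(M))$; the relevant dual exponent computation (with $n=2$, target pair $(4,6)$) shows that $L^{4/3}_{t}L^{2}_{x}$ into the required space holds, provided one puts in the correct derivative count, which matches because $(1/\sqrt{-\Delta_{g}})$ contributes one derivative of smoothing and the source is measured in $L^{2}(M)=W^{0,2}(M)$. Crucially, the output of $N$ lands in the $W^{1/2,2}(M)$ component of the solution space, never in the Hardy-space component; this is the point flagged in the introduction, that Strichartz estimates alone handle the nonlinear term and push one into $L^{2}$-based Sobolev spaces.

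\textbf{Main obstacle.} The genuinely delicate point is the bookkeeping of the various time-exponents so that the factor $t_{0}^{\gamma}$ with $\gamma>0$ in \eqref{eq:nonlinearabstract2} comes out strictly positive — the critical/scaling-invariant nature of the exponents ($p=6$ and $p=4$ are exactly the endpoints $2(n+1)/(n-1)$ and $2n/(n-1)$) means the spatial estimates are scale-invariant, so all the $t_{0}$-smallness must come from H\"older in time on $[0,t_{0}]$, and one must check that the inhomogeneous Strichartz estimate can absorb a subcritical-in-time source without losing the gain. A secondary technical issue is the precise local-in-time Strichartz estimates on a compact manifold for the non-endpoint admissible pairs involved, and the compatibility of the half-density formulation of $\Delta_{g}$ in Section \ref{subsec:wavelinear} with the scalar Strichartz theory, which is routine via the identification $u\mapsto\rho_{g}^{-1/2}u$ but should be stated. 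Everything else — the linear Hardy-space bounds from Theorems \ref{thm:wave} and \ref{thm:localsmoothFIO}, the continuity statements from Remark \ref{rem:semigroup}, and the pointwise H\"older inequalities in space — is either already available in the excerpt or entirely standard.
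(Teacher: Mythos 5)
Your overall plan matches the paper's: split the initial data into a Hardy-space piece and a $W^{s,2}$ piece, handle the Hardy piece with Theorem \ref{thm:wave}, handle the $W^{s,2}$ piece with Strichartz estimates, and use Duhamel plus Strichartz to land the nonlinearity in the $W^{s,2}$ component of the solution space. The observation that $N$ never needs to be bounded in the Hardy-space norm is correct and is exactly the paper's point. However, there are two related errors in the Strichartz bookkeeping that, as written, leave a genuine gap.

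First, the pairs you declare admissible are not. For the wave equation in $n=2$, the sharp admissibility condition $\tfrac{1}{q}+\tfrac{1}{2r}\le\tfrac14$ fails for both $(q,r)=(4,6)$ (which gives $\tfrac13>\tfrac14$) and $(q,r)=(24/7,4)$ (which gives $\tfrac{5}{12}>\tfrac14$). The admissible pairs with spatial exponents $6$ and $4$ are $(6,6)$ and $(8,4)$, with regularity $1/2$ and $3/8$ respectively. The spaces $L^4_t L^6_x$ and $L^{24/7}_t L^4_x$ in $S_{t_0}$ and $T_{t_0}$ are chosen \emph{strictly below} the Strichartz exponents in $q$. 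This is not an oversight to be fixed: it is precisely where the factor $t_0^\gamma$ in \eqref{eq:nonlinearabstract2} comes from. The paper first embeds $L^6_t L^6_x\hookrightarrow L^4_t L^6_x$ on $[0,t_0]$ by H\"older, picking up $t_0^{1/12}$, and applies the genuine Strichartz estimate at $(6,6)$.

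Second, you correctly observe that the spatial H\"older step sending three copies of $L^4_t L^6_x$ into $L^{4/3}_t L^2_x$ is exactly critical ($3\cdot\tfrac14=\tfrac34$) and so gains no power of $t_0$; but your proposed remedy, to aim for $L^2([0,t_0];L^2(M))$, does not close, since you cannot control $L^2_t$ of a product of three $L^4_t$ functions. The correct mechanism, which the paper uses, is that the inhomogeneous Strichartz/energy estimate accepts a source in $L^1_t L^2_x$, and the embedding $L^{4/3}_t\hookrightarrow L^1_t$ on $[0,t_0]$ supplies $t_0^{1/4}$. Together with the $t_0^{1/12}$ from the sub-Strichartz target, one gets $t_0^{1/3}$ in the $L^4_t L^6_x$ estimate and $t_0^{1/4}$ in the $L^\infty_t W^{1/2,2}_x$ estimate, as in \eqref{eq:slack}. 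The heuristic you gesture at (``slightly shrink exponents'') is the right idea, but without identifying the actual admissible pairs and the two explicit H\"older gains, the argument as written does not establish \eqref{eq:nonlinearabstract2}.
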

\begin{proof}
The proof follows the same template as that of \cite[Theorem 1.2]{Rozendaal-Schippa23}. We will only consider $p=6$, with the other case being analogous. The spaces $W^{s,2}(M)$ arise due to the use of Strichartz estimates, to deal with the nonlinear term. 

We have to prove \eqref{eq:linearabstract} and \eqref{eq:nonlinearabstract}. In the process we will also prove \eqref{eq:nonlinearabstract2} for $t_{0}\in(0,1]$. In fact, we will obtain estimates for the solution spaces $L^{4}([0,t_{0}];L^{6}(M))$ and $C([0,t_{0}];\HT^{\varepsilon,6}_{FIO}(M) + W^{1/2,2}(M))$ separately.

Firstly, for a given $(f_{1},f_{2})\in X$, write $(f_{1},f_{2})=(f_{11},f_{21})+(f_{12},f_{22})$ with
\[
(f_{11},f_{21})\in \HT^{\veps,6}_{FIO}(M)\times \HT^{\veps-1,6}_{FIO}(M)\text{ and }(f_{12},f_{22})\in W^{1/2,2}(M)\times W^{-1/2,2}(M).
\]
Then H\"{o}lder's inequality, the second part of Theorem \ref{thm:wave}, and Strichartz estimates on compact manifolds (cf.~\cite{Kapitanski89,Kapitanski89b}) yield
\begin{align*}
&\|L(f_{1},f_{2})\|_{L^{4}([0,t_{0}];L^{6}(M))}\leq t_{0}^{1/12}\|L(f_{1},f_{2})\|_{L^{6}([0,t_{0}];L^{6}(M))}\\
&\lesssim \|L(f_{11},f_{21})\|_{L^{6}([0,t_{0}];L^{6}(M))}+\|L(f_{12},f_{22})\|_{L^{6}([0,t_{0}];L^{6}(M))}\\
&\lesssim \|(f_{11},f_{21})\|_{\HT^{\veps,6}_{FIO}(M)\times \HT^{\veps-1,6}_{FIO}(M)}+\|(f_{12},f_{22})\|_{W^{1/2,2}(M)\times W^{-1/2,2}(M)}. 
\end{align*}
Taking the infimum over all such decompositions, one obtains
\[
\|L(f_{1},f_{2})\|_{L^{4}([0,t_{0}];L^{6}(M))}\lesssim \|(f_{1},f_{2})\|_{X}.
\]
On the other hand, since $W^{1/2,2}(M)=\HT^{1/2,2}_{FIO}(M)$ (see e.g.~Theorem \ref{thm:SobolevM} and Corollary \ref{cor:Sobolevman}), the first part of Theorem \ref{thm:wave}, with $p=6$ and $p=2$, yields
\[
\|L(f_{1},f_{2})\|_{L^{\infty}([0,t_{0}];\HT^{\veps,6}_{FIO}(M)+W^{1/2,2}(M))}\lesssim \|(f_{1},f_{2})\|_{X}.
\]
Moreover, applying Remark \ref{rem:semigroup} with $p=6$ and $p=2$, we obtain
\[
L(f_{1},f_{2})\in C\big([0,t_{0}];\HT^{\varepsilon,6}_{FIO}(M) + W^{1/2,2}(M)\big),
\]
which completes the proof of \eqref{eq:linearabstract}.

Next, we prove \eqref{eq:nonlinearabstract} and \eqref{eq:nonlinearabstract2} for $u_{1},u_{2},u_{3}\in S$. Firstly, H\"{o}lder's inequality and Strichartz estimates yield
\begin{equation}\label{eq:slack}
\begin{aligned}
&\|N(u_{1},u_{2},u_{3})\|_{L^{4}([0,t_{0}];L^{6}(M))}\leq t_{0}^{1/12}\|N(u_{1},u_{2},u_{3})\|_{L^{6}([0,t_{0}];L^{6}(M))}\\
&\lesssim t_{0}^{1/12}\|u_{1}u_{2}u_{3}\|_{L^{1}([0,t_{0}];L^{2}(M))}\leq t_{0}^{1/3}\|u_{1}u_{2}u_{3}\|_{L^{4/3}([0,t_{0}];L^{2}(M))}\\
&\leq t_{0}^{1/3}\prod_{j=1}^{3}\|u_{j}\|_{L^{4}([0,t_{0}];L^{6}(M))},
\end{aligned}
\end{equation}
where the implicit constant is independent of $t_{0}$ if $t_{0}\in(0,1]$. On the other hand, Minkowski's inequality, the spectral theorem and H\"{o}lder's inequality yield
\begin{align*}
&\|N(u_{1},u_{2},u_{3})\|_{L^{\infty}([0,t_{0}];\HT^{\veps,6}_{FIO}(M)+W^{1/2,2}(M))}\leq \|N(u_{1},u_{2},u_{3})\|_{L^{\infty}([0,t_{0}];W^{1/2,2}(M))}\\
&\leq  \int_{0}^{t_{0}}\sup_{t\in[0,t_{0}]}\Big\|\frac{\sin((t-s)\sqrt{-\Delta_{g}})}{\sqrt{-\Delta_{g}}}u_{1}(s)\overline{u_{2}(s)}u_{3}(s)\Big\|_{W^{1/2,2}(M)}\ud s\\
&\lesssim \|u_{1}\overline{u_{2}}u_{3}\|_{L^{1}([0,t_{0}];L^{2}(M))}\leq t_{0}^{1/4}\|u_{1}u_{2}u_{3}\|_{L^{4/3}([0,t_{0}];L^{2}(M))}\\
&\leq t_{0}^{1/4}\prod_{j=1}^{3}\|u_{j}\|_{L^{4}([0,t_{0}];L^{6}(M))}
\end{align*}
for an implicit constant independent of $t_{0}$. Using also the strong continuity of $t\mapsto \sin(t\sqrt{-\Delta_{g}})$, cf.~Remark \ref{rem:semigroup}, this concludes the proof.
\end{proof}

\begin{remark}\label{rem:nonlinSob}
One can combine the same proof strategy with local smoothing estimates that use classical Sobolev spaces as spaces of initial data. For $p=6$, this would yield a weaker statement than that in Theorem \ref{thm:nonlinearmain}, given that Theorem \ref{thm:wave} improves upon the local smoothing conjecture for $p\geq 2(n+1)/(n-1)$. On the other hand, for $p=4$, the main result of \cite{GaLiMiXi23} allows one to show that \eqref{eq:nonlinearmain} is quantitatively well posed with initial data space
\[
Y:=(W^{\veps,4}(M)+W^{3/8,2}(M))\times(W^{\veps-1,4}(M)+W^{-5/8,2}(M))
\]
and solution space
\[
T_{t_{0}}:=L^{24/7}\big([0,t_{0}];L^{4}(M)\big).
\]
Due to the sharpness of the embeddings in \eqref{eq:Sobolevintro}, this initial data space neither contains the one in \eqref{eq:Ydata}, nor vice versa. Note, moreover, that in this case there is no pointwise regularity statement as in Theorem \ref{thm:nonlinearmain}.
\end{remark}

In \eqref{eq:slack} we did not make full use of the regularizing effect of the Duhamel term. The additional regularization can be used to solve the quintic nonlinear wave equation
\begin{equation}
\label{eq:quintic}
\begin{cases}(\partial_{t}^{2}-\Delta_{g})u(x,t)=\pm |u(x,t)|^{4}u(x,t),
\\u(x,0)=f_{1}(x), \ \partial_{t}u(x,0)=f_{2}(x),
\end{cases}
\end{equation}
for small initial data.

\begin{proposition}\label{prop:quintic}
Let $\veps,t_{0}>0$.   Then \eqref{eq:quintic} is quantitatively well posed with initial data space
\[
X:=(\HT^{\veps,6}_{FIO}(M)+W^{1/2,2}(M))\times(\HT^{\veps-1,6}_{FIO}(M)+ W^{-1/2,2}(M))
\]
and solution space
\[
S_{t_{0}} = L^{6}\big([0,t_{0}]; L^6(M)\big) \cap C\big([0,t_{0}]; \HT^{\varepsilon,6}_{FIO}(M) + W^{1/2,2}(M)\big)
\]
\end{proposition}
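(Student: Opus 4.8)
The plan is to follow the template of the proof of Theorem~\ref{thm:nonlinearmain}, establishing for the data space $X$ and the solution space $S_{t_{0}}=L^{6}([0,t_{0}];L^{6}(M))\cap C([0,t_{0}];\HT^{\varepsilon,6}_{FIO}(M)+W^{1/2,2}(M))$ the two bounds \eqref{eq:linearabstract} and \eqref{eq:nonlinearabstract}, where now $L$ is as before and
\[
N(u_{1},\ldots,u_{5})(t)=\pm\int_{0}^{t}\frac{\sin((t-s)\sqrt{-\Delta_{g}})}{\sqrt{-\Delta_{g}}}\,u_{1}(s)\overline{u_{2}(s)}u_{3}(s)\overline{u_{4}(s)}u_{5}(s)\,\ud s .
\]
Since $n=2$ gives $s(6)=d(6)=1/6$, hence $d(6)-s(6)=0$, the norm $L^{6}([0,t_{0}];L^{6}(M))$ is the scale-invariant Strichartz norm at regularity $1/2$, which is the critical level for the quintic equation in two dimensions.

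For the linear bound I would argue exactly as in Theorem~\ref{thm:nonlinearmain} with $p=6$. Decompose $(f_{1},f_{2})=(f_{11},f_{21})+(f_{12},f_{22})$ with $(f_{11},f_{21})\in\HT^{\varepsilon,6}_{FIO}(M)\times\HT^{\varepsilon-1,6}_{FIO}(M)$ and $(f_{12},f_{22})\in W^{1/2,2}(M)\times W^{-1/2,2}(M)$. The second part of Theorem~\ref{thm:wave} (with $p=6$, $s=\varepsilon$ and the loss parameter also equal to $\varepsilon$, so that $W^{s-d(p)+s(p)-\varepsilon,6}(M)=L^{6}(M)$) controls $\|L(f_{11},f_{21})\|_{L^{6}([0,t_{0}];L^{6}(M))}$, and the Strichartz estimates on compact manifolds for the admissible pair $(6,6)$ (cf.~\cite{Kapitanski89,Kapitanski89b}) control $\|L(f_{12},f_{22})\|_{L^{6}([0,t_{0}];L^{6}(M))}$, using that $\sin(t\sqrt{-\Delta_{g}})/\sqrt{-\Delta_{g}}$ gains a derivative so that $W^{-1/2,2}(M)$ data suffices. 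The first part of Theorem~\ref{thm:wave}, applied with $p=6$ and with $p=2$ (using $\HT^{1/2,2}_{FIO}(M)=W^{1/2,2}(M)$ from Theorem~\ref{thm:SobolevM} and Corollary~\ref{cor:Sobolevman}), together with Remark~\ref{rem:semigroup} for strong continuity, controls the $C([0,t_{0}];\HT^{\varepsilon,6}_{FIO}(M)+W^{1/2,2}(M))$ component. Taking the infimum over decompositions yields \eqref{eq:linearabstract}.

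For the nonlinear bound I would estimate the two components of the $S_{t_{0}}$ norm of $N(u_{1},\ldots,u_{5})$ using only the $L^{6}([0,t_{0}];L^{6}(M))$ norms of the $u_{j}$. With $F:=u_{1}\overline{u_{2}}u_{3}\overline{u_{4}}u_{5}$, Hölder's inequality in space and time gives $\|F\|_{L^{6/5}([0,t_{0}];L^{6/5}(M))}\leq\prod_{j=1}^{5}\|u_{j}\|_{L^{6}([0,t_{0}];L^{6}(M))}$. The diagonal inhomogeneous Strichartz estimate, obtained from the homogeneous $(6,6)$ estimate by $TT^{*}$ and the Christ--Kiselev lemma, gives $\|N(u_{1},\ldots,u_{5})\|_{L^{6}([0,t_{0}];L^{6}(M))}\lesssim\|F\|_{L^{6/5}([0,t_{0}];L^{6/5}(M))}$; and pairing the same homogeneous estimate with the energy estimate $\|e^{it\sqrt{-\Delta_{g}}}g\|_{L^{\infty}([0,t_{0}];L^{2}(M))}=\|g\|_{L^{2}(M)}$ (again via $TT^{*}$ and Christ--Kiselev, and handling the zero mode of $\Delta_{g}$ separately, where $\sin(\tau\sqrt{-\Delta_{g}})/\sqrt{-\Delta_{g}}$ contributes only a factor bounded by $t_{0}$) gives $\|N(u_{1},\ldots,u_{5})\|_{L^{\infty}([0,t_{0}];W^{1/2,2}(M))}\lesssim\|F\|_{L^{6/5}([0,t_{0}];L^{6/5}(M))}$, whence also membership in the sum space. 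Strong continuity in $t$ then follows from this estimate by density of $C^{\infty}_{c}$ in $L^{6/5}$ and the strong continuity of $t\mapsto\sin(t\sqrt{-\Delta_{g}})/\sqrt{-\Delta_{g}}$. This establishes \eqref{eq:nonlinearabstract}.

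The main difficulty is the nonlinear $C_{t}$ estimate. A crude bound using Minkowski's inequality and the mapping properties of the propagator would force $F\in L^{1}_{t}W^{-1/2,2}_{x}$, essentially $F\in L^{1}_{t}L^{4/3}_{x}$, which the product of five $L^{6}(M)$ functions does not provide (it only lies in $L^{6/5}_{x}$, and $6/5<4/3$ on a compact manifold). One must instead use the full dispersive smoothing of the Duhamel operator, i.e.\ the inhomogeneous Strichartz estimate $L^{6/5}_{t,x}\to L^{\infty}_{t}W^{1/2,2}_{x}$ in place of the trivial one; this is precisely the ``additional regularization'' referred to in the remark preceding the statement, and its only non-routine aspect is checking that the relevant (inhomogeneous) Strichartz estimates, including the contribution of the zero eigenvalue of $\Delta_{g}$, are available on a compact manifold. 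Because the resulting estimate for $N$ on $L^{6}([0,t_{0}];L^{6}(M))$ carries no positive power of $t_{0}$, this argument gives well-posedness for small initial data only, as stated.
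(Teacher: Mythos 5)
Your argument is correct and takes essentially the same route as the paper's: both prove the nonlinear estimate by applying H\"older to place $F=u_1\overline{u_2}u_3\overline{u_4}u_5$ in $L^{6/5}_{t,x}$ and then invoking the inhomogeneous Strichartz bound $L^{6/5}_{t,x}\to L^{6}_{t,x}\cap L^{\infty}_{t}W^{1/2,2}_{x}$, with the linear estimate handled exactly as in Theorem~\ref{thm:nonlinearmain}. Your remarks on the $TT^{*}$/Christ--Kiselev derivation, the zero eigenvalue of $\Delta_g$, and why the crude Minkowski-plus-energy bound from the cubic case fails here merely spell out details the paper leaves to the cited Strichartz references.
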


The notion of quantitative well-posedness of \eqref{eq:quintic} is the natural analog of that of \eqref{eq:nonlinearmain}.
\begin{proof}
The linear estimate \eqref{eq:linearabstract} again follows from Strichartz estimates and from Theorem \ref{thm:wave}. For the natural analog of \eqref{eq:nonlinearabstract} we also use Strichartz estimates:
\begin{align*}
& \Big\| \int_0^t \frac{\sin((t-s) \sqrt{- \Delta})}{\sqrt{-\Delta}} \prod_{j=1}^5 u_j(s) ds \Big\|_{L^6([0,T],L^6(M))\cap L^{\infty}([0,T];W^{1/2,2}(M))}\\
&\lesssim \Big\| \prod_{j=1}^5 u_j \Big\|_{L_t^{6/5}([0,T], L^{6/5}(M))} \lesssim \prod_{j=1}^5 \| u_j \|_{L_t^6([0,T], L^6(M))}.\qedhere
\end{align*}
\end{proof}

\begin{remark}\label{rem:smalldata}
Note that we did not show that \eqref{eq:nonlinearabstract2} holds, so we do not obtain well-posedness for large initial data.
\end{remark}

Finally, global existence holds for the defocusing cubic nonlinear wave equation:
\begin{equation}\label{eq:defocus}
\begin{cases}(\partial_{t}^{2}-\Delta_{g})u(x,t)=-|u(x,t)|^{2}u(x,t),
\\u(x,0)=f_{1}(x), \ \partial_{t}u(x,0)=f_{2}(x).
\end{cases}
\end{equation}
The following proposition is a version on compact surfaces of \cite[Theorem 5.8]{Rozendaal-Schippa23}. The proof, which involves a blow-up alternative, Sobolev embeddings and an application of Gr\"{o}nwall's inequality, is exactly as in the Euclidean case (see also \cite{DoSoSp21,Schippa22}).

\begin{proposition}\label{prop:global}
Let $s>1/2$ and
\[
(f_{1},f_{2})\in (\HT^{s,6}_{FIO}(M)+W^{1/2,2}(M))\times(\HT^{s-1,6}_{FIO}(M)+W^{-1/2,2}(M)).
\]
Let $u$ be the solution to \eqref{eq:defocus} given by Theorem \ref{thm:nonlinearmain}. Then $u\in L^{4}([0,t_{0}];L^{6}(M))$ for all $t_{0}>0$.
\end{proposition}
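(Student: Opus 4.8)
The plan is to upgrade the local existence furnished by Theorem~\ref{thm:nonlinearmain} to global existence by means of an energy estimate, after subtracting off the free evolution of the data. First I would record the \emph{blow-up alternative}. Since the Duhamel estimate underlying Theorem~\ref{thm:nonlinearmain} gains a positive power $t_{0}^{\gamma}$ of the time (cf.~\eqref{eq:nonlinearabstract2}), the fixed-point construction can be iterated as long as the norm of the data at the current time stays bounded; hence the solution $u$ of \eqref{eq:defocus} extends to a maximal interval $[0,T^{*})$ on which $u\in L^{4}([0,t_{0}];L^{6}(M))\cap C([0,t_{0}];\HT^{s,6}_{FIO}(M)+W^{1/2,2}(M))$ for every $t_{0}<T^{*}$, and if $T^{*}<\infty$ then $\|u\|_{L^{4}([0,t_{0}];L^{6}(M))}\to\infty$ as $t_{0}\uparrow T^{*}$. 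It therefore suffices to show that, when $T^{*}<\infty$, the norm $\|u\|_{L^{4}([0,t_{0}];L^{6}(M))}$ stays bounded as $t_{0}\uparrow T^{*}$; this forces $T^{*}=\infty$, which is the claim.

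Next I would decompose $(f_{1},f_{2})=(g_{1},g_{2})+(h_{1},h_{2})$ with $(g_{1},g_{2})\in\HT^{s,6}_{FIO}(M)\times\HT^{s-1,6}_{FIO}(M)$ and $(h_{1},h_{2})\in W^{1/2,2}(M)\times W^{-1/2,2}(M)$, and write $u=u_{\mathrm{lin}}+v$, where $u_{\mathrm{lin}}$ solves $(\partial_{t}^{2}-\Delta_{g})u_{\mathrm{lin}}=0$ with data $(f_{1},f_{2})$. Then $v(0)=\partial_{t}v(0)=0$, and $v$ is given by Duhamel's formula with forcing $-|u_{\mathrm{lin}}+v|^{2}(u_{\mathrm{lin}}+v)\in L^{4/3}([0,t_{0}];L^{2}(M))$ (since $u,u_{\mathrm{lin}}\in L^{4}_{t}L^{6}_{x}$), so the energy estimate for the wave equation gives $v\in C([0,t_{0}];W^{1,2}(M))\cap C^{1}([0,t_{0}];L^{2}(M))$ for each $t_{0}<T^{*}$. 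Writing $u_{\mathrm{lin}}=w+w_{2}$, with $w$ and $w_{2}$ the linear evolutions of $(g_{1},g_{2})$ and $(h_{1},h_{2})$, Theorem~\ref{thm:wave}\,\eqref{eq:wave1} keeps $w(t)\in\HT^{s,6}_{FIO}(M)$ with norm bounded uniformly for $t$ in compact intervals, and since $s>1/2$ one has $s-s(6)=s-\tfrac16>\tfrac13$, so $\HT^{s,6}_{FIO}(M)\subseteq W^{s-s(6),6}(M)\hookrightarrow L^{\infty}(M)$ (by Theorem~\ref{thm:SobolevM}, Corollary~\ref{cor:Sobolevman} and the Sobolev embedding on the surface $M$); hence $\sup_{t\in[0,T]}\|w(t)\|_{L^{\infty}(M)}<\infty$ for every $T>0$. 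The rougher term $w_{2}$ is controlled by Strichartz estimates on compact manifolds (as in the proof of Theorem~\ref{thm:nonlinearmain}): $w_{2}\in L^{6}([0,T];L^{6}(M))\cap C([0,T];W^{1/2,2}(M))$ for every $T$.

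The core of the proof is an energy estimate for $v$. With
\[
E(v)(t):=\tfrac12\|\partial_{t}v(t)\|_{L^{2}(M)}^{2}+\tfrac12\|\nabla v(t)\|_{L^{2}(M)}^{2}+\tfrac14\|v(t)\|_{L^{4}(M)}^{4},
\]
which is finite and continuous by the previous step and satisfies $E(v)(0)=0$, I would differentiate in $t$ (justified for $W^{1,2}$ solutions with $L^{1}_{t}L^{2}_{x}$ forcing by a routine approximation): the defocusing sign makes the purely cubic-in-$v$ contribution cancel, so that
\[
\frac{\ud}{\ud t}E(v)(t)=-\Real\int_{M}\overline{\partial_{t}v}\,\big(|u_{\mathrm{lin}}+v|^{2}(u_{\mathrm{lin}}+v)-|v|^{2}v\big),
\]
whose integrand is $O\big(|\partial_{t}v|\,(|u_{\mathrm{lin}}|^{3}+|u_{\mathrm{lin}}|^{2}|v|+|u_{\mathrm{lin}}||v|^{2})\big)$ pointwise. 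Using $u_{\mathrm{lin}}=w+w_{2}$, H\"older's inequality, the bound $\|v(t)\|_{L^{6}(M)}\lesssim 1+E(v)(t)^{1/2}$ (from $W^{1,2}(M)\hookrightarrow L^{6}(M)$, $\|v\|_{L^{2}(M)}\lesssim\|v\|_{L^{4}(M)}$ and Corollary~\ref{cor:Sobolevman}), the uniform $L^{\infty}$ bound on $w$, the Strichartz control of $w_{2}$, and the a priori $L^{4}([0,t_{0}];L^{6}(M))$ bound on $v$, the aim is to reach a differential inequality $\tfrac{\ud}{\ud t}E(v)(t)\le C(t)\big(1+E(v)(t)\big)$ on $[0,T^{*})$, with $C\in L^{1}_{\loc}([0,T^{*}))$ and $\int_{0}^{t_{0}}C$ controlled by $\sup_{[0,t_{0}]}\|w\|_{L^{\infty}(M)}$, $\|w_{2}\|_{L^{6}([0,t_{0}];L^{6}(M))}$ and $\|v\|_{L^{4}([0,t_{0}];L^{6}(M))}$. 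Gr\"onwall's inequality then bounds $E(v)$ on $[0,t_{0}]$ uniformly in $t_{0}<T^{*}$, so $v\in L^{\infty}([0,T^{*});W^{1,2}(M))\hookrightarrow L^{\infty}([0,T^{*});L^{6}(M))$, whence $u=u_{\mathrm{lin}}+v\in L^{4}([0,T^{*});L^{6}(M))$ with finite norm; this contradicts the blow-up alternative unless $T^{*}=\infty$, and then $u\in L^{4}([0,t_{0}];L^{6}(M))$ for all $t_{0}>0$.

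The main obstacle I anticipate is making this differential inequality close globally, i.e.~controlling the cubic-in-$v$ term $\int_{M}|\partial_{t}v|\,|u_{\mathrm{lin}}|\,|v|^{2}$, which has the strength of the nonlinearity itself and whose coefficient $u_{\mathrm{lin}}$ contains the rough piece $w_{2}$ that lies only in a Strichartz space, not in $L^{\infty}$. This forces one to use the splitting $u_{\mathrm{lin}}=w+w_{2}$ essentially: the contribution of $w$ is linear in $E(v)$ precisely because $s>1/2$ places $w$ in $L^{\infty}_{t,x}$ on bounded time intervals, while the contribution of $w_{2}$ must be reabsorbed by interpolating, via Gagliardo--Nirenberg and Sobolev embeddings on $M$, between the $W^{1,2}$ bound on $v$ and the a priori $L^{4}_{t}L^{6}_{x}$ control of $v$ supplied by the local theory, so that it does not dominate. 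This is exactly the mechanism used for the Euclidean equation in \cite[Theorem~5.8]{Rozendaal-Schippa23} (see also \cite{DoSoSp21,Schippa22}); a secondary, routine point is the justification of the energy identity for the a priori low-regularity solution $v$, handled by approximation or by working with the corresponding one-sided energy inequality.
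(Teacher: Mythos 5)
Your proposal follows exactly the strategy the paper invokes: a blow-up alternative, the splitting $u=u_{\mathrm{lin}}+v$ with $v$ in the energy space, Sobolev embeddings (including that $s>1/2$ places the $\HT^{s,6}_{FIO}$ part of the free evolution in $L^{\infty}(M)$ via \eqref{eq:SobolevM}), and Gr\"onwall's inequality applied to the modified energy $E(v)$, with Strichartz control of the rough $W^{1/2,2}$ piece. This matches the paper, which gives no details beyond the remark that the proof is ``exactly as in the Euclidean case'' of \cite[Theorem 5.8]{Rozendaal-Schippa23} (see also \cite{DoSoSp21,Schippa22}) — the very argument you reproduce, correctly flagging the cubic-in-$v$ term with coefficient $w_{2}$ as the step requiring the Gagliardo--Nirenberg interpolation.
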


\section{Sharpness of the results}\label{sec:sharpness}

In this section we show that Theorems \ref{thm:waveintro} and \ref{thm:wave} are sharp, and Theorems \ref{thm:localsmoothFIOintro} and \ref{thm:localsmoothFIO} are sharp under mild assumptions. To this end, we first prove that the index $d(p)-s(p)$ in Corollary \ref{cor:decouple} cannot be improved, from which we then derive the same conclusion in the setting of Theorem \ref{thm:localsmoothFIO}.

Throughout, fix a dimension $n\geq 2$.

\subsection{Sharpness for operators in standard form}\label{subsec:sharpstand}

Throughout this subsection, fix $m\in\R$, $a\in S^{m}(\R^{n+1}\times\Rn)$ and a real-valued $\Phi\in C^{\infty}(\R^{n+1}\times (\Rn\setminus\{0\}))$, positively homogeneous of degree one in the fiber variable, such that $\rank\,\partial_{z\eta}^{2}\Phi(z_{0},\eta_{0})=n$ for all $(z_{0},\eta_{0})\in\supp(a)$ with $\eta_{0}\neq 0$. Set
\[
Tf(z):=\int_{\Rn}e^{i\Phi(z,\eta)}a(z,\eta)\wh{f}(\eta)\ud\eta
\]
for $f\in\Sw(\Rn)$ and $z\in\R^{n+1}$. Using terminology from Section \ref{subsec:FIOs}, we say that $T$ is a Fourier integral operator in standard form.

To prove sharpness, we will proceed in a similar way as in \cite[Section 5]{Rozendaal22b}, which dealt with the case where $Tf(x,t)=e^{it\phi(D)}f(x)$ for $\phi\in C^{\infty}(\Rn\setminus\{0\})$ positively homogeneous of degree one. A key step in that proof was to approximate $T$ by translations of the form $x\mapsto x+t\partial_{\eta}\phi(\nu)$, which arise through propagation of singularities from the canonical relation
\[
\{(x,t,\eta,\phi(\eta),x+t\partial_{\eta}\phi(\eta),\eta)\}.
\]
In the present variable-coefficient case, the relevant canonical relation is
\[
\{(z,\partial_{z}\Phi(z,\eta),\partial_{\eta}\Phi(z,\eta),\eta)\},
\]
and we will adjust the flow. However, we do not need to take the momentum variable into account. This leads to the following version of \cite[Lemma 5.1]{Rozendaal22b}.

\begin{lemma}\label{lem:flow}
Suppose that $m=0$ and that there exists a non-empty conic $V\subseteq\Rn\setminus\{0\}$ such that $a(0,\eta)=1$ and $\Phi(0,\eta)=0$
for all $\eta\in V$ with $|\eta|\geq 1/2$. Suppose also that $|(1-r)\nu+r\w|\geq 1/2$ for all $\nu,\w\in V\cap S^{n-1}$ and $r\in[0,1]$. Then there exists a $C\geq0$ such that
\[
|Th(z)-(2\pi)^{n}h(\partial_{\eta}\Phi(z,\nu))|\leq C\|\wh{h}\|_{L^{1}(\Rn)}|z|(1+\gamma(h,\nu))
\]
for all $h\in \Sw(\Rn)$ with $\supp(\wh{h})\subseteq V\cap \{\eta\in\Rn\mid |\eta|\geq 1/2\}$ compact, all $\nu\in S^{n-1}\cap V$ and all $z\in\R^{n+1}$ with $|z|\leq 1$. Here
\[
\gamma(h,\nu):=\sup\{|\hat{\eta}-\nu||\eta|\mid \eta\in\supp(\wh{h})\}.
\]
\end{lemma}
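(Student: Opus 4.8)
The plan is to expand both the phase and the amplitude of $T$ around $z=0$, using the hypotheses $\Phi(0,\eta)=0$ and $a(0,\eta)=1$ on $V\cap\{|\eta|\geq 1/2\}$ (which contains $\supp(\wh h)$), and then to bound the resulting error against $\|\wh h\|_{L^{1}(\Rn)}$ after absorbing the oscillatory factor. At $z=0$ both $Th(0)$ and $(2\pi)^{n}h(\partial_{\eta}\Phi(0,\nu))$ reduce to $(2\pi)^{n}h(0)$, so the point is quantitative control of the defect as $z$ moves off $0$, with the right dependence on the angular concentration $\gamma(h,\nu)$ of $\wh h$ around $\nu$.

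First I would record the algebraic consequence of homogeneity. Since $\Phi(z,\cdot)$ is positively homogeneous of degree one, Euler's relation gives $\Phi(z,\eta)=\partial_{\eta}\Phi(z,\eta)\cdot\eta$, while $\partial_{\eta}\Phi(z,\cdot)$ is homogeneous of degree zero; hence, with $\eta=|\eta|\hat\eta$ and $F(z,\omega):=\big(\partial_{\eta}\Phi(z,\omega)-\partial_{\eta}\Phi(z,\nu)\big)\cdot\omega$,
\[
\Psi(z,\eta):=\Phi(z,\eta)-\partial_{\eta}\Phi(z,\nu)\cdot\eta=|\eta|\,\big(\partial_{\eta}\Phi(z,\hat\eta)-\partial_{\eta}\Phi(z,\nu)\big)\cdot\hat\eta=|\eta|\,F(z,\hat\eta).
\]
Then $F(z,\nu)=0$ for every $z$ (again by Euler), and $F(0,\hat\eta)=0$ for $\hat\eta\in V\cap S^{n-1}$, since $\Phi(0,\cdot)$ vanishes on $V\cap\{|\eta|\geq 1/2\}$ and so do its $\eta$-derivatives there — this uses that $V$ may be taken with nonempty interior, together with continuity of $\partial_{\eta}\Phi(0,\cdot)$. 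A double application of the fundamental theorem of calculus, in the $z$ variable and then along the segment from $\nu$ to $\hat\eta$, yields
\[
F(z,\hat\eta)=\int_{0}^{1}\!\!\int_{0}^{1}\Big(z\cdot\partial_{z}\partial_{\omega}F\big(sz,(1-r)\nu+r\hat\eta\big)\Big)\cdot(\hat\eta-\nu)\,\ud r\,\ud s,
\]
so $|F(z,\hat\eta)|\leq C|z|\,|\hat\eta-\nu|$, where $C$ bounds $|\partial_{z}\partial_{\omega}F|$ over the compact set $\{|z|\leq 1\}\times\{1/2\leq|\omega|\leq 1\}$; here the hypothesis $|(1-r)\nu+r\w|\geq 1/2$ is exactly what keeps the interpolating segment inside this set, away from the singular fiber $\eta=0$, and one checks that $C$ may be chosen independently of $\nu$. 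Consequently $|\Psi(z,\eta)|\leq C|z|\,|\eta|\,|\hat\eta-\nu|\leq C|z|\,\gamma(h,\nu)$ for all $\eta\in\supp(\wh h)$ and $|z|\leq 1$.

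For the amplitude, $a\in S^{0}(\R^{n+1}\times\Rn)$ has $|\partial_{z}a|$ bounded uniformly, and $a(0,\eta)=1$ on $\supp(\wh h)$, so $|a(z,\eta)-1|\leq C|z|$ and $|a(z,\eta)|\leq C$ there. Writing
\[
Th(z)-\int_{\Rn}e^{i\,\partial_{\eta}\Phi(z,\nu)\cdot\eta}\wh h(\eta)\,\ud\eta=\int_{\Rn}e^{i\,\partial_{\eta}\Phi(z,\nu)\cdot\eta}\big(e^{i\Psi(z,\eta)}a(z,\eta)-1\big)\wh h(\eta)\,\ud\eta
\]
and using $|e^{i\Psi}a-1|\leq|\Psi|\,|a|+|a-1|\leq C|z|(1+\gamma(h,\nu))$ on $\supp(\wh h)$, the right-hand side is at most $C|z|(1+\gamma(h,\nu))\|\wh h\|_{L^{1}(\Rn)}$. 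Finally, Fourier inversion identifies the subtracted integral with $(2\pi)^{n}h(\partial_{\eta}\Phi(z,\nu))$, completing the proof.

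The main — essentially the only — subtlety is extracting the \emph{combined} factor $|z|\,\gamma(h,\nu)$ in the bound for $\Psi$, rather than the naive $|z|\,|\eta|$: this is what forces the two-parameter expansion of $F$ and the bookkeeping of which quantities stay bounded uniformly in $\nu$ on a fixed compact set, and it is precisely where the convexity-type condition on $V\cap S^{n-1}$ enters (to keep the interpolating segment off the origin). A minor secondary point is justifying that $\Phi(0,\cdot)$ and $a(0,\cdot)$ have vanishing $\eta$-derivatives on $V\cap\{|\eta|\geq 1/2\}$, which I would handle by taking $V$ open and invoking continuity of these derivatives.
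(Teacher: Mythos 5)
Your proof is correct and follows essentially the same route as the paper's: both use homogeneity of $\Phi$ together with the vanishing of $\Phi(0,\cdot)$ on $V$ to run a double Taylor expansion (along the segment from $\nu$ to $\hat\eta$ and in $z$), extracting the combined factor $|z|\,|\hat\eta-\nu|\,|\eta|$. The paper packages this slightly differently — splitting the integrand into a separate amplitude term and phase term rather than factoring out $e^{i\partial_\eta\Phi(z,\nu)\cdot\eta}$, and Taylor-expanding $\partial_\eta\Phi$ directly rather than your auxiliary function $F$ — but the substance is identical.
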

\begin{proof}
Fix $\nu\in S^{n-1}\cap V$ and $z\in\R^{n+1}$, and write
\begin{align*}
&Th(z)-(2\pi)^{n}h(\partial_{\eta}\Phi(z,\nu))=\int_{\Rn}\big(e^{i\Phi(z,\eta)}a(z,\eta)-e^{i\partial_{\eta}\Phi(z,\nu)\cdot\eta}\big)\wh{h}(\eta)\ud\eta\\
&=\int_{\Rn}e^{i\Phi(z,\eta)}(a(z,\eta)-1)\wh{h}(\eta)\ud\eta+\int_{\Rn}\big(e^{i\Phi(z,\eta)}-e^{i\partial_{\eta}\Phi(z,\nu)\cdot\eta}\big)\wh{h}(\eta)\ud\eta.
\end{align*}
We will bound each of the terms on the second line separately. The first term is easy:
\[
\Big|\int_{\Rn}e^{i\Phi(z,\eta)}(a(z,\eta)-1)\wh{h}(\eta)\ud\eta\Big|\lesssim \|\wh{h}\|_{L^{1}(\Rn)}|z|,
\]
where we used Taylor approximation and the assumptions that $m=0$ and that $a(0,\eta)=1$ for $\eta\in V$.

To deal with the second term, we first use Taylor approximation for the exponential:
\begin{align*}
\Big|\int_{\Rn}\big(e^{i\Phi(z,\eta)}-e^{i\partial_{\eta}\Phi(z,\nu)\cdot\eta}\big)\wh{h}(\eta)\ud\eta\Big|&\leq \int_{\Rn}\big|e^{i(\Phi(z,\eta)-\partial_{\eta}\Phi(z,\nu)\cdot\eta)}-1\big|\,|\wh{h}(\eta)|\ud\eta\\
&\leq \|\wh{h}\|_{L^{1}(\Rn)}\delta(h,\nu,z),
\end{align*}
where
\[
\delta(h,\nu,z):=\sup\{|(\partial_{\eta}\Phi(z,\eta)-\partial_{\eta}\Phi(z,\nu))\cdot\eta|\mid \eta\in\supp(\wh{h})\}.
\]
It thus suffices to show that $\delta(h,\nu,z)\lesssim |z|\gamma(h,\nu)$. To do so, for $\eta\in V$, we can use the homogeneity of $\Phi$ and another Taylor expansion to write
\begin{align*}
(\partial_{\eta}\Phi(z,\eta)-\partial_{\eta}\Phi(z,\nu))\cdot\eta&=(\partial_{\eta}\Phi(z,\hat{\eta})-\partial_{\eta}\Phi(z,\nu))\cdot\eta\\
&=\Big(\int_{0}^{1}\partial_{\eta\eta}^{2}\Phi(z,(1-r)\nu+r\hat{\eta})(\hat{\eta}-\nu)\ud r\Big)\cdot \eta.
\end{align*}
Finally, note that $\partial_{\eta\eta}^{2}\Phi(0,(1-r)\nu+r\hat{\eta})=0$ for all $r\in[0,1]$, since $\Phi(0,\eta')= 0$ for $\eta'\in V$ with $|\eta'|\geq 1/2$, and because $|(1-r)\nu+r\hat{\eta}|\geq 1/2$, by assumption. Hence, for each $r$ we can use another Taylor approximation, this time with respect to $z$, to obtain
\[
|\Phi(z,\eta)-\partial_{\eta}\Phi(z,\nu)\cdot\eta|\lesssim |z|\,|\hat{\eta}-\nu|\,|\eta|,
\]
as required.
\end{proof}

To prove sharpness for $2<p<2(n+1)/(n-1)$, we will use a version of \cite[Lemma 5.2]{Rozendaal22b} which relates the Hardy spaces for FIOs to square functions as in \cite{GuWaZh20,GaLiMiXi23}. The proof in \cite{Rozendaal22b}, which relies on Khintchine's inequality, extends directly to this setting. We use notation as in the decoupling norm in \eqref{eq:decouplenorm}.

\begin{lemma}\label{lem:squarefun}
Let $p\in[1,\infty)$ and $s\in\R$ be such that $T:\Hps\to L^{p}(\R^{n+1})$ is bounded. Then there exists a $C\geq0$ such that
\[
\Big(\int_{\R^{n+1}}\Big(\sum_{\nu\in\Theta_{k}}|T\chi_{\nu}(D)f(z)|^{2}\Big)^{p/2}\ud z\Big)^{1/p}\leq C\|f\|_{\Hps}
\]
for all $f\in\Hps$ such that $\supp(\wh{f}\,)\subseteq \{\xi\in\Rn\mid 2^{k-1}\leq |\xi|\leq 2^{k+1}\}$ for some $k\in\Z_{+}$.
\end{lemma}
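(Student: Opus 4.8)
The plan is to dualize the square function against a randomized sum of the pieces $T\chi_{\nu}(D)f$ and to exploit the hypothesis $T\colon\Hps\to L^{p}(\R^{n+1})$, exactly as in the proof of \cite[Lemma 5.2]{Rozendaal22b}; the variable coefficients play no essential role here. Since $\supp(\wh{f}\,)$ lies in the annulus $\{2^{k-1}\leq|\xi|\leq2^{k+1}\}$, the set $\Theta_{k}$ is finite and $f$ is a smooth function of at most polynomial growth, so there are no convergence issues. Moreover $\lb D\rb^{s}$ commutes with each $\chi_{\nu}(D)$ and preserves the annulus, so for the part of the argument that does not involve $T$ we may and will assume $s=0$.

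First I would introduce independent Rademacher variables $(\eps_{\nu})_{\nu\in\Theta_{k}}$, with expectation denoted $\E$, and set $f_{\eps}:=\sum_{\nu\in\Theta_{k}}\eps_{\nu}\chi_{\nu}(D)f$, so that $Tf_{\eps}=\sum_{\nu}\eps_{\nu}T\chi_{\nu}(D)f$. Applying Khintchine's inequality pointwise in $z\in\R^{n+1}$, then Tonelli's theorem, and then the assumed boundedness of $T$, gives
\[
\int_{\R^{n+1}}\Big(\sum_{\nu\in\Theta_{k}}|T\chi_{\nu}(D)f(z)|^{2}\Big)^{p/2}\ud z\lesssim\int_{\R^{n+1}}\E\,|Tf_{\eps}(z)|^{p}\ud z=\E\,\|Tf_{\eps}\|_{L^{p}(\R^{n+1})}^{p}\lesssim\E\,\|f_{\eps}\|_{\Hps}^{p}.
\]
It then remains to prove the uniform bound $\|f_{\eps}\|_{\Hps}\lesssim\|f\|_{\Hps}$, with a constant independent of the signs and of $k$.

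For this last bound I would estimate the two terms of the norm \eqref{eq:HpFIOnorm} of $\lb D\rb^{s}f_{\eps}$ separately; by the reduction above we may take $s=0$. The low-frequency term $q(D)f_{\eps}$ vanishes once $2^{k-1}$ exceeds the radius of $\supp(q)$, and for the remaining boundedly many $k$ one has $|\Theta_{k}|\lesssim 1$, so uniform $L^{p}$-boundedness of the multipliers $\chi_{\nu}(D)$ gives $\|q(D)f_{\eps}\|_{L^{p}(\R^{n})}\lesssim\|f\|_{L^{p}(\R^{n})}\lesssim\|f\|_{\Hp}$. For the conical term, fix $\w\in S^{n-1}$; by property \eqref{it:phiproperties1} of $\ph_{\w}$ and the support condition on $\chi_{\nu}$, the product $\ph_{\w}\chi_{\nu}$ is nonzero on the annulus only for $\nu$ in a set $\Theta_{k}(\w)$ of $O(1)$ cardinality, because $\Theta_{k}$ is $2^{-k/2}$-separated. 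Hence $\ph_{\w}(D)f_{\eps}=\sum_{\nu\in\Theta_{k}(\w)}\eps_{\nu}\chi_{\nu}(D)\ph_{\w}(D)f$, and using uniform $\HT^{p}(\Rn)$-boundedness of $\chi_{\nu}(D)$ together with $|\Theta_{k}(\w)|\lesssim 1$ one obtains $\|\ph_{\w}(D)f_{\eps}\|_{\HT^{p}(\Rn)}\lesssim\|\ph_{\w}(D)f\|_{\HT^{p}(\Rn)}$; integrating the $p$-th power over $\w\in S^{n-1}$ finishes the estimate.

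The only step that is not pure bookkeeping is the uniform boundedness on $\HT^{p}(\Rn)$ of the second dyadic--parabolic multipliers $\chi_{\nu}(D)$, uniformly in $\nu\in\Theta_{k}$ and $k\in\Z_{+}$. For $1<p<\infty$ this is immediate from the H\"{o}rmander--Mikhlin multiplier theorem applied on the annulus, using the derivative bounds on $\chi_{\nu}$ recorded before \eqref{eq:decouplenorm}. The case $p=1$, where $\HT^{1}(\Rn)$ is the local Hardy space, is the genuine technical point; I would handle it either through the atomic decomposition of $\HT^{1}(\Rn)$ and the pointwise kernel bounds that those derivative estimates imply, or by invoking the corresponding statement from the $L^{p}$ theory of Fourier integral operators as used in \cite{Rozendaal22b,Stein93}.
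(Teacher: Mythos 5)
Your overall strategy — Rademacher randomization, Khintchine's inequality, and then a uniform bound $\|f_\eps\|_{\Hps}\lesssim\|f\|_{\Hps}$ — is exactly the one the paper points to (the proof of \cite[Lemma 5.2]{Rozendaal22b}), and your handling of the low-frequency term and the reduction to $s=0$ are fine. But the way you justify the key multiplier estimate does not work as stated, and this is a genuine gap.

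The claim that the H\"{o}rmander--Mikhlin theorem gives, for $1<p<\infty$, uniform (in $k$ and $\nu$) boundedness of $\chi_{\nu}(D)$ on $\HT^{p}(\Rn)$ is false. The symbol $\chi_{\nu}$ is homogeneous of degree $0$ and varies on the angular scale $2^{-k/2}$, so on the annulus $|\eta|\approx 2^{k}$ one only has the \emph{anisotropic} bound $|\partial_{\eta}^{\alpha}\chi_{\nu}(\eta)|\lesssim 2^{-k|\alpha|/2}$, i.e.\ $|\eta|^{|\alpha|}|\partial_{\eta}^{\alpha}\chi_{\nu}(\eta)|\lesssim 2^{k|\alpha|/2}$. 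The Mikhlin constants therefore grow like a positive power of $2^{k}$ — as they must, since $\chi_{\nu}(D)$ applied to a generic $L^p$ function is \emph{not} uniformly bounded. What is true, and what your argument actually needs (since you only apply $\chi_{\nu}(D)$ to $\ph_{\w}(D)f$), is the uniform bound for functions whose Fourier support lies in a dyadic--parabolic plate: after the anisotropic rescaling $\eta\mapsto 2^{k}(\w\cdot\eta)\w+2^{k/2}\Pi^{\perp}_{\w}\eta$, the multiplier $\chi_{\nu}\tilde{\phi}_{k}$ (or $\chi_{\nu}\ph_{\w}$) becomes a uniformly smooth compactly supported symbol, so its kernel has a uniformly bounded $L^{1}$-norm and Young's inequality yields $\|\chi_{\nu}(D)\ph_{\w}(D)f\|_{\HT^{p}(\Rn)}\lesssim\|\ph_{\w}(D)f\|_{\HT^{p}(\Rn)}$ for all $p\in[1,\infty]$. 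This is precisely the ``standard argument from the $L^{p}$ theory of FIOs'' invoked at \eqref{eq:standardest} (cf.\ \cite[Section IX.4]{Stein93} and the proof of \cite[Lemma 3.3]{BeHiSo20}). The fix is essentially the alternative route you mention for $p=1$; it is the correct argument for all $p$, and the Mikhlin shortcut should be dropped. One final small caveat: the inequality $\|f\|_{L^{p}(\Rn)}\lesssim\|f\|_{\Hp}$ fails for general $f$ (the Sobolev embedding carries a loss of $s(p)$ derivatives), but it does hold for the boundedly many small $k$ that actually contribute to the $q(D)$ term, which is the only place you use it.
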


We are now ready to prove that the index $d(p)-s(p)$ in Corollary \ref{cor:decouple} is sharp. Note, however, that our assumptions on $T$ are much weaker than those in Section \ref{subsec:decouple}. In particular, we do not assume that the canonical relation $\Ca$ associated with $T$ satisfies the cinematic curvature condition, nor even that the projection condition holds which guarantees sharpness of the fixed-time $L^{p}$ estimates (see \cite{SeSoSt91}).

\begin{theorem}\label{thm:sharpstand}
Let $p\in(2,\infty)$ and $s\in\R$ be such that $T:\HT^{s+m}_{FIO}(\Rn)\to L^{p}(\R^{n+1})$ is bounded. Suppose that there exist a $z_{0}\in\R^{n+1}$, a non-empty open conic set $V\subseteq \Rn\setminus\{0\}$ and $c,C>0$ such that $|a(z_{0},\eta)|\geq c|\eta|^{m}$ for all $\eta\in V$ with $|\eta|\geq C$. Then $s\geq d(p)-s(p)$.
\end{theorem}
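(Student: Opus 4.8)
Before reading the authors' argument, here is how I would approach the proof of Theorem~\ref{thm:sharpstand}.

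\medskip

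\noindent\textbf{Reductions.} Since $\langle D\rangle^{-m}$ maps $\HT^{s+m,p}_{FIO}(\Rn)$ boundedly onto $\HT^{s,p}_{FIO}(\Rn)$, I would first replace $T$ by $T\langle D\rangle^{-m}$ and assume $m=0$; a translation lets me assume $z_{0}=0$. Next, composing $T$ on the right with a zero--order pseudodifferential operator supported in a smaller cone $V_{1}\subseteq V$, and with $e^{-i\Phi(0,D)}$ (all of which are bounded on $\HT^{s,p}_{FIO}(\Rn)$ by Proposition~\ref{prop:FIORn} and Corollary~\ref{cor:stanform}, after cutting off away from $\eta=0$), I may arrange that $a(0,\eta)=1$ and $\Phi(0,\eta)=0$ for $\eta$ in a cone $V_{2}$ with $|\eta|\geq 1/2$; shrinking $V_{2}$ to a small cap also secures the convexity hypothesis $|(1-r)\nu+r\omega|\geq 1/2$ of Lemma~\ref{lem:flow}. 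It then remains, with $\widehat f$ always supported in $V_{2}\cap\{|\eta|\geq 1/2\}$, to derive $s\geq\max\{0,\,s(p)-1/p\}=d(p)-s(p)$, which I would do from two families of examples: a focusing-type example valid for all $p\in(2,\infty)$, and a Kakeya/bush-type example supplying the extra bound $s\geq 0$ in the range $2<p<\tfrac{2(n+1)}{n-1}$, where $s(p)-1/p<0$.

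\medskip

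\noindent\textbf{The focusing example ($s\geq s(p)-1/p$).} Fix a small cap $V'\subseteq V_{2}$ and, for large $\lambda$, let $f_{\lambda}$ have $\widehat{f_{\lambda}}$ a smooth bump comparable to $1$ on $\{|\eta|\sim\lambda,\ \widehat\eta\in V'\}$. Using the equivalence $\|g\|_{\HT^{-s(p),p}_{FIO}(\Rn)}\eqsim\big(\sum_{\nu\in\Theta_{k}}\|\chi_{\nu}(D)g\|_{L^{p}(\Rn)}^{p}\big)^{1/p}$ for $\widehat g$ on a dyadic shell (as in the proof of Proposition~\ref{prop:decouple}), together with the elementary size estimates for the $\sim\lambda^{(n-1)/2}$ plank pieces $\chi_{\nu}(D)f_{\lambda}$, one computes $\|f_{\lambda}\|_{\HT^{s,p}_{FIO}(\Rn)}\eqsim\lambda^{s+(n-1)/4+(n+1)/2-(n+1)/(2p)}$. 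On the output side, $\gamma(f_{\lambda},\nu)\eqsim\lambda$ for $\nu\in V'$, so Lemma~\ref{lem:flow} gives $Tf_{\lambda}(z)\approx(2\pi)^{n}f_{\lambda}(\partial_{\eta}\Phi(z,\nu))$ with admissible error on $\{|z|\leq c\lambda^{-1}\}$; since $z\mapsto\partial_{\eta}\Phi(z,\nu)$ is a submersion at $z=0$ with $\partial_{\eta}\Phi(0,\nu)=0$, for $c$ small this forces $|Tf_{\lambda}(z)|\gtrsim\|\widehat{f_{\lambda}}\|_{L^{1}(\Rn)}\eqsim\lambda^{n}$ on that ball, whence $\|Tf_{\lambda}\|_{L^{p}(\R^{n+1})}\gtrsim\lambda^{\,n-(n+1)/p}$. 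Inserting both bounds into the assumed estimate and letting $\lambda\to\infty$ yields $s\geq s(p)-1/p$.

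\medskip

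\noindent\textbf{The range $2<p<\tfrac{2(n+1)}{n-1}$ ($s\geq 0$).} Here I would use Lemma~\ref{lem:squarefun}, which turns the hypothesis into the square-function bound $\big\|\big(\sum_{\nu\in\Theta_{k}}|T\chi_{\nu}(D)f|^{2}\big)^{1/2}\big\|_{L^{p}(\R^{n+1})}\lesssim\|f\|_{\HT^{s,p}_{FIO}(\Rn)}$ for $\widehat f$ on the shell $\{|\eta|\sim\lambda\}$, and test it on $f=\sum_{\nu}h_{\nu}$, where the $h_{\nu}$ are $L^{p}$-normalized wave packets of frequency $\sim\lambda$ and direction $\nu$, placed so that the translates $h_{\nu}(\partial_{\eta}\Phi(\cdot,\nu))$ — which is what $T\chi_{\nu}(D)f$ reduces to on $\{|z|\leq c\lambda^{-1/2}\}$, by Lemma~\ref{lem:flow} with $\gamma(h_{\nu},\nu)\eqsim\lambda^{1/2}$ — form a bush. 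The decoupling equivalence again gives $\|f\|_{\HT^{s,p}_{FIO}(\Rn)}\eqsim\lambda^{s+(n-1)/4}$, and the left-hand side becomes an explicit overlap computation for a family of $\lambda^{-1/2}$-separated tubes whose directions are governed by $\nu\mapsto\partial_{z}\partial_{\eta}\Phi(0,\nu)^{\!\top}\nu$. Bounding this overlap below — in the quantified way that exploits $2<p<\tfrac{2(n+1)}{n-1}$, exactly as for the Euclidean cone in \cite{Rozendaal22b} — is the main obstacle, since the variable-coefficient Lemma~\ref{lem:flow} only controls $Th$ by a translate of $h$ on a short time scale; getting the construction and this estimate to produce the clean threshold $s\geq 0$ is the delicate part of the argument. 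Combining this with the focusing example gives $s\geq\max\{0,\,s(p)-1/p\}=d(p)-s(p)$, which is the assertion of the theorem.
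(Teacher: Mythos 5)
Your reductions and your focusing example (giving $s\geq s(p)-1/p$) are in substance the same as the paper's: the paper also reduces to $m=0$, $z_{0}=0$, $a(0,\eta)=1$, $\Phi(0,\eta)=0$ (it removes this last assumption at the end by composing $T$ on the right with $\theta(D)$, $\theta(\eta)=e^{-i\Phi(0,\eta)}\rho(\eta)/a(0,\eta)$, whereas you front-load the same reduction), and it also takes $f$ with $\widehat f$ of size $\sim 1$ on the shell-intersect-cone region, computes $\|f\|_{\Hps}$ via the discrete-to-continuous equivalence from \cite[Proposition 4.1]{Rozendaal22b}, and uses Lemma~\ref{lem:flow} with $\gamma\eqsim 2^{k}$ to force $\Real(Tf)\gtrsim\|\widehat f\|_{L^{1}}$ on a ball of radius $\sim 2^{-k}$. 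Your normalization differs by a harmless factor $2^{k(n+1)/2}$, but the resulting inequality is the same.

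The gap is in your argument for $s\geq 0$. You are heading toward $L^{p}$-normalized wave packets with plank-sized frequency support, for which $\gamma(h_{\nu},\nu)\eqsim 2^{k/2}$, so Lemma~\ref{lem:flow} only controls $T\chi_{\nu}(D)f$ on the small ball $\{|z|\lesssim 2^{-k/2}\}$; you then need a genuine bush/overlap computation for a family of $2^{-k/2}$-tubes, and you rightly note you do not see how to close it. The paper sidesteps all of this by choosing, for the second example, wave packets $g_{\nu}(x)=e^{i2^{k}\nu\cdot x}\psi(x)$ whose Fourier support is a \emph{unit} ball around $2^{k}\nu$ rather than a plank. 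Then $\gamma(g_{\nu},\nu)\lesssim 1$, so Lemma~\ref{lem:flow} gives $|Tg_{\nu}(z)-(2\pi)^{n}g_{\nu}(\partial_{\eta}\Phi(z,\nu))|\lesssim|z|$, and since $\psi\geq 1$ on the unit ball and $\partial_{\eta}\Phi(0,\nu)=0$, one gets the \emph{pointwise} lower bound $|Tg_{\nu}(z)|\gtrsim 1$ for all $\nu$ simultaneously on a fixed ball $\{|z|\leq c_{2}\}$. Feeding this into Lemma~\ref{lem:squarefun} then yields $2^{k(n-1)/4}\lesssim\|g\|_{\Hps}\eqsim 2^{k(s+(n-1)/4)}$ with no geometric overlap considerations at all, and the bound $s\geq 0$ holds for every $p\in(2,\infty)$ — the restricted range $2<p<2(n+1)/(n-1)$ only enters in identifying which of the two bounds equals $d(p)-s(p)$. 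You should replace the plank-localized $h_{\nu}$ by these unit-Fourier-support $g_{\nu}$; that is the missing idea.
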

\begin{proof}
The approach is similar to that in \cite[Theorem 5.3]{Rozendaal22b}, as are the examples that show sharpness. However, we need additional work to reduce to a setting where we can apply Lemma \ref{lem:flow}.

By translating and precomposing with $\lb D\rb^{-m}$, we may assume that $z_{0}=0$ and $m=0$, and by adding a smoothing operator which modifies the low frequencies near $z_{0}$, we may assume that $C=1/2$. We may also suppose, without loss of generality, that $|(1-r)\nu+r\w|\geq 1/2$ for all $\nu,\w\in V\cap S^{n-1}$ and $r\in[0,1]$. Finally, we will assume for the moment that $a(0,\eta)=1$ and $\Phi(0,\eta)=0$ for all $\eta\in V$ with $|\eta|\geq 1/2$. This assumption will be removed at the end of the proof.

Now, we want to show that $s\geq d(p)-s(p)=\max(s(p)-1/p,0)$. Fix a standard Littlewood--Paley decomposition $(\phi_{k})_{k=0}^{\infty}\subseteq C^{\infty}_{c}(\Rn)$, as in the proof of Corollary \ref{cor:decouple}, and let $k\in\Z_{+}$. Our construction will only rely on estimates for large $k$. Given that $V$ is non-empty and open, we may thus choose $k$ large enough such that the collection $V_{k}$ of $\nu\in \Theta_{k}$ with $\supp(\chi_{\nu})\subseteq V$ is non-empty. Note that then $V_{k}$ has approximately $2^{k(n-1)/2}$ elements, because $V$ has a fixed nonzero aperture.

We first prove that $s\geq s(p)-1/p$.
In this case we will apply Lemma \ref{lem:flow} to functions whose Fourier support fills up a dyadic-parabolic piece.

We will construct a collection $(f_{\nu})_{\nu\in V_{k}}\subseteq \Sw(\Rn)$ with the following properties. For each $\nu\in V_{k}$ one has $\phi_{k}(D)f_{\nu}=\chi_{\nu}(D)f_{\nu}=f_{\nu}$, and $\Real(f_{\nu}(x))\gtrsim 1$ for all $x\in\Rn$ with $|x|\leq 2^{-k}$. Moreover, $\|\F f_{\nu}\|_{L^{1}(\Rn)}\eqsim 1$ and $\|f_{\nu}\|_{L^{p}(\Rn)}\eqsim 2^{-k\frac{n+1}{2p}}$. Here all the implicit constants are independent of $\nu$ and $k$. To construct such a collection, one may fix a $\psi\in \Sw(\Rn)$ such that $\psi(x)\geq1$ whenever $|x|\leq 1$, and $\wh{\psi}(\xi)=0$ if $|\xi|>c'$, for some small $c'>0$. Then set
\[
f_{\nu}(x):=e^{i2^{k}\nu\cdot x}\psi(2^{k}(\nu\cdot x)\nu+2^{k/2}\Pi_{\nu}^{\perp}x),
\]
where $\Pi^{\perp}_{\nu}$ is the orthogonal projection onto the complement of the span of $\nu$.

Now set $f:=\sum_{\nu\in V_{k}}f_{\nu}$. Then $\supp(\wh{f}\,)\subseteq V$, $f\in\Hps$ and
\begin{equation}\label{eq:fnorm}
\|f\|_{\Hps}\eqsim 2^{k(s+\frac{n-1}{2}(\frac{1}{2}-\frac{1}{p}))}\Big(\sum_{\nu\in V_{k}}\|f_{\nu}\|_{L^{p}(\Rn)}^{p}\Big)^{1/p}\eqsim 2^{k(s+\frac{n-1}{2}(\frac{1}{2}-\frac{1}{p})-\frac{1}{p})},
\end{equation}
by \cite[Proposition 4.1]{Rozendaal22b} (see also \cite[Equation (5.3)]{Rozendaal22b}).

Now, $\Phi$ and all of its derivatives are bounded on compact subsets of $\Rn\times(\Rn\setminus\{0\})$. Hence $z\mapsto \partial_{\eta}\Phi(z,\nu)$ is a locally Lipschitz flow for each $\nu\in V_{k}$, with Lipschitz constants independent of $\nu$. Moreover, by assumption, $\partial_{\eta}\Phi(0,\nu)=0$. It thus follows that $\Real(f_{\nu}(\partial_{\eta}\Phi(z,\nu)))\gtrsim 1$ whenever $|z|\lesssim 2^{-k}$ for some implicit constant independent of $k$ and $\nu$. On the other hand, because $\supp(f_{\nu})\subseteq \supp(\phi_{k}\chi_{\nu})$, one has $|\hat{\eta}-\nu|\,|\eta|\lesssim 2^{k/2}$ for all $\eta\in\supp(f_{\nu})$. Hence Lemma \ref{lem:flow} yields
\[
|Tf_{\nu}(z)-(2\pi)^{n}f_{\nu}(\partial_{\eta}\Phi(z,\nu))|\lesssim |z|2^{k/2}
\]
whenever $|z|\lesssim 2^{-k}$. By combining all this, we find
\[
\Real(Tf_{\nu}(z))\geq (2\pi)^{n}\Real(f_{\nu}(\partial_{\eta}\Phi(z,\nu)))-|Tf_{\nu}(z)-(2\pi)^{n}f_{\nu}(\partial_{\eta}\Phi(z,\nu))|\gtrsim 1
\]
if $|z|\leq c_{1}2^{-k}$ for some small $c_{1}>0$ independent of $k$ and $\nu$. Now we can use the assumption on $T$ and \eqref{eq:fnorm}:
\begin{align*}
2^{-k\frac{n+1}{p}}2^{k\frac{n-1}{2}}&\lesssim \Big(\int_{\R^{n+1}}\Big|\sum_{\nu\in V_{k}}Tf_{\nu}(z)\Big|^{p}\ud z\Big)^{1/p}=\|Tf\|_{L^{p}(\R^{n+1})}\lesssim \|f\|_{\HT^{s,p}_{FIO}(\Rn)}\\
&\eqsim 2^{k(s+\frac{n-1}{2}(\frac{1}{2}-\frac{1}{p})-\frac{1}{p})}.
\end{align*}
This implies that $s\geq s(p)-1/p$.

Next, we will show that $s\geq0$, which is the required statement for $2<p<2(n+1)/(n-1)$. Here we will apply Lemma \ref{lem:flow} to functions with frequency support of unit size in a dyadic-parabolic piece, and we will also use Lemma \ref{lem:squarefun}.

Let $\psi\in\Sw(\Rn)$ be as before, and set $g_{\nu}(x):=e^{i2^{k}\nu\cdot x}\psi(x)$ for $\nu\in V_{k}$ and $x\in\Rn$. Since $\wh{\psi}(\xi)=0$ if $|\xi|>c'$, for some small $c'>0$, we may again suppose that $\phi_{k}(D)g_{\nu}=\chi_{\nu}(D)g_{\nu}=g_{\nu}$. Also, $\|g_{\nu}\|_{L^{p}(\Rn)}\eqsim 1\eqsim \|\F(g_{\nu})\|_{L^{1}(\Rn)}$. Set $g:=\sum_{\nu\in V_{k}}g_{\nu}$. Then $\supp(\wh{g})\subseteq V$, $g\in\Hps$ and
\begin{equation}\label{eq:gnorm}
\|g\|_{\Hps}\eqsim 2^{k(s+\frac{n-1}{2}(\frac{1}{2}-\frac{1}{p}))}\Big(\sum_{\nu\in V_{k}}\|g_{\nu}\|_{L^{p}(\Rn)}^{p}\Big)^{1/p}\eqsim 2^{k(s+\frac{n-1}{4})},
\end{equation}
by \cite[Proposition 4.1]{Rozendaal22b} (see also \cite[Equation (5.5)]{Rozendaal22b}). 

Let $\nu\in V_{k}$. Since $\psi(x)\geq 1$ for $|x|\leq 1$, we find in a similar way as before that $|g_{\nu}(\partial_{\eta}\Phi(z,\nu))|\gtrsim 1$ whenever $|z|\lesssim 1$, for small implicit constants independent of $k$ and $\nu$. Moreover, since $\supp(g_{\nu})\subseteq \supp(\phi_{k}\chi_{\nu})$ is of unit size, Lemma \ref{lem:flow} yields
\[
|Tg_{\nu}(z)-(2\pi)^{n}g_{\nu}(\partial_{\eta}\Phi(z,\nu))|\lesssim |z|
\]
if $|z|\lesssim 1$. Combined, this implies that
\[
|Tg_{\nu}(z)|\geq (2\pi)^{n}|g_{\nu}(\partial_{\eta}\Phi(z,\nu))|-|Tg_{\nu}(z)-(2\pi)^{n}g_{\nu}(\partial_{\eta}\Phi(z,\nu))|\gtrsim 1
\]
whenever $|z|\leq c_{2}$ for some small $c_{2}>0$ independent of $k$ and $\nu$. Now we can use the assumption on $T$, Lemma \ref{lem:squarefun} and \eqref{eq:gnorm}:
\begin{align*}
2^{k\frac{n-1}{4}}&\lesssim \Big(\int_{\R^{n+1}}\Big(\sum_{\nu\in V_{k}}|Tg_{\nu}(z)|^{2}\Big)^{p/2}\ud z\Big)^{1/p}\\
&=\Big(\int_{\R^{n+1}}\Big(\sum_{\nu\in V_{k}}|T\chi_{\nu}(D)g(z)|^{2}\Big)^{p/2}\ud z\Big)^{1/p}\lesssim \|g\|_{\Hps}\eqsim 2^{k(s+\frac{n-1}{4})},
\end{align*}
which shows that $s\geq0$.

We have now proved that $s\geq d(p)-s(p)$, and it only remains to remove the assumption that $a(0,\eta)=1$ and $\Phi(0,\eta)=0$ for all $\eta\in V$ with $|\eta|\geq 1/2$. To do so, and in particular for the second part of this assumption, we will use the invariance of $\Hps$ under FIOs.

First note that, so far, we have not used any information about $V$ and $c$, beyond what is listed in the statement of the theorem, as well as the assumption that $|(1-r)\nu+r\w|\geq 1/2$ for all $\nu,\w\in V\cap S^{n-1}$ and $r\in[0,1]$, which can be made without loss of generality by choosing $V$ small enough. Hence, by choosing slightly smaller $V$ and $c$, we may suppose that the closure of $V$ in $\R^{n}\setminus\{0\}$ is a strict subset of a closed cone $V_{0}\subseteq\Rn\setminus\{0\}$ such that $|a(z_{0},\eta)|\geq c$ for all $\eta\in V_{0}$ with $|\eta|\geq c'$, for some $c'<1/2$.

Let $\rho\in C^{\infty}(\R^{n})$ be such that $\supp(\rho)\subseteq V_{0}\cap\{\eta\in\Rn\mid |\eta|\geq c'\}$, and such that $\rho(\eta)=1$ for all $\eta\in V\cap \{\eta\in\Rn\mid |\eta|\geq1/2\}$. Set
\[
\theta(\eta):=e^{-i\Phi(0,\eta)}\frac{\rho(\eta)}{a(0,\eta)}
\]
for $\eta\in\Rn$. Then $\theta(D)$ is an FIO in standard form, with phase function $x\cdot\eta-\Phi(0,\eta)$ and symbol $\rho(\eta)/a(0,\eta)$. In fact, $\theta(D)\in I^{0}(\Rn,\Rn;\Ca')$ for
\[
\Ca':=\{(x,\eta,x-\partial_{\eta}\Phi(0,\eta),\eta)\mid (x,\eta)\in\Rn\times(\Rn\setminus\{0\})\},
\]
which is the graph of $(y,\eta)\mapsto (y+\partial_{\eta}\Phi(0,\eta),\eta)$ on $T^{*}\Rn\setminus o$. It thus follows from \cite[Theorem 6.10]{HaPoRo20} (see also \cite[Remark 3.7]{Rozendaal21}) that $\theta(D):\HT^{s,p}_{FIO}(\Rn)\to\HT^{s,p}_{FIO}(\Rn)$ is bounded.

By assumption on $T$, this in turn implies that $T\theta(D):\Hps\to L^{p}(\R^{n+1})$ is bounded. Moreover, $T\theta(D)$ is an FIO in standard form, with phase function given by $\Phi(z,\eta)-\Phi(0,\eta)$ for $\eta\in V\cap \{\eta\in\Rn\mid|\eta|\geq1/2\}$, and symbol given by $a(z,\eta)/a(0,\eta)$ for such $\eta$. Now the first part of the proof shows that $s\geq d(p)-s(p)$, thereby concluding the proof for general $T$ as in the statement of the theorem.
\end{proof}

\begin{remark}\label{rem:sharpexamples}
Note that the sharpness examples are universal, in the sense that
they can be used for any FIO satisfying the conditions of Theorem \ref{thm:sharpstand}. Also note that, when combined with Proposition \ref{prop:conversedecouple}, Theorem \ref{thm:sharpstand} shows that the exponent $d(p)$ in Theorem \ref{thm:BeHiSo} is sharp whenever $T$ is non-characteristic somewhere.
\end{remark}

\subsection{Sharpness on manifolds}\label{subsec:sharpman}

Throughout, let $(M,g)$ and $(N,g')$ be complete Riemannian manifolds of dimensions $n$ and $n+1$, respectively, with bounded geometry.

We will prove that the index $d(p)-s(p)$ in Theorem \ref{thm:localsmoothFIO} is sharp. However, the assumptions under which we show sharpness are, for the most part, much weaker than those in Theorem \ref{thm:localsmoothFIO}, and in particular we do not assume that $\Ca$ satisfies the cinematic curvature condition.

\begin{theorem}\label{thm:sharpgen}
Let $T\in I^{m-1/4}(M,N;\Ca)$, for $m\in\R$ and $\Ca\subseteq (T^{*}N\setminus o)\times(T^{*}M\setminus o)$ a homogeneous canonical relation such that the projections $\Pi_{N}:\Ca\to N$ and $\Pi_{M}:\Ca\to M$ are submersions. Let $p\in(2,\infty)$ and $s,r\in\R$. Suppose that
\[
T:\HT^{s+m,p}_{FIO}(M)\to W^{r,p}(N)
\]
is bounded, and that $T$ is non-characteristic at some point $c\in \Ca$. Then $r\leq s-d(p)+s(p)$.
\end{theorem}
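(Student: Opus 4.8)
The plan is to reduce Theorem~\ref{thm:sharpgen} to Theorem~\ref{thm:sharpstand} by localizing near the non-characteristic point $c\in\Ca$ and passing to geodesic normal coordinates on $M$ and $N$. First I would use the hypothesis that $T$ is non-characteristic at $c$, together with the submersion assumptions on $\Pi_{M}$ and $\Pi_{N}$, to arrange that, after microlocalizing with pseudodifferential cutoffs of order zero near the base points and fiber directions of $c$, the resulting operator $\widetilde T$ is a compactly supported FIO associated with a canonical relation that is (locally) the graph of a canonical transformation. Indeed, the submersion conditions guarantee that, microlocally near $c$, $\Ca$ projects diffeomorphically onto $T^{*}N\setminus o$ in the relevant variables, so after choosing coordinates $\widetilde T$ is of the standard form considered in Section~\ref{subsec:sharpstand}, i.e.\ $\widetilde T f(z)=\int_{\Rn}e^{i\Phi(z,\eta)}a(z,\eta)\wh f(\eta)\,\ud\eta$ with $\rank\partial_{z\eta}^{2}\Phi=n$ on $\supp(a)$, and with $a$ of order $m$. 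The point $c$ being non-characteristic translates into the existence of a point $z_{0}$ and an open cone $V$ on which $|a(z_{0},\eta)|\gtrsim|\eta|^{m}$, which is exactly the hypothesis of Theorem~\ref{thm:sharpstand}.

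Next I would transfer the boundedness assumption through these reductions. Since $\HpsX$ on a manifold is defined via $\ell^{p}$ sums of $\Hps$ norms in coordinate charts (Definition~\ref{def:HpFIOman}), and since $W^{r,p}(N)$ admits the analogous description (Corollary~\ref{cor:Sobolevman}, Remark~\ref{rem:propclassical}), a single chart on $M$ containing the relevant base point embeds $\Hps$ boundedly into $\HpsM$ via $f\mapsto \ka^{*}(\psit_{\ka}f)$ up to harmless cutoffs, by Corollary~\ref{cor:coordchange}; dually, composing the output of $T$ with $v\mapsto\la_{*}(\theta_{\la}v)$ and a further spatial cutoff maps $W^{r,p}(N)$ into $W^{r,p}(\R^{n+1})$. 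The order-zero pseudodifferential microlocalizations preserve $\Hps$ and $\HpsM$ (Proposition~\ref{prop:FIORn}, Theorem~\ref{thm:FIObdd}, or directly \cite[Theorem 6.10]{HaPoRo20}), and by the composition theorem for FIOs the composite $f\mapsto \la_{*}(\theta_{\la}\,\widetilde T\,\ka^{*}(\psit_{\ka}f))$ is a compactly supported FIO in standard form associated with a local canonical graph, of order $m-1/4+0=m$ (absorbing the $-1/4$ into the symbol normalization, or simply noting that Theorem~\ref{thm:sharpstand} is stated for symbols in $S^{m}$, so one sets the effective order appropriately). Hence boundedness $T:\HT^{s+m,p}_{FIO}(M)\to W^{r,p}(N)$ forces boundedness of a standard-form operator $\HT^{s',m'}_{FIO}(\Rn)\to L^{p}(\R^{n+1})$ after absorbing the $r$ derivatives on $N$ into the symbol (multiplying $\widetilde T$ on the left by $\lb D\rb^{r}$, which by Lemma~\ref{lem:propersupp} and the composition theorem keeps it a standard-form FIO of order $m-r$ modulo smoothing, with the non-characteristic cone preserved). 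Applying Theorem~\ref{thm:sharpstand} to this operator, with its order $m-r$ and its Sobolev index shifted so that the domain exponent is $s-r$, yields $s-r\geq d(p)-s(p)$, i.e.\ $r\leq s-d(p)+s(p)$, which is the claim.

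The main obstacle I expect is the careful bookkeeping in the reduction to standard form: one must verify that microlocalizing near $c$ genuinely produces an FIO whose canonical relation is a \emph{local canonical graph} and whose phase function, in suitable coordinates, satisfies $\rank\partial_{z\eta}^{2}\Phi=n$ with a symbol that is non-vanishing of the right order on an open cone. This is where the two submersion hypotheses on $\Pi_{M},\Pi_{N}$ are essential: a homogeneous canonical relation $\Ca\subseteq(T^{*}N\setminus o)\times(T^{*}M\setminus o)$ with both projections to the base submersive, once further cut down to a conic neighborhood of the non-characteristic point $c$, is locally parametrized by a non-degenerate phase of the standard form, and the rank condition on the mixed Hessian is precisely the statement that $\Ca$ is a graph over the appropriate variables. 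One must also take care that the various cutoffs (in base variables and in frequency) can be chosen so that on their joint support the symbol of $T$ remains elliptic in the cone $V$, which is possible because non-characteristicity is an open condition. The rest — tracking the effect of $\lb D\rb^{r}$ and $\lb D\rb^{-(s+m)}$ through the composition theorem and Lemma~\ref{lem:propersupp}, and matching the resulting Sobolev indices with the hypotheses of Theorem~\ref{thm:sharpstand} — is routine FIO calculus.
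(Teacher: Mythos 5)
Your proposal follows essentially the same route as the paper's proof: microlocalize near the non-characteristic point with order-zero pseudodifferential cutoffs (so the canonical relation is confined to a small neighborhood of $c$), pass to a single chart on each manifold, trade away the output exponent $r$ via Bessel potentials and Lemma~\ref{lem:propersupp}, reduce to standard form via Lemma~\ref{lem:FIOcurv} and Remark~\ref{rem:smallcan} (the latter being available precisely because of the prior microlocalization, which is what prevents a genuine sum of standard-form pieces from appearing), and finally apply Theorem~\ref{thm:sharpstand}. One small arithmetic slip to note: composing on the left with $\lb D\rb^{r}$ raises the symbol order to $m+r$, not $m-r$, so the bounded domain $\HT^{s+m,p}_{FIO}(\Rn)=\HT^{(s-r)+(m+r),p}_{FIO}(\Rn)$ matches effective order $m+r$ with Sobolev index $s-r$ and Theorem~\ref{thm:sharpstand} still yields $s-r\geq d(p)-s(p)$ as you state, but the paper's bookkeeping is cleaner because it conjugates by $\lb D\rb^{\pm r}$, keeping the order fixed at $m$.
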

Note that, if $\Ca$ satisfies the cinematic curvature condition, then it also satisfies the projection conditions in the theorem, given that the natural projection $\Pi_{M}:T^{*}M\setminus o\to M$ is a submersion. Moreover, the space-time solution operator to the Cauchy problem \eqref{eq:Cauchyintro} satisfies the conditions of the theorem, given that the wave propagators are invertible and therefore elliptic. Hence the index $d(p)-s(p)$ in Theorems \ref{thm:waveintro} and \ref{thm:wave} is sharp on every compact manifold.
\begin{proof}
Firstly, we may work in a sufficiently small conic neighborhood of $c$, to be chosen later. More precisely, let $S_{1}$ and $S_{2}$ be compactly supported pseudodifferential operators of order zero on $N$ and $M$, whose kernels are smooth away from small conic neighborhoods of $(z_{0},\zeta_{0},z_{0},\zeta_{0})$ and $(y_{0},\eta_{0},y_{0},\eta_{0})$, and whose principal symbols are equal to one near $(z_{0},\zeta_{0})$ and $(y_{0},\eta_{0})$, respectively. Then, by the composition theorem for FIOs and because the canonical relation of a pseudodifferential operator is the diagonal, $S_{1}TS_{2}\in I^{m-1/4}(M,N;\Ca')$ is non-characteristic at $c$, and $\Ca'$ is a small neighborhood of $c$ in $\Ca$. Moreover, $S_{2}$ is bounded on $\HT^{s+m,p}_{FIO}(M)$, by Theorem \ref{thm:FIObdd}, and $S_{1}:W^{r,p}(N)\to W^{r,p}(N)$ is bounded as well, by Corollary \ref{cor:Sobolevman} and because pseudodifferential operators of order zero are bounded on $W^{r,p}(\R^{n+1})$. Hence $S_{1}TS_{2}:\HT^{s+m,p}_{FIO}(M)\to W^{r,p}(N)$ is bounded, and we may replace $T$ by $S_{1}TS_{2}$ in the remainder. For simplicity of notation, we will continue working with $T$ and simply assume that $T\in I^{m-1/4}(M,N;\Ca)$ for $\Ca$ a small neighborhood of $c$.

Next, we reduce to working in local coordinates. Let uniformly locally finite families $K\subseteq \A$, $L\subseteq\B$, $(\psi_{\ka})_{\ka\in K}\subseteq C^{\infty}_{c}(M)$ and $(\theta_{\la})_{\la\in L}\subseteq C^{\infty}_{c}(N)$ as in Lemma \ref{lem:cover} be given, where $\A$ and $\B$ are the smooth structures on $M$ and $N$, respectively. Write $c=(z_{0},\zeta_{0},y_{0},\eta_{0})$, and fix $\ka\in K$ and $\la\in L$ such that $\psi_{\ka}(y_{0})\neq 0$ and $\theta_{\la}(z_{0})\neq 0$.
Set $\tilde{T}f:=\la_{*}(\theta_{\la}T\ka^{*}(\tilde{\psi}_{\ka} f))$ for $f\in\Da(\Rn)$. Then, by the composition theorem for FIOs, $\tilde{T}\in I^{m}(\Rn,\R^{n+1};\tilde{\Ca})$ for
\[
\tilde{\Ca}:=\{(\la(z),(\partial \la(z))_{*}\zeta,\ka(y),(\partial\ka(y))_{*}\eta)\mid (z,\zeta,y,\eta)\in\Ca, z\in U_{\la}, y\in U_{\ka}\}.
\]
Note that the natural projections $\Pi_{\R^{n+1}}:\tilde{\Ca}\to \R^{n+1}$ and $\Pi_{\Rn}:\tilde{\Ca}\to\Rn$ are submersions, by the assumption on $\Ca$. Moreover, the principal symbol of $\tilde{T}$ is just the principal symbol of $T$, expressed in local coordinates near $z_{0}$ and $y_{0}$. In particular,  $\tilde{T}$ is non-characteristic at
\[
\tilde{c}:=(\la(z_{0}),(\partial \la(z_{0}))_{*}\zeta_{0},\ka(y_{0}),(\partial\ka(y_{0}))_{*}\eta_{0})\in\tilde{\Ca},
\]
given that $\theta_{\la}(z_{0})\neq0\neq\psi_{\ka}(y_{0})$. Also, by Definition \ref{def:Hpman} and Corollary \ref{cor:Sobolevman}, one has $\la_{*}(\theta_{\la}T):\HT^{s+m,p}_{FIO}(M)\to W^{r,p}(\R^{n+1})$, and $f\mapsto \ka^{*}(\tilde{\psi}_{\ka}f)$ is bounded from $\HT^{s+m,p}_{FIO}(\Rn)$ to $\HT^{s+m,p}_{FIO}(M)$ by Theorem \ref{thm:FIObdd}. Hence $\tilde{T}:\HT^{s+m,p}_{FIO}(\Rn)\to W^{r,p}(\R^{n+1})$ is bounded. We may thus work with $\tilde{T}$ in the remainder. Note also that $\tilde{T}$ is compactly supported.

Next, we want to reduce to the case where $r=0$. To this end, note that $\lb D\rb^{r}\tilde{T}\lb D\rb^{-r}:\HT^{s-r+m,p}_{FIO}(\Rn)\to L^{p}(\R^{n+1})$ is bounded. Since $\tilde{T}$ is compactly supported, we can multiply $\tilde{T}$ by smooth cutoffs $\rho_{1}$ and $\rho_{2}$, and apply Lemma \ref{lem:propersupp} twice, to write
\[
\lb D\rb^{r}\tilde{T}\lb D\rb^{-r}=\lb D\rb^{r}\rho_{1}\tilde{T}\rho_{2}\lb D\rb^{-r}=\tilde{S}_{1}\tilde{T}\tilde{S}_{2}+R,
\]
for compactly supported pseudodifferential operators $\tilde{S}_{1}$ and $\tilde{S}_{2}$ of order $r$ and $-r$, respectively, and a smoothing operator $R:\Sw'(\Rn)\to \Sw(\R^{n+1})$. Then $\tilde{S}_{1}\tilde{T}\tilde{S}_{2}:\HT^{s-r+m,p}_{FIO}(\Rn)\to L^{p}(\R^{n+1})$ is bounded, and the composition theorem for FIOs implies that $\tilde{S}_{1}\tilde{T}\tilde{S}_{2}\in I^{m-1/4}(\Rn,\R^{n+1};\tilde{C})$. Moreover, since $\lb D\rb^{r}$ and $\lb D\rb^{-r}$ are elliptic and because the smoothing term $R$ does not change the principal symbol, $\tilde{S}_{1}\tilde{T}\tilde{S}_{2}$ is a compactly supported element of $I^{m-1/4}(\Rn,\R^{n+1};\tilde{C})$ that is non-characteristic at $\tilde{c}$.

It only remains to express $\tilde{S}_{1}\tilde{T}\tilde{S}_{2}$ in standard form. This is where the projection conditions on $\tilde{C}$ come in. Namely, these allow us to apply Lemma \ref{lem:FIOcurv} to write $\tilde{S}_{1}\tilde{T}\tilde{S}_{2}$ as a sum of FIOs of order $m-1/4$ as in Section \ref{subsec:sharpstand}, composed with changes of coordinates, plus a smoothing term. Moreover, at the start of the proof we observed that we are allowed to work in a sufficiently small neighborhood of $c$ in $\Ca$ (and therefore also of $\tilde{c}$ in $\tilde{C}$). Hence Remark \ref{rem:smallcan} in fact yields a decomposition $\tilde{S}_{1}\tilde{T}\tilde{S}_{2}=T'S'+R'$, for $T'$ an FIO of order $m-1/4$ as in Section \ref{subsec:sharpstand}, $S'$ a change of coordinates on $\Rn$, and $R':\Sw'(\Rn)\to\Sw(\R^{n+1})$ a smoothing operator. Then $T'S'$ is non-characteristic at $\tilde{c}$ and bounded from $\HT^{s-r+m,p}_{FIO}(\Rn)$ to $L^{p}(\R^{n+1})$, given that $R'$ does not affect these properties. And, since $\tilde{S}_{1}\tilde{T}\tilde{S}_{2}$ is compactly supported and we are allowed to work in a sufficiently small neighborhood of $\tilde{c}$, the change of coordinates $S'$ does not affect the mapping properties of $\tilde{S}_{1}\tilde{T}\tilde{S}_{2}$ either.

To conclude, the operator $T'$ is as in Section \ref{subsec:sharpstand}, is non-characteristic at some point $c'$ (obtained from $\tilde{c}$ in terms of the change of coordinates $S'$), and is bounded from $\HT^{s-r+m,p}_{FIO}(\Rn)$ to $L^{p}(\R^{n+1})$. Thus Theorem \ref{thm:sharpstand} yields $s-r\geq d(p)-s(p)$, as required.
\end{proof}

\addtocontents{toc}{\protect\setcounter{tocdepth}{0}}

\section*{Acknowledgments}

The authors would like to thank the referee for their careful reading of the manuscript and for many helpful suggestions. The second author would also like to thank Po--Lam Yung for many inspiring conversations about the phenomenon of local smoothing. 

\addtocontents{toc}{\protect\setcounter{tocdepth}{2}}

\appendix

\section{Geometry and Fourier integral operators}\label{sec:background}

In this appendix we collect some basics on manifolds with bounded geometry and on the theory of Fourier integral operators on manifolds. We first discuss the main geometric assumptions on the underlying manifolds. We then recall some basics on distributional densities on manifolds, which will allow us to introduce Fourier integral operators on manifolds.

\subsection{Manifolds with bounded geometry}\label{subsec:geom}

In this subsection we recall some basics from Riemannian geometry. The underlying geometric assumptions correspond to those used in \cite{Triebel86,Triebel87,Taylor09} to define the classical local Hardy spaces on manifolds.

Throughout this subsection, we let $(M,g)$ be a complete Riemannian manifold of dimension $n\in\N$. Denote by $\A$ the smooth structure on $M$. That is, $\A$ is a maximal smooth atlas of homeomorphisms $\ka:U_{\ka}\to \Util_{\ka}\subseteq\Rn$, where the $U_{\ka}\subseteq M$ are open sets that cover $M$, and
\begin{equation}\label{eq:change}
\mu_{\la\ka}:=\la\circ\ka^{-1}:\ka(U_{\ka}\cap U_{\la})\to \la(U_{\ka}\cap U_{\la})
\end{equation}
is a diffeomorphism for all $\kappa,\la\in\A$ such that $U_{\ka}\cap U_{\la}\neq \emptyset$.

We assume throughout that $(M,g)$ has \emph{bounded geometry}, by which we mean that the injectivity radius of $(M,g)$ is positive, and that all covariant derivatives $\nabla^{k}R$, $k\in\Z_{+}$, of the Riemannian curvature tensor $R$ are bounded. Recall that the injectivity radius $\inj(M)\in[0,\infty]$ is the infimumum over $p\in M$ of the supremum over all $r>0$ such that the exponential map
\[
\exp_{p}:B_{r}(0)\to U_{r}(p):=\exp_{p}(B_{r}(0))\subseteq M
\]
at $p$ is a diffeomorphism. Here $B_{r}(x):=\{y\in\Rn\mid |x-y|<r\}\subseteq \Rn$ for all $x\in\Rn$, where we identify $T_{p}M$ with $\Rn$. Then $\ka=\exp^{-1}_{p}:U_{r}(p)\to B_{r}(0)$ is an element of $\A$, and the corresponding local coordinates are \emph{geodesic normal coordinates}. We will often alternate between denoting the domain of $\ka$ by $U_{\ka}$ and by $U_{r}(p)$.

One can reformulate the assumption that all covariant derivatives of the curvature tensor are bounded using the matrix expression $(g_{ij})_{i,j=1}^{n}$ of the metric tensor in local coordinate charts (see \cite{Eichhorn91}). That is, we assume that for all $0<r<\inj(M)$ there exists a $c>0$ and, for every $\alpha\in\Z_{+}^{n}$, a $C_{\alpha}\geq0$ such that
\begin{equation}\label{eq:localg}
|\det (g_{ij}(y))_{i,j=1}^{n}|\geq c\quad\text{and}\quad|\partial^{\alpha}_{y}g_{ij}(y)|\leq C_{\alpha}\text{ for }1\leq i,j\leq n,
\end{equation}
in every normal geodesic coordinate chart $\exp^{-1}_{p}:U_{r}(p)\to B_{r}(0)$, $p\in M$, and for all $y\in B_{r}(0)$. This implies similar uniform bounds for the inverse matrix $(g^{ij})_{i,j=1}^{n}$.

There is yet another equivalent formulation of the assumption of bounded geometry; this one, in terms of the changes of coordinates in \eqref{eq:change}, will be used frequently by us. Namely, we assume that, for all $0<r<\inj(M)$ and $\alpha\in\Z_{+}^{n}$, there exists a $C_{\alpha}\geq0$ such that
\begin{equation}\label{eq:uniformchange}
|\partial^{\alpha}_{y}\mu_{\la\ka}(y)|\leq C_{\alpha}
\end{equation}
for all normal geodesic charts $\ka=\exp_{p}^{-1}:U_{r}(p)\to B_{r}(0)$ and $\la=\exp_{q}^{-1}:U_{r}(q)\to B_{r}(0)$ with $U_{r}(p)\cap U_{r}(q)\neq \emptyset$, $p,q\in M$, and all $y\in \ka(U_{r}(p)\cap U_{r}(q))$.

 To avoid confusion, we note that some authors use the terminology ``bounded geometry" for the weaker assumption that $M$ has positive injectivity radius and Ricci tensor bounded from below (see e.g.~\cite{Meda-Veronelli22,MaMeVa22}).

For us, the main reason to make the assumption of bounded geometry is that it implies the existence of a \emph{uniformly locally finite} cover by geodesic balls, in a sense which is specified by the following lemma.

\begin{lemma}\label{lem:cover}
For every $\delta\in (0,\inj(M)/3]$, there exist countable collections $K\subseteq \A$ and $(\psi_{\ka})_{\ka\in K}\subseteq C^{\infty}_{c}(M)$ with the following properties.
\begin{enumerate}
\item\label{it:cover1} Each $\kappa=\exp_{p_{\ka}}^{-1}:U_{\delta}(p_{\ka})\to B_{\delta}(0)$, for $\ka\in K$, is a geodesic normal coordinate chart for some $p_{\ka}\in M$, and $\cup_{\ka\in K}U_{\delta}(p_{\ka})=M$.
\item\label{it:cover2} There exists an $N\in\N$ such that, for every $\ka\in K$, the collection of $\la\in K$ with $U_{\inj(M)/3}(p_{\ka})\cap U_{\inj(M)/3}(p_{\la})\neq \emptyset$ contains at most $N$ elements.
\item\label{it:cover3} For every $\ka\in K$ one has $\supp(\psi_{\ka})\subseteq U_{\delta}(p_{\ka})$, and
\[
\sum_{\ka\in K}\psi_{\ka}(x)^{2}=1\quad(x\in M).
\]
\item\label{it:cover4} Writing $\psit_{\ka}:=\psi_{\ka}\circ\ka^{-1}$ for $\ka\in K$, for each $\alpha\in \Z_{+}^{n}$ one has
\[
\sup\{|\partial_{y}^{\alpha}\psit_{\ka}(y)|\mid \ka\in K,y\in B_{\delta}(0)\}<\infty.
\]
\end{enumerate}
\end{lemma}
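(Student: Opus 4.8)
The plan is to obtain $K$ from a maximal separated net in $M$, to use the associated exponential charts $\kappa=\exp_{p_\kappa}^{-1}$, and to build the partition of unity by transporting a single fixed bump function to each chart and renormalising; the curvature hypotheses enter only in the uniform local finiteness \eqref{it:cover2}, through a volume count. Concretely, I would fix $\delta\in(0,\inj(M)/3]$ and, by Zorn's lemma, choose $\{p_\kappa\}_{\kappa\in K}\subseteq M$ maximal among subsets with pairwise geodesic distance $\geq\delta/2$; this set is countable since $M$ is second countable. Maximality forces $\bigcup_{\kappa\in K}U_{\delta/2}(p_\kappa)=M$, hence a fortiori $\bigcup_{\kappa\in K}U_{\delta}(p_\kappa)=M$, and since $\delta<\inj(M)$ each $\kappa:=\exp_{p_\kappa}^{-1}\colon U_{\delta}(p_\kappa)\to B_{\delta}(0)$ is a normal geodesic chart lying in $\A$. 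This gives \eqref{it:cover1}.

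For \eqref{it:cover2} I would prove the slightly more robust statement that for every fixed $r\in(0,\inj(M))$ there is an $N_r\in\N$ such that each $\kappa\in K$ has at most $N_r$ indices $\lambda\in K$ with $U_r(p_\kappa)\cap U_r(p_\lambda)\neq\emptyset$; taking $r=\inj(M)/3$ (or any fixed finite value when $\inj(M)=\infty$, which is all that the applications require) then yields \eqref{it:cover2}. Fixing $\kappa$, any such $\lambda$ satisfies $d(p_\kappa,p_\lambda)<2r$, so all the corresponding $p_\lambda$ lie in $U_R(p_\kappa)$ with $R:=2r+\delta/4<\inj(M)$, while the balls $U_{\delta/4}(p_\lambda)$ are pairwise disjoint (the net being $\delta/2$-separated) and contained in $U_R(p_\kappa)$. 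Reading everything in the normal coordinates $\exp_{p_\kappa}^{-1}$ on $U_R(p_\kappa)$ and using the two-sided bound \eqref{eq:localg} on $\det(g_{ij})$, the Riemannian volume of a geodesic ball of radius $\varrho\leq R$ about any point is $\eqsim\varrho^n$ with constants independent of the centre; comparing the total volume of the disjoint small balls with that of $U_R(p_\kappa)$ bounds the number of admissible $\lambda$ by a constant $N_r\lesssim(R/\delta)^n$. This is the main step, and the only one using the geometric hypotheses; the one thing to watch is that every radius appearing (here $2r+\delta/4$) stays below $\inj(M)$, which is exactly why $\delta\leq\inj(M)/3$ is imposed. (A Bishop--Gromov comparison from the lower curvature bound would give the same volume estimate.)

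Finally, for \eqref{it:cover3} and \eqref{it:cover4} I would fix once and for all a $\chi\in C^{\infty}_{c}(B_{\delta}(0))$ with $0\leq\chi\leq1$ and $\chi\equiv1$ on $B_{\delta/2}(0)$, and let $\phi_\kappa\in C^{\infty}_{c}(M)$ be the extension by zero of $\chi\circ\kappa$, so that $\supp\phi_\kappa\subseteq U_{\delta}(p_\kappa)$ and $\phi_\kappa\equiv1$ on $U_{\delta/2}(p_\kappa)$. The sum $\Phi:=\sum_{\kappa\in K}\phi_\kappa^2$ is locally finite by \eqref{it:cover2}, hence smooth, and since the $U_{\delta/2}(p_\kappa)$ cover $M$ and each summand lies in $[0,1]$ one has $1\leq\Phi\leq N$ on $M$; setting $\psi_\kappa:=\phi_\kappa\,\Phi^{-1/2}$ gives \eqref{it:cover3}. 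For \eqref{it:cover4}, note that $\psit_\kappa=\chi\cdot(\Phi^{-1/2}\circ\kappa^{-1})$ on $B_{\delta}(0)$, that the only summands of $\Phi\circ\kappa^{-1}$ nonzero on $B_{\delta}(0)$ come from the at most $N$ indices $\lambda$ with $U_{\delta}(p_\kappa)\cap U_{\delta}(p_\lambda)\neq\emptyset$, and that on the relevant overlap $\phi_\lambda\circ\kappa^{-1}=\chi\circ\mu_{\lambda\kappa}$, whose derivatives of all orders are bounded uniformly in $\kappa,\lambda$ by \eqref{eq:uniformchange} together with the chain rule (with $\chi$ fixed). Since $1\leq\Phi\circ\kappa^{-1}\leq N$, composing with the smooth map $t\mapsto t^{-1/2}$ on $[1,N]$ preserves uniform boundedness of all derivatives, and multiplying by the fixed $\chi$ yields \eqref{it:cover4}. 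Apart from \eqref{it:cover2}, every step here is a routine patching argument, so I expect the volume count to be the only real obstacle.
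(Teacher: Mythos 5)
Your proof is correct and follows essentially the same route as the references the paper cites for this lemma (Shubin's Lemmas A1.1.2–A1.1.3, Triebel Section 7.1, and the authors' own preprint): a maximal $\delta/2$-separated net via Zorn's lemma for the covering, a disjoint-balls volume count using the uniform two-sided bound on $\det(g_{ij})$ from \eqref{eq:localg} for the uniform local finiteness, and transport of a fixed bump function followed by an $\ell^2$-normalisation for the partition of unity, with the uniform derivative bounds in \eqref{it:cover4} coming from \eqref{eq:uniformchange} and the chain rule. The paper defers to those references rather than giving a self-contained proof, so there is nothing substantive to compare beyond noting that you have reconstructed the standard argument faithfully, including the small but necessary check that $2\cdot\inj(M)/3+\delta/4<\inj(M)$.
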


Note that the family $(\psi_{\ka})_{\ka\in K}$ is also uniformly locally finite, in the sense that, for each $x\in M$, one has $\psi_{\ka}(x)\neq 0$ for at most $N$ different $\ka\in K$. In fact, we typically choose $\delta\leq \inj(M)/9$, which allows us to treat intersecting geodesic balls within the same coordinate chart. This is in turn useful for the proof of the crucial Corollary \ref{cor:boundedPQ}.

The statement and the proof of Lemma \ref{lem:cover} are essentially contained in \cite[Lemmas A1.1.2 and A1.1.3]{Shubin92}. See also \cite[Section 7.1]{Triebel92}) and \cite[Lemma 2.1]{LiRoSoYa22preprint}.

\subsection{Distributional densities}\label{subsec:densities}

In this subsection we collect some basics on distributional densities on manifolds, cf.~\cite{Duistermaat11,Hormander03}. We apply this theory to a Riemannian manifold $(M,g)$ with bounded geometry, but for the material presented here we may assume that $M$ is a general $n$-dimensional smooth manifold.

Let $\gamma\in\R$. We write $\Da(M,\Omega_{\gamma})$ for the space of smooth compactly supported sections of the complex line bundle $\Omega_{\gamma}(M)$. The fibers of this bundle are the one-dimensional vector spaces $\Omega_{\gamma}(T_{x}M)$, $x\in M$, consisting of all functions $\sigma:\Lambda^{n}T_{x}M\to\C$, on the exterior product $\Lambda^{n}T_{x}M$, with the property that $\sigma(\lambda v)=|\lambda|^{\gamma}\sigma(v)$ for all $v\in\Lambda^{n}T_{x}M$ and $\lambda\in\R\setminus\{0\}$. If $\rho_{0}\in C^{\infty}(M,\Omega_{1})$ is a fixed strictly positive density on $M$ (such as the Riemannian density $\rho_{g}$), then
\begin{equation}\label{eq:identifydens}
f\mapsto f\rho_{0}^{\gamma}\quad (f\in C^{\infty}_{c}(M))
\end{equation}
is a bijection between $C^{\infty}_{c}(M)$ and $\Da(M,\Omega_{\gamma})$.

The topology on $\Da(M,\Omega_{\gamma})$ is defined in the same manner as that of the test functions $C^{\infty}_{c}(M)$. In fact, $\Da(M,\Omega_{\gamma})$ inherits the topology of $C^{\infty}_{c}(M)$ via \eqref{eq:identifydens}. Moreover, the resulting topology does not depend on the choice of $\rho_{0}$, by definition of the inductive limit topology on $C^{\infty}_{c}(M)$ and because the quotient of two strictly positive densities is bounded from above and below on compact subsets of $M$.

A distributional density of order $\gamma$ on $M$ is a continuous linear functional on $\Da(M,\Omega_{1-\gamma})$. We denote by $\Da'(M,\Omega_{\gamma})$ the space of distributional densities of order $\gamma$, endowed with the weak topology, and $\lb u,\rho\rb_{M}$ denotes the duality between $u\in\Da'(M,\Omega_{\gamma})$ and $\rho\in \Da(M,\Omega_{1-\gamma})$.  The space $C^{\infty}(M,\Omega_{\gamma})$ of smooth sections of $\Omega_{\gamma}$ is embedded in $\Da'(M,\Omega_{\gamma})$, via $\lb u,\rho\rb_{M}=\int_{M}u(x)\rho(x)$ for $u\in C^{\infty}(M,\Omega_{\gamma})$ and $\rho\in \Da(M,\Omega_{1-\gamma})$. This is well defined because $u\rho\in \Da(M,\Omega_{1})$ is an integrable density in the classical sense. Moreover, $\Da(M,\Omega_{\gamma})$ is dense in $\Da'(M,\Omega_{\gamma})$, as follows by approximating a $u\in \Da'(M,\Omega_{\gamma})$ in local coordinate charts by test functions, and then lifting these back to $M$ (see e.g.~\eqref{eq:Pinv}).

As is typical for the coordinate-invariant theory of Fourier integral operators (see e.g.~\cite{Duistermaat11,Hormander09}), we will mostly consider $\gamma=1/2$ in this definition, in which case we refer to \emph{half densities}. 
However, we note that $u\mapsto \rho_{0}^{\gamma-1}u$ identifies functionals on $C^{\infty}_{c}(M)$ with distributional densities of a general\footnote{In particular, after making this identification, our approach coincides with the notion of distributions on manifolds from \cite[Chapter 7]{Triebel92} (where $\gamma=1$) and \cite{BeHiSo20,BeHiSo21} (where $\gamma=0$).} order $\gamma\in\R$, as in \eqref{eq:identifydens}.


Let $\gamma\in\R$. Recall that if $A:E\to F$ is a linear map between $n$-dimensional vector spaces, then the pullback $A^{*}:\Omega_{\gamma}(F)\to\Omega_{\gamma}(E)$ is given by
\begin{equation}\label{eq:pullbackexterior}
(A^{*}\sigma)(v)=\sigma(Av_{1}\wedge\ldots\wedge Av_{n})
\end{equation}
for $\sigma\in \Omega_{\gamma}(F)$ and $v=v_{1}\wedge \ldots\wedge v_{n}\in \Lambda^{n}E$.
If $N$ is another $n$-dimensional manifold and $\chi:M\to N$ is a diffeomorphism, then the pullback $\chi^{*}_{\gamma}:C^{\infty}(N,\Omega_{\gamma})\to C^{\infty}(M,\Omega_{\gamma})$ is
\begin{equation}\label{eq:pullbacktest}
\chi^{*}_{\gamma}\rho(x):=(\partial_{x}\chi(x))^{*}\big(\rho(\chi(x))\big)
\end{equation}
for $\rho\in C^{\infty}(N,\Omega_{\gamma})$ and $x\in M$. Finally, the pullback $\chi_{\gamma}^{*}u\in \Da'(M,\Omega_{\gamma})$ of $u\in \Da'(N,\Omega_{\gamma})$ is given by
\begin{equation}\label{eq:pullbackdensity}
\lb \chi_{\gamma}^{*}u,\rho\rb_{M}:=\lb u,(\chi^{-1})_{1-\gamma}^{*}\rho\rb_{N}
\end{equation}
for $\rho\in \Da(M,\Omega_{1-\gamma})$. We also set $(\chi_{*})_{\gamma}:=(\chi^{-1})^{*}_{\gamma}$, acting on smooth or distributional densities. We will almost exclusively consider $\gamma=1/2$, in which case we write $\chi^{*}=\chi^{*}_{1/2}$ and $\chi_{*}=(\chi^{-1})^{*}_{1/2}$, for simplicity of notation.

Let $(U_{\ka})_{\ka\in K}$ be a cover of $M$ by coordinate charts $\ka:U_{\ka}\to\Util_{\ka}\subseteq\Rn$, and set
\begin{equation}\label{eq:Qinv}
u_{\ka}:=(\ka_{*})_{\gamma}u
\end{equation}
for $u\in\Da'(M,\Omega_{\gamma})$ and $\ka\in\A$. This gives rise to a collection $(u_{\ka})_{\ka\in K}$ of distributions $u_{\ka}\in\Da'(\Util_{\ka})=\Da'(\Util_{\ka},\Omega_{\gamma})$ such that
\begin{equation}\label{eq:densityequiv}
u_{\ka}=(\la\circ\ka^{-1})^{*}_{\gamma}u_{\la}
\end{equation}
in $D'(\ka(U_{\la}\cap U_{\ka}))$, for all $\ka,\la\in K$. Here and throughout, we identify distributions on $\Rn$ with distributional densities of order $\gamma$ using the standard basis. Note that
\[
\lb u_{\ka},\rho_{\ka}\rb_{\Rn}=\lb (\ka_{*})_{\gamma}u,(\ka_{*})_{1-\gamma}\rho\rb_{\Rn}=\lb u,\rho\rb_{M}
\]
for all $u\in \Da'(M,\Omega_{\gamma})$, $\rho\in\Da(U_{\ka},\Omega_{1-\gamma})$ and $\ka\in K$.

Conversely, for any collection $(u_{\ka})_{\ka\in K}$ of $u_{\ka}\in\Da'(\Util_{\ka})$,
\begin{equation}\label{eq:Pinv}
u:=\sum_{\ka\in K}\psi_{\ka}^{2}\ka_{\gamma}^{*}(u_{\ka})\in\Da'(M,\Omega_{\gamma})
\end{equation}
is a well-defined distributional density. Here $(\psi_{\ka}^{2})_{\ka\in K}$ is some partition of unity. If we additionally restrict this correspondence to those $(u_{\ka})_{\ka\in K}$ satisfying \eqref{eq:densityequiv}, then \eqref{eq:Qinv} and \eqref{eq:Pinv} are each other's inverses.

One can also express \eqref{eq:densityequiv} without any reference to distributional densities. Indeed, the pullback $\chi_{\gamma}^{*}u\in \Da'(V)$ of a distribution $u\in\Da'(W)$ is given by
\begin{equation}\label{eq:pullbackRn}
\lb \chi^{*}_{\gamma}u,\phi\rb_{\Rn}=\lb u,|\det\partial(\chi^{-1})|^{1-\gamma}\phi\circ\chi^{-1}\rb_{\Rn}
\end{equation}
for $\phi\in\Da(V)$. Here we used that $A^{*}\sigma=|\det(A)|^{1-\gamma}\sigma$ in \eqref{eq:pullbackexterior} when $E=F$. We note that \eqref{eq:pullbackRn}, with $\gamma=0$, corresponds to the pullback of a distribution on $\Rn$ as it is typically defined. Now \eqref{eq:pullbackRn} shows that \eqref{eq:densityequiv} is equivalent to the identity
\begin{equation}\label{eq:density}
u_{\ka}=|\det\partial(\la\circ \ka^{-1})|^{\gamma}(\la \circ\ka^{-1})^{*}_{0}u_{\la}
\end{equation}
in $\Da'(\ka(U_{\la}\cap U_{\ka}))$, for all $\ka,\la\in K$.

The remarks above show that one could alternatively define distributional densities of order $\gamma$ to be collections $(u_{\ka})_{\ka\in K}$ of distributions $u_{\ka}\in\Da'(\Util_{\ka})$ satisfying \eqref{eq:density}. However, doing so would complicate notation and computations in Sections \ref{sec:sequences} and \ref{sec:spacesmanifold}. Instead, in those sections we use an association of sequences of distributions to distributional densities which is more symmetric with respect to the partition of unity $(\psi_{\ka}^{2})_{\ka\in K}$ in \eqref{eq:Pinv}.

\subsection{Fourier integral operators}\label{subsec:FIOs}

This subsection contains some background on the Fourier integral operators under consideration in this article. We first collect the relevant definitions, and then some basic results about these operators.

\subsubsection{Definitions}

We refer to \cite{Hormander07,Hormander09,Duistermaat11} for the general theory of Fourier integral operators on manifolds and the associated symplectic geometry. In particular, as in \cite[Section 25.1]{Hormander09}, Fourier integral operators are defined using the class $I^{m}(X,\Lambda,\Omega_{1/2})\subseteq\Da'(X,\Omega_{1/2})$ of Lagrangian distributions of order $m\in\R$, for $X$ a smooth manifold and $\Lambda\subseteq T^{*}X\setminus o$ a closed conic Lagrangian submanifold. Recall that $o$ is the zero section. On the other hand, our main results rely on estimates in local coordinates, and these distributions have well-known concrete oscillatory integral representations in local coordinates (see e.g.~Lemma \ref{lem:FIOcurv} below).

For $m\in\R$ and $n_{1},n_{2}\in\N$, we will work with the standard Kohn--Nirenberg symbol class $S^{m}(\R^{n_{1}}\times \R^{n_{2}})$, consisting of those $a\in C^{\infty}(\R^{n_{1}}\times\R^{n_{2}})$ such that
\begin{equation}\label{eq:Kohn-Nirenberg}
\sup\{\lb\eta\rb^{|\beta|-m}|\partial_{z}^{\alpha}\partial_{\eta}^{\beta}a(z,\eta)|\mid z\in\R^{n_{1}},\eta\in \R^{n_{2}}\}<\infty
\end{equation}
for all $\alpha\in\Z_{+}^{n_{1}}$ and $\beta\in \Z_{+}^{n_{2}}$. We write $S^{m}(\R^{n_{2}})$ for the subspace of symbols which do not depend on the $z$ variable. As in \cite{HaPoRo20}, one could consider symbols from more general symbol classes, but for simplicity we will not do so here.

Let $M$ and $N$ be smooth manifolds. The Schwartz kernel theorem (see \cite[Theorem 1.1.1]{Duistermaat11}) yields a bijection between continuous linear maps $T:\Da(M,\Omega_{1/2})\to \Da'(N,\Omega_{1/2})$ and kernels $K\in \Da'(N\times M,\Omega_{1/2})$. A \emph{homogeneous canonical relation} from $T^{*}M$ to $T^{*}N$ is a submanifold $\Ca\subseteq (T^{*}N\setminus o)\times (T^{*}M\setminus o)$ such that
\[
\Ca':=\{(x,y,\xi,-\eta)\mid (x,\xi,y,\eta)\in\Ca\}
\]
is a closed conic Lagrangian submanifold of $T^{*}(N\times M)\setminus o$. Then a \emph{Fourier integral operator of order $m\in\R$ associated with $\Ca$} is an operator $T:\Da(M,\Omega_{1/2})\to \Da'(N,\Omega_{1/2})$ such that the Schwartz kernel of $T$ is an element of $I^{m}(N\times M,\Ca',\Omega_{1/2})$.  The class of all such Fourier integral operators will be denoted by $I^{m}(M,N;\Ca)$.

An FIO $T\in I^{m}(M,N;\Ca)$ is \emph{compactly supported} if its Schwartz kernel $K$ is compactly supported in $N\times M$, and \emph{properly supported} if both projections from $\supp(K)\subseteq N\times M$, to $N$ and to $M$, are proper maps. We say that $T$ is \emph{non-characteristic} at a point $c\in\Ca$ if the principal symbol $a\in S^{m}(\Ca,L_{\Ca})$ of $T$ is of the form $a=s_{0}a_{0}$ for some $a_{0}\in S^{m}(\Ca,L_{\Ca})$ homogeneous of degree $m$ and nowhere vanishing, and $|s_{0}|$ bounded away from zero in a conic neighborhood of $c$. We refer to \cite{Duistermaat11,Hormander09} for more on the principal symbol of an FIO, the Maslov line bundle $L_{\Ca}$ and the symbol class $S^{m}(\Ca,L_{\Ca})$, but we note that these notions reduce to more concrete concepts when working in local coordinates (see Section \ref{sec:sharpness} and \eqref{eq:elliptic}). 
One says that $T$ \emph{elliptic} if it is non-characteristic at every point $c\in\Ca$.  

The transpose $T^{t}:\Da(N,\Omega_{1/2})\to\Da'(M,\Omega_{1/2})$ of a $T$, given by $\lb T^{t}v,u\rb_{M}=\lb Tu,v\rb_{N}$ for $v\in\Da(N,\Omega_{1/2})$ and $u\in\Da(M,\Omega_{1/2})$, is an element of $I^{m}(N,M;\Ca^{-1})$, where
\begin{equation}\label{eq:inversecan}
\Ca^{-1}:=\{(y,\eta,x,\xi)\mid (x,\xi,y,\eta)\in\Ca)\subseteq (T^{*}M\setminus o)\times (T^{*}N\setminus o)
\end{equation}
is the inverse canonical relation.

We consider two types of FIOs, by making geometric assumptions on their canonical relations. Firstly, we deal with $T\in I^{m}(M,N;\Ca)$ for $\Ca$ a canonical relation which is a \emph{local canonical graph}. This means that the natural projections
\[
\Pi_{T^{*}N}:\Ca\to T^{*}N\setminus o\ \ \text{and}\ \ \Pi_{T^{*}M}:C\to T^{*}M\setminus o
\]
are local diffeomorphisms, or equivalently that $\Ca$ is locally the graph of a symplectomorphism $\chi$ between open subsets of $T^{*}M\setminus o$ and $T^{*}N\setminus o$. This condition implies in particular that $M$ and $N$ have the same dimension, and $\Ca^{-1}$ is then a local canonical graph as well. If $T$ is properly supported, then $T:\Da(M,\Omega_{1/2})\to \Da(N,\Omega_{1/2})$ continuously. Then $T^{t}$ has the same properties with $M$ and $N$ reversed, so $T$ extends uniquely to a continuous map $T:\Da'(M,\Omega_{1/2})\to \Da'(N,\Omega_{1/2})$ by setting $\lb Tu,v\rb_{N}:=\lb u,T^{t}v\rb_{M}$ for $u\in\Da'(M,\Omega_{1/2})$ and $v\in\Da(N,\Omega_{1/2})$.

Secondly, we consider the case where $M$ has dimension $n\geq 2$ and $N$ has dimension $n+1$. Then, cf.~\cite{MoSeSo93}, we say that $\Ca$ satisfies the \emph{cinematic curvature condition} if the natural projections
\[
\Pi_{T^{*}M}:\Ca\to T^{*}M\setminus o\ \ \text{and}\ \ \Pi_{N}:\Ca\to N
\]
are submersions, and if for each $z\in \Pi_{N}\Ca$ the conic hypersurface $\Pi_{T^{*}_{z}N}\Ca\subseteq T^{*}_{z}N\setminus \{0\}$ has everywhere $n-1$ non-vanishing principal curvatures. Here $\Pi_{T_{z}^{*}N}:\Ca\to T^{*}_{z}N\setminus \{0\}$ is the natural projection onto the fiber $T^{*}_{z}N$ of $z$. This condition is invariant under changes of coordinates on $M$ and $N$.

\subsubsection{Results}

We now collect some basic results about Fourier integral operators.

Let $\Ca$ be a homogeneous canonical relation from $T^{*}\R^{n}$ to $T^{*}\R^{n+k}$, for $n\in\N$ and $k\in\Z_{+}$, and let $m\in\R$ and $a\in S^{m}(\R^{n+k}\times \R^{n})$. Also let $V\subseteq \R^{n+k}\times(\R^{n}\setminus\{0\})$ be open, conic and such that $(z,\eta)\in V$ whenever $(z,\eta)\in\supp(a)$ and $\eta\neq0$. Let $\Phi\in C^{\infty}(V)$ be real-valued and positively homogenous of degree $1$ in the fiber variable $\eta$, and let $T\in I^{m-k/4}(\Rn,\R^{n+k};\Ca)$ be such that
\[
Tf(z):=\int_{\R^{n}}e^{i\Phi(z,\eta)}a(z,\eta)\wh{f}(\eta)\ud \eta
\]
for all $f\in\Da(\R^{n})$ and $z\in\R^{n+k}$. We then say that $T$ is a Fourier integral operator \emph{in standard form}, with phase function $\Phi$ and symbol $a$. Moreover, we call an operator $S:\Da'(\Rn)\to\Da'(\Rn)$ a \emph{change of coordinates} if there exists a diffeomorphism $\chi:U\to U'$ between open subsets $U,U'\subseteq\Rn$, and a $\psi\in C_{c}(U')$, such that $Sf=\chi^{*}_{0}(\psi f)$ for all $f\in\Da'(\Rn)$. Finally, an operator $R:\Sw'(\R^{n_{1}})\to \Sw(\R^{n_{2}})$ is a \emph{smoothing operator} if the Schwartz kernel of $R$ is an element of $\Sw(\R^{n_{2}}\times \R^{n_{1}})$.

The following lemma reduces the analysis of relatively general Fourier integral operators to operators in standard form.

\begin{lemma}\label{lem:FIOcurv}
Let $n\in\N$ and $k\in\Z_{+}$, and let $\Ca$ be a homogeneous canonical relation from $\R^{n}$ to $\R^{n+k}$ such that the projections
\[
\Pi_{\R^{n+k}}:\Ca\to \R^{n+k}\ \ \text{and}\ \ \Pi_{T^{*}\R^{n}}:\Ca\to T^{*}\Rn\setminus o
\]
are submersions. Let $T\in I^{m-k/4}(\R^{n},\R^{n+k};\Ca)$, for $m\in\R$, be compactly supported. Then there exist an $l\in\N$, homogeneous canonical relations $(\Ca_{j})_{j=1}^{l}$ from $\Rn$ to $\R^{n+k}$,  Fourier integral operators $(T_{j})_{j=1}^{l}$ in standard form satisfying $T_{j}\in I^{m-k/4}(\Rn,\R^{n+k};\Ca_{j})$ for each $j\in\{1,\ldots, l\}$, changes of coordinates $(S_{j,1})_{j=1}^{l}$ and $(S_{j,2})_{j=1}^{l}$, and a smoothing operator $R:\Sw'(\R^{n})\to \Sw(\R^{n+k})$, with the following properties:
\begin{enumerate}
\item\label{it:FIOcurv1} $T=\sum_{j=1}^{l}S_{j,1}T_{j}S_{j,2}+R$;
\item\label{it:FIOcurv2} For each $j\in\{1,\ldots, l\}$, the phase function $\Phi_{j}\in C^{\infty}(V_{j})$ of $T_{j}$ satisfies $\rank\,\partial^{2}_{x\eta}\Phi_{j}(z,\eta)=n$ for all $(z,\eta)\in V_{j}$, where $x=(z_{1},\ldots, z_{n})$, and the symbol $a_{j}\in S^{m}(\R^{n+k}\times \Rn)$ has compact support in $\R^{n+k}$.
\end{enumerate}
Moreover, suppose that $k=1$ and that $\Ca$ satisfies the cinematic curvature condition. Then one may additionally assume that the following holds for all $j\in\{1,\ldots, l\}$ and  $(z_{0},\eta_{0})\in V_{j}$. One has $\Pi_{T^{*}_{z_{0}}\R^{n+1}}\Ca_{j}=\{\partial_{z}\Phi_{j}(z_{0},\eta)\mid (z_{0},\eta)\in V_{j}\}$, and the unit normal vectors $\pm\theta$ to $\Pi_{T^{*}_{z_{0}}\R^{n+1}}\Ca_{j}$ at $\partial_{z}\Phi_{j}(z_{0},\eta_{0})$ satisfy
\begin{equation}\label{eq:cincurv}
\rank\,\partial^{2}_{\eta\eta}(\partial_{z}\Phi_{j}(z_{0},\eta)\cdot\theta)
|_{\eta=\eta_{0}}=n-1.
\end{equation}
\end{lemma}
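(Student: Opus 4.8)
\textbf{Proof plan for Lemma \ref{lem:FIOcurv}.}

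The plan is to reduce to standard form by the classical procedure of localizing in the cotangent variable and resolving the Lagrangian $\Ca'\subseteq T^{*}(\R^{n+k}\times\Rn)\setminus o$ into finitely many pieces, each of which admits a non-degenerate phase function. First I would use the compact support of $T$ and a partition of unity on $\Ca'$ (possible since $\supp K$ is compact, so $\Ca'$ restricted to the relevant cone is contained in finitely many conic coordinate patches) to write $T=\sum_{j}T_{j}'+R$, where each $T_{j}'$ has Schwartz kernel in $I^{m-k/4}$ associated with a small conic piece $\Ca_{j}'$ of $\Ca'$, and $R$ is smoothing (the part away from $\Ca'$). On each small piece I would invoke the standard fact (e.g.\ \cite[Section 25.1]{Hormander09}) that a Lagrangian admits a non-degenerate phase function of the form $\Phi_{j}(z,\eta)$ with $\eta\in\Rn$ the fiber variable, \emph{provided} one knows that the projection $\Ca_{j}'\to T^{*}_{y}\Rn$ (equivalently $\Pi_{T^{*}\Rn}:\Ca_{j}\to T^{*}\Rn\setminus o$) is non-degenerate there; the hypothesis that $\Pi_{T^{*}\Rn}$ is a submersion, together with the dimension count ($\dim\Ca=\dim T^{*}\Rn=2n$), forces it to be a local diffeomorphism onto its image, so $\eta$ is a legitimate choice of phase variable on a small enough piece. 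This gives the oscillatory integral representation $T_{j}'f(z)=\int e^{i\Phi_{j}(z,\eta)}a_{j}(z,\eta)\wh f(\eta)\,\ud\eta$ with $a_{j}\in S^{m}$, and the submersion hypothesis on $\Pi_{\R^{n+k}}:\Ca\to\R^{n+k}$ is exactly what yields $\rank\,\partial^{2}_{x\eta}\Phi_{j}=n$ (with $x=(z_1,\dots,z_n)$), since this rank condition is the infinitesimal form of the base projection in the first $n$ coordinates being surjective. The changes of coordinates $S_{j,1},S_{j,2}$ enter because on a general patch the phase function need not be globally defined in the given coordinates, so one first straightens out $z$ and $y$ by diffeomorphisms (cut off by $\psi\in C_c$), absorbs them into $T_{j}=S_{j,1}^{-1}T_{j}'S_{j,2}^{-1}$, and then $T_{j}$ is genuinely in standard form; the compact spatial support of $a_j$ in $\R^{n+k}$ follows from compact support of the kernel. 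This establishes \eqref{it:FIOcurv1} and \eqref{it:FIOcurv2}.

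For the second part, with $k=1$ and $\Ca$ satisfying the cinematic curvature condition, I would observe that once $T_j$ is in standard form the canonical relation is explicitly
\[
\Ca_{j}=\{(z,\partial_{z}\Phi_{j}(z,\eta),\partial_{\eta}\Phi_{j}(z,\eta),\eta)\mid (z,\eta)\in V_{j}\},
\]
so the fiber $\Pi_{T^{*}_{z_{0}}\R^{n+1}}\Ca_{j}$ over a point $z_0$ is precisely the cone $\{\partial_{z}\Phi_{j}(z_{0},\eta)\mid(z_{0},\eta)\in V_{j}\}$, which is what the statement asserts. The cinematic curvature condition says this conic hypersurface has $n-1$ non-vanishing principal curvatures at every point; translating ``$n-1$ non-vanishing principal curvatures of the cone $\{\partial_z\Phi_j(z_0,\eta)\}$'' into a statement about the parametrization $\eta\mapsto\partial_z\Phi_j(z_0,\eta)$ gives exactly \eqref{eq:cincurv}: the second fundamental form of the cone, pulled back via this parametrization and contracted with a unit normal $\theta$, is $\partial^{2}_{\eta\eta}(\partial_{z}\Phi_{j}(z_{0},\eta)\cdot\theta)$, whose rank being $n-1$ (the cone being $(n+1)$-dimensional ambient, the hypersurface $n$-dimensional, with one direction radial and hence flat) is the non-vanishing of all the non-radial principal curvatures. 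Here I would use homogeneity of $\Phi_j$ in $\eta$ to account for the one automatic zero eigenvalue (the Euler/radial direction). Since the cinematic curvature condition is coordinate-invariant, it is preserved under the reductions of the first part, so it holds for each $\Ca_j$.

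The main obstacle I expect is bookkeeping rather than conceptual: verifying that the localization of $\Ca'$ can be done with \emph{compactly supported} symbols while keeping the error genuinely smoothing (one must be careful that the pieces of the partition of unity on the cotangent sphere bundle, multiplied back in, do not destroy the class membership), and—more substantially—cleanly identifying the geometric ``$n-1$ non-vanishing principal curvatures'' condition on the cone with the analytic rank condition \eqref{eq:cincurv} on $\partial^{2}_{\eta\eta}(\partial_z\Phi_j\cdot\theta)$. This last identification requires choosing the phase variable $\eta$ compatibly with a parametrization of the cone and carefully tracking which degeneracy directions are ``trivial'' (radial) versus genuine; the submersion hypothesis $\Pi_{T^{*}\Rn}$ and homogeneity are the tools that make this work, but the computation is the delicate point. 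I would handle it by working at a fixed $(z_0,\eta_0)$, using Euler's relation $\eta\cdot\partial_\eta(\partial_z\Phi_j)=\partial_z\Phi_j$ to peel off the radial direction, and then matching the restriction of $\partial^2_{\eta\eta}(\partial_z\Phi_j\cdot\theta)$ to the orthogonal complement of $\eta_0$ with the shape operator of the cone, exactly as in the passage from \cite{MoSeSo93} to the form used in \cite{BeHiSo20}.
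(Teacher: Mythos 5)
Your overall strategy --- localize on $\Ca'$, exhibit a phase of the form $\Phi(z,\eta)-y\cdot\eta$ on each piece, insert changes of coordinates, and translate the cinematic curvature into \eqref{eq:cincurv} via Euler's relation to peel off the radial degeneracy --- is the standard route (the paper does not write out a proof but defers to H\"ormander, Sogge and a companion preprint), and your second paragraph on the curvature identification is essentially correct. However, there is a genuine error in the first paragraph: $\Ca'$ is Lagrangian in $T^*(\R^{n+k}\times\Rn)$, which has dimension $2(2n+k)$, so $\dim\Ca=2n+k$, \emph{not} $2n$. For $k\geq 1$ the submersion $\Pi_{T^*\Rn}:\Ca\to T^*\Rn\setminus o$ therefore has $k$-dimensional fibers and is not a local diffeomorphism, so your justification that ``$\eta$ is a legitimate choice of phase variable'' collapses in the very case of interest. (The paper flags exactly this when it notes that $\dim\Ca=2n$ only when $k=0$, in which case the hypotheses reduce to the local canonical graph condition.) Showing that $(z,\eta)$ parametrize $\Ca$ for $k\geq 1$ requires using both submersion hypotheses in combination with the Lagrangian property of $\Ca'$, not a dimension-count shortcut through $\Pi_{T^*\Rn}$ alone.

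Relatedly, the sentence claiming that the $\Pi_{\R^{n+k}}$ submersion ``is exactly what yields'' $\rank\,\partial^2_{x\eta}\Phi_j=n$ misattributes the source of that condition. Once $\Ca$ is parametrized by $(z,\eta)$, the projection $\Pi_{\R^{n+k}}$ is simply $(z,\eta)\mapsto z$ and is a submersion automatically, so it carries no information at that stage. It is $\Pi_{T^*\Rn}$, i.e.\ $(z,\eta)\mapsto(\partial_\eta\Phi(z,\eta),\eta)$, whose surjective differential forces $\rank\,\partial^2_{z\eta}\Phi=n$ as an $n\times(n+k)$ matrix; the coordinate change $S_{j,1}$ in the $z$ variable --- which the paper notes can be dropped precisely when $k=0$, since then $z=x$ --- then upgrades this to the stronger rank condition on the $n\times n$ block $\partial^2_{x\eta}\Phi_j$. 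With these two points corrected your plan would line up with the argument the paper is pointing to.
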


One may express \eqref{eq:cincurv} as in \eqref{eq:Gaussmap}:
\[
\rank\,\partial_{\eta\eta}^{2}(\partial_{z}\Phi_{j}(z_{0},\eta)\cdot G_{j}(z_{0},\eta_{0}))|_{\eta=\eta_{0}}=n-1,
\]
where $G_{j}(z_{0},\eta_{0}):=G_{j,0}(z_{0},\eta_{0})/|G_{j,0}(z_{0},\eta_{0})|$ is the generalized Gauss map.

A statement similar to the first one in the lemma can be found in \cite[pp.~29-30]{Hormander09}, under more general assumptions on the canonical relation. Note that the assumptions of the lemma hold with $k=0$ if and only if $\Ca$ is a local canonical graph, since $\Ca$ then has dimension $2n$. In this case, the statement is essentially contained in \cite[pp.~26-27]{Hormander09}, \cite[Proposition 6.2.4]{Sogge17}, \cite[Proposition 2.14]{HaPoRo20} and \cite[p.~108]{Duistermaat11}, and the changes of coordinates $(S_{j,1})_{j=1}^{l}$ can be omitted. The proof for general $k$ is similar to that for $k=0$. The second statement can in turn be found in \cite[Section 8.1]{Sogge17}. One can also find a more detailed proof of both statements in \cite[Lemma 2.2]{LiRoSoYa22preprint}.

\begin{remark}\label{rem:timeparameter}
In Section \ref{sec:wave}, we apply this lemma to canonical relations $\Ca$ from $\Rn$ to $\R^{n+1}$ of the form
\[
\Ca=\{(x,t,\xi,\tau,y,\eta)\mid (x,\xi)=\chi_{t}(y,\eta), \tau=q(x,t,\xi)\},
\]
where each $\chi_{t}$ is a canonical transformation on $T^{*}(\R^{n})\setminus o$. 
In this case the changes $S_{j,1}$ of $z$ coordinates are not needed to guarantee that $\partial_{x\eta}^{2}\Phi_{j}(x,t,\eta)=n$. It follows from the inverse function theorem that, in suitable local coordinates, any canonical relation satisfying the cinematic curvature condition has this form (see e.g.~\cite[Section 8.1]{Sogge17}).
\end{remark}

\begin{remark}\label{rem:smallcan}
It is relevant for the proof of Theorem \ref{thm:sharpgen} to note that, if $\Ca$ is contained in a sufficiently small neighborhood of a given point $\tilde{c}\in \Ca$, then the decomposition in \eqref{it:FIOcurv1} simplifies and the summation and one of the changes of coordinates are not necessary. More precisely, one may write $T=T'S'+R$ for $T'\in I^{m-k/4}(\Rn,\R^{n+k};\Ca')$ a Fourier integral operator in standard form associated with a homogeneous canonical relation $\Ca'$, a change of coordinates $S'$, and a smoothing operator $R$.
\end{remark}

Finally, for our main results we use the following lemma, a slight variation of \cite[Proposition 18.1.22]{Hormander07}. It allows us to apply the composition theorem for Fourier integral operators (see e.g.~\cite[Theorem 25.2.3]{Hormander09} or \cite[Theorem 6.2.2]{Sogge17}), which deals with properly supported operators. The proof of the lemma is similar to that of \cite[Proposition 18.1.22]{Hormander07} and can also be found in \cite[Lemma 2.4]{LiRoSoYa22preprint}.

\begin{lemma}\label{lem:propersupp}
Let $T_{1}$ and $T_{2}$ be pseudodifferential operators on $\Rn$, of orders $m_{1}\in\R$ and $m_{2}\in\R$, respectively. Suppose that either $T_{1}$ or $T_{2}$ is compactly supported. Then there exist a compactly supported pseudodifferential $S$ of order $m_{1}+m_{2}$, and a smoothing operator $R$, such that $T_{1}T_{2}=S+R$.
\end{lemma}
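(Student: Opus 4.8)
The plan is to reduce at once to the case that $T_1$ is compactly supported, and then to "trim" the right-hand factor $T_2$ to compact support at the cost of a smoothing error, using only the standard pseudodifferential calculus.

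\emph{Reduction.} If instead $T_2$ is the compactly supported factor, apply the statement to the transposes: $T_2^{t}$ is a compactly supported pseudodifferential operator of order $m_2$ and $T_1^{t}$ one of order $m_1$, so one obtains $T_2^{t}T_1^{t}=S'+R'$ with $S'$ a compactly supported pseudodifferential operator of order $m_1+m_2$ and $R'$ smoothing; transposing back gives $T_1T_2=(S')^{t}+(R')^{t}$, where $(S')^{t}$ is again a compactly supported pseudodifferential operator of order $m_1+m_2$ and $(R')^{t}$ is smoothing, since the transpose of a Schwartz kernel is Schwartz. So from now on assume $T_1$ is compactly supported, say with Schwartz kernel $K_{T_1}$ supported in $A\times B$ with $A,B\subseteq\Rn$ compact.

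\emph{Cutting off and splitting.} Choose $\phi'\in C_c^{\infty}(\Rn)$ equal to $1$ on a neighbourhood of $B$, so that $T_1=T_1M_{\phi'}$, where $M_\psi$ denotes multiplication by $\psi$; note $M_\psi$ is a compactly supported pseudodifferential operator of order $0$ for $\psi\in C_c^{\infty}(\Rn)$. Pick $\chi\in C_c^{\infty}(\Rn)$ with $\chi\equiv1$ on a neighbourhood of $\supp\phi'$, and write
\begin{equation*}
T_1T_2 = T_1M_{\phi'}T_2M_{\chi} + T_1M_{\phi'}T_2(1-M_{\chi}) = T_1T_2M_{\chi} + T_1R_2,\qquad R_2:=M_{\phi'}T_2(1-M_{\chi}),
\end{equation*}
using $T_1M_{\phi'}=T_1$. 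The kernel of $R_2$ is $\phi'(x)K_{T_2}(x,y)(1-\chi(y))$; on its support $x\in\supp\phi'$ while $y$ lies outside $\{\chi=1\}$, so $|x-y|$ is bounded below, where $K_{T_2}$ is smooth with all derivatives rapidly decreasing. Together with the compact $x$-support this puts the kernel of $R_2$ in $\Sw(\Rn\times\Rn)$, i.e.\ $R_2$ is smoothing. Since $T_1$ maps $\Sw(\Rn)$ continuously into $C_c^{\infty}(\Rn)\subseteq\Sw(\Rn)$ (its kernel being a compactly supported distribution in each variable), the kernel $(x,y)\mapsto\bigl[T_1\bigl(K_{R_2}(\cdot,y)\bigr)\bigr](x)$ of $R:=T_1R_2$ also lies in $\Sw(\Rn\times\Rn)$, so $R$ is smoothing. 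For the other term, $T_2M_{\chi}$ is a pseudodifferential operator of order $m_2$ (composition with the properly supported $M_{\chi}$) whose kernel is supported in $\Rn\times\supp\chi$; composing on the left with the properly supported $T_1$, the composition theorem for pseudodifferential operators (see e.g.\ \cite{Hormander07}) shows $S:=T_1T_2M_{\chi}$ is a pseudodifferential operator of order $m_1+m_2$, and the support inclusion $\supp K_S\subseteq A\times\supp\chi$ (obtained by tracking supports through the composition) shows $S$ is compactly supported. Hence $T_1T_2=S+R$ as required.

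\emph{Main obstacle.} There is no genuine difficulty: the argument is bookkeeping inside the standard pseudodifferential calculus. The two points that need a little care are that the off-diagonal remainders $R_2$ and $R=T_1R_2$ genuinely have \emph{Schwartz} kernels, not merely that the operators regularize — this is where the compact spatial support of the cutoffs and the rapid decay of pseudodifferential kernels off the diagonal are combined — and that the composition theorem applies, for which one needs one of the composed operators to be properly supported; this is supplied by the compact (hence proper) support of $T_1$ and of $M_{\chi}$.
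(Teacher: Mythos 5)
Your proof is correct and matches the paper's intended argument, which is simply referenced to H\"ormander's Proposition 18.1.22 and to \cite{LiRoSoYa22preprint} without being spelled out; the ingredients — transposition to reduce to $T_1$ compactly supported, cutting off with $\phi'$ and $\chi$ to exploit the rapid off-diagonal decay of pseudodifferential kernels, and the composition theorem with one properly supported factor — are exactly the standard ones.

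One place worth being a touch more explicit: the assertion that the kernel of $R=T_1R_2$ is Schwartz deserves the remark that $T_1\otimes I$ maps $\Sw(\R^{2n})$ into itself (since $T_1:\Sw(\Rn)\to\Sw(\Rn)$ is continuous and $\Sw(\R^{2n})=\Sw(\Rn)\hat\otimes\Sw(\Rn)$), or alternatively that $K_R(x,y)=\lb K_{T_1}(x,\cdot),K_{R_2}(\cdot,y)\rb$ is compactly supported in $x$, smooth, and inherits the $y$-decay of $K_{R_2}$. Likewise, note that neither $T_1T_2$ nor $T_2M_\chi$ is properly supported on its own (each has its kernel compactly supported in only one variable), so one is relying on the version of the composition theorem in which \emph{either} factor is properly supported — which is the standard formulation and is available here because $T_1$ and $M_\chi$ are both compactly supported. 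With those small clarifications the argument is complete.
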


\section{Atomic decomposition}\label{sec:atomic}

This appendix contains the atomic decomposition of $\HT^{s,1}_{FIO}(\Rn)$. This atomic decomposition was proved for $s=(n-1)/4$ in \cite{Smith98a}. In the proof of Theorem \ref{thm:localization}, we use that every $\HT^{s,1}_{FIO}(\Rn)$-atom has compact support, and this support condition is not preserved when directly applying the fractional derivative $\lb D\rb^{-s-(n-1)/4}$ to an $\HT^{(n-1)/4,1}_{FIO}(\Rn)$-atom. Moreover, we believe that it is of independent interest to include an atomic decomposition of $\HT^{s,1}_{FIO}(\Rn)$ for general $s\in\R$.

Throughout this section, fix $s\in\R$. An $f\in W^{s,2}(\Rn)$ is an $\HT^{s,1}_{FIO}(\Rn)$-\emph{atom}, associated with a ball $B_{\sqrt{\tau}}(y,\nu)\subseteq\Sp$, for $(y,\nu)\in\Sp$ and $\tau>0$, if
\begin{equation}\label{eq:supportatom}
\supp(f)\subseteq \{x\in\Rn\mid (|\nu\cdot(x-y)|+|x-y|^{2})^{1/2}\leq \sqrt{\tau}\}
\end{equation}
and $\|M_{\tau}^{\nu}(D)f\|_{W^{s,2}(\Rn)}\leq \tau^{-n/2}$. Here, for $\xi\in\Rn\setminus\{0\}$,
\[
M^{\nu}_{\tau}(\xi):=\big(1+\tau^{-2}\big(\lb\xi\rb^{-1}+\min(|\hat{\xi}-\nu|^{2},|\hat{\xi}+\nu|^{2})\big)^{2}\big)^{n/2}.
\]
Then $M^{\nu}_{\tau}$ is a weight which is concentrated in part of the union of two cones, in the direction of $\nu$ and $-\nu$. In this terminology, $B_{\sqrt{\tau}}(y,\nu)$ is a ball of radius $\sqrt{\tau}$ around $(y,\nu)$ with respect to a sub-Riemannian metric on $\Sp$ which arises from contact geometry, and $f$ is supported in approximately the projection of such a ball onto $\Rn$. However, for the present article these subtleties play no role whatsoever, given that we will only use the compact support condition \eqref{eq:supportatom} and Proposition \ref{prop:atom}. Instead, we refer the interested reader to \cite{Smith98a,HaPoRo20} for more on the nomenclature.

The following proposition contains the atomic decomposition of $\HT^{s,1}_{FIO}(\Rn)$. Fix $r\in C^{\infty}_{c}(\Rn)$ such that $r(\xi)=1$ if $|\xi|\leq 1$, and $r(\xi)=0$ if $|\xi|\geq 2$.

\begin{proposition}\label{prop:atom}
The collection of $\HT^{s,1}_{FIO}(\Rn)$-atoms of radius at most $1$ is a uniformly bounded subset of $\HT^{s,1}_{FIO}(\Rn)$. Moreover, there exists a $C\geq0$ with the following property. For each $f\in\HT^{s,1}_{FIO}(\Rn)$ there exists a sequence $(f_{k})_{k=1}^{\infty}$ of $\HT^{s,1}_{FIO}(\Rn)$-atoms, associated with balls of radius at most $1$, and an $(\alpha_{k})_{k=1}^{\infty}\in\ell^{1}$, such that
\begin{equation}\label{eq:atomdecomp}
f=\sum_{k=1}^{\infty}\alpha_{k}f_{k}+r(D)f\quad\text{and}\quad\sum_{k=1}^{\infty}|\alpha_{k}|\leq C\|f\|_{\HT^{s,1}_{FIO}(\Rn)}.
\end{equation}
\end{proposition}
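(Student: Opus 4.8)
The plan is to deduce the general statement from the known case $s_{0}:=\tfrac{n-1}{4}$ of \cite{Smith98a} by means of the fractional derivative $\lb D\rb^{s-s_{0}}$, which is an isometric isomorphism of $\HT^{s,1}_{FIO}(\Rn)$ onto $\HT^{s_{0},1}_{FIO}(\Rn)$ and commutes with the Fourier multipliers $M_{\tau}^{\nu}(D)$ and with $q(D), r(D)$. The key point is that the frequency condition $\|M_{\tau}^{\nu}(D)f\|_{W^{s,2}(\Rn)}\le\tau^{-n/2}$ in the definition of an $\HT^{s,1}_{FIO}(\Rn)$-atom is \emph{exactly} the corresponding condition for $\lb D\rb^{s-s_{0}}f$ to be an $\HT^{s_{0},1}_{FIO}(\Rn)$-atom, since $\|M_{\tau}^{\nu}(D)\lb D\rb^{s-s_{0}}f\|_{W^{s_{0},2}(\Rn)}=\|\lb D\rb^{s}M_{\tau}^{\nu}(D)f\|_{L^{2}(\Rn)}=\|M_{\tau}^{\nu}(D)f\|_{W^{s,2}(\Rn)}$. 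The only feature $\lb D\rb^{s-s_{0}}$ does not preserve is the compact support, and restoring it is the crux.

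For the uniform boundedness of atoms, let $f$ be an $\HT^{s,1}_{FIO}(\Rn)$-atom of radius $\le1$, associated with $B_{\sqrt{\tau}}(y,\nu)$, and set $g:=\lb D\rb^{s-s_{0}}f$. Fix $\chi\in C^{\infty}_{c}(\Rn)$ equal to $1$ on $\supp(f)$, supported in a fixed dilate of $B_{\sqrt{\tau}}(y,\nu)$ and adapted to the anisotropic scale of that ball (so $|(\nu\cdot\partial)^{\beta}\partial_{x}^{\alpha}\chi|\lesssim\tau^{-|\alpha|/2-\beta}$), and write $g=\chi g+(1-\chi)g$. The term $\chi g$ is supported in an anisotropic ball $B_{C\sqrt{\tau}}(y,\nu)$ with $C\sqrt{\tau}\le C$, and, using that multiplication by an anisotropically adapted cutoff is bounded on the weighted space $M_{\tau}^{\nu}(D)^{-1}W^{s_{0},2}(\Rn)$ — an estimate from the anisotropic calculus of \cite{Smith98a} — one gets $\|M_{C\tau}^{\nu}(D)(\chi g)\|_{W^{s_{0},2}(\Rn)}\lesssim\|M_{\tau}^{\nu}(D)g\|_{W^{s_{0},2}(\Rn)}\le\tau^{-n/2}$; hence $\chi g$ is a fixed multiple of an $\HT^{s_{0},1}_{FIO}(\Rn)$-atom of radius $\le C$, and $\|\chi g\|_{\HT^{s_{0},1}_{FIO}(\Rn)}\lesssim1$ by the case $s=s_{0}$. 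The tail $(1-\chi)g=(1-\chi)\lb D\rb^{s-s_{0}}\chi_{0}f$ (with $\chi_{0}\equiv1$ on $\supp(f)$) is then decomposed dyadically according to the distance to $B_{\sqrt{\tau}}(y,\nu)$; using the kernel bounds for $\lb D\rb^{s-s_{0}}$ together with the frequency concentration of $g$ near the cone through $\pm\nu$, one writes it as an absolutely convergent sum of $\HT^{s_{0},1}_{FIO}(\Rn)$-atoms directed along $\pm\nu$ with summable coefficients of total mass $\lesssim1$. Together this gives $\|f\|_{\HT^{s,1}_{FIO}(\Rn)}=\|g\|_{\HT^{s_{0},1}_{FIO}(\Rn)}\lesssim1$. (An atom of radius $\le C$ is in turn a bounded combination of $O(1)$ atoms of radius $\le1$, so one may work with radius $\le C$ throughout.)

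For the decomposition, given $f\in\HT^{s,1}_{FIO}(\Rn)$, apply the known decomposition of $\HT^{s_{0},1}_{FIO}(\Rn)$ to $\lb D\rb^{s-s_{0}}f$, getting $\HT^{s_{0},1}_{FIO}(\Rn)$-atoms $(g_{k})_{k}$ of radius $\le1$ and $(\beta_{k})_{k}\in\ell^{1}$ with $\lb D\rb^{s-s_{0}}f=\sum_{k}\beta_{k}g_{k}+r(D)\lb D\rb^{s-s_{0}}f$ and $\sum_{k}|\beta_{k}|\lesssim\|\lb D\rb^{s-s_{0}}f\|_{\HT^{s_{0},1}_{FIO}(\Rn)}=\|f\|_{\HT^{s,1}_{FIO}(\Rn)}$. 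Applying the bounded operator $\lb D\rb^{s_{0}-s}\colon\HT^{s_{0},1}_{FIO}(\Rn)\to\HT^{s,1}_{FIO}(\Rn)$ gives $f=\sum_{k}\beta_{k}\lb D\rb^{s_{0}-s}g_{k}+r(D)f$, with convergence in $\HT^{s,1}_{FIO}(\Rn)$; here each ``molecule'' satisfies $\|\lb D\rb^{s_{0}-s}g_{k}\|_{\HT^{s,1}_{FIO}(\Rn)}=\|g_{k}\|_{\HT^{s_{0},1}_{FIO}(\Rn)}\lesssim1$ uniformly. It remains to write each $\lb D\rb^{s_{0}-s}g_{k}$ as an absolutely convergent combination of $\HT^{s,1}_{FIO}(\Rn)$-atoms of radius $\le1$ with $\ell^{1}$-norm of the coefficients bounded \emph{uniformly in $k$}; this is carried out exactly as for the tail above — an anisotropic cutoff at the scale of $g_{k}$'s ball produces a genuine $\HT^{s,1}_{FIO}(\Rn)$-atom of radius $\le C$ with coefficient $\lesssim1$, and the remainder is split dyadically by distance into atoms directed along the cone of $g_{k}$ with geometrically decaying coefficients. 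Substituting into the sum over $k$ and re-indexing yields \eqref{eq:atomdecomp} with $\sum_{l}|\alpha_{l}|\lesssim\sum_{k}|\beta_{k}|\lesssim\|f\|_{\HT^{s,1}_{FIO}(\Rn)}$.

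The main obstacle is precisely this re-decomposition of $\lb D\rb^{s_{0}-s}g_{k}$, i.e.\ the restoration of compact support after the fractional derivative. The difficulty is that an anisotropic atom has small $\HT^{s_{0},1}_{FIO}(\Rn)$ norm but \emph{large} $L^{1}$ and classical Sobolev norms, so the smeared-out part cannot be handled by crude embeddings into $W^{N,1}(\Rn)$ as in \eqref{eq:Sobolev}; its atomic decomposition must genuinely exploit the anisotropic geometry. Concretely, one must verify that the anisotropically adapted cutoff preserves the $M_{\tau}^{\nu}$-frequency condition (this is Smith's calculus) and that the dyadic pieces of the tail, which remain frequency-concentrated near the cone through $\pm\nu$, decompose into atoms of that direction whose coefficients sum geometrically across the dyadic scales, so that after weighting by $|\beta_{k}|$ and summing over $k$ the total mass remains $\lesssim\|f\|_{\HT^{s,1}_{FIO}(\Rn)}$. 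Everything else — the commutation identities, the mapping properties of $\lb D\rb^{\sigma}$ between the relevant spaces, and the bookkeeping with $q(D)$ and $r(D)$ — is routine once the definitions are unwound.
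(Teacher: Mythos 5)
Your overall strategy — reduce to the known case $s_{0}=\tfrac{n-1}{4}$ via the isometry $\lb D\rb^{s-s_{0}}$ — is a natural first thought, and the paper itself explicitly flags why this route is dangerous: ``this support condition is not preserved when directly applying the fractional derivative.'' Your proposal correctly locates that obstacle, but does not overcome it, and that is the whole proposition.

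The genuine gap is the re-localization. Both in the uniform-boundedness argument and in the decomposition, everything reduces to the claim that $\lb D\rb^{s_{0}-s}g$, for $g$ an $\HT^{s_{0},1}_{FIO}(\Rn)$-atom of radius $\tau\le 1$, can be written as an $\ell^{1}$-combination of $\HT^{s,1}_{FIO}(\Rn)$-atoms with coefficient mass bounded uniformly in $g$ and $\tau$. You describe this as a cutoff producing ``a genuine atom of radius $\le C$'' plus a tail that is ``split dyadically by distance into atoms directed along the cone \ldots with geometrically decaying coefficients,'' and you state this must ``genuinely exploit the anisotropic geometry.'' That is precisely the point where a proof is required and none is supplied. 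Two concrete sub-claims are left unverified: (a) that the anisotropically adapted cutoff $\chi$ is bounded on the weighted space $M_{\tau}^{\nu}(D)^{-1}W^{s_{0},2}(\Rn)$ with a constant uniform in $(\tau,\nu)$, and, more seriously, (b) that the dyadic annular pieces of $(1-\chi)\lb D\rb^{s_{0}-s}g$ can be renormalized into atoms (with the correct support and $M_{\sigma}^{\nu}$-frequency bound at the rescaled radius) with coefficients summing geometrically. Claim (b) is not a routine computation: the kernel of $\lb D\rb^{\sigma}$ is isotropic, while both the support of $g$ and the required support/frequency conditions for the atoms are anisotropic at scale $\tau$, so the decay of the tail must be measured in the correct anisotropic distance and it is not clear a priori that the frequency localization of the truncated pieces survives the spatial cutoffs with the right constants. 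In effect, you would be re-proving a version of the $\phi$-molecule boundedness theorem from scratch, which is a full argument, not a remark.

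The paper takes a structurally different route that sidesteps re-localization altogether. It goes into Smith's proof and changes the power: in the atomic decomposition it replaces $\lb D\rb^{(n-1)/2}$ by $\lb D\rb^{s+(n-1)/4}$ in \cite[Lemma~3.10]{Smith98a}, so the construction already outputs compactly supported $\HT^{s,1}_{FIO}(\Rn)$-atoms. For uniform boundedness it modifies the $\phi$-molecule definition with the same power change, shows an atom is a sum of two such molecules via \cite[Lemma~3.4(b)]{Smith98a}, and then reduces the molecule boundedness to $s=s_{0}$ using exactly the fractional derivative argument you have in mind — but this time it is licit because molecules, by design, carry decay rather than compact support, so $\lb D\rb^{s-s_{0}}$ maps molecules to molecules. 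That is the missing ingredient in your proposal: you need an intermediate, non-compactly-supported notion (molecules) on which $\lb D\rb^{s-s_{0}}$ acts nicely, rather than trying to force compact support back after the fact. As written, the ``main obstacle'' paragraph of your proposal is an honest to-do list, but it is the theorem, not the bookkeeping.
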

\begin{proof}
The proof in \cite{Smith98a} for $s=(n-1)/4$, which in turn is a modification of the proof of the atomic decomposition of $\HT^{1}(\Rn)$, extends almost directly to general $s\in\R$. Most notably, to obtain the decomposition in \eqref{eq:atomdecomp}, one may replace the factor $\lb D\rb^{(n-1)/2}$ in \cite[Lemma 3.10]{Smith98a} by $\lb D\rb^{s+(n-1)/4}$, after which the remainder of the proof is identical to that of \cite[Theorem 3.5]{Smith98a}.

To show that the collection of $\HT^{s,1}_{FIO}(\Rn)$-atoms of radius at most $1$ is uniformly bounded, one can modify the notion of a $\phi$-molecule in \cite[Definition 3.3]{Smith98a}, again by changing the factor $\lb D\rb^{(n-1)/2}$ to $\lb D\rb^{s+(n-1)/4}$. Then one can show as in \cite[Lemma 3.4(b)]{Smith98a} that an $\HT^{s,1}_{FIO}(\Rn)$-atom is the sum of two such $\phi$-molecules, using that \cite[Lemma 2.13(b)]{Smith98a} is already adapted to general $s$. In turn, that the collection of these $\phi$-molecules form a uniformly bounded subset of $\HT^{s,1}_{FIO}(\Rn)$ follows immediately from the case where $s=(n-1)/4$, given that $\phi$-molecules need not be compactly supported and are well behaved under application of $\lb D\rb^{s-(n-1)/4}$.
\end{proof}

We note in passing, and to prove the localization principle for $\HT^{s,p}(\Rn)$ in \eqref{eq:localclassical} in the same way as that for $\Hps$ in Theorem \ref{thm:localization}, that the atomic decomposition of $\HT^{s,1}(\Rn)$ also extends from $s=0$ to general $s\in\R$, by a similar modification of the proof. An $f\in W^{s,2}(\Rn)$ is an $\HT^{s,1}(\Rn)$-\emph{atom}, associated with a ball $B_{\tau}(y)\subseteq\Rn$, for $y\in\Rn$ and $\tau>0$, if $\supp(f)\subseteq B_{\tau}(y)$ and $\|f\|_{W^{s,2}(\Rn)}\leq \tau^{-n/2}$.

\begin{lemma}\label{lem:atomHp}
The collection of $\HT^{s,1}(\Rn)$-atoms is a uniformly bounded subset of $\HT^{s,1}(\Rn)$. Moreover, there exists a $C\geq0$ with the following property. For each $f\in\HT^{s,1}(\Rn)$ there exists a sequence $(f_{k})_{k=1}^{\infty}$ of $\HT^{s,1}(\Rn)$-atoms, associated with balls of radius at most $1$, and an $(\alpha_{k})_{k=1}^{\infty}\in\ell^{1}$, such that
\[
f=\sum_{k=1}^{\infty}\alpha_{k}f_{k}+r(D)f\quad\text{and}\quad\sum_{k=1}^{\infty}|\alpha_{k}|\leq C\|f\|_{\HT^{s,1}(\Rn)}.
\]
\end{lemma}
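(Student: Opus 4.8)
The plan is to repeat, with the conical/directional structure erased, the proof of Proposition \ref{prop:atom}; equivalently, to deduce the statement from the classical atomic decomposition of the local Hardy--Sobolev space $\HT^{s,1}(\Rn)$ (the $p=1$, $q=2$ local Triebel--Lizorkin space, cf.~\cite[Section 2.4]{Triebel92} and the references there), repackaged into the normalisation used here. As explained before the statement of the lemma, the argument is the same modification of Smith's proof \cite{Smith98a} that underlies Proposition \ref{prop:atom}, and there are two things to verify: that every $\HT^{s,1}(\Rn)$-atom associated with a ball of radius at most $1$ lies in a fixed bounded subset of $\HT^{s,1}(\Rn)$, and that every $f\in\HT^{s,1}(\Rn)$ admits a decomposition $f=\sum_{k}\alpha_{k}f_{k}+r(D)f$ with $f_k$ such atoms and $\sum_k\abs{\alpha_k}\lesssim\|f\|_{\HT^{s,1}(\Rn)}$, in analogy with \eqref{eq:atomdecomp}.

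For the first point I would use the translation invariance of $\HT^{s,1}(\Rn)$ to centre the defining ball at the origin, after which the claim is a uniform-in-scale estimate over radii $\tau\in(0,1]$. As in the corresponding step of Proposition \ref{prop:atom} --- where an $\HT^{(n-1)/4,1}_{FIO}(\Rn)$-atom is exhibited as a sum of two $\phi$-molecules and the factor $\lb D\rb^{(n-1)/2}$ in \cite[Lemma 3.10]{Smith98a} is replaced by $\lb D\rb^{s+(n-1)/4}$ --- I would write an $\HT^{s,1}(\Rn)$-atom as a finite sum of classical molecules adapted to $\HT^{s,1}(\Rn)$ and invoke the molecule-to-$\HT^{1}$ estimate for $\HT^{s,1}(\Rn)$; the latter, for general $s$, follows from the case $s=0$ by conjugating with $\lb D\rb^{s}$, using that molecules, unlike atoms, need not be compactly supported and are well behaved under the fractional derivative.

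For the decomposition I would run the stopping-time construction of \cite[Theorem 3.5]{Smith98a}, that is, the classical grand-maximal-function argument with no directional variable present: decompose $\lb D\rb^{s}f$ along the super-level sets of a local grand maximal function, on each Whitney ball of the resulting cover peel off an $L^{2}$-piece which, after applying $\lb D\rb^{-s}$ --- the role of the factor $\lb D\rb^{-(n-1)/2}$ in \cite[Lemma 3.10]{Smith98a} --- is a bounded multiple of an $\HT^{s,1}(\Rn)$-atom on a ball of radius at most $1$, and collect the low-frequency residue into the single term $r(D)f$; summing the $\ell^{1}$-quasinorm of the coefficients over the generations of the construction then gives $\sum_{k}\abs{\alpha_{k}}\lesssim\|\lb D\rb^{s}f\|_{\HT^{1}(\Rn)}=\|f\|_{\HT^{s,1}(\Rn)}$, exactly as in \cite{Smith98a}.

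The step I expect to be the main obstacle is the same one already flagged for Proposition \ref{prop:atom}: when $s<0$ it is $\lb D\rb^{s}f$, not $f$ itself, that carries the $L^{2}$ normalisation, so the molecule estimates and the maximal-function estimates must be transported through the fractional derivative $\lb D\rb^{s}$ while keeping all constants uniform in the scale $\tau\in(0,1]$ and under translations. Since precisely this transport is carried out in the harder conical setting of Proposition \ref{prop:atom} (via the $s$-adapted versions of \cite[Lemmas 2.13(b), 3.4(b), 3.10]{Smith98a}), it applies verbatim here, and the rest of the proof is routine bookkeeping.
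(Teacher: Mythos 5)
Your proposal matches the paper's (very terse) argument: the paper proves Lemma~\ref{lem:atomHp} by simply remarking that it follows ``by a similar modification of the proof'' of Proposition~\ref{prop:atom} — that is, run Smith's atomic decomposition from \cite{Smith98a} with the conical/directional structure erased and the factor $\lb D\rb^{(n-1)/2}$ replaced by the appropriate $\lb D\rb^{s}$-adapted factor, handling non-compact support via $\phi$-molecules. Your more detailed reconstruction — centre by translation invariance, pass to molecules for uniform boundedness, run the stopping-time/grand-maximal construction of \cite[Theorem 3.5]{Smith98a} for the decomposition, and collect low frequencies into $r(D)f$ — is exactly what is intended.
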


\section{More on variable-coefficient Wolff-type inequalities}\label{sec:Wolff}

In this appendix we will prove a converse statement to Proposition \ref{prop:decouple}. Such a converse statement is contained in \cite[Corollary 4.2]{Rozendaal22b} for the Euclidean half-wave equation, with a relatively simple proof that uses the invertibility of the Euclidean half-wave propagators $e^{it\sqrt{-\Delta}}$, for $t\in\R$. By contrast, a general FIO need not be invertible, and in fact the operators in Proposition \ref{prop:decouple} are never invertible at fixed times, due to the assumption that their symbol has compact spatial support. Instead, we may merely hope for microlocal invertibility wherever the operator is non-characteristic. This makes the converse statement to Proposition \ref{prop:decouple} less elegant than that in \cite[Corollary 4.2]{Rozendaal22b}. Nonetheless, the main result of this appendix shows that, microlocally near any point where the operator $T$ is non-characteristic, one may equivalently replace the decoupling norm in \eqref{eq:BeHiSo} by the $\HT^{m-s(p),p}_{FIO}(\Rn)$ norm of the initial data.

Throughout, we consider an operator $T\in I^{m-1/4}(\Rn,\R^{n+1};\Ca)$ as in Section \ref{subsec:decouple}. Additionally, we will assume that $\Ca$ is as in Remark \ref{rem:timeparameter}:
\[
\Ca=\{(x,t,\xi,\tau,y,\eta)\mid (x,t,\eta)\in \supp(a),\eta\neq0, (x,\xi)=\chi_{t}(y,\eta), \tau=q(x,t,\xi)\},
\]
where each $\chi_{t}$ is a homogeneous canonical transformation between open subsets of $T^{*}(\R^{n})\setminus o$. This assumption always holds locally in appropriate coordinate systems and, in this appendix, it only serves to simplify the statement of the main result. In particular, it implies the following. Set $T_{t}f(x):=Tf(x,t)$ for $f\in\Sw(\Rn)$, $x\in\Rn$ and $t\in\R$. Then $T_{t}\in I^{m}(\Rn,\Rn;\Ca_{t})$, where
\[
\Ca_{t}:=\{(x,\xi,y,\eta)\mid \exists\,\tau\text{ such that }(x,t,\xi,\tau,y,\eta)\in\Ca\}
\]
is the graph of $\chi_{t}$ whenever this set is non-empty. The inverse canonical relation $\Ca_{t}^{-1}$, from \eqref{eq:inversecan}, is then the graph of $\chi_{t}^{-1}$.

We note that $T$ is non-characteristic at a point $c=(z_{0},\zeta_{0},y_{0},\eta_{0})\in \Ca$ if and only if there exists a conic neighborhood $V_{0}\subseteq \R^{n+1}\times(\Rn\setminus\{0\})$ of $(z_{0},\eta_{0})$ and a $c>0$ such that
\begin{equation}\label{eq:elliptic}
|a(z,\eta)|\geq c\lb\eta\rb^{m}\text{ for all }(z,\eta)\in V_{0}.
\end{equation}
Indeed, the principal symbol of each $T_{t}$ is (see e.g.~\cite[p.~27]{Hormander09})
\begin{equation}\label{eq:principalsym}
a(x,t,\eta)|\!\det\partial_{x\eta}^{2}\Phi(x,t,\eta)|^{-1/2},
\end{equation}
and $T$ is non-characteristic at $c$ if and only if this quantity is bounded in absolute value, both from above and below, by multiples of $\lb\eta\rb^{m}$, uniformly near $z_{0}=(x_{0},t_{0})$. Since $\det\partial_{x\eta}^{2}\Phi(x,t,\eta)\neq 0$, this reduces to \eqref{eq:elliptic}.

Also recall that the \emph{microsupport} $\WF'(R)\subseteq\Rn\times (\Rn\setminus\{0\})$ of a pseudodifferential operator $R:\Sw'(\Rn)\to\Sw'(\Rn)$ with symbol $r$ is the complement of the set of $(x_{0},\eta_{0})\in \Rn\times(\Rn\setminus\{0\})$ such that, in some conic neighborhood of $(x_{0},\eta_{0})$, one has $r(x,\eta)=O(\lb\eta\rb^{-l})$ for each $l\in\R$.

For our main result we need a lemma concerning a microlocal parametrix for $T$. Let $(\chi_{\nu})_{\nu\in\Theta_{k}}$ be as in the decoupling norm in \eqref{eq:decouplenorm}, and let $(\phi_{k})_{k=0}^{\infty}$  be a standard
Littlewood--Paley decomposition, as in the proof of Corollary \ref{cor:decouple}. Fix real-valued $(\tilde{\chi}_{\nu})_{\nu\in\Theta_{k}}$ and $(\tilde{\phi}_{k})_{k=0}^{\infty}\subseteq C^{\infty}_{c}(\Rn)$ with similar properties. More precisely, each $\tilde{\chi}_{\nu}$ is positively homogeneous of degree $0$, satisfies
\[
\supp(\tilde{\chi}_{\nu})\subseteq\{\eta\in\Rn\setminus\{0\}\mid |\hat{\eta}-\nu|\leq 2^{-k/2+2}\}
\]
and $\tilde{\chi}_{\nu}\chi_{\nu}=\chi_{\nu}$, and for all $\alpha\in\Z_{+}^{n}$ and $\beta\in\Z_{+}$ there exists a $\tilde{C}_{\alpha,\beta}\geq0$ independent of $k$ such that, if $2^{k-2}\leq |\eta|\leq 2^{k+2}$, then
\[
|(\hat{\eta}\cdot\partial_{\eta})^{\beta}\partial_{\eta}^{\alpha}\tilde{\chi}_{\nu}(\eta)|\leq \tilde{C}_{\alpha,\beta}2^{-k(|\alpha|/2+\beta)}
\]
for all $\nu\in \Theta_{k}$. The $(\tilde{\phi}_{k})_{k=0}^{\infty}$ have inverse Fourier transforms uniformly in $L^{1}(\Rn)$ and satisfy $\supp(\tilde{\phi}_{k})\subseteq\{\eta\in\Rn\mid 2^{k-2}\leq |\eta|\leq 2^{k+2}\}$ and $\tilde{\phi}_{k}\phi_{k}=\phi_{k}$ for each $k$.

\begin{lemma}\label{lem:parametrix}
Suppose that $T$ is non-characteristic at a point $c=(x_{0},t_{0},\zeta_{0},y_{0},\eta_{0})\in \Ca$. Then there exists a conic neighborhood $V\subseteq\Rn\times(\Rn\setminus\{0\})$ of $(y_{0},\eta_{0})$, an interval $J\subseteq\R$ containing $t_{0}$, and, for each $t\in J$, a compactly supported $S_{t}\in I^{-m}(\Rn,\Rn;\Ca_{t}^{-1})$ with the following properties.
\begin{enumerate}
\item\label{it:parametrix1} For all $p\in[1,\infty]$ and $s\in\R$ one has
\[
\sup\{\|\tilde{\phi}_{k}(D)\tilde{\chi}_{\nu}(D)S_{t}\|_{\La(W^{s,p}(\Rn),W^{s+m,p}(\Rn))}\mid t\in J, k\in\Z_{+},\nu\in\Theta_{k}\}<\infty;
\]
\item\label{it:parametrix2}
The pseudodifferential operator $R:\Sw'(\Rn)\to\Sw'(\Rn)$, given by
\[
Rf(x):=f(x)-\int_{J}S_{t}T_{t}f(x)\ud t
\]
for $f\in\Sw(\Rn)$ and $x\in\Rn$, satisfies $\WF'(R)\cap V=\emptyset$.
\end{enumerate}
\end{lemma}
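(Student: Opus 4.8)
The plan is to build $S_{t}$ as a spatially cut-off microlocal parametrix for $T_{t}$ near $c$, normalised in $t$ so that $\int_{J}S_{t}T_{t}\,\ud t$ equals the identity microlocally near $(y_{0},\eta_{0})$. First I would note that, by \eqref{eq:elliptic} and \eqref{eq:principalsym}, non-characteristicity of $T$ at $c=(x_{0},t_{0},\zeta_{0},y_{0},\eta_{0})$ means that for $t$ in a small interval $J\ni t_{0}$ each $T_{t}$ is a compactly supported FIO of order $m$ associated with the graph $\Ca_{t}$ of $\chi_{t}$ and elliptic on a fixed conic neighbourhood of the relevant points; since ellipticity is an open condition, $J$ and that neighbourhood can be chosen uniformly. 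The standard microlocal parametrix construction for elliptic FIOs then produces, for each $t\in J$, a compactly supported $\tilde{S}_{t}\in I^{-m}(\Rn,\Rn;\Ca_{t}^{-1})$, depending smoothly on $t$, with symbol in $S^{-m}(\Rn\times\Rn)$ and spatial support contained in a fixed compact set, such that $\Id-\tilde{S}_{t}T_{t}$ is a compactly supported pseudodifferential operator of order $0$ whose symbol is rapidly decreasing on a fixed conic neighbourhood $V$ of $(y_{0},\eta_{0})$; here $V$ may be taken as small as desired, and all of this is uniform in $t\in J$ because $a$ and $\chi_{t}$ depend smoothly and with compact support on $t$, as in Remark \ref{rem:constantsFIO}. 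Fixing $\psi\in C^{\infty}_{c}(\Rn)$ supported in the interior of $J$ with $\int_{J}\psi(t)\,\ud t=1$, I would then set $S_{t}:=\psi(t)\tilde{S}_{t}$, which is a compactly supported element of $I^{-m}(\Rn,\Rn;\Ca_{t}^{-1})$.

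Property \ref{it:parametrix2} would then be bookkeeping: since $\int_{J}\psi=1$ one has $R=\int_{J}\psi(t)(\Id-\tilde{S}_{t}T_{t})\,\ud t$, and by the composition theorem for FIOs $\Id-\tilde{S}_{t}T_{t}$ is a compactly supported order-$0$ pseudodifferential operator with symbol $\sigma_{t}$ uniformly bounded in $S^{0}$ and uniformly rapidly decreasing on $V$; differentiating under the integral sign shows that $R$ is a compactly supported order-$0$ pseudodifferential operator with symbol $\int_{J}\psi(t)\sigma_{t}\,\ud t$, which is still rapidly decreasing on $V$. Hence $R:\Sw'(\Rn)\to\Sw'(\Rn)$ and $\WF'(R)\cap V=\emptyset$.

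For property \ref{it:parametrix1} I would reduce, by conjugating with powers of $\lb D\rb$, to bounding $\lb D\rb^{s+m}\tilde{\phi}_{k}(D)\tilde{\chi}_{\nu}(D)S_{t}\lb D\rb^{-s}$ on $L^{p}(\Rn)$ uniformly in $t\in J$, $k\in\Z_{+}$ and $\nu\in\Theta_{k}$. Putting $S_{t}$ into standard form via Lemma \ref{lem:FIOcurv} (modulo a harmless smoothing term), its canonical relation is the graph of $\chi_{t}^{-1}$, so localising the output frequency to a plank $\{2^{k-2}\le|\eta|\le 2^{k+2},\ |\hat{\eta}-\nu|\lesssim 2^{-k/2}\}$ via $\tilde{\phi}_{k}(D)\tilde{\chi}_{\nu}(D)$ confines the input frequency, modulo rapidly decreasing errors, to a single $2^{-k/2}$-cap at scale $2^{k}$. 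On that band the multipliers $\lb D\rb^{-s}$, $\lb D\rb^{s+m}\tilde{\phi}_{k}(D)$ and $\tilde{\phi}_{k}(D)\tilde{\chi}_{\nu}(D)$ have uniformly $L^{1}$-bounded inverse Fourier transforms and sizes $\eqsim 2^{-ks}$, $\eqsim 2^{k(s+m)}$ and $\eqsim 1$, while $S_{t}$, of order $-m$, contributes a factor $\eqsim 2^{-km}$ times an order-zero FIO restricted to a single $2^{-k/2}$-cap, which is bounded on $L^{p}(\Rn)$ with a constant independent of $k$, $\nu$ and $t$ by the standard second-dyadic-decomposition estimate in the $L^{p}$ theory of FIOs, cf.\ \eqref{eq:standardest} and \cite[Section IX.4]{Stein93}. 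The three size factors multiply to $2^{k(s+m)}\cdot 2^{-ks}\cdot 2^{-km}=1$, which gives the claim, the $t$-uniformity again coming from Remark \ref{rem:constantsFIO}.

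The hard part will be this last step: one must make precise, with full uniformity over the planks $(k,\nu)$ and over $t\in J$, the heuristic that composing the order-$(-m)$ operator $S_{t}$ with the single-plank multiplier $\tilde{\phi}_{k}(D)\tilde{\chi}_{\nu}(D)$ costs exactly $m$ derivatives and no more — the uniform, variable-coefficient analogue of the classical fact that one dyadic-parabolic piece of an FIO is $L^{p}$-bounded without loss. Once that is in place, the parametrix construction, the pseudodifferential character of $R$, and the uniformity in $t$ are routine.
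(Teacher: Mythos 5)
Your construction of $\tilde{S}_{t}$ and the treatment of property~\eqref{it:parametrix2} track the paper's proof closely: non-characteristicity and \eqref{eq:elliptic} give ellipticity on an open conic neighbourhood, the standard microlocal parametrix construction (cf.~\cite[Theorem~4.2.5]{Duistermaat11}) produces compactly supported $\tilde{S}_{t}\in I^{-m}(\Rn,\Rn;\Ca_{t}^{-1})$ with bounds uniform in $t$, and averaging over $J$ gives a pseudodifferential remainder $R$ with $\WF'(R)\cap V=\emptyset$. (The paper uses the constant weight $|J|^{-1}$ rather than a bump function $\psi(t)$, and your $\psi$ should be in $C^{\infty}_{c}(\R)$ rather than $C^{\infty}_{c}(\Rn)$; both points are cosmetic.)

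Where you diverge is on property~\eqref{it:parametrix1}, and this is exactly the step you flag as the hard part. You try to show that post-composing $S_{t}$ with the \emph{output-side} multiplier $\tilde{\phi}_{k}(D)\tilde{\chi}_{\nu}(D)$ effectively confines the \emph{input} frequency to a single $2^{-k/2}$-cap, so that one can then invoke the single-cap estimate \eqref{eq:standardest}. That is plausible, since $\Ca_{t}^{-1}$ is a canonical graph and so carries caps to caps, but making it rigorous needs an Egorov-type propagation argument, uniform in $(k,\nu,t)$, because \eqref{eq:standardest} has the cap multiplier on the \emph{input} side. The paper sidesteps this entirely with a duality observation: since $\tilde{\phi}_{k}$ and $\tilde{\chi}_{\nu}$ are real, $\bigl(\tilde{\phi}_{k}(D)\tilde{\chi}_{\nu}(D)S_{t}\bigr)^{*}=S_{t}^{*}\tilde{\chi}_{\nu}(D)\tilde{\phi}_{k}(D)$, and $S_{t}^{*}$ is again a compactly supported FIO of order $-m$ associated with a local canonical graph, so the required bound is literally the $W^{s-m,p'}\to W^{s,p'}$ dual of the standard single-cap estimate \eqref{eq:standardest} applied to $S_{t}^{*}$, with the uniformity in $(k,\nu,t)$ coming from Remark~\ref{rem:constantsFIO}. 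Replacing your direct microlocalization argument with this duality step turns the one genuinely delicate point in your proposal into a one-line observation.
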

\begin{proof}
The construction is analogous to that of a parametrix for a single FIO associated with a canonical graph, as in \cite[Theorem 4.2.5]{Duistermaat11}, but we will sketch the details since our setting is slightly different.

Let $V_{0}$ be as in \eqref{eq:elliptic}, and let $J\subseteq\R$ and $V'\subseteq\Rn\times(\R^{n}\setminus \{0\})$ be the projections of $V_{0}$ with respect to the $t$ variable and the $(y,\eta)$ variable, respectively. We may assume that the reciprocal of the symbol in \eqref{eq:principalsym} is bounded, uniformly in $t\in J$, by a multiple of $\lb\eta\rb^{-m}$.
Then, as in \cite[Theorem 4.2.5]{Duistermaat11}, one can cut off this reciprocal away from a small neighborhood of $c$ to construct, for each $t\in J$ and each conic subset $V\subseteq V'$ which is closed in $\Rn\times(\Rn\setminus\{0\})$, a properly supported $\tilde{S}_{t}\in I^{-m}(\Rn,\Rn;\Ca_{t}^{-1})$ such that $\WF'(I-\tilde{S}_{t}T_{t})\cap V=\emptyset$.
The relevant bounds for $\tilde{S}_{t}$ and the pseudodifferential operator $R_{t}:=I-\tilde{S}_{t}T_{t}$ are uniform in $t$. Moreover, since $a$ has compact spatial support and because each $\tilde{S}_{t}$ is properly supported, we may assume in addition that each $\tilde{S}_{t}$ is compactly supported. By assumption, $\Ca_{t}$ is a canonical graph, and therefore so is $\Ca_{t}^{-1}$.

Now set $S_{t}:=|J|^{-1}\tilde{S}_{t}$ for each $t\in J$. Then \eqref{it:parametrix2} holds by construction.
Moreover, \eqref{it:parametrix1} is the dual of a standard estimate from the $L^{p}$ theory of FIOs (see \eqref{eq:standardest}):
\[
\|S_{t}^{*}\tilde{\chi}_{\nu}(D)\tilde{\phi}_{k}(D)\|_{\La(W^{s-m,p}(\Rn),W^{s,p}(\Rn)}\lesssim 1,
\]
with an implicit constant which is uniform in $k$, $\nu$ and $t\in J$, due to Remark \ref{rem:constantsFIO}.
\end{proof}

\begin{remark}\label{rem:nocurv}
Note that we did not use anywhere in the proof that $\Ca$ satisfies the cinematic curvature condition. Indeed, we only used the specific form of the operator $T$ in \eqref{eq:Tstandard}, and the assumption that the canonical relations $\Ca_{t}$ are canonical graphs. In fact, the results in this appendix hold under these more general assumptions. The product structure of the symbol in Section \ref{subsec:decouple} also plays no role here, nor do the low frequencies. In fact, by the very statement of Proposition \ref{prop:conversedecouple}, it suffices to only require \eqref{eq:elliptic} when $|\eta|\geq 1/2$.
\end{remark}

We can now prove a converse to Proposition \ref{prop:decouple}, a version for general FIOs of \cite[Corollary 4.2]{Rozendaal22b}.
Recall that the wavefront set $\WF(f)\subseteq\Rn\times (\Rn\setminus\{0\})$ of an $f\in\Sw'(\Rn)$ is the complement of the set of $(y_{0},\eta_{0})$ such that there exists a pseudodifferential operator $S$, elliptic at $(y_{0},\eta_{0})$, with $Sf\in\Sw(\Rn)$.

\begin{proposition}\label{prop:conversedecouple}
Suppose that $T$ is non-characteristic at a point $(z_{0},\zeta_{0},y_{0},\eta_{0})\in \Ca$. Then there exists a conic neighborhood $V$ of $(y_{0},\eta_{0})$ and, for all $p\in(2,\infty)$ and $L\geq0$, a $C\geq0$ such that
\begin{equation}\label{eq:conversedecouple}
\|\phi_{k}(D)f\|_{\HT^{m-s(p),p}_{FIO}(\Rn)}\leq C\big(\|T\phi_{k}(D)f\|_{L^{p,k}_{\dec}(\R^{n+1})}+2^{-kL}\|\phi_{k}(D)f\|_{L^{2}(\Rn)}\big)
\end{equation}
for all $k\in\Z_{+}$ and $f\in L^{2}(\Rn)$ with $\WF(f)\subseteq V$.
\end{proposition}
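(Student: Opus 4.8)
The plan is to combine the microlocal parametrix of Lemma \ref{lem:parametrix} with the standard square function / decoupling machinery, exploiting that $\HT^{m-s(p),p}_{FIO}(\Rn)$-norms are controlled by $\ell^{p}(\Theta_{k})$-sums of pieces $T_{t}\chi_{\nu}(D)f$ after applying the parametrix $S_{t}$. First I would fix $(z_{0},\zeta_{0},y_{0},\eta_{0})\in\Ca$ where $T$ is non-characteristic, apply Lemma \ref{lem:parametrix} to obtain the interval $J\ni t_{0}$, the conic neighborhood $V$ of $(y_{0},\eta_{0})$, the operators $S_{t}\in I^{-m}(\Rn,\Rn;\Ca_{t}^{-1})$, and the pseudodifferential remainder $R$ with $\WF'(R)\cap V=\emptyset$. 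Since $\WF(f)\subseteq V$ and $\WF'(R)\cap V=\emptyset$, the composition $Rf$ lies in $\Sw(\Rn)$; more precisely, shrinking $V$ if necessary and using that $R$ is a pseudodifferential operator, one gets $\|\phi_{k}(D)Rf\|_{\HT^{m-s(p),p}_{FIO}(\Rn)}\lesssim 2^{-kL'}\|f\|_{L^{2}(\Rn)}$ for any $L'$, by the rapid decay of the symbol of $R$ on $V$ together with the Sobolev embedding \eqref{eq:Sobolev}. Writing $\phi_{k}(D)f=\phi_{k}(D)Rf+\int_{J}\phi_{k}(D)S_{t}T_{t}\phi_{k}(D)f\,\ud t$ (inserting $\phi_{k}(D)$ harmlessly into the $T_{t}f$ term by choosing the Littlewood--Paley piece correctly, or by absorbing the mismatch into the remainder), it remains to bound the integral term.

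For the integral term, I would decompose $T_{t}\phi_{k}(D)f=\sum_{\nu\in\Theta_{k}}T_{t}\chi_{\nu}(D)\phi_{k}(D)f$, and insert the cutoffs $\tilde{\chi}_{\nu}$ and $\tilde{\phi}_{k}$, which act as identities on the respective pieces. Thus
\[
\phi_{k}(D)\!\!\int_{J}\!S_{t}T_{t}\phi_{k}(D)f\,\ud t=\sum_{\nu\in\Theta_{k}}\int_{J}\phi_{k}(D)\tilde{\phi}_{k}(D)\tilde{\chi}_{\nu}(D)S_{t}\,T_{t}\chi_{\nu}(D)\phi_{k}(D)f\,\ud t.
\]
By Proposition \ref{prop:decouple}'s proof pattern (i.e.\ \cite[Proposition 4.1]{Rozendaal22b}), the $\HT^{m-s(p),p}_{FIO}(\Rn)$-norm of a function whose $k$-th dyadic frequency block is a sum of single-cone pieces $g_{\nu}$ with $\supp(\wh{g_{\nu}})$ in the appropriate plate is equivalent to $2^{-ks(p)+km}\big(\sum_{\nu}\|g_{\nu}\|_{L^{p}}^{p}\big)^{1/p}$ up to harmless constants; more precisely I would bound $\|\phi_{k}(D)\cdot\|_{\HT^{m-s(p),p}_{FIO}(\Rn)}$ from above by such an $\ell^{p}$ sum, using the reproducing formula and that the cutoffs $\tilde{\chi}_{\nu}$ have controlled derivatives. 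Then Minkowski's integral inequality in $t\in J$ (using $|J|<\infty$) and Lemma \ref{lem:parametrix}\eqref{it:parametrix1}, which gives $\|\tilde{\phi}_{k}(D)\tilde{\chi}_{\nu}(D)S_{t}\|_{\La(L^{p},W^{m,p})}\lesssim 1$ uniformly in $t,k,\nu$, together with the uniform boundedness of $T_{t}$ on $L^{p}(\Rn)$ in the single-cone sense (\eqref{eq:standardest}), reduce everything to $\big(\sum_{\nu\in\Theta_{k}}\|T\chi_{\nu}(D)\phi_{k}(D)f\|_{L^{p}(\R^{n+1})}^{p}\big)^{1/p}=\|T\phi_{k}(D)f\|_{L^{p,k}_{\dec}(\R^{n+1})}$, after absorbing one more factor of $2^{km-ks(p)}$ through the $W^{m,p}\to L^{p}$ mapping and the $s(p)$ loss.

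\textbf{Main obstacle.} The delicate point is keeping track of all the frequency normalizations so that the exponents $2^{km-ks(p)}$ come out exactly right, and in particular verifying the upper bound
\[
\Big\|\sum_{\nu\in\Theta_{k}}h_{\nu}\Big\|_{\HT^{m-s(p),p}_{FIO}(\Rn)}\lesssim 2^{k(m-s(p))}\Big(\sum_{\nu\in\Theta_{k}}\|h_{\nu}\|_{L^{p}(\Rn)}^{p}\Big)^{1/p}
\]
when each $h_{\nu}$ has Fourier support in a $2^{k}\times 2^{k/2}\times\dots$ plate around $2^{k}\nu$; this is the content of the one-sided version of \cite[Proposition 4.1]{Rozendaal22b}, and while it is available in the literature it requires care to invoke with the correct order $m-s(p)$ and on a fixed dyadic shell. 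The other subtlety is the handling of the $t$-integral and the parametrix remainder in a single frequency block: one must ensure that the $\phi_{k}(D)$ and $\tilde\phi_k(D)$ insertions are genuinely compatible with the parametrix identity of Lemma \ref{lem:parametrix}\eqref{it:parametrix2}, i.e.\ that restricting to the frequency block does not create a new, uncontrolled error. This is resolved by noting that the remainder $R$ from Lemma \ref{lem:parametrix} is a pseudodifferential operator, so it commutes with Littlewood--Paley projections up to a smoothing operator, and its microsupport avoids $V\supseteq\WF(f)$; the resulting error is $O(2^{-kL}\|f\|_{L^2})$ for any $L$, which is absorbed into the right-hand side of \eqref{eq:conversedecouple}. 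Once these two points are in place, everything else is a routine concatenation of the estimates cited above.
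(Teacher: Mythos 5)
Your overall strategy matches the paper's: combine the microlocal parametrix of Lemma~\ref{lem:parametrix} with the $\ell^{p}(\Theta_{k})$ characterization of $\HT^{-s(p),p}_{FIO}(\Rn)$ on a dyadic shell, and invoke Lemma~\ref{lem:parametrix}\eqref{it:parametrix1} together with $|J|<\infty$ to pass from the $S_{t}T_{t}$-integral to the decoupling norm. But there are two genuine gaps.

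First, your treatment of the remainder term is incomplete. You write that $Rf\in\Sw(\Rn)$ and then assert a quantitative bound $\|\phi_{k}(D)Rf\|_{\HT^{m-s(p),p}_{FIO}(\Rn)}\lesssim 2^{-kL'}\|f\|_{L^{2}(\Rn)}$, justified by ``rapid decay of the symbol of $R$ on $V$.'' This does not follow: the hypothesis $\WF(f)\subseteq V$ is qualitative, and the symbol of $R$ being small on $V$ does not control $R$ applied to $\phi_{k}(D)f$, whose Fourier support fills the whole shell $\{|\eta|\sim 2^{k}\}$ and whose spatial support is arbitrary. The paper closes this gap by introducing an auxiliary zero-order pseudodifferential operator $S$ whose symbol equals $1$ on $V$ and whose microsupport is disjoint from $\WF'(R)$. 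One then splits $R=RS+R(1-S)$: the term $RS$ is a smoothing operator (disjoint microsupports), so it gains an arbitrary number of derivatives with constants independent of $f$; the term $R(1-S)\chi_{\nu}(D)f_{k}$ is handled using $\WF(f)\subseteq V$. Without this auxiliary cutoff you cannot obtain the uniform-in-$f$ constant demanded by the proposition.

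Second, the identity $\phi_{k}(D)f=\phi_{k}(D)Rf+\int_{J}\phi_{k}(D)S_{t}T_{t}\phi_{k}(D)f\,\ud t$ is not correct as written, and the remark that the mismatch can be ``absorbed into the remainder'' is not substantiated. The parametrix identity applied to $\phi_{k}(D)f$ gives $\phi_{k}(D)f = R\phi_{k}(D)f+\int_{J}S_{t}T_{t}\phi_{k}(D)f\,\ud t$; neither $\phi_{k}(D)$ and $R$ nor $\phi_{k}(D)$ and $S_{t}T_{t}$ commute, and $S_{t}T_{t}\phi_{k}(D)f$ is not supported on the $k$-th shell. The paper sidesteps this entirely by first decomposing $f_{k}=\sum_{\nu\in\Theta_{k}}\tilde\phi_{k}(D)\tilde\chi_{\nu}(D)\chi_{\nu}(D)f_{k}$ and then applying the operator identity $(1-R)=\int_{J}S_{t}T_{t}\,\ud t$ to each piece $\chi_{\nu}(D)f_{k}$ separately; the $\nu$-pieces $T_{t}\chi_{\nu}(D)f_{k}$ of the decoupling norm then appear directly, and the outer $\tilde\phi_{k}(D)\tilde\chi_{\nu}(D)$ cutoffs are present from the start, with no commutation needed. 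You should reorganize your proof along these lines; as it stands, the frequency-localization step and the remainder estimate both have unjustified claims.
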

\begin{proof}
By replacing $T$ by $T\lb D\rb^{-m}$, we may suppose that $m=0$.

Let $V$, $J$ and $R$ be as in Lemma \ref{lem:parametrix}, and write $f_{k}:=\phi_{k}(D)f=\tilde{\phi}_{k}(D)\phi_{k}(D)f$. Then
\begin{equation}\label{eq:tobound}
f_{k}=\sum_{\nu\in\Theta_{k}}\tilde{\phi}_{k}(D)\tilde{\chi}_{\nu}(D)R\chi_{\nu}(D)f_{k}+\sum_{\nu\in\Theta_{k}}\tilde{\phi}_{k}(D)\tilde{\chi}_{\nu}(D)(1-R)\chi_{\nu}(D)f_{k},
\end{equation}
and we will bound the $\HT^{-s(p),p}_{FIO}(\Rn)$ norm of each of the terms in \eqref{eq:tobound} separately.

To deal with the first term on the right-hand side of \eqref{eq:tobound}, let $S$ be a pseudodifferential operator of order zero, the symbol of which is supported away from $\WF'(R)$ and is equal to $1$ on $V$. Then $\WF'(R)\cap \WF'(S)=\emptyset$ and, for each $r\in\R$,
\[
\|(1-S)\chi_{\nu}(D)f_{k}\|_{W^{r,2}(\Rn)}\lesssim \|\chi_{\nu}(D)f_{k}\|_{W^{-L-(n-1)/2,2}(\Rn)}
\]
uniformly in $k$ and $\nu$. Hence \eqref{eq:Sobolev} and a standard Sobolev embedding yield, for some $r\in\R$,
\begin{align*}
&\Big\|\sum_{\nu\in\Theta_{k}}\tilde{\phi}_{k}(D)\tilde{\chi}_{\nu}(D)R\chi_{\nu}(D)f_{k}\Big\|_{\HT^{-s(p),p}_{FIO}(\Rn)}\\
&\lesssim \sum_{\nu\in\Theta_{k}}\|\tilde{\phi}_{k}(D)\tilde{\chi}_{\nu}(D)R\chi_{\nu}(D)f_{k}\|_{W^{r,2}(\Rn)}\\
&\lesssim \sum_{\nu\in\Theta_{k}}\|RS\chi_{\nu}(D)f_{k}\|_{W^{r,2}(\Rn)}+\sum_{\nu\in\Theta_{k}}\|R(1-S)\chi_{\nu}(D)f_{k}\|_{W^{r,2}(\Rn)}\\
&\lesssim \sum_{\nu\in\Theta_{k}}\|\chi_{\nu}(D)f_{k}\|_{W^{-L-(n-1)/2,2}(\Rn)}\lesssim 2^{k(n-1)/2}\|f_{k}\|_{W^{-L-(n-1)/2,2}(\Rn)}\\
&\eqsim 2^{-kL}\|f_{k}\|_{L^{2}(\Rn)},
\end{align*}
where we also used that the $\tilde{\phi}_{k}(D)$ and $\tilde{\chi}_{\nu}(D)$ have kernels uniformly in $L^{1}(\Rn)$.

On the other hand, there exists a constant $N\geq0$, independent of $k$ and $\mu\in\Theta_{k}$, such that $\chi_{\mu}\tilde{\chi}_{\nu}\neq 0$ for at most $N$ elements $\nu\in \Theta_{k}$. Hence one can use \cite[Proposition 4.1]{Rozendaal22b} and H\"{o}lder's inequality to write
\begin{align*}
&\Big\|\sum_{\nu\in\Theta_{k}}\tilde{\phi}_{k}(D)\tilde{\chi}_{\nu}(D)(1-R)\chi_{\nu}(D)f_{k}\Big\|_{\HT^{-s(p),p}_{FIO}(\Rn)}\\
&\eqsim \Big(\sum_{\mu\in\Theta_{k}}\Big\|\sum_{\nu\in\Theta_{k}}\chi_{\mu}(D)\tilde{\phi}_{k}(D)\tilde{\chi}_{\nu}(D)(1-R)\chi_{\nu}(D)f_{k}\Big\|_{L^{p}(\Rn)}^{p}\Big)^{1/p}\\
&\leq \Big(\sum_{\mu\in\Theta_{k}}\Big(\sum_{\nu\in\Theta_{k}}\|\chi_{\mu}(D)\tilde{\phi}_{k}(D)\tilde{\chi}_{\nu}(D)(1-R)\chi_{\nu}(D)f_{k}\|_{L^{p}(\Rn)}\Big)^{p}\Big)^{1/p}\\
&\lesssim \Big(\sum_{\mu\in\Theta_{k}}\sum_{\nu\in\Theta_{k}}\|\chi_{\mu}(D)\tilde{\phi}_{k}(D)\tilde{\chi}_{\nu}(D)(1-R)\chi_{\nu}(D)f_{k}\|_{L^{p}(\Rn)}^{p}\Big)^{1/p}\\
&\lesssim \Big(\sum_{\nu\in\Theta_{k}}\|\tilde{\phi}_{k}(D)\tilde{\chi}_{\nu}(D)(1-R)\chi_{\nu}(D)f_{k}\|_{L^{p}(\Rn)}^{p}\Big)^{1/p}\\
&=\Big(\sum_{\nu\in\Theta_{k}}\Big\|\int_{J}\tilde{\phi}_{k}(D)\tilde{\chi}_{\nu}(D)S_{t}T_{t}\chi_{\nu}(D)f_{k}\ud t\Big\|_{L^{p}(\Rn)}^{p}\Big)^{1/p}.
\end{align*}
Now Lemma \ref{lem:parametrix} \eqref{it:parametrix1}, together with H\"{o}lder's inequality and the fact that $J$ is a bounded interval, allows us to bound the quantity on the final line by
\begin{align*}
&\Big(\sum_{\nu\in\Theta_{k}}\Big(\int_{J}\|\tilde{\phi}_{k}(D)\tilde{\chi}_{\nu}(D)S_{t}T_{t}\chi_{\nu}(D)f_{k}\|_{L^{p}(\Rn)}\ud t\Big)^{p}\Big)^{1/p}\\
&\lesssim \Big(\sum_{\nu\in\Theta_{k}}\Big(\int_{J}\|T_{t}\chi_{\nu}(D)f_{k}\|_{L^{p}(\Rn)}\ud t\Big)^{p}\Big)^{1/p}\lesssim \Big(\sum_{\nu\in\Theta_{k}}\int_{J}\|T_{t}\chi_{\nu}(D)f_{k}\|_{L^{p}(\Rn)}^{p}\ud t\Big)^{1/p}\\
&\leq \|Tf_{k}\|_{L^{p,k}_{\dec}(\R^{n+1})},
\end{align*}
as required.
\end{proof}

\begin{remark}\label{rem:regularityf}
The reason to formulate \eqref{eq:conversedecouple} using the $\phi_{k}(D)f$, instead of a general $f$ with $\supp(\wh{f}\,)\subseteq\{\eta\in\Rn\mid 2^{k-1}\leq |\eta|\leq 2^{k+1}\}$, is that a function with the latter property is necessarily smooth. Hence, in order for the condition $\WF(f)\subseteq V$ to be useful, we  dyadically localize a single function $f$ in frequency, and it is the uniformity in $k$ which is relevant. Note also that, when deriving local smoothing estimates, Theorem \ref{thm:BeHiSo} is applied to functions of the form $\phi_{k}(D)f$.

For the same reason, one could replace the a priori assumption that $f\in L^{2}(\Rn)$ by $f\in W^{s,2}(\Rn)$ for any $s\in\R$. On the other hand, we work with square integrable functions to match the setting of Theorem \ref{thm:BeHiSo}. Doing so limits us to $p>2$, because of the use of a standard Sobolev embedding. The same proof as above would work for any $p\in[1,\infty]$ upon replacing $L^{2}(\Rn)$ by $L^{p}(\Rn)$.
\end{remark}

\begin{remark}\label{rem:wavefront}
One could attempt to choose the set $V$ as large as possible, which would lead to a stronger statement in Proposition \ref{prop:conversedecouple}. Moreover, one could work with a more refined notion of wavefront set, taking into account only microlocalized $\HT^{s,p}_{FIO}(\Rn)$ regularity. For the sake of simplicity, we will not do so here. Note also that, since the symbol $a$ has compact spatial support, the operator $T$ always has characteristic points.
\end{remark}

\bibliographystyle{plain}
\bibliography{Bibliography}

\end{document}